\definecolor{ddorange}{rgb}{1,0.5,0}
\definecolor{ddcyan}{rgb}{0,0.2,1.0}
\newcommand{\GGG}{\color{black}}
\newcommand{\CCC}{\color{black}}
\newcommand{\EEE}{\color{black}}
\newtheorem{theorem}{Theorem}[section]
\newtheorem{corollary}[theorem]{Corollary}
\newtheorem{lemma}[theorem]{Lemma}
\newtheorem{proposition}[theorem]{Proposition}
\theoremstyle{definition}
\newtheorem{definition}[theorem]{Definition}
\newtheorem{remark}[theorem]{Remark}
\newcommand{\R}{\mathbb{R}}
\newcommand{\N}{\mathbb{N}}
\newcommand{\Z}{\mathbb{Z}}
\newcommand{\eps}{\varepsilon}
\newcommand{\ove}{\overline}
\newcommand{\Ld}{\mathcal{L}^d}
\newcommand{\Rd}{\R^d}
\newcommand{\sm}{\setminus}
\newcommand{\hd}{\mathcal{H}^{d-1}}
\newcommand{\sps}{\sigma^p_{\mathrm{\rm sym}}}
\newcommand{\Mdd}{{\mathbb{M}^{d\times d}_{\rm sym}}}
\newcommand{\Sd}{{\mathbb{S}^{d-1}}}
\newcommand{\xy}{^\xi_y}
\newcommand{\dx}{\, \mathrm{d} x}
\renewcommand{\dh}{\, \mathrm{d} \hd}
\newcommand{\M}{\mathfrak{M}}
\newcommand{\weak}{\rightharpoonup}
\newcommand{\km}{k^{-1}}
\newcommand{\tq}{{\tilde{q}}}
\newcommand{\qz}{{q_z^k}}
\newcommand{\tqz}{{\tilde{q}_z^k}}
\newcommand{\Qz}{{Q_z^k}}
\renewcommand{\d}{\mathrm{d}}
\newcommand{\oho}{\Omega_h^{0}}
\newcommand{\ohi}{\Omega_h^{1}}
\newcommand{\wu}{\widetilde{u}}
\newcommand{\eop}{\nopagebreak\hspace*{\fill}$\Box$\smallskip}
\newcommand{\dod}{{\partial_D \Omega}}
\newcommand{\don}{{\partial_N \Omega}}
\newcommand{\dom}{{\partial \Omega}}
\newcommand{\mres}{\mathbin{\vrule height 1.6ex depth 0pt width
0.13ex\vrule height 0.13ex depth 0pt width 1.3ex}}
\DeclareMathOperator*{\aplim}{ap\,lim}
\numberwithin{equation}{section}
\begin{document}

\title[Equilibrium configurations of epitaxially strained films and material voids]{Equilibrium configurations for epitaxially strained films and material voids in three-dimensional linear elasticity}

\author{Vito Crismale}
\address[Vito Crismale]{CMAP, \'Ecole Polytechnique, 91128 Palaiseau Cedex, France}
\email[Vito Crismale]{vito.crismale@polytechnique.edu}

\author{Manuel Friedrich}
\address[Manuel Friedrich]{Applied Mathematics M\"unster, University of M\"unster\\
Einsteinstrasse 62, 48149 M\"unster, Germany.}
\email{manuel.friedrich@uni-muenster.de}

%
%
%
%
%
%
%
%
%
%
%
%
%
%
%
%

\begin{abstract}
 We extend the results about existence of minimizers, relaxation, and approximation proven by Chambolle et al.\ in 2002 and 2007 for an energy related to  epitaxially strained crystalline films,   and by Braides, Chambolle, and Solci in 2007 for a class of energies defined on pairs of  function-set.  We study these models in the framework of three-dimensional linear elasticity, where 
 a major obstacle to overcome
 is the lack of any \emph{a priori} assumption on the integrability properties of  displacements.  As a key tool for the proofs,  we introduce a new notion of convergence for $(d{-}1)$-rectifiable sets that are jumps of $GSBD^p$ functions, called $\sps$-convergence.  
\end{abstract}

\keywords{Epitaxial growth, material voids, free discontinuity problems,  generalized special functions of bounded deformation, relaxation, $\Gamma$-convergence,  phase-field approximation}

\subjclass[2010]{49Q20,    26A45,  49J45,  	 74G65.}

\maketitle

%

%

\section{Introduction}
The last years have witnessed a remarkable progress in the mathematical and physical literature towards the understanding of  stress driven rearrangement instabilities (SDRI),  i.e.,  morphological instabilities of interfaces between elastic phases generated by the
competition between elastic and surface energies  of (isotropic or anisotropic) perimeter type.  Such  phenomena are  for instance  observed in the formation of material voids inside elastically stressed solids.  Another example is  hetero-epitaxial growth of elastic thin films,  when thin layers of highly strained hetero-systems, such as InGaAs/GaAs or SiGe/Si, are deposited onto a substrate:  in case  of  a mismatch between the lattice parameters of the two crystalline solids,  the free surface of the film is flat until a critical value of the thickness is reached, after which the free surface becomes corrugated   (see e.g.\ \cite{AsaTil72, GaoNix99, Grin86, Grin93,  SieMikVoo04, Spe99} for some physical and numerical literature). 

 From a mathematical point of view,   the common feature of functionals describing SDRI  is  the presence of both  stored elastic bulk  and surface energies. In the static setting, problems arise concerning existence, regularity,  and stability of equilibrium configurations obtained by energy minimization.   The analysis of these issues  is by now mostly developed 
in dimension two  only.

 Starting  with the seminal work by {\sc Bonnetier and Chambolle} \cite{BonCha02} who proved existence of equilibrium configurations, several results  have been obtained    in  \cite{BelGolZwi15, Bon12, FonFusLeoMor07, FonPraZwi14, FusMor12,  GolZwi14}  for  hetero-epitaxially   strained  elastic thin films in 2d. We also refer to   \cite{DavPio18, DavPio17,  KrePio19} for related  energies  and  to  \cite{KhoPio19} for a unified model for SDRI. In the three dimensional setting, results 
 are  limited to the geometrically nonlinear setting  or to linear elasticity  under   antiplane-shear assumption  \cite{Bon13, ChaSol07}.   In a similar fashion, regarding the study of material voids in elastic solids, there are  works about  existence and regularity in dimension two \cite{capriani, FonFusLeoMil11} and a relaxation result in higher dimensions \cite{BraChaSol07}  for nonlinearly elastic energies  or  in  linear elasticity  under  antiplane-shear assumption. \GGG Related to \cite{BraChaSol07}, we also mention a similar relaxation result in presence of obstacles \cite{FocGel08}, and the study of homogenization in periodically perforated domains, cf.\ e.g.\ \cite{CagSca11, FocGel07}. \EEE


The goal of the present paper is to extend the results about relaxation, existence, and approximation obtained for energies related to material voids \cite{BraChaSol07}  and to epitaxial growth \cite{BonCha02, ChaSol07}, respectively, to the case of  linear elasticity in arbitrary space dimensions. As  already observed in \cite{ChaSol07}, the main obstacle for deriving such generalizations lies in the fact that a deep understanding of the function space of \emph{generalized special functions of bounded deformation} ($GSBD$) is necessary. Indeed, our strategy is  based extensively on using the theory on $GSBD$ functions which, initiated by {\sc Dal Maso} \cite{DM13},  was developed over the last years, see e.g.\ \cite{ChaConIur17, CC17, CC18, ConFocIur15,  CFI16ARMA,    CFI17DCL, CFI17Density,  Cri19, Fri17M3AS, FriPWKorn, FriSol16,   Iur14}. In fact, as a byproduct of our analysis, we introduce two new notions related to this function space, namely (1) a version of the space with functions attaining also the value infinity   and  (2) a novel notion for convergence of rectifiable  sets,   which we  call  $\sps$-convergence.   Let us  stress  that  in this work we consider exclusively a static setting.  For evolutionary models, we mention the recent works \cite{FonFusLeoMor15, FusJulMor18, FusJulMor19, Pio14}.

We now introduce the models under consideration in a slightly simplified way, restricting ourselves to three space dimensions. To describe material voids in elastically stressed solids, we consider the following functional defined on pairs of function-set (see \cite{SieMikVoo04})
\begin{equation}\label{eq: F functional-intro}
F(u,E) = \int_{\Omega \setminus E}  \mathbb{C} \, e(u) : e(u) \, \dx + \int_{\Omega \cap \partial E}  \varphi(\nu_E) \, \d\mathcal{H}^{2}\,,
\end{equation}
where $E \subset \Omega$ represents the (sufficiently smooth) shape of voids within an elastic body with reference configuration $\Omega \subset \R^3$, and  $u$ is an elastic displacement field. The first part of the functional represents the elastic energy depending on the linear strain $e(u) := \frac{1}{2}\big((\nabla u)^T + \nabla u\big)$, where $\mathbb{C}$ denotes the fourth-order  positive  semi-definite  tensor of elasticity coefficients. (In fact, we can incorporate more general elastic energies, see \eqref{eq: F functional} below.) The surface energy depends on a (possibly anisotropic) density $\varphi$   evaluated at the outer normal $\nu_E$ to $E$.  This setting is usually complemented with a volume constraint on the voids $E$ and nontrivial  prescribed Dirichlet boundary conditions for $u$ on a part of $\partial \Omega$. We point out that the boundary conditions are the reason why the solid is elastically  stressed.

A variational model for epitaxially strained films can be regarded as a special case of  \eqref{eq: F functional-intro} and corresponds to the situation   where the material domain is the subgraph of an unknown nonnegative function $h$. More precisely, we assume that the material occupies the region 
$${\Omega_h^+ := \{ x \in \omega \times \R : 0 <  x_3   <   h(x_1,x_2)\}}$$
for a given bounded function $h: \omega \to  [0,\infty) $,  $\omega \subset \R^2$, whose graph represents the free profile of the film. We consider the energy
\begin{equation}\label{eq: G functional-intro} 
G(u,h) = \int_{\Omega_h^+} \mathbb{C} \, e(u) : e(u)   \, \dx + \int_{\omega} \sqrt{1 + |\nabla h(x_1,x_2)|^2} \, \d(x_1,x_2)\,.
\end{equation}
Here, $u$ satisfies  prescribed boundary data on $\omega \times \lbrace 0 \rbrace$ which corresponds to the interface between film and substrate.  This Dirichlet boundary condition models the case of a film growing on an  infinitely rigid substrate and is the reason for the film to be strained. We observe that \eqref{eq: G functional-intro}  corresponds to \eqref{eq: F functional-intro} when $\varphi$ is the Euclidean norm,  $\Omega= \omega {\times} (0, M)$  for some $M>0$ large enough, and $E=\Omega \sm  \Omega^+_h$.

Variants of the  above models \eqref{eq: F functional-intro} and \eqref{eq: G functional-intro} have been studied by {\sc Braides, Chambolle, and Solci} \cite{BraChaSol07} and by  {\sc Chambolle and Solci} \cite{ChaSol07}, respectively, where  the  linearly elastic energy density  $\mathbb{C} \, e(u): e(u)$ is replaced by   an elastic energy satisfying a $2$-growth (or $p$-growth, $p>1$) condition in the full gradient $\nabla u$ with quasiconvex integrands.  These works are devoted to giving a sound mathematical formulation for determining equilibrium configurations.  By means of variational methods and geometric measure theory, they study the relaxation of the functionals in terms of  \emph{generalized functions of bounded variation} ($GSBV$) which allows to incorporate the possible roughness of the geometry of voids or films. Existence of minimizers for the relaxed functionals and  the approximation of (the counterpart of) $G$  through a phase-field $\Gamma$-convergence result are addressed.  In fact, the two articles
 have been written almost  simultaneously with many similarities in both the setting and the proof strategy. 

Therefore, we prefer to  present the extension of  both works    to the $GSBD$ setting (i.e., to three-dimensional linear elasticity) in a single work to allow for a comprehensive study of different applications. 
We now briefly  discuss  our main results. 

 \textbf{(a) Relaxation of $F$}: We first note that, for fixed $E$,  $F(\cdot,E)$ is weakly lower semicontinuous in  $H^1$  
 and, for fixed $u$, $F(u,\cdot)$ can be regarded as a lower semicontinuous functional on sets of finite perimeter. The energy defined on pairs $(u,E)$, however, is not lower semicontinuous since, in a limiting process, the voids $E$  may collapse into a  discontinuity of the displacement $u$. The relaxation has to take this phenomenon into account, in particular collapsed surfaces need to be counted twice in the relaxed energy. Provided that the surface density $\varphi$ is a norm in $\R^3$, we show that  the relaxation takes the form (see Proposition \ref{prop:relF})
\begin{equation}\label{eq: oveF}
\ove{F}(u,E) =  \int_{\Omega \setminus E}  \mathbb{C} \, e(u) : e(u)   \, \dx + \int_{\Omega \cap \partial^* E}  \varphi  (\nu_E)  \, {\rm d}\mathcal{H}^2 + \int_{J_u \cap (\Omega \setminus E)^1} 2\, \varphi(\nu_u)  \, {\rm d}\mathcal{H}^2\,,
\end{equation}
where $E$ is a set of finite perimeter with essential boundary $\partial^* E$, $( \Omega \setminus E)^1$ denotes the set of points  of density $1$ of $\Omega \setminus E$, and $u \in GSBD^2(\Omega)$. Here, $e(u)$  denotes the approximate symmetrized gradient of class $L^2(\Omega; \R^{3 \times 3})$ and $J_u$ is the jump set with corresponding    measure-theoretical normal $\nu_u$. (We refer to Section~\ref{sec:prel} for the definition and  the main properties of this function space. Later, we will also consider more general elastic energies and work with the space $GSBD^p(\Omega)$, $1 <p < \infty$, i.e., $e(u) \in L^p(\Omega; \R^{3 \times 3})$.)  

\textbf{(b) Minimizer for $\ove F$:} In Theorem~\ref{thm:relFDir}, we show that such a relaxation result can also be proved by imposing additionally a volume constraint on $E$ (which reflects mass conservation)  and by prescribing boundary data for $u$. For this version of the relaxed  functional,  we prove the existence of minimizers, see Theorem~\ref{th: relF-extended}. 

 \textbf{(c) Relaxation of $G$}: For the model \eqref{eq: G functional-intro}  describing epitaxially strained crystalline films, we show in Theorem \ref{thm:relG} that the lower semicontinous envelope takes the form 
\begin{equation}\label{eq: oveG}
\ove{G}(u,h) = \int_{\Omega_h^+} \mathbb{C} \, e(u) : e(u)   \, \dx  +\mathcal{H}^{2}(\Gamma_h) + 2 \, \mathcal{H}^{2}(\Sigma)\,, 
\end{equation}
where  $h \in BV(\omega; [0,\infty) )$ and $\Gamma_h$ denotes the (generalized) graph of $h$. Here, $u$ is again  a $GSBD^2$-function and the set $\Sigma \subset \R^3$ is a ``vertical'' rectifiable set describing the discontinuity set of $u$ inside the subgraph $\Omega_h^+$.  Similar to the last term in \eqref{eq: oveF}, this contribution has to be counted twice.  We remark that in \cite{FonFusLeoMor07} the set $\Sigma$ is called ``vertical cuts''.   Also here a volume constraint may be imposed. 

\textbf{(d) Minimizer for $\ove G$:}  In Theorem~\ref{thm:compG}, we  show   compactness for sequences with bounded $G$ energy. In particular, this implies existence of minimizers for $\ove G$ (under  a volume constraint).

\textbf{(e) Approximation for $\ove G$:} 
In Theorem~\ref{thm:phasefieldG}, we finally prove a phase-field $\Gamma$-convergence approximation of $\ove G$. We remark that 
we can  generalize the assumptions on the regularity of the Dirichlet datum. Whereas in \cite[Theorem~5.1]{ChaSol07} the class $H^1 \cap L^\infty$ was considered, we show that it indeed  suffices to assume $H^1$-regularity.

We now provide some information on the proof strategy highlighting  in   particular the additional difficulties compared to \cite{BraChaSol07, ChaSol07}. Here, we will also explain why two new technical tools related to the space $GSBD$ have to be introduced.

\textbf{(a)} The proof of the lower inequality for the relaxation $\ove F$ is closely related to the  analog in \cite{BraChaSol07}: we use an approach by slicing, exploit the lower inequality in one dimension, and a localization method.  To prove  the upper inequality, it is enough to  combine the corresponding upper bound from \cite{BraChaSol07} with a density result for  $GSBD^p$ ($p>1$)  functions \cite{CC17},  slightly adapted for our purposes, see Lemma \ref{le:0410191844}.

\textbf{(b)} We point out that, in \cite{BraChaSol07}, the existence of minimizers has not been addressed due to the  lack of a compactness result. In this sense, our study also delivers a conceptionally new result without corresponding counterpart in \cite{BraChaSol07}. The main difficulty lies in the fact that, for configurations with finite energy \eqref{eq: oveF},  small pieces of the body could be 
disconnected from the bulk part, either by the voids $E$ or  by the jump set $J_u$. Thus, since there are \emph{no a priori bounds} on the displacements, the function $u$ could attain arbitrarily large values on certain components, and this might 
 rule out measure convergence for minimizing sequences. We remark  that truncation methods,  used to remedy this issue in scalar problems, are not applicable in the vectorial setting.  This problem was  solved only recently by general compactness results, both in the $GSBV^p$ and the $GSBD^p$ setting. The result \cite{Fri19} in $GSBV^p$   delivers a \emph{ selection  principle for minimizing  sequences} showing that one can always find at least one minimizing sequence converging in measure. With this, existence of minimizers for the 
energies 
in \cite{BraChaSol07} is immediate. 

Our situation in linear elasticity, however, is more delicate since a comparable strong result is not available in $GSBD$. In \cite[Theorem~1.1]{CC18}, a compactness and lower semicontinuity 
result in $GSBD^p$ is derived relying on the idea that minimizing sequences may ``converge to infinity'' on a set of finite perimeter. In the present work, we refine this result by introducing a topology which induces this kind of nonstandard convergence. To this end, we  need to define the new space $GSBD^p_\infty$ consisting of $GSBD^p$ functions which may also attain the value infinity. With these new techniques at hand, we can prove a general compactness result in $GSBD^p_\infty$ (see Theorem~\ref{thm:compF}) which particularly implies the existence of minimizers for  \eqref{eq: oveF}.

\textbf{(c)} Although the functional $G$ in \eqref{eq: G functional-intro}  is a special case of $F$,  the relaxation result is not an immediate consequence, due to the additional constraint that the domain is the subgraph of a function.  Indeed, in the lower inequality, a further crucial step is needed in the description of the (variational) limit of $\partial\Omega_{h_n}$ when $h_n \to h$ in $L^1(\omega)$. In particular, the vertical set $\Sigma$ has to be identified,  see \eqref{eq: oveG}.   

This issue is connected to the problem of detecting all possible limits of jump sets $J_{u_n}$  of converging sequences $(u_n)_n$ of $GSBD^p$ functions. In the $GSBV^p$ setting, the notion of $\sigma^p$-convergence of sets is used, which has originally been developed by {\sc Dal Maso, Francfort, and Toader} \cite{DMFraToa02} to study quasistatic crack evolution in nonlinear elasticity. (We refer also to the variant \cite{GiaPon06} which is independent of $p$.) In this work, we introduce an analogous notion in the $GSBD^p$ setting which we call $\sps$-convergence.  The definition is a bit more complicated compared to the $GSBV$ setting since it has to be formulated in the frame of $GSBD^p_\infty$ functions possibly attaining the value infinity.   We believe that this notion may be of independent interest and is potentially  helpful to study also other  problems such as quasistatic crack evolution in linear elasticity \cite{FriSol16}. We refer to Section \ref{sec:sigmap} for the definition and properties of  $\sps$-convergence, as well as for a comparison to the corresponding notion in the $GSBV^p$ setting. 

Showing  the upper bound for the relaxation result is considerably more difficult than the analogous bound for $\ove F$. In fact, one has to guarantee that  recovery sequences are made up by sets that are still subgraphs. We stress that this cannot be obtained by some general existence results, but is achieved through a very careful construction (pp.\ \pageref{page:upperineqbeg}-\pageref{page:upperineqend}),  that  follows  only  partially the analogous  one   
  in \cite{ChaSol07}. We believe that   the construction in \cite{ChaSol07}   could indeed be improved by adopting an approach similar to ours,  in order     to take also some pathological situations into account.  
 
\textbf{(d)} To show the existence of minimizers of $G$, the delicate step is to prove that minimizing sequences have subsequences which converge (at least) in measure. In the $GSBV^p$ setting, this is simply obtained by applying a Poincar\'e inequality on vertical slices through the film. The same strategy cannot be pursued in $GSBD^p$ since by slicing in a certain direction not all components can be controlled. As a remedy, we proceed in two steps. We first use the novel compactness result in $GSBD^p_\infty$ to identify a limit which might attain the value infinity on a set of finite perimeter $G_\infty$.  Then, \emph{a posteriori}, we show that actually $G_\infty = \emptyset$,
 see Subsection~\ref{subsec:RelG}  for details.  

 \textbf{(e)}  For the phase-field approximation,   we  combine  a variant of the construction in the upper inequality for $\ove G$  with  the general strategy of the corresponding  approximation  result in \cite{ChaSol07}.  The latter is slightly modified  in order to  proceed without $L^\infty$-bound on the displacements.

The paper is   organized   as follows. In Section~\ref{sec: main results}, we introduce the setting of our two models on material voids in elastic solids and epitaxially strained films. Here, we also present our main relaxation, existence,  and approximation  results. Section~\ref{sec:prel} collects definition and main properties of the function space $GSBD^p$.  In this section, we also  define  the space   $GSBD^p_\infty$ and  show   basic properties. In Section \ref{sec:sigmap} we introduce  the novel notion of $\sps$-convergence  and  prove  
a compactness result for sequences of rectifiable sets with bounded Hausdorff measure.  Section \ref{sec:FFF} is devoted to the analysis of  functionals defined on pairs of function-set. Finally, in Section \ref{sec:GGG} we investigate the model for epitaxially strained films and prove the relaxation, existence, and approximation results.

\section{Setting of the problem and statement of the main results}\label{sec: main results}

In this section, we give the precise definitions of the two energy functionals and present the main relaxation,  existence,  and approximation  results. In the following,  $f\colon \mathbb{M}^{d\times d}\to  [0,\infty) $ denotes a convex function satisfying the growth condition ($| \cdot |$ is the Frobenius norm on $\mathbb{M}^{d\times d}$)
\begin{align}\label{eq: growth conditions}
 c_1 |\zeta^T + \zeta|^p \EEE  &\le f(\zeta) \le c_2 (|\zeta^T + \zeta|^p +1) \ \ \ \ \ \text{for all} \ \zeta \in \mathbb{M}^{d\times d}
\end{align}
 and $f(0) = 0$,   
for some $1 < p < + \infty$. \GGG In particular, the convexity of $f$ and \eqref{eq: growth conditions} imply that $f(\zeta) = f(\frac{1}{2}(\zeta^T + \zeta))$ for all $\zeta \in \mathbb{M}^{d\times d}$. \EEE  For an open subset $\Omega \subset \R^d$, we will denote by $L^0(\Omega;\Rd)$ the space of $\Ld$-measurable functions $v \colon \Omega \to \Rd$ endowed with the topology of the convergence in measure. We let $\M(\Omega)$ be the family of all  $\Ld$-measurable subsets of $\Omega$. 

\subsection{Energies on pairs function-set: material voids in elastically stressed solids}\label{sec: results1}

Let $\Omega\subset \R^d$ be a Lipschitz domain.  We introduce an energy functional defined on pairs function-set.  Given a norm  $\varphi$    on $\Rd$ and $f\colon  \mathbb{M}^{d\times d}\to  [0,\infty) $,  we let $F\colon  L^0( \Omega;  \R^d) \times \M(\Omega) \to \R \cup \lbrace+ \infty \rbrace$   be   defined by
\begin{equation}\label{eq: F functional}
F(u,E) = 
\begin{dcases}
\int_{\Omega \setminus E} 
 f(e(u))\, \dx + \int_{\Omega \cap \partial E}  
\varphi(\nu_E) \, \d\mathcal{H}^{d-1}
\hspace{-0.5em} & \\
& \hspace{-4cm} 
\text{if } \partial E \text{ Lipschitz, } u|_{\Omega\sm \ove E} \in W^{1,p}( \Omega \sm \ove E;\R^d), \,  u|_E=0,\\
+ \infty  \hspace{-0.9em} &\hspace{-4cm}  \text{otherwise,}
\end{dcases}
\end{equation}
where $e(u) := \frac{1}{2}\big((\nabla u)^T + \nabla u\big)$ denotes the symmetrized gradient, and $\nu_E$ the outer normal to $E$. We point out that the energy is determined by $E$ and the values of $u$ on $\Omega \setminus \ove E$. The condition $u|_E=0$ is for definiteness only. We denote by $\ove{F}\colon  L^0(\Omega;\R^d) \times \M(\Omega) \to \R \cup \lbrace+ \infty \rbrace$  the lower semicontinuous envelope of the functional $F$ with respect to the convergence in  measure  for the functions and the $L^1(\Omega)$-convergence of characteristic functions of sets, i.e.,
\begin{equation}\label{eq: first-enve}
\ove{F}(u,E) = \inf\Big\{ \liminf_{n\to\infty} F(u_n,E_n)\colon  \,   u_n \to u \text{ in }L^0(\Omega;\R^d) \text{ and } \chi_{E_n} \to \chi_E \text{ in }L^1(\Omega) \Big\}\,.
\end{equation}
\GGG (We observe that the convergence in $L^0(\Omega;\Rd)$ is metrizable, so the sequential lower semicontinuous envelope coincides with the lower semicontinuous envelope with respect to this convergence.) \EEE
In the following, for any $s \in [0,1]$ and any $E \in \M(\Omega)$, $E^s$ denotes the set of points with density $s$ for $E$. By $\partial^* E$ we indicate its essential boundary,  see \cite[Definition 3.60]{AFP}. For the definition of the space $GSBD^p(\Omega)$, $p>1$, we refer to Section \ref{sec:prel} below. In particular, by $e(u) = \frac{1}{2}((\nabla u)^T + \nabla u)$ we denote the approximate  symmetrized  gradient,  and by $J_u$
the jump set of $u$ with  measure-theoretical normal $\nu_u$. We characterize $\ove F$ as follows.

\begin{proposition}[Characterization of  the lower semicontinuous envelope $\ove{F}$]\label{prop:relF} 
Suppose that $f$ is convex and satisfies \eqref{eq: growth conditions}, and that $\varphi$ is a norm on $\Rd$. Then there holds
\begin{equation*}
\ove{F}(u,E) = \begin{dcases} \int_{\Omega \setminus E} f(e(u))\, \dx + \int_{\Omega \cap \partial^* E} & \varphi  (\nu_E)  \, \dh + \int_{J_u \cap (\Omega \setminus E)^1} 2\, \varphi(\nu_u)  \, \dh \\
&\hspace{-1em}\text{if } u=  u \,  \chi_{E^0}  \in GSBD^p(\Omega) \text{ and }\mathcal{H}^{d-1}(\partial^* E) < +\infty\,,\\
+\infty &\hspace{-1em}\text{otherwise.}
\end{dcases}
\end{equation*}
Moreover, if $\mathcal{L}^d(E)>0$, then for any $(u,E) \in L^0(\Omega;\Rd){\times}\M(\Omega)$ there exists a recovery sequence $(u_n,E_n)_n\subset L^0(\Omega;\Rd){\times}\M(\Omega)$ such that $\mathcal{L}^d(E_n) = \mathcal{L}^d(E)$ for all $n \in \N$.
\end{proposition}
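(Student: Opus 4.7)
\smallskip
\noindent\textbf{Overall strategy.} The plan is to prove the lower and upper bounds for $\ove F$ separately, and to enforce the volume constraint on the recovery sequence afterwards by a small perturbation. The argument follows the same scheme as in \cite{BraChaSol07}, where the $GSBV^p$ analogue of $\ove F$ is identified; the new ingredient needed to pass from the $GSBV^p$ setting to the linear-elastic $GSBD^p$ setting is the slicing characterization of $GSBD^p(\Omega)$ from \cite{DM13}, which reduces both the identification of the effective domain of $\ove F$ and the estimation of surface integrals to a one-dimensional scalar problem on $\hd$-a.e.\ line.

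\smallskip
\noindent\textbf{Lower inequality.} Given $(u_n,E_n)\to(u,E)$ with $\liminf F(u_n,E_n)<+\infty$, I would fix $\xi\in\Sd$ and consider the slices $\{y+t\xi:t\in\R\}$ for $\hd$-a.e.\ $y$ in the hyperplane perpendicular to $\xi$: the scalar slice $u_n^{\xi,y}(t):=u_n(y+t\xi)\cdot\xi$ is a $W^{1,p}$ function on the complement of $E_n^{\xi,y}$ and vanishes on $E_n^{\xi,y}$. The 1D version of the statement, covered by the scalar case of \cite{BraChaSol07}, then yields: (i) the limit slice $u^{\xi,y}$ lies in $SBV^p$; (ii) the associated 1D bulk density is lower semicontinuous by convexity of $f$; (iii) a connected component of $E_n^{\xi,y}$ collapsing inside the slice of $\Omega\sm E$ becomes a jump of $u^{\xi,y}$ counted with weight $2\varphi(\xi)$, while endpoints persisting on $\partial^*E$ contribute $\varphi(\xi)$ each. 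Integrating in $y$ via Fatou's lemma, and invoking the integralgeometric formulae relating slices to $\hd(\partial^*E)$ and $\hd(J_u)$, one gets the lower bound for the $\xi$-marginal of each of the three terms of $\ove F(u,E)$. A sup-of-measures localization over countably many directions then concludes and, along the way, identifies the effective domain: $u\in GSBD^p(\Omega)$, $u=u\chi_{E^0}$, and $\hd(\partial^*E)<+\infty$.

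\smallskip
\noindent\textbf{Upper inequality, volume constraint, and main obstacle.} For the upper bound I would first use the density result Lemma~\ref{le:0410191844} (adapted from \cite{CC17}) to approximate any admissible $u$ by $u_k\in GSBD^p(\Omega)$ whose jump set $J_{u_k}\subset(\Omega\sm E)^1$ is polyhedral, $u_k$ is $W^{1,p}$ away from $J_{u_k}$, and such that $e(u_k)\to e(u)$ in $L^p(\Omega\sm E;\Mdd)$ together with convergence of the $\varphi$-weighted surface measures. The construction of \cite{BraChaSol07} applied to each $u_k$ then thickens $J_{u_k}$ into a tubular neighborhood of width $\eta_k\downarrow 0$, merges it with $E$ into a single set $E_k$ with Lipschitz boundary, and sets $u_k\equiv 0$ inside; the two faces of the tube produce the factor $2$ in the limit of the surface term, and a diagonal extraction delivers the recovery sequence. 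To enforce $\Ld(E_n)=\Ld(E)$ when $\Ld(E)>0$, I would insert or remove a small ball in a region far from $\partial^*E\cup J_u$, whose additional surface cost is $O(|\Ld(E_n)-\Ld(E)|^{(d-1)/d})\to 0$. I expect the main obstacle to be the lower inequality, specifically the doubling identification on $J_u\cap(\Omega\sm E)^1$: without \emph{a priori} $L^p$ bounds on $u_n$, the dichotomy ``a slice-component of $E_n$ collapses inside $\Omega\sm E$'' versus ``it shrinks onto $\partial^*E$'' must be carried out by purely geometric slicing, and the reduction to the 1D result of \cite{BraChaSol07} is precisely what forces the essential use of the $GSBD^p$ slicing theory of \cite{DM13}.
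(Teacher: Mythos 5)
Your proposal is essentially the same approach as the paper's proof: the lower bound is obtained by one-dimensional slicing in $GSBD^p$ (with a small elastic regularization on the slices, Fatou in $y$, and a supremum-of-measures localization over directions — which the paper encapsulates in a separate, more general Theorem~\ref{thm:lower-semi} framed in $GSBD^p_\infty$), and the upper bound follows by reducing to the $GSBV^p$ case through a density argument and then invoking the construction of \cite{BraChaSol07}. One small misattribution: for Proposition~\ref{prop:relF} the paper uses the unconstrained density result Theorem~\ref{thm:densityGSBD} rather than Lemma~\ref{le:0410191844}, which is the boundary-data variant needed later for $\ove F_{\mathrm{Dir}}$; and the volume constraint is delegated directly to Proposition~\ref{th: Braides-upper bound} rather than adjusted by an explicit ball insertion, though your construction is a valid way to prove that statement.
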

The last property shows that it is possible to incorporate a volume constraint on $E$ in the relaxation result. We now move on to consider  a Dirichlet minimization  problem associated to $F$. We will impose Dirichlet boundary data $u_0 \in W^{1,p}(\R^d;\R^d)$ on a subset $\partial_D \Omega \subset \partial \Omega$. For technical reasons, we suppose that  $\dom=\dod\cup \don\cup N$ with $\dod$ and $\don$ relatively open, $\dod \cap \don =\emptyset$, $\hd(N)=0$, 
$\dod \neq \emptyset$,  $\partial(\dod)=\partial(\don)$, and 
that there exist a small $\ove \delta  >0 $ and $x_0\in \Rd$ such that for every $\delta \in (0,\ove \delta)$   there holds 
\begin{equation}\label{0807170103}
O_{\delta,x_0}(\dod) \subset \Omega\,,
\end{equation}
where $O_{\delta,x_0}(x):=x_0+(1-\delta)(x-x_0)$.  (These assumptions are related to  Lemma \ref{le:0410191844} below.)   In the following, we denote by ${\rm tr}(u)$ the trace of $u$ on $\partial \Omega$ which is well defined for functions in $GSBD^p(\Omega)$, see Section  \ref{sec:prel}. In particular, it is well defined for functions $u$ considered in \eqref{eq: F functional} satisfying $u|_{\Omega\sm \ove E} \in W^{1,p}( \Omega \sm \ove E;\R^d)$ and   $u|_E=0$.  By $\nu_\Omega$ we denote the outer unit normal to $\partial \Omega$.

We now introduce a version of $F$ taking boundary data into account.   Given  $u_0 \in W^{1,p}(\R^d;\R^d)$, we    set   
\begin{equation}\label{eq: FDir functional}
F_{\mathrm{Dir}}(u,E) = 
\begin{cases}
 F(u,E) +   \int_{\dod \cap \partial E} \varphi(\nu_E) \, \d\mathcal{H}^{d-1}   & \text{if } \ \  \mathrm{tr}(u)  =\mathrm{tr}(u_0) \text{ on } \partial_D\Omega \setminus \ove E, \\
+\infty &\text{otherwise.}
\end{cases}
\end{equation}
Similar to  \eqref{eq: first-enve}, we define  the lower semicontinuous envelope  $\ove F_{\mathrm{Dir}}$  by  
\begin{equation}\label{eq: Fdir-rela}
\ove{F}_{\mathrm{Dir}}(u,E) = \Big\{ \liminf_{n\to\infty} F_{\mathrm{Dir}}(u_n,E_n)\colon  \,   u_n \to u \text{ in }L^0(\Omega;\R^d) \text{ and } \chi_{E_n} \to \chi_E \text{ in }L^1(\Omega) \Big\}\,.
\end{equation}
We have the following characterization. 
\begin{theorem}[Characterization of  the lower semicontinuous envelope $\ove{F}_{\mathrm{Dir}}$]\label{thm:relFDir}
Suppose that $f$ is convex and satisfies \eqref{eq: growth conditions}, that $\varphi$ is a norm on $\Rd$, and that  \eqref{0807170103} is satisfied. Then there holds
\begin{equation}\label{1807191836}
\ove{F}_{\mathrm{Dir}}(u,E) = 
\ove{F}(u,E) +  \int\limits_{\dod \cap \partial^* E}  \hspace{-0.5cm} \varphi (\nu_E)    \dh + \int\limits_{ \{ \mathrm{tr}(u)  \neq \mathrm{tr}(u_0)  \} \cap  (\dod \setminus \partial^* E) }   \hspace{-0.5cm} 2 \, \varphi(  \nu_\Omega  ) \dh\,.
\end{equation}
Moreover, if $\mathcal{L}^d(E)>0$, then for any $(u,E) \in L^0(\Omega;\Rd){\times}\M(\Omega)$ there exists a recovery sequence $(u_n,E_n)_n\subset L^0(\Omega;\Rd){\times}\M(\Omega)$ such that $\mathcal{L}^d(E_n) = \mathcal{L}^d(E)$ for all $n \in \N$.
\end{theorem}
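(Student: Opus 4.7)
The plan is to reduce \eqref{1807191836} to Proposition~\ref{prop:relF} by extending every admissible pair across the Dirichlet portion of the boundary, so that the extra $\partial^*E\cap\dod$ and trace-mismatch contributions reappear as interior terms on a larger domain. Fix a bounded Lipschitz open set $\Omega'\supset\Omega$ with $\dod\subset\Omega'$ and $\don\subset\partial\Omega'$, and let $F'$ be the functional \eqref{eq: F functional} written on $\Omega'$ with no Dirichlet addendum. To any competitor $(v,G)$ with $F_{\mathrm{Dir}}(v,G)<+\infty$ associate
\[
\tilde v := v\,\chi_\Omega + u_0\,\chi_{\Omega'\setminus\Omega},\qquad \tilde G := G.
\]
Since $u_0\in W^{1,p}(\R^d;\R^d)$ and $\mathrm{tr}(v)=\mathrm{tr}(u_0)$ on $\dod\setminus\ove G$, the extended pair is admissible for $F'$ on $\Omega'$, and a direct bookkeeping of the three contributions yields
\[
F'(\tilde v,\tilde G)\;=\;F_{\mathrm{Dir}}(v,G)\;+\;\int_{\Omega'\setminus\ove\Omega} f(e(u_0))\,\dx.
\]

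For the lower inequality, take any $(u_n,E_n)\to(u,E)$ with $F_{\mathrm{Dir}}(u_n,E_n)<+\infty$. Their extensions $(\tilde u_n,\tilde E_n)$ still converge to $(\tilde u,\tilde E)$ in the product topology of $L^0(\Omega';\R^d)\times L^1$, so Proposition~\ref{prop:relF} on $\Omega'$ gives $\liminf_n F'(\tilde u_n,\tilde E_n)\ge\ove F'(\tilde u,\tilde E)$. Because $\tilde E=E$ is empty outside $\Omega$, one has $\partial^*\tilde E\cap\Omega'=(\partial^*E\cap\Omega)\cup(\partial^*E\cap\dod)$; and $J_{\tilde u}\cap(\Omega'\setminus\tilde E)^1$ splits into $J_u\cap(\Omega\setminus E)^1$ plus the set $\{\mathrm{tr}(u)\ne\mathrm{tr}(u_0)\}\cap(\dod\setminus\partial^*E)$ with normal $\nu_\Omega$. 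Substituting these identifications into the formula for $\ove F'(\tilde u,\tilde E)$ coming from Proposition~\ref{prop:relF} and subtracting $\int_{\Omega'\setminus\ove\Omega}f(e(u_0))\,\dx$ produces exactly the right-hand side of \eqref{1807191836}.

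For the upper inequality, apply the recovery part of Proposition~\ref{prop:relF} on $\Omega'$ to $(\tilde u,\tilde E)$, obtaining $(\hat u_n,\hat E_n)\to(\tilde u,\tilde E)$ with $F'(\hat u_n,\hat E_n)\to\ove F'(\tilde u,\tilde E)$. The hard part is that the restriction of $(\hat u_n,\hat E_n)$ to $\Omega$ need not satisfy $\mathrm{tr}(\hat u_n)=\mathrm{tr}(u_0)$ on $\dod\setminus\ove{\hat E_n}$; this is exactly where the geometric assumption \eqref{0807170103} enters. Using the contractions $O_{\delta_n,x_0}$ we pull $(\hat u_n,\hat E_n)$ into $O_{\delta_n,x_0}(\Omega')\subset\Omega$ and fill the thin collar $\Omega\setminus O_{\delta_n,x_0}(\Omega')$ with $u_0$, obtaining a configuration that coincides with $u_0$ in a neighborhood of $\dod$ and therefore has finite $F_{\mathrm{Dir}}$. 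Lemma~\ref{le:0410191844} is tailored precisely for such an adjustment in the $GSBD^p$-setting: it guarantees that the bulk and surface energies of the modified pairs still converge to those of $(\tilde u,\tilde E)$, and a diagonal argument in $n$ and $\delta_n\to 0$ supplies the required recovery sequence.

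The volume-preservation statement is inherited from the analogous clause in Proposition~\ref{prop:relF}: when $\mathcal{L}^d(E)>0$ we may pick $(\hat u_n,\hat E_n)$ with $\mathcal{L}^d(\hat E_n)=\mathcal{L}^d(\tilde E)=\mathcal{L}^d(E)$, and the rescaling modifies $\mathcal{L}^d(\hat E_n)$ only by a quantity of order $1-(1-\delta_n)^d\to 0$, which can be compensated by a measurable perturbation of $\hat E_n$ of arbitrarily small perimeter inside $\Omega$. The main obstacle of the whole proof is precisely this gluing step near $\partial_D\Omega$: one must enforce the Dirichlet datum without spuriously creating extra interfacial energy, which is the non-trivial content of Lemma~\ref{le:0410191844} combined with \eqref{0807170103}.
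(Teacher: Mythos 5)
Your lower bound argument mirrors the paper's: extend $u$ by $u_0$ across $\dod$ into $\Omega'$, compare $F'$ with $F_{\mathrm{Dir}}$, and apply Proposition~\ref{prop:relF} on $\Omega'$. That part is fine.

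The upper bound has a genuine gap. You propose to take a recovery sequence $(\hat u_n,\hat E_n)$ for $\ove F'(\tilde u,\tilde E)$ on $\Omega'$, contract it by $O_{\delta_n,x_0}$, and fill the collar $\Omega\setminus O_{\delta_n,x_0}(\Omega')$ with $u_0$. But the contracted function $\hat u_n\circ(O_{\delta_n,x_0})^{-1}$ does \emph{not} agree with $u_0$ on $\partial\big(O_{\delta_n,x_0}(\Omega')\big)\cap\Omega$, so gluing creates a new $(d{-}1)$-dimensional jump across the collar interface whose $\hd$-measure you have not controlled; this can contribute spurious surface energy and destroy the $\limsup$ inequality. (Note also that \eqref{0807170103} only says $O_{\delta,x_0}(\dod)\subset\Omega$; it does not force $O_{\delta,x_0}(\Omega')\subset\Omega$ for a fixed $\Omega'$ sticking out across $\dod$, so even the set-theoretic inclusion you invoke needs more care.) Moreover, you attribute the repair to Lemma~\ref{le:0410191844}, but that lemma does something different: starting from a \emph{fixed} $(v,H)$ with $\ove F'_{\mathrm{Dir}}(v,H)<\infty$, it produces $SBV^p$ approximants $v_n$ that equal $u_0$ in an open neighborhood of $\dod$; it does not post-process an arbitrary recovery sequence. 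The key dilation trick inside that lemma is the ansatz $\widetilde v^{\delta}=\widetilde v\circ(O_{\delta,x_0})^{-1}+u_0-u_0\circ(O_{\delta,x_0})^{-1}$ — the additive correction is exactly what keeps the boundary datum, and it is missing from your construction. The paper's actual route is: first invoke Lemma~\ref{le:0410191844} to reduce to the case where $u\in L^p\cap SBV^p(\Omega;\Rd)$ and $u=u_0$ in a neighborhood $U$ of $\dod$; then run Proposition~\ref{th: Braides-upper bound} on $\Omega$, and fix the boundary data with a smooth cutoff $\psi$ and truncations, setting $v_n:=(\psi u_n^M+(1-\psi)u_0)\chi_{\Omega\setminus E_n}$ — which produces no extra interface because both pieces agree near $\dod$. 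That ordering (regularize the limit first, then build the recovery sequence) is what your proposal is missing.

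Finally, your volume-fix ("perturb $\hat E_n$ by a set of arbitrarily small perimeter") is asserted but not argued; in the paper the volume constraint propagates through Proposition~\ref{th: Braides-upper bound}, and a separate argument would be needed to justify a perturbation that preserves the energy estimate.
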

The proof of Proposition~\ref{prop:relF}  and Theorem~\ref{thm:relFDir} will be given in Subsection \ref{sec: sub-voids}. \CCC There, we provide also two slight generalizations, see Proposition~\ref{prop:relFinfty}  and Theorem~\ref{thm:relFDirinfty}, namely a relaxation with respect to a weaker convergence in a general space  $GSBD^p_\infty$ (cf.\ \eqref{eq: compact extension}), where functions are allowed to attain  the value infinity.  \EEE
We close this subsection with an existence result for $\ove{F}_{\mathrm{Dir}}$, under a volume constraint for the voids.

\begin{theorem}[Existence of minimizers for $\ove{F}_{\mathrm{Dir}}$]\label{th: relF-extended}
Suppose that $f$ is convex and satisfies \eqref{eq: growth conditions}, and that $\varphi$ is a norm on $\Rd$. Let $m>0$. Then the minimization problem
$$
\inf \big\{ \ove{F}_{\mathrm{Dir}}(u,E)\colon (u,E) \in  L^0(\Omega;\Rd){\times}\M(\Omega), \ \mathcal{L}^d(E) = m  \big\}  
$$
admits solutions. 
\end{theorem}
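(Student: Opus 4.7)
The approach is the direct method of the calculus of variations in the enlarged space $GSBD^p_\infty$, relying on the $GSBD^p_\infty$-compactness of Theorem~\ref{thm:compF} and the extended relaxation of Theorem~\ref{thm:relFDirinfty}, followed by a reduction to the original space that accommodates the volume constraint.

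Let $(u_n, E_n)_n \subset L^0(\Omega;\R^d) \times \M(\Omega)$ be a minimizing sequence with $\mathcal{L}^d(E_n) = m$; we may assume $\sup_n \ove{F}_{\mathrm{Dir}}(u_n, E_n) < \infty$. Since $\ove{F}_{\mathrm{Dir}}$ is the lower semicontinuous envelope of $F_{\mathrm{Dir}}$ and admits volume-preserving recovery sequences (Theorem~\ref{thm:relFDir}), a standard diagonal argument lets us replace each term by an admissible pair with $u_n = u_n \chi_{E_n^0} \in GSBD^p(\Omega)$ for which \eqref{1807191836} together with \eqref{eq: growth conditions} yields
\[
\sup_n \Bigl\{ \|e(u_n)\|^p_{L^p(\Omega)} + \mathcal{H}^{d-1}(J_{u_n}) + \mathcal{H}^{d-1}(\partial^* E_n) \Bigr\} < \infty,
\]
together with $\mathrm{tr}(u_n) = \mathrm{tr}(u_0)$ on $\partial_D \Omega \setminus \overline{E_n}$ up to a controlled exceptional $\mathcal{H}^{d-1}$-set. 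Standard $BV$-compactness for sets of finite perimeter gives, up to a subsequence, $\chi_{E_n} \to \chi_E$ in $L^1(\Omega)$ for some $E$ of finite perimeter with $\mathcal{L}^d(E) = m$. For the displacements I would invoke Theorem~\ref{thm:compF}, extracting a further subsequence that converges in the sense dictated by $\sps$-convergence of jump sets to some $u \in GSBD^p_\infty(\Omega)$ whose infinity set $A := \{u = +\infty\}$ is of finite perimeter. The lower-semicontinuity part of Theorem~\ref{thm:relFDirinfty} then gives
\[
\ove{F}_{\mathrm{Dir},\infty}(u, E) \;\leq\; \liminf_{n\to\infty} \ove{F}_{\mathrm{Dir}}(u_n, E_n) \;=\; \inf.
\]

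The main obstacle is to produce from $(u,E)$ an actual minimizer in $L^0(\Omega;\R^d) \times \M(\Omega)$ with $\mathcal{L}^d(E') = m$. A natural candidate is $\tilde u := u\,\chi_{\Omega \setminus (E \cup A)}$ and $\tilde E := E \cup A$; one has $\tilde u \in GSBD^p(\Omega)$, and a careful bookkeeping of the three surface contributions in \eqref{1807191836}---comparing the essential boundary of $\tilde E$ with $\partial^* E \cup \partial^* A$ and using that, in the $GSBD^p_\infty$-analogue, the surface term along $\partial^* A$ is counted with the doubled density $2\varphi(\nu_A)$---shows $\ove{F}_{\mathrm{Dir}}(\tilde u,\tilde E) \leq \ove{F}_{\mathrm{Dir},\infty}(u,E)$. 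If $\mathcal{L}^d(A) = 0$ we are done with $(u',E') := (\tilde u, \tilde E)$. Otherwise $\mathcal{L}^d(\tilde E) = m + \mathcal{L}^d(A) > m$, and I would restore the volume constraint by trimming a small portion of $\tilde E$ in the interior of $\Omega$, far from $\partial_D\Omega$, and then invoke the volume-preserving recovery sequences from Theorem~\ref{thm:relFDir} to diagonalize and obtain $(u',E')$ with $\mathcal{L}^d(E') = m$ and $\ove{F}_{\mathrm{Dir}}(u', E') \leq \ove{F}_{\mathrm{Dir}}(\tilde u, \tilde E) \leq \inf$. The most delicate point is to ensure that this trimming can be executed with vanishing energy cost; this should follow from a density-and-diagonalization scheme applied to admissible volume perturbations, using once again the second statement of Theorem~\ref{thm:relFDir}.
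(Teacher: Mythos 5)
Your first part---extracting a $GSBD^p_\infty$-compactness subsequence via Theorem~\ref{thm:compF} and using the lower semicontinuous envelope $\ove{F}_{\mathrm{Dir},\infty}$ from Theorem~\ref{thm:relFDirinfty}---is sound and matches the paper. There is, however, a genuine gap in the second part. You propose to absorb the infinity set $A:=A^\infty_u$ into the void by setting $\tilde{E}:=E\cup A$, which violates the volume constraint whenever $\mathcal{L}^d(A)>0$. The ``trimming with vanishing energy cost'' you invoke to restore $\mathcal{L}^d(E')=m$ cannot be executed: removing a set of measure $\mathcal{L}^d(A)$ from the interior of $\tilde{E}$ unavoidably creates new essential boundary of $\mathcal{H}^{d-1}$-measure at least of order $(\mathcal{L}^d(A))^{(d-1)/d}$ by the isoperimetric inequality, a fixed positive cost that does not vanish and cannot be diagonalized away. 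The volume-preserving recovery sequences of Theorem~\ref{thm:relFDir} produce approximations converging to a \emph{prescribed} pair with fixed volume; they do not allow you to repair a volume mismatch in a limit you have already extracted.

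The paper sidesteps the issue entirely by \emph{not} absorbing $A^\infty_u$ into $E$. It keeps $E$ unchanged (so $\mathcal{L}^d(E)=m$ is preserved automatically) and replaces the value $\infty$ on $A^\infty_u$ by an arbitrary infinitesimal rigid motion $a$ (affine with $e(a)=0$): setting $v_a := u\chi_{\Omega\setminus A^\infty_u}+a\chi_{A^\infty_u}$, one has $v_a\in GSBD^p(\Omega)$ with $e(v_a)=e(u)$ (since $e(u)=0$ on $A^\infty_u$ by \eqref{eq:same}) and $J_{v_a}\tilde{\subset} J_u$, whence $\ove{F}_{\mathrm{Dir},\infty}(u,E)\ge \ove{F}_{\mathrm{Dir}}(v_a,E)$. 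The pair $(v_a,E)$ is then a minimizer of $\ove{F}_{\mathrm{Dir}}$ satisfying the volume constraint verbatim. Note the qualitative difference: instead of converting the region where $u$ blows up into void, the paper fills it with rigid material, which carries zero elastic energy and does not touch $E$, giving a whole family of minimizers parametrized by $a$ when $A^\infty_u\neq\emptyset$.
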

For the proof, we refer to  Subsection \ref{sec: Fcomp}.  It relies on the lower semicontinuity of $\ove{F}_{\mathrm{Dir}}$ and a compactness result in the general space $GSBD^p_\infty$ (cf.\ \eqref{eq: compact extension}),  see  Theorem~\ref{thm:compF}.

\subsection{Energies on domains with a subgraph constraint: epitaxially strained films}\label{sec: results2}

We now  consider the problem of  displacement  fields in a material domain which is the subgraph of an unknown nonnegative function $h$.  Assuming that $h$ is defined on a Lipschitz domain $\omega \subset \R^{d-1}$,  displacement fields $u$ will be defined on the
subgraph 
$$\Omega_h^+ := \{ x \in \omega \times \R \colon 0  <  x_d   <   h(x')\},$$
where here and in the following we use the notation $x = (x',x_d)$ for $x \in \R^d$. To model Dirichlet boundary data at the flat surface $\omega \times \lbrace 0 \rbrace$, we will suppose that functions are extended to the set $\Omega_h := \{ x \in  \omega \times \R  \colon -1 <  x_d   <  h(x')\}$ and satisfy $u = u_0$ on $\omega{\times}(-1,0)$ for a given function $u_0 \in W^{1,p}(\omega{\times}(-1,0);\R^d)$, $p>1$.  In the application to epitaxially strained films, $u_0$ represents the substrate and   $h$ represents the profile of  the  free surface of the film.

For convenience, we introduce the reference domain $\Omega:=\omega{\times} (-1, M+1)$ for $M>0$. We  define the energy functional $G\colon    L^0(\Omega;\R^d) \times L^1(\omega;[0,M]) \to \R \cup \lbrace + \infty \rbrace$ by  
\begin{equation}\label{eq: Gfunctional}
G(u,h) =  
\int_{\Omega_h^+} f(e(u(x))) \, \dx + \int_{\omega} \sqrt{1 + |\nabla h(x')|^2} \, \dx' 
\end{equation}
if $h \in C^1(\omega;[0,M])$, $u|_{\Omega_h} \in W^{1,p}(\Omega_h;\R^d)$,  $u=0$  in $\Omega\sm \Omega_h$, and $u=u_0$  in $\omega{\times}(-1,0)$, and $G(u,h) = +\infty$ otherwise. Here,  $f\colon  \mathbb{M}^{d\times d}\to  [0,\infty) $ denotes a convex function satisfying \eqref{eq: growth conditions}, and as before we set $e(u) := \frac{1}{2}\big((\nabla u)^T + \nabla u\big)$.
 Notice  that, in contrast to \cite{BonCha02}, we suppose  that the functions  $h$ are  equibounded by a value $M$: this is for technical reasons only and is indeed justified from a mechanical point of view since other effects 
come into play for  very high crystal profiles.  

We     study the relaxation of $G$ with respect to the
$L^0(\Omega;\Rd){\times}L^1(\omega; [0,M])$ topology, i.e.,   its  
lower semicontinuous envelope $\ove G\colon  L^0(\Omega;\R^d) \times L^1(\omega;[0, M ]) \to \R \cup \lbrace+ \infty \rbrace$, 
defined as 
\begin{equation*}
\ove{G}(u,h) = \inf\big\{ \liminf\nolimits_{n\to\infty} G(u_n,h_n)\colon  \,  u_n \to u \text { in } L^0(\Omega;\R^d), \ \  h_n \to h \text { in } L^1(\omega) \big\} \,.
\end{equation*}
We characterize $\ove G$ as follows, further assuming that the Lipschitz set  $\omega \subset \R^{d-1}$ is uniformly star-shaped with respect to the origin, i.e.,
\begin{align}\label{eq: star-shaped}
tx \subset \omega \ \ \ \text{for all} \ \ t \in (0,1), \, x \in \partial \omega.
\end{align}
\begin{theorem}[Characterization of  the lower semicontinuous envelope $\ove{G}$]\label{thm:relG}
Suppose that $f$ is convex satisfying \eqref{eq: growth conditions} and that \eqref{eq: star-shaped} holds. Then we have 
\begin{equation*}
\ove{G}(u,h) = 
\begin{dcases}
\int_{\Omega_h^+}  f(e(u))\,& \hspace{-1.0em}\dx  +\mathcal{H}^{d-1}(\partial^* \Omega_h   \cap \Omega  ) + 2 \mathcal{H}^{d-1}(J_u' \cap \Omega_h^1)  \\
& \hspace{-0.4cm} \text{if } u=  u \chi_{ \Omega_h } \in GSBD^p(\Omega), \, u=u_0 \text{ in }\omega{\times}(-1,0),\, h \in BV (\omega;  [0,M]  )\,,  \\
+\infty  &\hspace{-0.4cm} \hspace{-0.4em}\text{ otherwise,}
\end{dcases}
\end{equation*}
where 
\begin{align}\label{eq: Ju'}
J_u' := \lbrace (x',x_d + t)\colon  \, x \in J_u, \,  t \ge 0 \rbrace.
\end{align}
\end{theorem}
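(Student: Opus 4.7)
The proof splits into a liminf inequality bounding $\ove G(u,h)$ from below by the right-hand side of the theorem, and the construction of a recovery sequence realizing equality. I would exploit the structural link with Proposition~\ref{prop:relF}: with $E_n = \Omega \setminus \Omega_{h_n}$, the functional $G(u_n,h_n)$ essentially coincides with $F(u_n, E_n)$ for the Euclidean choice $\varphi = |\cdot|$, but now under the additional constraint that $E_n$ is the complement of a subgraph. This constraint is precisely what promotes the jump term $J_u \cap (\Omega \setminus E)^1$ of Proposition~\ref{prop:relF} to its vertical completion $J_u' \cap \Omega_h^1$, still counted with multiplicity two.

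\textbf{Lower bound.} Take $u_n \to u$ in measure and $h_n \to h$ in $L^1(\omega)$ with $G(u_n, h_n)$ bounded. From $\sup_n \mathcal{H}^{d-1}(\Gamma_{h_n}) < + \infty$ and the area formula deduce $h \in BV(\omega;[0,M])$. Extending each $u_n$ by zero outside $\Omega_{h_n}$ gives elements of $GSBD^p_\infty(\Omega)$ whose jump sets contain $\partial \Omega_{h_n}^+ \cap \Omega$; the elastic term then passes to the liminf by the usual $GSBD^p$ lower semicontinuity on $\Omega_h^+$. For the surface term, I would invoke the $\sps$-convergence of Section~\ref{sec:sigmap} applied to these jump sets, and show that the resulting rectifiable limit contains both $\partial^* \Omega_h \cap \Omega$ (giving the graph term) and $J_u' \cap \Omega_h^1$ counted twice. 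The vertical closure arises because the subgraph constraint forces any interior jump of $u$ to originate, along the sequence, from a thin channel in $\Omega \setminus \Omega_{h_n}^+$ whose two walls run vertically up to $\Gamma_{h_n}$ and collapse in the limit onto the vertical ray above the jump point.

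\textbf{Recovery sequence.} For the upper bound, fix $(u,h)$ with finite right-hand-side energy and construct $(u_n, h_n)$ with $h_n \in C^1(\omega;[0,M])$ and $u_n \in W^{1,p}(\Omega_{h_n};\R^d)$ attaining the substrate datum. I would proceed in three steps. First, use the uniform star-shapedness \eqref{eq: star-shaped} to rescale $h$ by $x' \mapsto (1-1/n)x'$ and mollify, yielding a smooth $\tilde h_n \to h$ in $L^1(\omega)$ with $\mathcal{H}^{d-1}(\Gamma_{\tilde h_n}) \to \mathcal{H}^{d-1}(\partial^* \Omega_h \cap \Omega)$. Second, apply a density result for $GSBD^p$-functions in the spirit of \cite{CC17} and Lemma~\ref{le:0410191844}, adapted to the subgraph geometry, to reduce to the case where $u$ has polyhedral jump set $J_u \subset \Omega_h^+$ whose vertical completion is arbitrarily close to $J_u' \cap \Omega_h^1$. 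Third, for every polyhedral piece $P$ of this approximate jump set, dig a thin vertical channel in $\tilde h_n$ above $P$ reaching up to $\Gamma_{\tilde h_n}$: this replaces each interior discontinuity by two opposing boundary walls of $\Omega_{h_n}^+$ whose combined area accounts for the factor two in the energy.

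\textbf{Main obstacle.} The delicate point is the excavation of the channels in step three. One must simultaneously (i) keep $h_n$ a $C^1$ function with values in $[0,M]$ and with $h_n \to h$ in $L^1(\omega)$, (ii) ensure that the total lateral area of the dug channels is $2\mathcal{H}^{d-1}(J_u' \cap \Omega_h^1) + o(1)$ without double-counting where vertical cuts meet $\Gamma_h$, and (iii) extend $u$ across the excavated regions without generating spurious elastic energy. In the vectorial linearly-elastic setting no truncation shortcut is available, so the extension across the channels has to rely on trace-type results in $GSBD^p$, and the slabs must be tuned to the normals of the polyhedral pieces. This is precisely the refinement of the \cite{ChaSol07} construction carried out on pp.~\pageref{page:upperineqbeg}--\pageref{page:upperineqend}, designed to cover pathological configurations absent in the scalar $GSBV^p$ setting.
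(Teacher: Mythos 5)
Your high-level architecture is right: reduce to the pair functional of Proposition~\ref{prop:relF} with $E_n=\Omega\setminus\Omega_{h_n}$ for the liminf, use $\sps$-convergence to produce the vertical set $\Sigma$, and construct a recovery sequence that turns interior cuts into channels dug into a smooth subgraph. But there are two genuine gaps, one in each direction.

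For the lower bound, you never address whether the limit $u$ lives in $GSBD^p(\Omega)$ rather than merely in $GSBD^p_\infty(\Omega)$, i.e., whether the set $G_\infty$ produced by Theorem~\ref{thm:compSps} is empty. This is not automatic and cannot be inferred from the jump-set reasoning you outline: without it, the convergence of $u_n$ to $u$ in measure on $\Omega$ (not just on $\Omega\setminus G_\infty$) fails, and the $\sps$-limit of $\Gamma_n$ gives you no control on $J_u$ inside $(G_\infty)^1$, so the surface term cannot be identified. The paper devotes an entire step to proving $G_\infty=\emptyset$, and the argument is non-trivial: it compares the $\hd$-measure of $\Gamma$ and the projections of $\partial^* G_\infty$, $\partial^*\Omega_h\cap(G_\infty)^0$, and $\partial^*\Omega_h\setminus(G_\infty)^0$ onto $\omega\times\{0\}$, exploiting that the $\Gamma_n$ are graphs so their slices in direction $e_d$ are singletons and $\int_{\Gamma_n}|\nu_{\Gamma_n}\cdot e_d|\,\d\hd = \hd(\omega)$, together with the anisotropic lower semicontinuity of Corollary~\ref{cor: GSDB-lsc}. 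Your heuristic about ``thin channels collapsing onto vertical rays'' is the intuition for why $\Sigma$ is vertical, but it does not yield $G_\infty = \emptyset$, nor does it replace the translation argument $v_n'(x)=v_n(x',x_d-t)\chi_{\Omega_{h_n}}(x)$ used to prove verticality rigorously.

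For the upper bound, your three-step scheme misses the main structural difficulty. After Lemma~\ref{lemma: graph approx} (which already ``digs the channels,'' so your step three is folded into step one of the paper), the smooth graph $g$ misses a residual portion of $J_u$ of measure $\varepsilon$, and the density argument (Lemma~\ref{lemma: vito approx}) can only be run on cubes where $J_u$ is small. The exceptional set $V_k$ where this fails must be \emph{vertical} — otherwise the set $(\Omega_g)^k\setminus V_k$ is not the subgraph of a function, and the whole construction collapses. This is exactly why the paper replaces the simple ``cubes hit by $J_u$'' criterion of \cite{ChaSol07} with the bad/good-node dichotomy of \eqref{0306191212-1}--\eqref{0306191212-2} and proves the monotonicity property of Lemma~\ref{lemma: bad/good}(i). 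Merely mollifying and ``tuning slabs to the normals of polyhedral pieces'' does not ensure the complement of the exceptional set remains a subgraph; you need the explicit verticality of $V_k$ and the lifting/dilation Lemma~\ref{lemma: new-lemma} to recover a $C^1$ graph $h_k$ sitting below $(\Omega_g)^k\setminus V_k$.
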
 

The assumption \eqref{eq: star-shaped} on $\omega$ is more general than the one considered in \cite{ChaSol07}, where $\omega$ is 
 assumed to be  a torus. We point out, however, that both assumptions  are only of technical nature and could be dropped  at  the expense of more elaborated estimates, see also \cite{ChaSol07}.  The proof of this result will be given in Subsection~\ref{subsec:RelG}. 

We note that the functional $G$ could be considered with an additional volume constraint on the film, i.e., $\mathcal{L}^d(\Omega_h^+) = \int_\omega h(x') \, \d x'$ is fixed. 
An easy  adaptation  of the proof shows that the relaxed functional $\ove G$ is not changed under this constraint,  
 see Remark~\ref{rem: volume constraint} for details.   

 In Subsection~\ref{subsec:compactness},  we further prove the following 
general compactness result, from which we deduce the existence of equilibrium  configurations  for epitaxially strained films. 

\begin{theorem}[Compactness 
for $\ove G$]\label{thm:compG} 
Suppose that $f$ is convex and satisfies   \eqref{eq: growth conditions}.
For any $(u_n,h_n)_n$ with $\sup_{n} G(u_n,h_n) <+\infty$, there exist a subsequence (not relabeled)  and functions $u \in GSBD^p(\Omega)$, $h \in BV(\omega;[0,M])$  with $u = u\chi_{\Omega_h}$ and $u=u_0$ on $\omega \times (-1,0)$  such that 
\begin{equation*}
(u_n, h_n) \to (u, h) \quad \text{ in } \quad
 L^0(\Omega;\R^d){\times} { L^1(\omega) \,. }
 \end{equation*}
 \end{theorem}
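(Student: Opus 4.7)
The plan is to combine standard $BV$ compactness for the profiles $h_n$ with the novel $GSBD^p_\infty$ compactness (Theorem~\ref{thm:compF}) for the displacements, and then to remove the possible ``infinity'' component of the limit by exploiting the fixed substrate datum $u_0$. The surface part of $G$ gives $|Dh_n|(\omega)\le \int_\omega \sqrt{1+|\nabla h_n|^2}\,\mathrm{d}x'\le C$, and since $h_n\in[0,M]$ is equibounded we extract, up to a subsequence, $h_n\to h$ in $L^1(\omega)$ for some $h\in BV(\omega;[0,M])$ and thus $\chi_{\Omega_{h_n}}\to \chi_{\Omega_h}$ in $L^1(\Omega)$. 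The coercivity in \eqref{eq: growth conditions} gives $\|e(u_n)\|_{L^p(\Omega_{h_n};\mathbb{M}^{d\times d})}\le C$; extending each $u_n$ by $0$ outside $\Omega_{h_n}$ we view $u_n\in GSBD^p(\Omega)$ with $J_{u_n}\subset \partial\Omega_{h_n}^+\cap\Omega$ and $\mathcal{H}^{d-1}(J_{u_n}\cap\Omega)$ uniformly controlled by the surface term of $G$.

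Theorem~\ref{thm:compF} then produces, along a further subsequence, a set $A_\infty\subset\Omega$ of finite perimeter and a function $u\in GSBD^p(\Omega\setminus A_\infty)$ such that $u_n\to u$ in measure on $\Omega\setminus A_\infty$ and $|u_n|\to +\infty$ in measure on $A_\infty$. Because $u_n\equiv u_0$ on $\omega\times(-1,0)$ and $u_n\equiv 0$ on $\Omega\setminus \Omega_{h_n}$ with $\chi_{\Omega_{h_n}}\to \chi_{\Omega_h}$ in $L^1(\Omega)$, the set $A_\infty$ can meet neither the substrate nor $\Omega\setminus \Omega_h$ in positive measure; hence $A_\infty\subset \Omega_h^+$ up to $\mathcal{L}^d$-null sets, $u=u_0$ on $\omega\times(-1,0)$, and $u=0$ a.e.\ on $\Omega\setminus\Omega_h$. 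Once $\mathcal{L}^d(A_\infty)=0$ is established, the limit $u$ belongs to $GSBD^p(\Omega)$, satisfies $u=u\,\chi_{\Omega_h}$, and $u_n\to u$ in measure on all of $\Omega$, which together with $h_n\to h$ in $L^1(\omega)$ is the statement.

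The hard part is upgrading $A_\infty\subset \Omega_h^+$ to $\mathcal{L}^d(A_\infty)=0$. As pointed out in overview~(d), the $GSBV^p$ strategy based on a 1D Poincar\'e estimate along vertical slices does not transfer directly, since slicing in direction $e_d$ controls only the component $(u_n)_d$ via the identity $\partial_{x_d}(u_n)_d=(e(u_n))_{dd}$. The plan is first to use this identity together with the fixed substrate trace $u_n(x',\cdot)=u_0(x',\cdot)$ on $(-1,0)$ and a Fubini--Chebyshev truncation on $\int_{-1}^{h_n(x')}|e(u_n)(x',\cdot)|^p\,\mathrm{d}x_d$ to obtain a uniform sup bound on $(u_n)_d$ along a family of vertical slices whose projection has arbitrarily large $(d{-}1)$-measure, ruling out that $(u_n)_d$ blows up on a positive-measure subset of $A_\infty$. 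For the transverse components the plan is to exploit the structural information on limits of $J_{u_n}$ encoded in the $\sps$-convergence of Section~\ref{sec:sigmap}: since each $J_{u_n}$ lies in the smooth graph of $h_n$, any portion of $\partial^* A_\infty$ strictly inside $\Omega_h^+$ must correspond to a vertical cut in the $\sps$-limit; but, since $u_n$ is Sobolev on the connected set $\Omega_{h_n}$ attached through $\omega\times\{0\}$ to the fixed substrate and the trace of $u_n$ on $\omega\times\{0\}$ equals that of $u_0$, such a vertical cut cannot isolate a ``floating piece'' of $A_\infty$ from that substrate. Combining these two ingredients gives $\mathcal{L}^d(A_\infty)=0$ and finishes the proof.
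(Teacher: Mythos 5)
The setup (BV compactness for $h_n$, $GSBD^p_\infty$-compactness for $u_n$, reduction to showing $A_\infty:=\{u=\infty\}$ is $\mathcal L^d$-null, and the observation that $A_\infty\subset\Omega_h^+$ up to null sets) matches the paper, which also passes through $\sps$-convergence. The genuine gap is in the final step ruling out $A_\infty$.

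Your first ingredient --- bounding $(u_n)_d$ by FTC along vertical slices using $\partial_{x_d}(u_n)_d=e(u_n)_{dd}$, the fixed datum on the substrate, and a Chebyshev truncation --- is correct but is not what decides the matter, and in fact the paper never uses it. Your second ingredient is where the argument breaks. The assertion that, because $\Omega_{h_n}$ is Sobolev-connected to the fixed substrate, ``a vertical cut cannot isolate a floating piece of $A_\infty$'' is exactly the kind of qualitative connectedness heuristic that fails in $GSBD$: a bound on $\|e(u_n)\|_{L^p}$ on a connected subgraph does not control $u_n$ unless one also controls the Korn constant of $\Omega_{h_n}$, which can degenerate along the sequence, and slicing in $e_d$ gives no information on the transverse components. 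The paper sidesteps this entirely. It establishes $G_\infty=\emptyset$ by a sharp projected-area count (Step~3 of the proof of Theorem~\ref{thm:relG}): it shows $\Gamma \supset (\partial^* G_\infty\cap\Omega) \cup \big(\partial^*\Omega_h\cap\Omega\cap(G_\infty)^0\big)$, projects both pieces onto $\omega\times\{0\}$ (using that $\partial^* G_\infty$ must project with multiplicity $\ge 2$ onto $\omega_\Psi$ while the generalized graph $\partial^*\Omega_h$ projects with multiplicity $1$), and compares this to the anisotropic lower-semicontinuity bound $\int_\Gamma|\nu_\Gamma\cdot e_d|\,\d\hd \le \liminf_n\int_{\Gamma_n}|\nu_{\Gamma_n}\cdot e_d|\,\d\hd = \hd(\omega)$ from Corollary~\ref{cor: GSDB-lsc}; this forces $\hd(\omega_\Psi)=0$, hence $G_\infty=\emptyset$. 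Finally, $\{u=\infty\}\subset G_\infty$ follows from Definition~\ref{def:spsconv}(i) applied to $v_n=\psi u_n$. You should replace your connectedness heuristic with this quantitative surface-measure argument (or an equivalent one); without it, your proof does not close.
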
  
In particular, general properties of relaxation (see e.g.\ \cite[Theorem~3.8]{DMLibro}) imply that, given  $0<m<M\mathcal{H}^{d-1}(\omega)$,  the minimization problem 
\begin{align}\label{eq: minimization problem2}
\inf \Big\{ \ove{G}(u,h)\colon (u,E) \in  L^0(\Omega;\R^d) \times L^1(\omega), \ \mathcal{L}^d(\Omega_h^+) =  m  \Big\}  
\end{align} 
 admits solutions.  Moreover, fixed $m$ and the volume constraint $\mathcal{L}^d(\Omega_h^+) =  m$ for $G$ and $\ove G$, any cluster point for minimizing sequences of $G$  is a minimum point for $\ove G$.

 Our final  issue is a   phase-field approximation of $\ove G$. The idea is to represent  any subgraph $\Omega_h$  by a  (regular) function   $v$ which will be an approximation of the characteristic function $\chi_{\Omega_h}$ at a scale of order $\eps$. Let $W \colon [0,1] \to  [0,\infty) $ be continuous, with $W(1)=W(0)=0$, $W>0$ in $(0,1)$, and let $(\eta_\eps)_\varepsilon$ with $\eta_\varepsilon>0$ and $\eta_\varepsilon \varepsilon^{1-p} \to 0$ as $\varepsilon \to 0$.  Let $c_W:=(\int_0^1 \sqrt{2 W(s)} \,\d s)^{-1}$.  In the reference domain $\Omega=\omega{\times}(-1,M+1)$, we introduce the functionals 
\begin{equation}\label{eq: phase-approx}
G_\varepsilon(u,v):=\int_\Omega \bigg( (v^2+\eta_\varepsilon) f(e(u)) + c_W\Big(\frac{W(v)}{\varepsilon} + \frac{\varepsilon}{2} |\nabla v|^2 \Big) \bigg)\dx\,,
\end{equation}
 if 
\[ {u \in W^{1,p}(\Omega; \Rd)\,,\quad u=u_0 \text{ in }\omega{\times}  (-1,0)   \,,}\]
\[
v \in H^1(\Omega; [0,1])\,,\quad v=1\text{ in }\omega{\times}(-1,0)\,, \, v=0\text{ in }\omega{\times}(M,M+1) \quad \partial_d v \leq 0 \,\, \Ld\text{-a.e.\ in }\Omega\,,
\]
and $G_\varepsilon(u,v):=+\infty$ otherwise.
The following phase-field approximation is the analog of \cite[Theorem~5.1]{ChaSol07}  in the frame of linear elasticity.   We remark that here, differently from \cite{ChaSol07}, we assume only $u_0 \in W^{1,p}( \omega \times (-1,0);  \Rd)$, and not necessarily  $u_0 \in L^\infty(\omega \times (-1,0);\R^d)$.  For the proof we refer to Subsection~\ref{sec:phasefield}.   
\begin{theorem}\label{thm:phasefieldG}
 Let $u_0 \in W^{1,p}(\omega \times (-1,0);\R^d)$.  For any  decreasing sequence  $(\varepsilon_n)_n$  of positive numbers converging to zero, the following  hold: 
\begin{itemize}
\item[(i)] 
For any $(u_n, v_n)_n$ with  $\sup_n G_{\varepsilon_n}(u_n, v_n)< +\infty$,  there exist $u \in L^0(\Omega;\Rd)$ and $h\in BV(\omega;  [0,M]  )$ such that, up to a subsequence,  $u_n \to u$ a.e.\ in $\Omega$,  $v_n \to \chi_{\Omega_h}$ in $L^1(\Omega)$, and
\begin{equation}\label{1805192018}
\ove G(u,h) \leq \liminf_{n\to +\infty}G_{\varepsilon_n}(u_n, v_n)\,.
\end{equation}
\item[(ii)]For any $(u,h)$ with $\ove G(u, h)  < + \infty $, there exists $(u_n, v_n)_n$ such that $u_n \to u$ a.e.\ in $\Omega$, $v_n \to \chi_{\Omega_h}$ in $L^1(\Omega)$, and
$$
\limsup_{n\to \infty} G_{\varepsilon_n}(u_n, v_n)= \ove G(u,h)\,.
$$
\end{itemize}
\end{theorem}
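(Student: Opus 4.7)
My plan is to follow the strategy of \cite[Theorem~5.1]{ChaSol07}, replacing the $GSBV^p$-compactness ingredients there by the $GSBD^p_\infty$-compactness developed in Theorem~\ref{thm:compF}, and modifying the upper-bound construction so as to dispense with the $L^\infty$-hypothesis on $u_0$. The key structural observation is that the constraint $\partial_d v\leq 0$ forces the limit of $v_n$ to be the characteristic function of a subgraph, while the Modica--Mortola term controls the total variation of $\phi(v_n):=c_W\int_0^{v_n}\sqrt{2W(t)}\,\mathrm{d}t$, which converges to $\chi_{\Omega_h}$ in $L^1(\Omega)$.

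\textbf{Part (i): compactness and liminf.} I would first apply the classical Modica--Mortola inequality $c_W(W(v)/\eps+\eps|\nabla v|^2/2)\geq |\nabla \phi(v)|$ to extract, along a subsequence, $v_n\to \chi_{\Omega_h}$ in $L^1(\Omega)$ with $h\in BV(\omega;[0,M])$; the monotonicity $\partial_d v_n\leq 0$ passes to the limit and yields the subgraph structure. For the displacements, fix $\delta\in(0,1)$, let $E_n^\delta:=\{v_n<\delta\}$, and observe via the coarea formula that $\mathcal{H}^{d-1}(\partial^* E_n^\delta)$ is uniformly bounded in $n$. The truncated function $\tilde u_n:=u_n\chi_{\Omega\setminus E_n^\delta}$ belongs to $SBD^p(\Omega)\subset GSBD^p(\Omega)$: on $\Omega\setminus E_n^\delta$ the weight $v_n^2+\eta_\eps\geq \delta^2$, so $e(\tilde u_n)$ is $L^p$-bounded, and $J_{\tilde u_n}\subset \partial^* E_n^\delta$. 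Applying Theorem~\ref{thm:compF} and letting $\delta\downarrow 0$ along a diagonal subsequence yields a limit $u\in GSBD^p_\infty(\Omega)$; an \emph{a posteriori} argument analogous to the one used in the proof of Theorem~\ref{thm:compG} shows that the infinite-value set is empty, so $u\in GSBD^p(\Omega)$ with $u=u\chi_{\Omega_h}$. The liminf \eqref{1805192018} then follows by combining the $GSBD^p$-lower-semicontinuity of $\int f(e(\cdot))$ on the ``good'' region $\{v_n\to 1\}$ with a localization of the surface part: near $\partial^*\Omega_h\cap \Omega$ the phase field performs a single $1\to 0$ transition, producing $\mathcal{H}^{d-1}(\partial^*\Omega_h\cap \Omega)$ by the standard Modica--Mortola argument, while near the vertical shadow $J_u'\cap \Omega_h^1$ a horizontal one-dimensional slicing forces a double $1\to 0\to 1$ transition, producing the factor $2$.

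\textbf{Part (ii): recovery sequence.} Given $(u,h)$ with $\ove G(u,h)<\infty$, I would first invoke a density argument based on the upper-bound construction of Theorem~\ref{thm:relG} and on Lemma~\ref{le:0410191844} to reduce to the case where $h\in C^1(\omega;[0,M])$ and $u$ is smooth on $\Omega_h^+$ away from finitely many Lipschitz vertical cuts $\Sigma_i$ extending up to the graph of $h$, with $u=u_0$ on $\omega\times(-1,0)$. The phase field $v_n$ is then defined as the pointwise minimum of (a) the rescaled optimal Modica--Mortola profile $\psi_{\eps_n}$ applied to the signed distance from the graph of $h$, and (b) the same profile applied to the horizontal distance to the projection onto $\omega$ of each cut $\Sigma_i$ in a tube of width $\eps_n^\beta$, subsequently made non-increasing in $x_d$ by taking an infimum in the vertical direction so as to preserve the monotonicity $\partial_d v_n\leq 0$ and the boundary conditions $v_n=1$ on $\omega\times(-1,0)$ and $v_n=0$ on $\omega\times(M,M+1)$. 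The standard optimal-profile computation then gives a Modica--Mortola contribution converging to $\mathcal{H}^{d-1}(\partial^*\Omega_h\cap\Omega)+2\mathcal{H}^{d-1}(J_u'\cap \Omega_h^1)$, the factor $2$ arising from the double transition across each slit. For $u_n$, I would mollify $u$ at a scale $\ll \eps_n$ inside $\Omega_h^+$ and glue it to $u_0$ via a $W^{1,p}$-cutoff across $\omega\times\{0\}$; since only $L^p$-control through the interface is needed, the assumption $u_0\in W^{1,p}$ suffices, which is precisely the technical improvement over \cite{ChaSol07}. The bulk energy $\int_\Omega(v_n^2+\eta_\eps)f(e(u_n))\,\dx$ converges to $\int_{\Omega_h^+}f(e(u))\,\dx$ because $v_n\to 1$ off a set of vanishing measure and the hypothesis $\eta_\eps\eps^{1-p}\to 0$ absorbs the possibly divergent contribution on the diffuse interface and across the cuts.

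\textbf{Main obstacle.} The hardest point is the liminf in part (i), specifically identifying the ``doubled'' part of the surface energy precisely with $J_u'\cap \Omega_h^1$ and no more. The truncation $\tilde u_n$ introduces artificial jumps along $\partial^* E_n^\delta$, and one must show that as $\delta\downarrow 0$ these concentrate in the limit only on $\partial^*\Omega_h\cup J_u'$. This requires a vertical slicing argument exploiting $\partial_d v_n\leq 0$ combined with the $\sps$-convergence framework of Section~\ref{sec:sigmap}, and amounts to showing that any $(d{-}1)$-rectifiable limit of $\partial^* E_n^\delta$ meets almost every vertical line in at most two points: one on the graph of $h$ and one on a slit below.
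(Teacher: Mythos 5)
Your broad plan for part (ii) is close to the paper's: preprocess via the upper-bound construction of Theorem~\ref{thm:relG} (the paper uses Proposition~\ref{prop: enough} together with Remark~\ref{rem:1805192117}, which yields $u_n\in L^\infty$ with only $W^{1,p}$-convergent boundary trace) and then build a phase field with the right profile; the final boundary-value correction you envisage by $W^{1,p}$-extension of the trace discrepancy is exactly the paper's closing step. The observation that $\eta_\eps\eps^{1-p}\to 0$ absorbs the bulk contribution across the diffuse interface is the right mechanism.

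Part (i) is where you diverge from the paper, and where your proposal has a genuine gap. The paper does not try to localize the Modica--Mortola term around $\partial^*\Omega_h$ and $J_u'$ separately and then argue about single vs.\ double transitions. Instead it first perturbs $v_n$ to $\tilde{v}_n:=0\vee(v_n-\delta_n x_d)\wedge 1$ to make it \emph{strictly} decreasing in $x_d$ on its transition layer; this is not cosmetic, it is what allows the implicit function theorem plus the coarea formula to guarantee that for a.e.\ $s\in(0,1)$ the superlevel set $\{\tilde{v}_n>s\}$ is the subgraph of a function $h_n^s\in H^1(\omega;[0,M])$. Then the whole energy is decomposed by the layer-cake identity $\int_\Omega\tilde{v}_n f(e(u_n))\,\dx\geq \int_0^1 2s\int_{\{\tilde{v}_n>s\}}f(e(u_n))\,\dx\,\d s$ together with the coarea bound $\int_0^1\sqrt{2W(s)}\,\mathcal{H}^{d-1}(\partial^*\Omega_{h_n^s}\cap\Omega)\,\d s\leq\int_\Omega(\frac{\eps_n}{2}|\nabla\tilde{v}_n|^2+\eps_n^{-1}W(\tilde{v}_n))\,\dx$. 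After Fatou in $s$, the integrand $I_n^s=2s\int_{\Omega_{h_n^s}}f(e(u_n))+c_W\sqrt{2W(s)}\int_\omega\sqrt{1+|\nabla h_n^s|^2}$ is, for fixed $s$, exactly the energy $G$ evaluated at the pair $(u_n\chi_{\Omega_{h_n^s}},h_n^s)$ up to multiplicative constants, so the \emph{already-proven} lower semicontinuity from Theorem~\ref{thm:relG} gives $2s\int_{\Omega_h^+}f(e(u))+c_W\sqrt{2W(s)}(\mathcal{H}^{d-1}(\partial^*\Omega_h\cap\Omega)+2\mathcal{H}^{d-1}(J_u'\cap\Omega_h^1))\leq\liminf_n I_n^s$, and integrating in $s$ (using $c_W=(\int_0^1\sqrt{2W})^{-1}$) finishes the proof. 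No fresh analysis of ``double transitions'' is needed, because the factor $2$ on $J_u'$ was established once and for all in Theorem~\ref{thm:relG}.

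Your proposed replacement — ``near $\partial^*\Omega_h$ a single $1\to 0$ transition; near $J_u'$ a horizontal slicing forces a $1\to 0\to 1$ transition'' — is where the gap sits. The constraint $\partial_d v_n\leq 0$ says nothing about monotonicity in the horizontal directions, so a horizontal one-dimensional slicing of $v_n$ has no structure to exploit; and even if you could show pointwise that each horizontal slice across a cut sees a full transition back to $1$, you would still have to justify that the bulk estimate and the Modica--Mortola estimate split additively between the graph region and the cut region without double-counting — which is precisely what the layer-cake decomposition handles cleanly. Your invocation of $\sps$-convergence to show that ``$\partial^*E_n^\delta$ meets almost every vertical line in at most two points'' is not the form in which the paper uses $\sps$-convergence either (there it appears inside Theorem~\ref{thm:compG}, only to rule out the set $G_\infty$), and even granting that claim it does not by itself yield the surface lower bound: one still has to account for the jump contributions created by the truncation $u_n\chi_{\Omega\setminus E_n^\delta}$. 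In short, part (i) needs the paper's reduction to Theorem~\ref{thm:relG}; as written your localization argument is a sketch of a harder, unproved statement.

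A smaller technical point worth flagging: you truncate at $E_n^\delta=\{v_n<\delta\}$ and assert $\tilde{u}_n\in SBD^p$. Since $v_n\in H^1$ only, the level set $\{v_n=\delta\}$ need not be Lipschitz for any $\delta$, so $u_n|_{\Omega\setminus\overline{E_n^\delta}}\in W^{1,p}$ does not place $\tilde{u}_n$ in the admissible class for Theorem~\ref{thm:compF} without further work; the paper's route via $\tilde{v}_n$ and the implicit function theorem produces $C^1$ graphs $h_n^s$ and avoids this issue entirely.
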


 \section{Preliminaries}\label{sec:prel}
 
  In this section, we recall the definition and main properties of the function space $GSBD^p$. Moreover, we introduce the space $GSBD^p_\infty$ of functions which may attain the value infinity.

 \subsection{Notation} 
For every $x\in \Rd$ and $\varrho>0$, let $B_\varrho(x) \subset \R^d$ be the open ball with center $x$ and radius $\varrho$. For $x$, $y\in \Rd$, we use the notation $x\cdot y$ for the scalar product and $|x|$ for the  Euclidean  norm.   By ${\mathbb{M}^{d\times d}}$ and ${\mathbb{M}^{d\times d}_{\rm sym}}$ we denote the set of matrices and symmetric matrices, respectively.   We write $\chi_E$ for the indicator function of any $E\subset \R^n$, which is 1 on $E$ and 0 otherwise.  If $E$ is a set of finite perimeter, we denote its essential boundary by $\partial^* E$, and by $E^s$ the set of points with density $s$ for $E$,  see \cite[Definition 3.60]{AFP}.   
 We indicate the minimum  and maximum  value between $a, b \in \R$ by  $a \wedge b$  and $a \vee b$, respectively. The symmetric difference of two sets $A,B \subset \R^d$ is indicated by $A \triangle B$.

We denote by $\Ld$ and $\mathcal{H}^k$ the $n$-dimensional Lebesgue measure and the $k$-dimensional Hausdorff measure, respectively. For any locally compact subset $B  \subset \Rd$, (i.e.\ any point in $B$ has a neighborhood contained in a compact subset of $B$),
the space of bounded $\R^m$-valued Radon measures on $B$ [respectively, the space of $\R^m$-valued Radon measures on $B$] is denoted by $\mathcal{M}_b(B;\R^m)$ [resp., by $\mathcal{M}(B;\R^m)$]. If $m=1$, we write $\mathcal{M}_b(B)$ for $\mathcal{M}_b(B;\R)$, $\mathcal{M}(B)$ for $\mathcal{M}(B;\R)$, and $\mathcal{M}^+_b(B)$ for the subspace of positive measures of $\mathcal{M}_b(B)$. For every $\mu \in \mathcal{M}_b(B;\R^m)$, its total variation is denoted by $|\mu|(B)$.  Given $\Omega \subset \R^d$ open, we   use the notation  
$L^0(\Omega;\Rd)$  for  the space of $\Ld$-measurable functions $v \colon \Omega \to \Rd$.

\begin{definition}
Let $E\subset \Rd$,  $v \in L^0(E;\R^m)$,   and  $x\in \Rd$ such that
\begin{equation*}
\limsup_{\varrho\to 0^+}\frac{\Ld(E\cap B_\varrho(x))}{\varrho^{  d  }}>0\,.
\end{equation*}
A vector $a\in \Rd$ is the \emph{approximate limit} of $v$ as $y$ tends to $x$ if for every $\varepsilon>0$ there holds
\begin{equation*}
\lim_{\varrho \to 0^+}\frac{\Ld(E \cap B_\varrho(x)\cap \{|v-a|>\varepsilon\})}{ \varrho^d  }=0\,,
\end{equation*}
and then we write
\begin{equation*}
\aplim \limits_{y\to x} v(y)=a\,.
\end{equation*}
\end{definition}

\begin{definition}
Let $U\subset \Rd$  be  open  and $v \in L^0(U;\R^m)$.    The \emph{approximate jump set} $J_v$ is the set of points $x\in U$ for which there exist $a$, $b\in \R^m$, with $a \neq b$, and $\nu\in \Sd$ such that
\begin{equation*}
\aplim\limits_{(y-x)\cdot \nu>0,\, y \to x} v(y)=a\quad\text{and}\quad \aplim\limits_{(y-x)\cdot \nu<0, \, y \to x} v(y)=b\,.
\end{equation*}
The triplet $(a,b,\nu)$ is uniquely determined up to a permutation of $(a,b)$ and a change of sign of $\nu$, and is denoted by $(v^+(x), v^-(x), \nu_v(x))$. The jump of $v$ is the function 
defined by $[v](x):=v^+(x)-v^-(x)$ for every $x\in J_v$. 
\end{definition}
 We note that  $J_v$ is a Borel set  with $\Ld(J_v)=0$, and that $[v]$ is a Borel function. 

\subsection{$BV$ and $BD$ functions}
 Let $U\subset \Rd$  be open. We say that a function $v\in L^1(U)$ is a \emph{function of bounded variation} on $U$, and we write $v\in BV(U)$, if $\mathrm{D}_i v\in \mathcal{M}_b(U)$ for  $i=1,\dots,d$,  where $\mathrm{D}v=(\mathrm{D}_1 v,\dots, \mathrm{D}_d v)$ is its distributional  derivative.  A vector-valued function $v\colon U\to \R^m$ is in $BV(U;\R^m)$ if $v_j\in BV(U)$ for every $j=1,\dots, m$.
The space $BV_{\mathrm{loc}}(U)$ is the space of $v\in L^1_{\mathrm{loc}}(U)$ such that $\mathrm{D}_i v\in \mathcal{M}(U)$ for $i=1,\dots,d$. 


A function $v\in L^1(U;\Rd)$ belongs to the space of \emph{functions of bounded deformation} if 
 the distribution 
$\mathrm{E}v := \frac{1}{2}((\mathrm{D}v)^T + \mathrm{D}v )$  belongs to $\mathcal{M}_b(U;\Mdd)$.
It is well known (see \cite{AmbCosDM97, Tem}) that for $v\in BD(U)$, $J_v$ is countably $(\hd, d-1)$ rectifiable, and that
\begin{equation*}
\mathrm{E}v=\mathrm{E}^a v+ \mathrm{E}^c v + \mathrm{E}^j v\,,
\end{equation*}
where $\mathrm{E}^a v$ is absolutely continuous with respect to $\Ld$, $\mathrm{E}^c v$ is singular with respect to $\Ld$ and such that $|\mathrm{E}^c v|(B)=0$ if $\hd(B)<\infty$, while $\mathrm{E}^j v$ is concentrated on $J_v$. The density of $\mathrm{E}^a v$ with respect to $\Ld$ is denoted by $e(v)$.

The space $SBD(U)$ is the subspace of all functions $v\in BD(U)$ such that $\mathrm{E}^c v=0$. For $p\in (1,\infty)$, we define
\begin{equation*}
SBD^p(U):=\{v\in SBD(U)\colon e(v)\in L^p(\Omega;\Mdd),\, \hd(J_v)<\infty\}\,.
\end{equation*}
Analogous properties hold for $BV$,  such as  the countable rectifiability of the jump set and the decomposition of $\mathrm{D}v$.  The  spaces $SBV(U;\R^m)$ and $SBV^p(U;\R^m)$ are defined similarly, with $\nabla v$, the density of $\mathrm{D}^a v$, in place of $e(v)$.
For a complete treatment of $BV$, $SBV$ functions and $BD$, $SBD$ functions, we refer to \cite{AFP} and to \cite{AmbCosDM97, BelCosDM98,  Tem}, respectively.

\subsection{$GBD$ functions}
We now recall the definition and the main properties of the space $GBD$ of \emph{generalized functions of bounded deformation}, introduced in \cite{DM13}, referring to that paper for a general treatment and more details. Since the definition of $GBD$ is given by slicing (differently from the definition of $GBV$, cf.~\cite{Amb90GSBV, DeGioAmb88GBV}), we  first  need to introduce some notation. Fixed $\xi \in \Sd:=\{\xi \in \Rd\colon |\xi|=1\}$, we let
\begin{equation}\label{eq: vxiy2}
\Pi^\xi:=\{y\in \Rd\colon y\cdot \xi=0\},\qquad B^\xi_y:=\{t\in \R\colon y+t\xi \in B\} \ \ \ \text{ for any $y\in \Rd$ and $B\subset \Rd$}\,,
\end{equation}
and for every function $v\colon B\to  \R^d  $ and $t\in B^\xi_y$ let
\begin{equation}\label{eq: vxiy}
v^\xi_y(t):=v(y+t\xi),\qquad \widehat{v}^\xi_y(t):=v^\xi_y(t)\cdot \xi\,.
\end{equation}

\begin{definition}[\cite{DM13}]
Let $\Omega\subset \Rd$ be a  bounded open set, and let  $v \in L^0(\Omega;\Rd)$.   Then $v\in GBD(\Omega)$ if there exists $\lambda_v\in \mathcal{M}^+_b(\Omega)$ such that  one of the following equivalent conditions holds true 
 for every 
$\xi \in \Sd$: 
\begin{itemize}
\item[(a)] for every $\tau \in C^1(\R)$ with $-\tfrac{1}{2}\leq \tau \leq \tfrac{1}{2}$ and $0\leq \tau'\leq 1$, the partial derivative $\mathrm{D}_\xi\big(\tau(v\cdot \xi)\big)=\mathrm{D}\big(\tau(v\cdot \xi)\big)\cdot \xi$ belongs to $\mathcal{M}_b(\Omega)$, and for every Borel set $B\subset \Omega$ 
\begin{equation*}
\big|\mathrm{D}_\xi\big(\tau(v\cdot \xi)\big)\big|(B)\leq \lambda_v(B);
\end{equation*}
\item[(b)] $\widehat{v}^\xi_y \in BV_{\mathrm{loc}}(\Omega^\xi_y)$ for $\hd$-a.e.\ $y\in \Pi^\xi$, and for every Borel set $B\subset \Omega$ 
\begin{equation*}
\int_{\Pi^\xi} \Big(\big|\mathrm{D} {\widehat{v}}_y^\xi\big|\big(B^\xi_y\setminus J^1_{{\widehat{v}}^\xi_y}\big)+ \mathcal{H}^0\big(B^\xi_y\cap J^1_{{\widehat{v}}^\xi_y}\big)\Big)\dh(y)\leq \lambda_v(B)\,,
\end{equation*}
where
$J^1_{{\widehat{u}}^\xi_y}:=\left\{t\in J_{{\widehat{u}}^\xi_y} : |[{\widehat{u}}_y^\xi]|(t) \geq 1\right\}$.
\end{itemize} 
The function $v$ belongs to $GSBD(\Omega)$ if $v\in GBD(\Omega)$ and $\widehat{v}^\xi_y \in SBV_{\mathrm{loc}}(\Omega^\xi_y)$ for 
every
$\xi \in \Sd$ and for $\hd$-a.e.\ $y\in \Pi^\xi$.
\end{definition}
$GBD(\Omega)$ and $GSBD(\Omega)$ are vector spaces, as stated in \cite[Remark~4.6]{DM13}, and one has the inclusions $BD(\Omega)\subset GBD(\Omega)$, $SBD(\Omega)\subset GSBD(\Omega)$, which are in general strict (see \cite[Remark~4.5 and Example~12.3]{DM13}).
Every $v\in GBD(\Omega)$ has an \emph{approximate symmetric gradient} $e(v)\in L^1(\Omega;\Mdd)$ such that for every $\xi \in \Sd$ and $\hd$-a.e.\ $y\in\Pi^\xi$ there holds
\begin{equation}\label{3105171927}
 e(v)(y +  t\xi)   \xi   \cdot \xi= (\widehat{v}^\xi_y)'(t)  \quad\text{for } \mathcal{L}^1\text{-a.e.\ }   t \in  \Omega^\xi_y\,.
\end{equation}
We recall also that by the area formula (cf.\ e.g.\ \cite[(12.4)]{Sim84}; see \cite[Theorem~4.10]{AmbCosDM97} and \cite[Theorem~8.1]{DM13}) it follows that for any $\xi \in \Sd$
\begin{subequations}\label{2304191254}
\begin{equation}\label{2304191254-1}
 (J^\xi_v)\xy = J_{\widehat{v}\xy} \ \ \text{for $\mathcal{H}^{d-1}$-a.e.\ $y \in \Pi^\xi$,} \  \ \text{where} \ \  J_v^\xi:= \{ x \in J_v \colon [v](x) \cdot \xi \neq 0\}\,, 
\end{equation}
\begin{equation}
\int_{\Pi^\xi} \mathcal{H}^{0}(J_{\widehat{v}\xy}) \dh(y) = \int_{J_v^ \xi} |\nu_v \cdot \xi| \dh\,.
\end{equation}
\end{subequations}
 Moreover, there holds 
\begin{equation}\label{2304191637}
\hd(J_v \sm J_v^\xi)=0 \qquad\text{for }\hd\text{-a.e.\ }\xi \in \Sd\,.
\end{equation}
 Finally, if $\Omega$ has Lipschitz boundary, for each $v\in GBD(\Omega)$ the traces on $\partial \Omega$ are well defined in the sense that for $\mathcal{H}^{d-1}$-a.e.\ $x \in \partial\Omega$ there exists ${\rm tr}(v)(x) \in \R^d$ such that
$$\aplim\limits_{y \to x, \ y \in \Omega} v(y) = {\rm tr}(v)(x). $$
 For $1 < p < \infty$, the   space $GSBD^p(\Omega)$ is  defined by  
\begin{equation*}
GSBD^p(\Omega):=\{u\in GSBD(\Omega)\colon e(u)\in L^p(\Omega;\Mdd),\, \hd(J_u)<\infty\}\,.
\end{equation*}
We recall below two general density and compactness results in $GSBD^p$, from \cite{CC17} and \cite{CC18}.

\begin{theorem}[Density in $GSBD^p$]\label{thm:densityGSBD}
Let $\Omega\subset \Rd$ be an open,  bounded set with finite perimeter  and let $\partial \Omega$ be a $(d{-}1)$-rectifiable, 
$ p > 1$,   $\psi(t)= t \wedge 1$,
and $u\in GSBD^p(\Omega)$.   
Then there exist $u_n\in SBV^p(\Omega;\Rd)\cap   L^\infty(\Omega; \Rd)$ such that each
$J_{u_n}$ is closed in $\Omega$ and included in a finite union of closed connected pieces of $C^1$ hypersurfaces, $u_n\in   W^{1,\infty}(\Omega\setminus J_{u_n}; \Rd)$, and:
\begin{subequations}\label{eqs:main'}
\begin{align}
\int_\Omega \psi(|u_n - u|) \dx & \to 0 \,,\label{1main'}\\
 \|e(u_n) - e(u) \|_{L^p(\Omega) } & \to 0 \,, \label{2main'}\\
 \hd(J_{u_n}\triangle J_u)&\to 0 \,.\label{3main'}
 \end{align}
\end{subequations}
\end{theorem}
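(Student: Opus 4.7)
The plan is to build the approximations $u_n$ by a dyadic covering argument combined with a local regularization, following the strategy of \cite{CC17}. The central tool is a Korn--Poincar\'e-type inequality for $GSBD^p$: on a cube $Q$ of side $r_Q$, one can find a Borel set $\omega_Q \subset Q$ with $\hd(\partial^* \omega_Q \cap Q) \le C\hd(J_u \cap Q)$ and an infinitesimal rigid motion $a_Q$ such that $u - a_Q$ is controlled in $L^p(Q \sm \omega_Q; \Rd)$ quantitatively by $\|e(u)\|_{L^p(Q)}$ and $r_Q$. The point is that redefining $u$ to equal $a_Q$ on $\omega_Q$ produces a function in $SBV^p(Q;\Rd) \cap L^\infty(Q;\Rd)$ (since $a_Q$ is affine), at the cost of only a small additional jump along $\partial^* \omega_Q$.

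First I would reduce to the case that $u$ is bounded: since the target $L^1$ convergence uses the cut-off metric $\psi(t) = t \wedge 1$, modifying $u$ on a set of arbitrarily small measure is harmless. Then I would cover $\Omega$ by a dyadic grid whose cubes are finer near $\partial \Omega$ and near $J_u$, so that in each ``good'' cube $\hd(J_u \cap Q) \ll r_Q^{d-1}$ and the Korn--Poincar\'e correction $\omega_Q$ has negligible perimeter compared with $r_Q^{d-1}$. On such cubes I replace $u$ by $a_Q$ on $\omega_Q$ and then mollify at a scale much smaller than $r_Q$. A partition of unity subordinate to (slightly enlarged) cubes glues these local regularizations into a single $u_n \in SBV^p(\Omega;\Rd) \cap L^\infty(\Omega;\Rd)$. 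The jump set $J_{u_n}$ is then contained in the grid faces (a finite union of portions of hyperplanes) together with the part of $J_u$ lying outside $\bigcup_Q \omega_Q$; a final small perturbation deforms the piecewise-planar pieces into $C^1$ hypersurfaces, and a slight modification near $\partial \Omega$ (using the rectifiability of the boundary to extend $u$ across it without spurious jumps) handles the boundary cubes.

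The main obstacle is not the convergence $\|e(u_n) - e(u)\|_{L^p} \to 0$ (which follows once the mollification scale is sent to zero faster than the cube size) or the $L^1$ convergence of $\psi(u_n - u)$, but rather the set-wise convergence $\hd(J_{u_n} \triangle J_u) \to 0$. Indeed, an upper bound $\limsup \hd(J_{u_n}) \le \hd(J_u)$ is not enough: one must also ensure that essentially all of $J_u$ is preserved, and that the extra jumps along grid faces have asymptotically negligible $\hd$-measure. This delicate accounting rests on a blow-up argument at $\hd$-a.e.\ point of $J_u$: by choosing dyadic orientations adapted (up to rotation) to the approximate tangent hyperplane at such points, the grid-face jumps align with $J_u$ itself, so that the symmetric difference collapses in measure. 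Balancing these competing errors across all scales, while simultaneously keeping the perimeter contributions of the exceptional sets $\omega_Q$ summable, is the technical heart of the construction.
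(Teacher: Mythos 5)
Your overall strategy—cube covering, Korn--Poincar\'e correction on each cube, local mollification, partition-of-unity gluing—is indeed the skeleton of the proof in \cite{CC17}, which the paper cites for this theorem rather than reproving it (the appendix Lemma~\ref{lemma: vito approx} gives a rough version of the same construction). But three of your steps break.

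First, the reduction to bounded $u$ is not available. Modifying $u$ on a small set to force an $L^\infty$ bound generically creates an uncontrolled jump along the boundary of that set, ruining the requirement $\hd(J_{u_n}\triangle J_u)\to 0$; and componentwise truncation does not preserve $GSBD^p$ (this is precisely the obstruction, highlighted in the introduction, that distinguishes the vectorial symmetric-gradient setting from $GSBV$). The Korn--Poincar\'e inequality of \cite{CCF16} handles unbounded $u$ directly, so this step is both unjustified and unnecessary. Second, and more seriously, your claim that $J_{u_n}$ contains ``the part of $J_u$ lying outside $\bigcup_Q\omega_Q$'' is incompatible with the conclusion you must prove: $J_{u_n}$ has to be closed and contained in a finite union of connected pieces of $C^1$ hypersurfaces, whereas $J_u$ is merely $(d{-}1)$-rectifiable and in general neither closed nor smoothable by a small perturbation. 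In the actual construction the jump of $u_n$ lies entirely along grid faces bounding the ``bad'' cubes (where the local density of $J_u$ is not small), which is automatically piecewise affine; one never keeps a portion of $J_u$ itself. The convergence $\hd(J_{u_n}\triangle J_u)\to 0$ is then an asymptotic accounting of how these bad-cube faces trace out $J_u$ as the mesh refines, not a consequence of preserving $J_u$. Third, mollifying ``at a scale much smaller than $r_Q$'' would blow up the symmetric gradient: a residual jump inside $Q$ contributes of order $\delta^{1-p}\hd(J_u\cap Q)$ to $\|e(\cdot)\|^p_{L^p(Q)}$ under mollification at scale $\delta$. The mollification must occur at scale comparable to $r_Q$, and the good-cube condition $\hd(J_u\cap Q)\ll r_Q^{d-1}$ is exactly what makes that scale admissible; compare the second estimate in Proposition~\ref{prop:3CCF16}, where the error is governed by $(\hd(J_u)/r^{d-1})^q$.

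In short, you have the right high-level architecture, but the handling of the jump set—both which jumps survive in $u_n$ and at what scale the mollification is performed—is where the proof actually lives, and your version of those steps would not produce functions with the required regularity nor yield the symmetric-difference convergence of the jump sets.
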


  We refer to \cite[Theorem 1.1]{CC17}.   In contrast to   \cite{CC17}, we use here the function $\psi(t):= t \wedge 1$ for simplicity. It is indeed easy to check that \cite[(1.1e)]{CC17} implies \eqref{1main'}.

\begin{theorem}[$GSBD^p$ compactness]\label{th: GSDBcompactness}
 Let $\Omega \subset \R$ be an open, bounded set,  and let $(u_n)_n \subset  GSBD^p(\Omega)$ be a sequence satisfying
$$ \sup\nolimits_{n\in \N} \big( \Vert e(u_n) \Vert_{L^p(\Omega)} + \mathcal{H}^{d-1}(J_{u_n})\big) < + \infty.$$
Then, there exists a subsequence, still denoted by $(u_n)_n$, such that the set  $A := \lbrace x\in \Omega\colon \, |u_n(x)| \to \infty \rbrace$ has finite perimeter, and  there exists  $u \in GSBD^p(\Omega)$ such that 
\begin{align}\label{eq: GSBD comp}
{\rm (i)} & \ \ u_n \to u \  \ \ \ \text{ in } L^0(\Omega \setminus A; \R^d), \notag \\ 
{\rm (ii)} & \ \ e(u_n) \rightharpoonup e(u) \ \ \ \text{ weakly  in } L^p(\Omega \setminus A; \Mdd),\notag \\
{\rm (iii)} & \ \ \liminf_{n \to \infty} \mathcal{H}^{d-1}(J_{u_n}) \ge \mathcal{H}^{d-1}(J_u \cup  (\partial^*A \cap\Omega)  ).
\end{align}
Moreover, for each $\Gamma\subset \Omega$ with $\mathcal{H}^{d-1}(\Gamma) < + \infty$, there holds
\begin{align}\label{eq: with Gamma}
\liminf_{n \to \infty} \mathcal{H}^{d-1}(J_{u_n}\setminus \Gamma) \ge \mathcal{H}^{d-1} \big( (J_u \cup  (\partial^*A \cap\Omega)  ) \setminus \Gamma \big)\,.
\end{align}
\end{theorem}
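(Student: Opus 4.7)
The plan is to follow the strategy introduced by Chambolle–Conti–Iurlano (which is precisely what is being stated as Theorem \ref{th: GSDBcompactness}), namely to combine a piecewise Korn–type decomposition in $GSBD^p$ with the compactness of Caccioppoli partitions, then identify the ``escape to infinity'' set $A$ as the limit of the pieces on which the subtracted rigid motions diverge.

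\textbf{Step 1: piecewise rigidity.} For each $n$ I would apply the piecewise Korn inequality available in $GSBD^p$ (see, e.g., \cite{FriPWKorn, CC17}): there exist a Caccioppoli partition $(P_{n,j})_{j\in\N}$ of $\Omega$ and rigid motions $a_{n,j}\colon \R^d \to \R^d$ such that, setting $v_n := u_n - \sum_j a_{n,j}\,\chi_{P_{n,j}}$, one has
\begin{equation*}
\sum_{j} \hd(\partial^* P_{n,j} \cap \Omega) \le C\,\hd(J_{u_n}) + o(1),\quad \|e(v_n)\|_{L^p(\Omega)} \le C\,\|e(u_n)\|_{L^p(\Omega)},
\end{equation*}
and a truncated $L^1$ control $\int_\Omega |v_n|\wedge 1\dx$ is bounded. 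In particular, $v_n$ is essentially a $GSBD^p$ function whose $L^0$-compactness is straightforward; up to a subsequence, $v_n \to v$ in $L^0(\Omega;\R^d)$ and $e(v_n) \rightharpoonup e(v)$ weakly in $L^p$ for some $v \in GSBD^p(\Omega)$.

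\textbf{Step 2: compactness of the partitions and dichotomy on the pieces.} By the compactness theorem for Caccioppoli partitions (cf.\ \cite{AFP}), up to a further subsequence and relabeling the indices, the partitions $(P_{n,j})_j$ converge in $L^1$ to a Caccioppoli partition $(P_j)_j$ of $\Omega$, with lower semicontinuity of the total perimeter. For each fixed $j$ I would pass to a further diagonal subsequence to ensure the dichotomy: either $(a_{n,j})_n$ is bounded on $P_{n,j}$ and thus $a_{n,j}\to a_j$ uniformly on compact sets for some rigid motion $a_j$, or $|a_{n,j}|\to\infty$ uniformly on compact subsets of $P_j$. Let $J_{\mathrm{fin}}$ and $J_{\mathrm{inf}}$ denote the two index sets and define
\begin{equation*}
A := \bigcup_{j\in J_{\mathrm{inf}}} P_j.
\end{equation*}
Since $A$ is a (countable) union of components of a Caccioppoli partition, it has finite perimeter with $\hd(\partial^*A \cap \Omega) \le \liminf_n \sum_j \hd(\partial^*P_{n,j}\cap \Omega) < \infty$.

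\textbf{Step 3: identification of the limit and convergence.} On $\Omega\setminus A = \bigcup_{j\in J_{\mathrm{fin}}} P_j$ (up to an $\Ld$-null set), I set $u := v + \sum_{j\in J_{\mathrm{fin}}} a_j\,\chi_{P_j}$, and $u:=0$ on $A$. Then $u\in GSBD^p(\Omega)$ since $e(u)=e(v)$ a.e.\ and $J_u \subset J_v \cup \bigcup_{j\in J_{\mathrm{fin}}} \partial^*P_j$, hence $\hd(J_u)<\infty$. By construction, for each $j\in J_{\mathrm{fin}}$, $u_n = v_n + a_{n,j} \to v + a_j = u$ on compact subsets of $P_j$, while on $P_{n,j}$ with $j\in J_{\mathrm{inf}}$ the displacements diverge, so $A = \{|u_n|\to\infty\}$ up to an $\Ld$-null set. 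This yields (i) and (ii) of \eqref{eq: GSBD comp}.

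\textbf{Step 4: lower semicontinuity of the jump set.} The crucial point — and the main obstacle — is (iii), namely that the contribution of $\partial^*A\cap\Omega$ shows up in the liminf. The idea is that each interface $\partial^*P_{n,j}\cap\Omega$ with $j\in J_{\mathrm{inf}}$ is contained (up to small error) in $J_{u_n}$, because $u_n$ jumps across it: on one side it stays close to a finite rigid motion (or diverges at a different rate), on the other it escapes to infinity. A covering/localisation argument separating $J_v$ from $\partial^*A$, combined with the classical $GSBD^p$ lower semicontinuity for the jump set applied to the modified sequence $v_n$, gives
\begin{equation*}
\liminf_{n\to\infty} \hd(J_{u_n}) \ge \hd(J_u) + \hd(\partial^*A\cap \Omega) \ge \hd\big(J_u \cup (\partial^*A\cap\Omega)\big).
\end{equation*}
For the sharper bound \eqref{eq: with Gamma}, exactly the same slicing/localisation works with measures restricted to $\Omega\setminus\Gamma$, using that $\Gamma$ has finite $\hd$-measure so the restriction of the jump-set measure is still lower semicontinuous. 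The delicate part I expect throughout is disentangling the interaction between $J_v$ and the partition boundaries to avoid double counting, and verifying that the divergent pieces can indeed be organised into a single finite-perimeter set rather than accumulating on a wildly irregular boundary; this is where the compactness of Caccioppoli partitions and the slicing characterisation of $GSBD$ are indispensable.
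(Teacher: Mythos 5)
The paper does not prove this theorem: the stated proof is a two-line citation to the compactness paper of Chambolle and Crismale (the reference \cite{CC18}) for \eqref{eq: GSBD comp}, and to \cite[Theorem~2.5]{FriSol16} for the additional statement \eqref{eq: with Gamma}. Your proposal attempts a full reconstruction, which is a laudable exercise, and the broad strategy — localise rigidity, subtract rigid motions, track which pieces escape to infinity — is indeed in the same spirit as the cube-by-cube Korn--Poincar\'e decomposition used in \cite{CC18}. However, implementing it through a global piecewise Korn decomposition and Caccioppoli-partition compactness, as you do, leaves a genuine gap precisely where you flag the ``delicate part,'' namely item~(iii).

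The problem is Step~4. A piecewise Korn inequality gives a partition $(P_{n,j})_j$ with a bound of the form $\sum_j \hd(\partial^* P_{n,j}\cap\Omega)\le C\,\hd(J_{u_n})$ with $C>1$ (and even in idealised versions one loses a factor because each internal interface is counted twice in the sum); crucially it does \emph{not} give $\partial^* P_{n,j}\cap\Omega\ \tilde\subset\ J_{u_n}$. The partition may introduce ``artificial'' interfaces across which $u_n$ is continuous but the subtracted rigid motions differ. Your claim that each $\partial^*P_{n,j}\cap\Omega$ with $j\in J_{\mathrm{inf}}$ is contained up to small error in $J_{u_n}$ ``because $u_n$ jumps across it'' is an appealing heuristic, but it is not a consequence of the decomposition: the only a priori information is the total perimeter bound with constant $C>1$, from which Caccioppoli-partition lower semicontinuity yields at best $\hd(\partial^*A\cap\Omega)\le C\,\liminf_n\hd(J_{u_n})$, not the sharp constant $1$ required in (iii), and it also does not give the stronger union bound $\liminf_n\hd(J_{u_n})\ge \hd(J_u\cup(\partial^*A\cap\Omega))$ without a disjointness argument separating $J_u$ from $\partial^*A$. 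The reference proof avoids this issue by working with a slicing characterisation of the jump set directly (and for \eqref{eq: with Gamma} by restricting the same measure-theoretic argument to $\Omega\setminus\Gamma$), rather than trying to pass the lower bound through a partition whose boundaries overshoot $J_{u_n}$. A secondary caveat: the form of piecewise Korn you invoke in Step~1 is established in \cite{FriPWKorn} for $SBD$ in two dimensions; for $GSBD^p$ in arbitrary $d$ this exact statement is not available off the shelf, so Step~1 also needs a substitute (e.g.\ the cube-based local decomposition of \cite{CC17}, which is in fact what \cite{CC18} builds on).
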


\begin{proof}
We refer to \cite{CC18}. The additional statement \eqref{eq: with Gamma} is proved, e.g., in  \cite[Theorem 2.5]{FriSol16}. 
\end{proof}

 Later, as a byproduct of our analysis, we will generalize the lower semicontinuity property \eqref{eq: GSBD comp}(iii) to anisotropic surface energies, see Corollary \ref{cor: GSDB-lsc}.

\subsection{$GSBD^p_\infty$ functions}\label{sec:prel4}
 Inspired by the previous compactness result, we now introduce a space of $GSBD^p$ functions which may also attain  a limit  value $\infty$. Define  $\bar{\R}^d := \R^d \cup \lbrace \infty \rbrace$. The sum on $\bar{\R}^d$ is given by $a + \infty = \infty$ for any $a \in \bar{\R}^d$.  There is a natural bijection between $\bar{\R}^d$ and $ \mathbb{S}^d  =\lbrace \xi \in\R^{d+1}:\,|\xi| =1 \rbrace$ given by the stereographic projection of $\mathbb{S}^{d}$ to $\bar{\R}^d$: for $\xi \neq e_{d+1}$, we define
$$\phi(\xi) = \frac{1}{1-\xi_{d+1}}(\xi_1,\ldots,\xi_d),$$
and let  $\phi(e_{d+1}) = \infty$. By $\psi: \bar{\R}^d\to \mathbb{S}^{d}$ we denote the inverse. Note that 
\begin{equation}\label{3005191230}
d_{\bar{\R}^d}(x,y):= |\psi(x) - \psi(y)|\quad \text{for }x,y \in \bar{\R}^d\end{equation} 
induces a bounded metric on $\bar{\R}^d$. We define
\begin{align}\label{eq: compact extension}
GSBD^p_\infty(\Omega) := \Big\{ &u \in L^0(\Omega;\bar{\R}^d)\colon \,   A^\infty_u  := \lbrace u = \infty \rbrace \text{ satisfies } \mathcal{H}^{d-1}(\partial^* A^\infty_u)< +\infty, \notag \\
&  \ \ \ \ \ \ \ \ \ \  \ \ \ \ \ \tilde{u}_t := u \chi_{\Omega \setminus A^\infty_u} + t \chi_{A^\infty_u} \in GSBD^p(\Omega) \ \text{ for all $t \in \R^d$} \Big\}. 
\end{align}
Symbolically, we will also write
$$u = u \chi_{\Omega \setminus A^\infty_u} + \infty \chi_{A^\infty_u}.$$
 Moreover, for any $u \in GSBD^p_\infty(\Omega)$, we  set  $e(u) = 0$ in $A^\infty_u$, and 
\begin{align}\label{eq: general jump}
J_u = J_{u \chi_{\Omega \setminus A^\infty_u}} \cup (\partial^*A^\infty_u \cap \Omega).
\end{align}
 In particular, we have
\begin{align}\label{eq:same}
e(u) = e(\tilde{u}_t) \ \ \text{$\mathcal{L}^d$-a.e.\ on  $\Omega$} \ \ \ \text{ and  } \ \ \ J_u = J_{\tilde{u}_t} \ \ \text{$\mathcal{H}^{d-1}$-a.e.} \ \ \ \text{ for almost all $t \in \R$}\,,
\end{align} where $\tilde{u}_t$ is the function from \eqref{eq: compact extension}.  Hereby, we also get a natural definition of a normal $\nu_u$ to the jump set $J_u$, and the slicing properties described in \eqref{3105171927}--\eqref{2304191637} still hold.   Finally, we point out that all definitions are consistent with the usual ones if $u \in GSBD^p(\Omega)$, i.e., if $A^\infty_u= \emptyset$.  Since $GSBD^p(\Omega)$ is a vector space, we observe that the sum of two functions in $GSBD^p_\infty(\Omega)$ lies again in this space.

 A  metric on $GSBD^p_\infty(\Omega)$  is given by
\begin{equation}\label{eq:metricd}
\GGG\bar{d}(u,v) \EEE { := } \int_\Omega  d_{\bar{\R}^d}(u(x),v(x)) \, \dx\,,
\end{equation} 
 where $d_{\bar{\R}^d}$ is the distance in \eqref{3005191230}.  We now state  compactness properties in $GSBD^p_\infty(\Omega)$.
\begin{lemma}[Compactness in $GSBD^p_\infty$]\label{eq: corollary-comp}
For $L>0$ and $\Gamma \subset \Omega$ with $\mathcal{H}^{d-1}(\Gamma)<+\infty$,   we introduce the sets
\begin{align}\label{eq: XL}
X_L(\Omega) & = \big\{         v \in GSBD^p_\infty(\Omega)\colon   \mathcal{H}^{d-1}(J_v ) \le L, \ \ \Vert e(v) \Vert_{L^p(\Omega)}  \le 1    \big\}\,,\notag\\
X_\Gamma(\Omega) & = \big\{         v \in GSBD^p_\infty(\Omega)\colon   \mathcal{H}^{d-1}(J_v \setminus \Gamma ) = 0, \ \ \Vert e(v) \Vert_{L^p(\Omega)}  \le 1    \big\}\,.
\end{align}
Then the sets $X_L(\Omega), X_\Gamma(\Omega)  \subset GSBD^p_\infty(\Omega)$ are compact with respect to the metric $\GGG\bar{d} \EEE$.
\end{lemma}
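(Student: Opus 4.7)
The plan is to reduce the claim to Theorem~\ref{th: GSDBcompactness} by replacing the value $\infty$ on $A^\infty_{v_n}$ with a diverging constant $t_n$, so that the pre‑existing infinity set and the possible ``runaway'' set of the finite parts get absorbed into a single limit set of finite perimeter. First I would fix $(v_n)_n\subset X_L(\Omega)$ and observe, using \eqref{eq: general jump}, that $\mathcal{H}^{d-1}(\partial^* A^\infty_{v_n})\le \mathcal{H}^{d-1}(J_{v_n})\le L$. Since $\Omega$ is bounded, up to a subsequence $\chi_{A^\infty_{v_n}}\to \chi_B$ in $L^1(\Omega)$ and $\mathcal{L}^d$-a.e., for some $B\subset\Omega$ of finite perimeter. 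I then pick $t_n\in\R^d$ with $|t_n|\to\infty$ and generic (in the sense that it belongs to the full-measure set of $t$ for which \eqref{eq:same} holds for the $n$-th function), and set
\[
\hat v_n:=v_n\,\chi_{\Omega\setminus A^\infty_{v_n}}+t_n\,\chi_{A^\infty_{v_n}}.
\]
By definition of $GSBD^p_\infty(\Omega)$ one has $\hat v_n\in GSBD^p(\Omega)$, with $\mathcal{H}^{d-1}(J_{\hat v_n})=\mathcal{H}^{d-1}(J_{v_n})\le L$ and $\|e(\hat v_n)\|_{L^p(\Omega)}=\|e(v_n)\|_{L^p(\Omega)}\le 1$ (because $\hat v_n$ is constant on $A^\infty_{v_n}$ and agrees with $v_n$ on its complement, while $e(v_n)\equiv 0$ on $A^\infty_{v_n}$ by convention).

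Next I apply Theorem~\ref{th: GSDBcompactness} to $(\hat v_n)_n$, obtaining a further subsequence, a set $A_\infty\subset\Omega$ of finite perimeter, and $\hat u\in GSBD^p(\Omega)$ with $\hat v_n\to \hat u$ in $L^0(\Omega\setminus A_\infty;\R^d)$, $e(\hat v_n)\rightharpoonup e(\hat u)$ weakly in $L^p$, and
\[
\mathcal{H}^{d-1}\big(J_{\hat u}\cup(\partial^* A_\infty\cap\Omega)\big)\ \le\ \liminf_n \mathcal{H}^{d-1}(J_{\hat v_n})\ \le\ L.
\]
Because $|t_n|\to\infty$ and $\hat v_n=t_n$ on $A^\infty_{v_n}$, $|\hat v_n|\to\infty$ $\mathcal{L}^d$-a.e.\ on $B$, so $B\subset A_\infty$ up to $\mathcal{L}^d$-null sets. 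I then define the candidate limit
\[
v\ :=\ \hat u\,\chi_{\Omega\setminus A_\infty}+\infty\cdot\chi_{A_\infty},
\]
which belongs to $GSBD^p_\infty(\Omega)$: indeed $A^\infty_v=A_\infty$ has finite perimeter, and for every $t\in\R^d$ the function $\hat u\,\chi_{\Omega\setminus A_\infty}+t\,\chi_{A_\infty}$ lies in $GSBD^p(\Omega)$ since $\hat u\in GSBD^p(\Omega)$ and $A_\infty$ has finite perimeter.

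To conclude, I would check convergence in $\bar d$ by a case analysis on the three regions $\Omega\setminus A_\infty$, $B$, $A_\infty\setminus B$: on $\Omega\setminus A_\infty\subset\Omega\setminus B$ we have $v_n=\hat v_n$ eventually (by pointwise a.e.\ convergence of $\chi_{A^\infty_{v_n}}$) and $\hat v_n\to \hat u=v$ after a pointwise a.e.\ subsequence; on $B$ we have $v_n=\infty=v$ eventually; on $A_\infty\setminus B$ we have $v_n=\hat v_n$ eventually and $|\hat v_n|\to\infty$. In all three cases $d_{\bar{\R}^d}(v_n(x),v(x))\to 0$ pointwise a.e., and since $d_{\bar{\R}^d}$ is bounded and $\mathcal{L}^d(\Omega)<\infty$, dominated convergence yields $\bar d(v_n,v)\to 0$. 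To verify $v\in X_L(\Omega)$, the bound $\|e(v)\|_{L^p(\Omega)}\le\|e(\hat u)\|_{L^p(\Omega\setminus A_\infty)}\le 1$ follows from weak lower semicontinuity, while the inclusion $J_v\subset J_{\hat u}\cup(\partial^* A_\infty\cap\Omega)$ (a direct consequence of \eqref{eq: general jump} and standard slicing for $\hat u\,\chi_{\Omega\setminus A_\infty}$) gives $\mathcal{H}^{d-1}(J_v)\le L$; for $X_\Gamma(\Omega)$ the refined inequality \eqref{eq: with Gamma} applied with the same $\Gamma$ and the fact that $\mathcal{H}^{d-1}(J_{\hat v_n}\setminus\Gamma)=0$ yield $\mathcal{H}^{d-1}(J_v\setminus\Gamma)=0$. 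The main obstacle I anticipate is the joint identification of the limit infinity set $A_\infty$, which must simultaneously absorb $B$ and the runaway set produced by Theorem~\ref{th: GSDBcompactness} for $\hat v_n$; this is precisely what forces the choice $|t_n|\to\infty$ together with the genericity needed to preserve the jump-set bound $\mathcal{H}^{d-1}(J_{\hat v_n})\le L$.
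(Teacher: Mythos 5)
Your proposal is correct and follows essentially the same route as the paper: replace $\infty$ by a diverging constant $t_n$ to land in $GSBD^p(\Omega)$, invoke Theorem~\ref{th: GSDBcompactness}, and read off the limit in $GSBD^p_\infty(\Omega)$ via \eqref{eq: compact extension}--\eqref{eq: general jump}, using \eqref{eq: with Gamma} for the $X_\Gamma$ case. The only cosmetic difference is that the paper avoids your extraction of $B$ and the three-region case analysis by simply noting $\bar d(u^n,\tilde u^n_{t_n})\to 0$ and $\bar d(\tilde u^n_{t_n},u)\to 0$ and concluding by the triangle inequality.
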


\begin{proof}
For $X_L(\Omega)$, the statement follows from  Theorem \ref{th: GSDBcompactness} and the definitions \eqref{eq: compact extension}--\eqref{eq: general jump}:  in  fact, given a sequence $(u^n)_n \subset X_L(\Omega)$, we consider a sequence $(\tilde{u}_{t_n}^n)_n \subset GSBD^p(\Omega)$ as in \eqref{eq: compact extension}, for  suitable   $(t_n)_n \subset \R^d$ with $|t_n| \to \infty$. This implies
\begin{align}\label{eq: t-def}
\GGG\bar{d} \EEE(u^n,\tilde{u}_{t_n}^n) \to 0 \text{ as } n \to \infty.
\end{align}
 Then, by Theorem \ref{th: GSDBcompactness} there exists $v \in  GSBD^p(\Omega)$ and $A = \lbrace x \in \Omega\colon \, |\tilde{u}_{t_n}^n(x)| \to \infty \rbrace$ such that $\tilde{u}_{t_n}^n \to v$ in $L^0(\Omega \setminus A;\R^d)$. We define $u = v\chi_{\Omega \setminus A} + \infty \chi_A \in GSBD^p_\infty(\Omega)$. By \eqref{eq: GSBD comp}(ii),(iii) and \eqref{eq: general jump} we get that $u \in X_L(\Omega)$.  We observe that $\GGG\bar{d} \EEE(\tilde{u}_{t_n}^n,u) \to 0$ and then by \eqref{eq: t-def} also $\GGG\bar{d} \EEE(u^n,u) \to 0$.
 
 The proof for the set $X_\Gamma(\Omega)$ is similar, where we additionally use \eqref{eq: with Gamma} to ensure that $\mathcal{H}^{d-1}(J_u \setminus \Gamma) = 0 $.         
\end{proof}
 In the next  sections,  we will use the following  notation. We say that a sequence $(u_n)_n \subset GSBD^p_\infty(\Omega)$ \emph{converges weakly} to $u \in GSBD^p_\infty(\Omega)$ if 
\begin{align}\label{eq: weak gsbd convergence}
 \sup\nolimits_{n\in \N} \big( \Vert e(u_n) \Vert_{L^p(\Omega)} + \mathcal{H}^{d-1}(J_{u_n})\big) < + \infty \ \ \ \text{and} \ \ \    \GGG\bar{d} \EEE(u_n,u) \to 0 \text{ for } n \to \infty\,.
 \end{align}

\GGG We close this subsection by pointing out that a similar space has been introduced in \cite{CagColDePMag17}, in the case of scalar valued functions attaining extended real values: the space $GBV_*(\R^{d})$ was defined by $f \colon \R^{d} \to \R \cup \{ \pm \infty\} \in GBV_*(\R^{d})$ if and only if $(-M \vee  f)\wedge M \in BV_{\mathrm{loc}}(\R^d)$ for every $M>0$. In \cite[Proposition~3.1]{CagColDePMag17} it is shown that $f \in GBV_*(\R^{d})$ if and only if its epigraph is of locally finite perimeter in $\R^{d+1}$.
Our definition is based on the structure of the set where functions attain infinite values, rather than employing (the analog of) truncations. In fact, the latter is not meaningful if one controls only symmetric gradients. \EEE


\section{The $\sps$-convergence of sets}\label{sec:sigmap}

 This section is devoted to the introduction of a convergence of sets in the framework of $GSBD^p$ functions 
  analogous  to  $\sigma^p$-convergence defined in \cite{DMFraToa02} for the space  
 \GGG $SBV^p$. \EEE   
  This type of convergence of sets will be useful to study the lower limits in the relaxation results in  Subsection~\ref{subsec:RelG}  and the compactness properties in  Subsection~\ref{subsec:compactness}.   We believe that this notion may be of independent interest and is potentially  helpful to study also other  problems such as quasistatic crack evolution. 
 
We start by recalling briefly the definition of $\sigma^p$-convergence in \cite{DMFraToa02}: a sequence of sets $(\Gamma_n)_n$ $\sigma^p$-converges to $\Gamma$ if  (i) for any sequence $(u_n)_n$ converging to $u$ \GGG weakly \EEE in \GGG $SBV^p$  \EEE   with $J_{u_n} \subset \Gamma_n$, it holds $J_u \subset \Gamma$ and (ii)  there  exists a \GGG $SBV^p$ \EEE function whose jump is $\Gamma$, which is approximated \GGG (in the sense of weak convergence in $SBV^p$) \EEE by \GGG $SBV^p$ \EEE functions with jump included in $\Gamma_n$. \GGG (Here, weak convergence in $SBV^p$ means that $\sup_n \big(\|u_n\|_{L^\infty} + \hd(J_{u_n})\big) < +\infty$,   $\nabla u_n \weak \nabla u$ in $L^p$, and $u_n \to u$ almost everywhere.) \EEE For sequences of sets $(\Gamma_n)_n$ with $\sup_n \hd(\Gamma_n) < +\infty$, a compactness result
 with respect to  $\sigma^p$-convergence is obtained by means of Ambrosio's compactness theorem \cite{Amb90GSBV}, see \cite[Theorem~4.7]{DMFraToa02} and \cite[Theorem~3.3]{ChaSol07}.    We refer to \cite[Section~4.1]{DMFraToa02} for a general motivation to consider such a kind of convergence.

We now introduce the notion of $\sps$-convergence.   In the following, we use the notation $A \tilde{\subset} B$ if $\mathcal{H}^{d-1}(A \setminus B) = 0$ and $A \tilde{=} B$ if  $A \tilde{\subset} B$ and  $B \tilde{\subset} A$.   As before, by  $(G)^1$ we denote the set of points with density $1$ for $G \subset \R^d$. Recall also the definition and properties of $GSBD^p_\infty$ in Subsection~\ref{sec:prel4}, in particular \eqref{eq: weak gsbd convergence}.

\begin{definition}[$\sps$-convergence]\label{def:spsconv}
Let $ U  \subset \R^d$  be  open, let  $U' \supset U$  be  open  with $\mathcal{L}^d(U' \setminus U)>0$, and let   $p \in (1,\infty)$.   We say that a sequence $(\Gamma_n)_n  \subset  \overline{U}\cap U'$  with $\sup_{n\in \N} \mathcal{H}^{d-1}(\Gamma_n) <+\infty$  $\sps$-converges to a pair $(\Gamma, G_\infty)$ satisfying  $\Gamma \subset \overline{U} \cap U'$   together with 
\begin{align}\label{eq: limit-prop}
\mathcal{H}^{d-1}(\Gamma) < +\infty,  \ \ G_\infty \subset U, \ \ \partial^*G_\infty \cap U' \, \tilde{\subset} \, \Gamma, \ \ \text{ and } \ \ \Gamma \cap (G_\infty)^1 = \emptyset
\end{align} 
if there holds:  

(i)  for any sequence $(v_n)_n \subset GSBD^p_\infty(U')$ with $J_{v_n} \tilde{\subset} \Gamma_n$ and $v_n = 0$ in $U' \setminus U$, if a subsequence $(v_{n_k})_k$ converges weakly in $GSBD^p_\infty(U')$ to $v \in GSBD^p_\infty(U')$, then 
\CCC $\mathcal{L}^d(\lbrace v = \infty \rbrace \setminus G_\infty) = 0$ and \EEE 
$J_v \setminus \Gamma \tilde{\subset} ( G_\infty)^1$.  

(ii) there exists a function $v \in GSBD^p_\infty(U')$  and a sequence  $(v_n)_n \subset GSBD^p_\infty(U')$ converging weakly in $GSBD^p_\infty(U')$ to $v$ such that $ J_{v_n}  \tilde{\subset} \Gamma_n$, $v_n = 0$ on $U' \setminus U$ for all $n \in \N$,  $J_v  \tilde{=} \Gamma$,  and  $\lbrace v = \infty \rbrace = G_\infty$.

\end{definition}

 Our definition deviates from  $\sigma^p$-convergence  in the sense that, besides a limiting $(d{-}1)$-rectifiable set $\Gamma$, there exists also a set of finite perimeter $G_\infty$.  Roughly speaking, in view of  $\partial^* G_\infty \subset \Gamma \cup \partial U$,   this set  represents the parts which are completely disconnected by $\Gamma$ from the rest of the domain. The  behavior  of functions cannot be controlled there,  i.e., a sequence $(v_n)_n$ as in (i) may converge to infinity on this set or exhibit further cracks.  \GGG
 In the  framework of $GSBV^p$ functions in \cite{DMFraToa02}, it was possible to avoid such a phenomenon  by  working with truncations which allows to resort to $SBV^p$ functions with uniform $L^\infty$-bounds. 
  \EEE  In $GSBD$, however, this truncation technique is not available and we therefore need a more general definition involving  the space $GSBD^p_\infty$ and   a set of finite perimeter $G_\infty$.


 Moreover, due to the presence of the set $G_\infty$, in contrast to the definition of $\sigma^p$-convergence, it is essential to control the functions in a set $U' \setminus U$:  the assumptions $\mathcal{L}^d(U' \setminus U)>0$ and  $ G_\infty  \subset U$ are crucial since otherwise, if $U' = U$, conditions (i) and (ii) would always be trivially satisfied with $G_\infty  = U$ and $\Gamma = \emptyset$. 
 
 \CCC We briefly note that the pair $(\Gamma,G_\infty)$ is unique. In fact, if there were two different limits $(\Gamma^1,G^1_\infty)$ and $(\Gamma^2,G^2_\infty)$, we could choose functions $v^1$ and $v^2$ with  $J_{v^1}  \tilde{=} \Gamma^1$, $J_{v^2}  \tilde{=} \Gamma^2$, $\lbrace v^1 = \infty \rbrace = G^1_\infty$, and $\lbrace v^2 = \infty \rbrace = G^2_\infty$, as well as corresponding sequences  $(v^1_n)_n$ and  $(v^2_n)_n$ as in (ii). But then (i) implies   $\Gamma^1 \setminus \Gamma^2 \tilde{\subset} ( G^2_\infty)^1$, $\Gamma^2 \setminus \Gamma^1 \tilde{\subset} ( G^1_\infty)^1$, as well as $G_\infty^1 \subset G^2_\infty$ and $G_\infty^2 \subset G^1_\infty$. As $\Gamma^i\cap (G_\infty^i)^1 = \emptyset$ for $i=1,2$, this shows $(\Gamma^1,G^1_\infty) = (\Gamma^2,G^2_\infty)$.  In a similar way,  if a sequence $(\Gamma_n)_n$ $\sps$-converges to $(\Gamma, G_\infty)$, then every subsequence $\sps$-converges to the same limit. \EEE

\CCC 
Let us mention that, in our application in Section \ref{sec:GGG}, the sets $\Gamma_n$ will be graphs of functions. In this setting, we will be able to ensure that $G_\infty = \emptyset$, see \eqref{eq: G is empty} below, and thus a simplification of Definition \ref{def:spsconv} only in terms of $\Gamma$ without $G_\infty$ is in principle possible. We believe, however, that the notion of $\sps$-convergence  may be of independent interest and is potentially  helpful to study also other  problems such as quasistatic crack evolution in linear elasticity \cite{FriSol16}, where $G_\infty = \emptyset$ cannot be expected. Therefore, we prefer to treat this more general definition here. 
\EEE

   The main goal of this section is to prove the following compactness result for $\sps$-convergence.

\begin{theorem}[Compactness of $\sps$-convergence]\label{thm:compSps}
Let $U \subset \R^d$  be  open, let $U' \supset U$  be open  with $\mathcal{L}^d(U' \setminus U)>0$, and let   $p \in (1,\infty)$.  Then, every sequence $(\Gamma_n)_n \subset U$ with $\sup_n \mathcal{H}^{d-1}(\Gamma_n) < + \infty$ has a $\sps$-convergent subsequence  with limit  $(\Gamma,G_\infty)$  satisfying $\hd(\Gamma) \leq \liminf_{n\to\infty} \hd(\Gamma_n)$.  
\end{theorem}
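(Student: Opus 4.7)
I plan to mimic the structure of the compactness theorem for $\sigma^p$-convergence in \cite{DMFraToa02}, replacing Ambrosio's $SBV^p$ compactness with the $GSBD^p_\infty$ compactness of Lemma~\ref{eq: corollary-comp}. The strategy is to identify, via a diagonal argument over a countable dense family, a ``universal'' pair $(\Gamma, G_\infty)$ which records the jump set and infinity-value set of every possible weak limit of admissible test functions.

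Setting $L := \sup_n \hd(\Gamma_n) < +\infty$, I will work with the class
\[
V_n := \bigl\{v \in GSBD^p_\infty(U')\colon v = 0 \text{ on } U' \sm U,\ J_v \,\tilde\subset\, \Gamma_n,\ \|e(v)\|_{L^p(U')} \le 1\bigr\},
\]
and with the set $\bar{\mathcal{L}}$ of all weak $GSBD^p_\infty(U')$-limits, in the sense of \eqref{eq: weak gsbd convergence}, of sequences $v_j \in V_{n_j}$ with $n_j \to \infty$. By Lemma~\ref{eq: corollary-comp} we have $\bar{\mathcal{L}} \subset X_L(U')$, and a standard diagonal argument shows it is closed in $\bar d$, hence compact and separable. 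I pick a countable dense family $\{v^k\}_{k \in \N} \subset \bar{\mathcal{L}}$ and, for each $k$, an approximating sequence $v^k_n \in V_n$ with $v^k_n \rightharpoonup v^k$; a Cantor diagonal extraction then yields a subsequence of $(\Gamma_n)_n$ (not relabeled) along which all these convergences hold simultaneously.

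Defining $G_\infty := \bigcup_k \{v^k = \infty\}$ and $\Gamma := \bigl(\bigcup_k J_{v^k}\bigr) \sm (G_\infty)^1$ modulo $\hd$-null sets, I obtain the lower semicontinuity bound $\hd(\Gamma) \le \liminf_n \hd(\Gamma_n)$ by applying \eqref{eq: GSBD comp}(iii) to each $v^k_n \rightharpoonup v^k$ together with a geometrically weighted union. To produce the recovery function for (ii), I form a combination $v := \sum_k \alpha_k v^k$ with $\alpha_k := 2^{-k}/(1+\|e(v^k)\|_{L^p})$, interpreting sums that involve $\infty$ via the convention from Subsection~\ref{sec:prel4}, and extract a diagonal approximating sequence $v_n \in GSBD^p_\infty(U')$ with $J_{v_n} \,\tilde\subset\, \Gamma_n$. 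For condition (i), given $(w_n)_n$ with $w_n \in V_n$ and $w_n \rightharpoonup w$, density of $\{v^k\}_k$ in $\bar{\mathcal{L}}$ yields $v^{k_j} \to w$ in $\bar d$ with uniform $GSBD^p_\infty$-bounds; the improved lower semicontinuity \eqref{eq: with Gamma} with fiducial set $\Gamma$ then forces both $J_w \sm \Gamma \,\tilde\subset\, (G_\infty)^1$ and $\mathcal{L}^d(\{w = \infty\} \sm G_\infty) = 0$.

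The principal obstacle is this last step: the metric $\bar d$ only records convergence in measure on $\bar{\R}^d$, so $v^{k_j} \to w$ in $\bar d$ does not by itself control the geometry of the infinity sets $\{v^{k_j} = \infty\}$. Overcoming this requires combining $\bar d$-convergence with the uniform $GSBD^p_\infty$ bounds to apply Theorem~\ref{th: GSDBcompactness}, whose conclusions allow the infinity values of the approximating sequence to be ``absorbed'' into the density-one set of $G_\infty$, matching exactly the requirement $\Gamma \cap (G_\infty)^1 = \emptyset$ in \eqref{eq: limit-prop}. A secondary technical care point is to make the series defining $v$ well-posed inside $GSBD^p_\infty(U')$, which is why both the summable weights $\alpha_k$ and the $\bar{\R}^d$-sum convention are crucial.
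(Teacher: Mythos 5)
Your overall architecture is recognizably close to the paper's (countable dense family, $G_\infty$ and $\Gamma$ defined as unions over that family, a diagonal extraction), but the technical core of the argument is missing, and several of your claimed steps would fail as stated.

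The most significant gap is the measure bound $\hd(\Gamma)\le\liminf_n\hd(\Gamma_n)$. Applying \eqref{eq: GSBD comp}(iii) to each $v^k_n\rightharpoonup v^k$ gives only $\hd(J_{v^k})\le L$ for each individual $k$, which says nothing about the measure of the union $\bigcup_k J_{v^k}$, and no ``geometric weighting'' can fix this (the Hausdorff measure of a countable union is not a weighted sum). The paper resolves this by first establishing that the set $K$ of weak limits is convex (via Lemma~\ref{lemma: theta}, a pointwise dodging argument needed precisely because the infinity sets $A^\infty_u$ can overlap), and then showing that any two $v,v'\in K$ admit a convex combination $w\in K$ with $J_w$ covering $(J_v\cup J_{v'})\setminus(A^\infty_v\cup A^\infty_{v'})^1$, so that finite unions of the $J_{y_i}\setminus(G_\infty)^1$ have $\hd$-measure bounded by $L$ and one may pass to the limit. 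Without convexity, your argument has no mechanism to aggregate the bounds across $k$.

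Your recovery function for condition (ii) has a second, independent gap. Choosing coefficients $\alpha_k=2^{-k}/(1+\|e(v^k)\|_{L^p})$ guarantees summability, but does not guarantee $J_v\,\tilde=\,\Gamma$: arbitrary combinations can produce cancellations in the jump, shrinking $J_v$ below the target $\Gamma$. Avoiding this is exactly the content of Lemma~\ref{lemma: good function choice}, whose proof iteratively selects $(c_i)_i$ together with auxiliary thresholds $(\eps_i)_i$ and tracks the exceptional sets $E_k,F_k$ so that $J_v\,\tilde\supset\,\Gamma$. Your scheme does not address this at all. Relatedly, for the infinite sum to even define an element of $GSBD^p_\infty(U')$ one needs a convergence-in-measure argument for the partial sums (the condition \eqref{eq: inclusion condition} plus compactness), which requires constants controlled by quantities like $\mathcal{L}^d(\{|v_i|\ge 1/(2^id_i)\})$ -- a constraint on the $c_i$ that has nothing to do with $\|e(v^k)\|_{L^p}$.

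Finally, two smaller issues: (a) defining $G_\infty:=\bigcup_k\{v^k=\infty\}$ from an arbitrary dense family does not guarantee $\mathcal{L}^d(\{w=\infty\}\setminus G_\infty)=0$ for every weak limit $w$; the paper explicitly augments the dense family to enforce property \eqref{eq: additional property}, and your proposal omits this. (b) Working directly with ``the set of all weak limits'' $\bar{\mathcal{L}}$ rather than the Hausdorff limit $K$ of the compact sets $X(\Gamma_n)$ is a heavier burden to carry: you still need the approximation property $K=\lim_H X(\Gamma_n)$ in order to produce, for any $v\in K$, the approximating sequence $v_n\in X(\Gamma_n)$ required in (ii), and to conclude in (i) that a weak limit of functions $w_n\in X(\Gamma_n)$ is in $K$. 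Hausdorff convergence delivers both directions at once; your diagonal construction only gives the easy direction.
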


 For the proof, we need the following two auxiliary results.

\begin{lemma}\label{lemma: good function choice}
Let $(v_i)_i \subset GSBD^p(\Omega)$ such that  $\|e(v_i)\|_{L^p(\Omega)} \leq 1$ for all $i$ and  $\Gamma:= \bigcup_{i=1}^\infty J_{v_i}$ satisfies $\mathcal{H}^{d-1}(\Gamma) < + \infty$.  Then there exist constants $c_i>0$, $i \in \N$, such that  $\sum_{i=1}^\infty c_i \le 1$ and $v:= \sum\nolimits_{i=1}^\infty c_i v_i \in GSBD^p(\Omega)$ satisfies  $J_v \tilde{=} \GGG \Gamma \EEE$.
\end{lemma}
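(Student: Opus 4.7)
The plan is to build $v$ as a weighted series $\sum_i c_i v_i$ in two stages: I first choose weights $c_i>0$ sufficiently small so that the series defines an element of $GSBD^p(\Omega)$ with $\sum_i c_i\le 1$ and $J_v\,\tilde{\subset}\,\Gamma$, and then I refine the choice, via a Fubini-type argument, so that no cancellation of jumps occurs on $\Gamma$.

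For the first stage, I set $M_i := 1 + \int_\Omega (|v_i|\wedge 1)\,\dx + \hd(J_{v_i}) + \|e(v_i)\|_{L^p(\Omega)}^p$ and require $c_i\le 2^{-i}/M_i$, which gives $\sum_i c_i\le 1$. This choice yields $\mathcal{L}^d$-a.e.\ convergence of $\sum_i c_i v_i(x)$ (controlled via $\sum_i c_i(|v_i|\wedge 1)\in L^1$) as well as $L^p$-convergence of $\sum_i c_i e(v_i)$. The partial sums $v^N := \sum_{i=1}^N c_i v_i\in GSBD^p(\Omega)$ satisfy $\hd(J_{v^N})\le\hd(\Gamma)<+\infty$ and $\|e(v^N)\|_{L^p}\le 1$. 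Applying Theorem~\ref{th: GSDBcompactness} to $(v^N)_N$ (with infinity set $A=\emptyset$, thanks to the pointwise convergence), I conclude $v\in GSBD^p(\Omega)$, $e(v) = \sum_i c_i e(v_i)$, and $J_v\,\tilde{\subset}\,\Gamma$ via \eqref{eq: GSBD comp}(iii).

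The genuine difficulty is the reverse inclusion $\Gamma\,\tilde{\subset}\,J_v$. Since $\Gamma$ is countably $(d{-}1)$-rectifiable with finite measure, I fix a Borel orientation $\nu_\Gamma$ on $\Gamma$; standard rectifiability theory then gives, at $\hd$-a.e.\ $x\in J_{v_i}$, that $\nu_{v_i}(x)=\epsilon_i(x)\,\nu_\Gamma(x)$ for some $\epsilon_i(x)\in\{\pm 1\}$, and I extend $\epsilon_i$ by $0$ outside $J_{v_i}$. Setting $[v_i]^\Gamma(x):=\epsilon_i(x)[v_i](x)$, the jump of $v$ at $\hd$-a.e.\ $x\in\Gamma$ should equal
\[
S(x)\;:=\;\sum_{i\ge 1} c_i\,[v_i]^\Gamma(x)\,,
\]
so the task reduces to choosing the $c_i$ such that $S(x)\ne 0$ for $\hd$-a.e.\ $x\in\Gamma$. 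For this I parametrize $c_i = a_i t_i$ with $a_i$ fixed as above (say $a_i=2^{-i-1}/M_i$) and $t_i\in[1,2]$, and equip $T=\prod_i[1,2]$ with the normalized product Lebesgue measure $\mathbb{P}$. For each fixed $x\in\Gamma$ there exists $i_0$ with $[v_{i_0}]^\Gamma(x)\ne 0$ (since $x$ lies in some $J_{v_i}$ by $\Gamma=\bigcup_i J_{v_i}$); varying $t_{i_0}$ alone then turns $S(x)$ into an affine function of nonzero slope, so $\mathbb{P}(\{S(x)=0\})=0$. Fubini with respect to $\mathbb{P}\otimes(\hd\mres\Gamma)$ yields that $\mathbb{P}$-a.e.\ $(t_i)\in T$ produces weights with $S\ne 0$ $\hd$-a.e.\ on $\Gamma$, so a valid choice of $(c_i)$ exists.

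I expect the main obstacle to be the identification of the pointwise jump of the infinite series $v$ with the formal sum $S(x)$: approximate one-sided limits do not automatically commute with infinite summation. I would handle this either by a dominated-convergence argument exploiting the rapid decay $c_i\le 2^{-i}/M_i$ to make the tail $\sum_{i>N} c_i v_i$ negligible near typical points of $\Gamma$, or by the slicing characterization of $GSBD^p$, reducing the claim to the one-dimensional statement that a sufficiently fast convergent sum of $SBV_{\mathrm{loc}}$ functions is again $SBV_{\mathrm{loc}}$ with jump equal to the sum of the individual jumps. Once this identification is secured, the convergence bookkeeping of stage one and the Fubini argument of stage two are comparatively routine.
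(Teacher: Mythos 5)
Your stage-two argument, which is the crux, rests on the claim that $[v](x)=\sum_i c_i[v_i]^\Gamma(x)=:S(x)$ for $\hd$-a.e.\ $x\in\Gamma$, and on the prior well-definedness (pointwise convergence) of the series $S$. You correctly flag this as ``the main obstacle'', but the remedies you sketch do not close it. The smallness $c_i\le 2^{-i}/M_i$ with $M_i=1+\int(|v_i|\wedge 1)+\hd(J_{v_i})+\|e(v_i)\|_{L^p}^p$ gives \emph{no pointwise control at all} on the jump amplitudes $|[v_i](x)|$: they can be arbitrarily large on a large portion of $J_{v_i}$ while $\hd(J_{v_i})$ stays bounded. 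Consequently $\sum_i c_i|[v_i](x)|$ need not converge $\hd$-a.e.\ on $\Gamma$; without that, $S(x)$ is undefined, the ``affine in $t_{i_0}$ with nonzero slope'' structure you invoke in the Fubini step has no meaning (the residual sum over $j\neq i_0$ need not converge), and the dominated-convergence remedy has nothing to dominate. The same issue undercuts stage one: $\sum_i c_i(|v_i|\wedge 1)\in L^1$ only gives $\sum_i c_i\min(|v_i(x)|,1)<\infty$ a.e., which is compatible with $|v_i(x)|\ge 1$ for every $i$ and thus does not yield $\Ld$-a.e.\ convergence of $\sum_i c_i v_i(x)$.

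The paper's proof is structured precisely to sidestep both of these difficulties. In \eqref{eq: small volume} it selects $d_i$ so that both $\Ld(\{|v_i|\ge 1/(2^id_i)\})\le 2^{-i}$ \emph{and} $\hd(\{x\in J_{v_i}\colon|[v_i](x)|\ge 1/d_i\})\le 2^{-i}$; membership of $v$ in $GSBD^p(\Omega)$ is obtained not from pointwise convergence of the series but from the Cauchy-in-measure condition \eqref{eq: inclusion condition} and compactness. For $\Gamma\,\tilde{\subset}\,J_v$ the paper never represents $[v]$ as an infinite sum: it chooses the $c_i$ \emph{iteratively}, with $c_i\le\eps_{i-1}d_i2^{-i-1}$, so that away from exceptional sets $E_k\cup F_k$ of measure $\le 2^{2-k}$ one has $|[u_k](x)|\ge\eps_k$ while the tail satisfies $\sum_{i>k}c_i|[v_i](x)|\le\eps_k/2$, whence $|[v](x)|\ge\eps_k/2>0$ directly. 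Your randomization/Fubini idea is a genuinely different mechanism for avoiding cancellation, and it could in principle replace the iterative non-cancellation argument --- but only after (i) imposing a smallness condition tying $c_i$ to a pointwise bound on $[v_i]$ (mimicking the role of $d_i$) so that $\sum_i c_i|[v_i](x)|$ converges $\hd$-a.e.\ on $\Gamma$, and (ii) rigorously identifying $[v]$ with $S$ (for instance via the slicing characterization of $GSBD$). As written, both of these prerequisites are missing, so the argument has a genuine gap.
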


\begin{lemma}\label{lemma: theta}
Let $V \subset  \R^d  $ and suppose that two sequences $(u_n)_n,(v_n)_n \in L^0(V;\bar{\R}^d)$ satisfy $|u_n|, |v_n| \to \infty$ on $V$.   Then for $\mathcal{L}^1$-a.e. $\theta \subset(0,1)$ there holds 
$$|(1-\theta) u_n(x) + \theta v_n(x)| \to \infty\ \ \  \text{for a.e.\ $x \in V$}. $$  
\end{lemma}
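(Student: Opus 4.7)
The approach I would take is a Fubini-type reduction followed by a slice-wise analysis. Defining $g_n(x,\theta) := (1-\theta) u_n(x) + \theta v_n(x)$, the set $\{(x,\theta) \in V \times (0,1) : |g_n(x,\theta)| \not\to \infty\}$ is Borel-measurable. Hence, by Fubini, it suffices to prove that for $\mathcal{L}^d$-a.e.\ $x \in V$ the slice
$$B(x) := \{\theta \in (0,1) : \liminf\nolimits_n |g_n(x,\theta)| < \infty\}$$
has one-dimensional Lebesgue measure zero; a second application of Fubini then yields the stated conclusion that for $\mathcal{L}^1$-a.e.\ $\theta$, the exceptional set of $x$'s has $\mathcal{L}^d$-measure zero.

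Fix such an $x$ and write $a_n := u_n(x)$, $b_n := v_n(x)$, $\phi_n := b_n - a_n$, so that $g_n(\theta) = a_n + \theta \phi_n$. Since $\theta \mapsto |g_n(\theta)|$ is convex, for each $M > 0$ the sublevel set $A_n^M := \{\theta \in (0,1) : |g_n(\theta)| \le M\}$ is an interval. Decomposing $a_n = -\theta_n \phi_n + r_n$ with $r_n \perp \phi_n$, one computes
$$|g_n(\theta)|^2 = |\phi_n|^2 (\theta-\theta_n)^2 + |r_n|^2,$$
so $A_n^M$ is nonempty only if $|r_n| \le M$, and then $A_n^M \subset [\theta_n - M/|\phi_n|,\,\theta_n + M/|\phi_n|]$. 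Coupling $|r_n| \le M$ with $|a_n|^2 = \theta_n^2 |\phi_n|^2 + |r_n|^2 \to \infty$ and $|b_n|^2 = (1-\theta_n)^2 |\phi_n|^2 + |r_n|^2 \to \infty$ forces both $\theta_n|\phi_n| \to \infty$ and $(1-\theta_n)|\phi_n| \to \infty$ along the subsequence where $A_n^M \neq \emptyset$; in particular $|\phi_n| \to \infty$ and $\mathcal{L}^1(A_n^M) \le 2M/|\phi_n| \to 0$. Since $B(x) = \bigcup_{M \in \mathbb{N}} \limsup_n A_n^M$, the task reduces to showing $\mathcal{L}^1(\limsup_n A_n^M) = 0$ for every $M$.

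The main obstacle lies precisely in this last step, since $|A_n^M| \to 0$ alone does not ensure that the $\limsup$ is negligible. The extra input is a quasi-disjointness property coming from linearity: for $\theta_1 \neq \theta_2$ in $(0,1)$, the map $(a,b) \mapsto ((1-\theta_1)a+\theta_1 b,\,(1-\theta_2)a+\theta_2 b)$ is a linear isomorphism of $(\mathbb{R}^d)^2$ (determinant $(\theta_2-\theta_1)^d$), so $|g_n(\theta_1)|+|g_n(\theta_2)|$ and $|a_n|+|b_n|$ are comparable up to constants depending only on $\theta_2-\theta_1$. Consequently, if $\theta_1,\theta_2$ both lie in the same $A_n^M$, then $|\phi_n|\,|\theta_2-\theta_1| \le 2M$; combined with $|\phi_n|\to\infty$, this means the pair $\{\theta_1,\theta_2\}$ can co-occupy $A_n^M$ only finitely often. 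A Borel--Cantelli / Vitali-covering argument exploiting the explicit interval structure $A_n^M \subset [\theta_n - M/|\phi_n|,\theta_n+M/|\phi_n|]$ together with this quasi-disjointness then rules out positive measure for $\limsup_n A_n^M$; intersecting over $M \in \mathbb{N}$ concludes $\mathcal{L}^1(B(x)) = 0$.
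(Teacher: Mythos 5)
Your argument takes a genuinely different route from the paper's. The paper does not pass to a fixed-$x$ slice: it introduces $B:=\{x\in V:\limsup_n|u_n(x)-v_n(x)|<\infty\}$, on which $|w^\theta_n|\to\infty$ for every $\theta$, and on the complement defines $D_\theta:=\{x\in V\setminus B:\limsup_n|w^\theta_n(x)|<\infty\}$ and observes that $D_{\theta_1}\cap D_{\theta_2}=\emptyset$ for $\theta_1\neq\theta_2$, because $w^{\theta_1}_n-w^{\theta_2}_n=(\theta_1-\theta_2)(v_n-u_n)$. A pairwise disjoint family of measurable sets in $\R^d$ can contain only countably many members of positive $\Ld$-measure, which singles out the exceptional $\theta$'s directly and elementarily, with no interval geometry and no orthogonal decomposition. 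The decisive mechanism is disjointness, over $\theta$, of sets living in the $x$-variable; this is invisible once $x$ is frozen, which is why your slicing reduction runs into trouble exactly where you acknowledge that it does.

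That final step is a genuine gap and cannot be filled from the ingredients you list. The quasi-disjointness you isolate (a fixed pair $\{\theta_1,\theta_2\}$ can co-occupy $A_n^M$ only finitely often) holds automatically for any sequence of intervals whose diameters tend to zero, and it places no constraint whatsoever on $\limsup_n A_n^M$. The lengths $|A_n^M|\lesssim M/|\phi_n|$ are in general not summable (one only knows $|\phi_n|\to\infty$), and the centers $\theta_n$ are unconstrained, so the shrinking intervals can sweep a fixed subinterval of $(0,1)$ over and over. Concretely, take $u_n(x)=n\,e_1$ and $v_n(x)=-n\alpha_n\,e_1$ with $\alpha_n\in[1/2,2]$; then $A_n^M=\{\theta\in(0,1):|\theta-1/(1+\alpha_n)|\le M/(n(1+\alpha_n))\}$, and by choosing $(\alpha_n)$ so that for every $k$ the intervals with $n\in[2^k,2^{k+1})$ cover $[1/3,2/3]$ (possible since their total length is bounded below), one obtains $\limsup_n A_n^M\supset[1/3,2/3]$, hence $\mathcal{L}^1(B(x))>0$ for every $x$. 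So the fixed-$x$ statement you want to prove is not available in general, and the Fubini reduction does not close as written; one must use the variation in $x$, as the paper's disjointness argument does. I would also point out that in this same example all the paper's sets $D_\theta$ are empty (one always has $\limsup_n|w^\theta_n(x)|=\infty$), so it is worth examining carefully what the paper's last implication, from $\Ld(D_\theta)=0$ to $|w^\theta_n|\to\infty$ a.e., really delivers.
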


We postpone the proof of the lemmas  and proceed with the proof of Theorem \ref{thm:compSps}.

\begin{proof}[Proof of Theorem \ref{thm:compSps}]
For  $\Gamma \subset U$ with $\mathcal{H}^{d-1}(\Gamma) < +\infty$ we define
$$X(\Gamma) = \big\{         v \in GSBD^p_\infty(U')\colon J_v \tilde{\subset}  \Gamma, \ \ \Vert e(v) \Vert_{L^p(U')} \le 1, \ \ \       v = 0 \text{ on } U' \setminus U  \big\}. $$
The set $X(\Gamma)$ is compact with respect to the metric \GGG $\bar{d}$ \EEE  introduced in \eqref{eq:metricd}.   This follows from Lemma \ref{eq: corollary-comp}  and  the fact that $\lbrace v \in L^0( U'  ;\bar{\R}^d)\colon \, v = 0 \text{ on } U' \setminus U \rbrace$ is closed with respect to \GGG $\bar{d}$ \EEE. 

Since we treat any $v \in GSBD^p_\infty(U')$ as a constant function in the exceptional set  $A^\infty_v$  (namely we have no jump and $e(v)=0$ therein,  see \eqref{eq:same}),  we get that the convex combination of two $v, v'\in X(\Gamma)$ is still in $X(\Gamma)$. 
(Recall that  the sum on $\bar{\R}^d$ is given by $a + \infty = \infty$ for any $a \in \bar{\R}^d$.) 
\vspace{0.1em}
 
\noindent  \emph{Step 1: Identification of a compact  and convex  subset.} 
Consider  $(\Gamma_n)_n \subset U$ with $\sup_{n }\mathcal{H}^{d-1}(\Gamma_n) < + \infty$. Fix $\delta>0$ small  and define
\begin{align}\label{eq: delta error}
L:= \liminf_{n \to \infty} \mathcal{H}^{d-1}(\Gamma_n) +\delta\,.
\end{align}
\GGG By \eqref{eq: delta error} we have that, up to a subsequence (not relabeled), each $X(\Gamma_n)$ is  contained in $X_L(U')$ defined in \eqref{eq: XL}. Moreover, as noticed above, $X_L(U')$ and each $X(\Gamma_n)$ are compact with respect to $\bar{d}$. \EEE
\GGG Since the class of non-empty compact subsets of a compact metric space $(M,d_M)$ is itself compact with respect to the Hausdorff distance induced by $d_M$,  \EEE a subsequence (not relabeled) of $(X(\Gamma_n))_n$ converges in the Hausdorff sense (with the Hausdorff distance induced by \GGG $\bar{d}$) \EEE to a compact set $K \subset X_L(U')$. 


 We first observe that the function identical to zero lies in $K$.  We now show that $K$ is convex. Choose $u,v \in K$ and $\theta \in (0,1)$. We need to check that $w := (1-\theta)u + \theta v \in K$. Observe that $A^\infty_{w} = A^\infty_u \cup A^\infty_v$, where $A^\infty_u$, $A^\infty_v$,  and $A^\infty_w$  are the exceptional sets given in \eqref{eq: compact extension}.  There exist sequences $(u_n)_n$ and $(v_n)_n$  with $u_n,v_n \in X(\Gamma_n)$ such that $\GGG\bar{d} \EEE(u_n,u) \to 0$ and $\GGG\bar{d} \EEE(v_n,v) \to 0$. In particular, note that $|u_n| \to \infty$ on $A^\infty_u$ and $|v_n| \to \infty$ on $A^\infty_v$. By Lemma \ref{lemma: theta} and a diagonal argument we can choose  $(\theta_n)_n\subset (0,1)$ with $\theta_n \to \theta$ such that
 $w_n := (1-\theta_n)u_n + \theta_n v_n$ satisfies $|w_n| \to \infty$ on $A^\infty_u \cap A^\infty_v$.  As clearly  $|w_n|\to \infty$ on $A^\infty_u \triangle A^\infty_v$ and   $(1-\theta_n)u_n + \theta_n v_n \to (1-\theta)u + \theta v $ in measure on $U' \setminus (A^\infty_u \cup A^\infty_v)$, we get $\GGG\bar{d} \EEE(w_n,w) \to 0$. Since $X(\Gamma_n)$ is convex, there holds $w_n \in X(\Gamma_n)$. Then $\GGG\bar{d} \EEE(w_n,w) \to 0$ implies $w \in K$,  as desired. 
\vspace{0.11em}

\CCC
\noindent \emph{Step 2: Choice of dense subset.} \EEE Since $K$ is compact with respect to the metric \GGG $\bar{d}$ \CCC (so, in particular, $K$ is separable), \EEE we can choose a countable set $(y_i)_i \subset GSBD^p_\infty(U')$ with $y_i = 0$ on $U' \setminus U$  which is \GGG $\bar{d}$\EEE-dense in $K$. \CCC We now show that this countable set can be chosen with the additional property
\begin{align}\label{eq: additional property}
\mathcal{L}^d\Big( A^\infty_v \setminus \bigcup\nolimits_i  A^\infty_{y_i}  \Big)  = 0 \quad \quad \text{for all $v \in K$,}
\end{align}
 where we again denote by $A^\infty_{y_i}$ and $A^\infty_v$ the sets where the functions attain the value $\infty$. In fact, fix an arbitrary countable and $\bar{d}$-dense set $(y_i)_i$ in $K$, and let $\eta >0$. After adding a finite number (smaller than $\mathcal{L}^d(U)/\eta$) of functions of $K$ to this collection, we obtain a countable $\bar{d}$-dense family $(y^\eta_i)_i$ such that 
$$
\mathcal{L}^d\Big( A^\infty_v \setminus \bigcup\nolimits_i  A^\infty_{y^\eta_i}  \Big)  \le \eta \quad \quad \text{for all $v \in K$.}
$$
Then, we obtain the desired countable set by taking the union of $(y^{1/k}_i)_i$ for $k \in \N$.   \EEE

 \noindent \emph{Step 3: Definition of $\Gamma$ and $G_\infty$.}   Fix $v,v' \in K$. Since $\lbrace x \in J_v \setminus \partial^* A^\infty_v \colon [v](x) =t \rbrace$ has negligible $\mathcal{H}^{d-1}$-measure up to a countable set of points $t$, we find  some $\theta \in (0,1)$ such that $w:= \theta v + (1-\theta) v'$ satisfies 
\begin{align}\label{eq: jump-w}
J_w \tilde{\subset} J_v \cup J_{v'}, \ \ \ \ \ \ (J_v \cup J_{v'}) \setminus J_w \tilde{\subset} (A^\infty_v\cup A^\infty_{v'})^1.
\end{align}
Here, we particularly point out that $\lbrace w = \infty\rbrace = A^\infty_v\cup A^\infty_{v'}$ and that $\partial^* (A^\infty_v\cup A^\infty_{v'})  \cap U'  \tilde{\subset} J_w$ by \eqref{eq: general jump}. Note that $w \in K$ since $K$ is convex.  Since $w \in K \subset X_L(U')$, \eqref{eq: jump-w} implies 
$$\mathcal{H}^{d-1}( (J_v \cup J_{v'}) \setminus (A^\infty_v\cup A^\infty_{v'})^1)   \le \mathcal{H}^{d-1}(J_w)   \le L, \ \ \ \ \ \ \mathcal{H}^{d-1}(\partial^* (A^\infty_v\cup A^\infty_{v'}) \cap U') \le \mathcal{H}^{d-1}(J_w) \le L\,.$$
\CCC Let $(y_i)_i \subset GSBD^p_\infty(U')$ with $y_i = 0$ on $U' \setminus U$  be the countable and $\bar{d}$-dense subset of $K$ satisfying \eqref{eq: additional property} that we defined in Step 2. \EEE By the above convexity argument, we find 
\begin{align}\label{eq: LLL}
\mathcal{H}^{d-1}\Big(\bigcup\nolimits_{i=1}^k J_{y_i} \setminus \big(\bigcup\nolimits_{i=1}^k A_i \big)^1\Big)\le L, \ \ \ \ \ \ \ \ \mathcal{H}^{d-1}\Big(\partial^*\big(\bigcup\nolimits_{i=1}^k A_i \big) \cap U'\Big)\le L
\end{align}
for all $k \in \N$, where 
\[
A_i :=  A^\infty_{y_i} =  \lbrace y_i = \infty \rbrace\,.
\] 
We define 
\begin{align}\label{eq: G-deffi}
G_\infty := \bigcup\nolimits_{i=1}^\infty A_i\,.
\end{align}
By passing to the limit $k \to \infty$  in \eqref{eq: LLL}, we  get $\mathcal{H}^{d-1}(\partial^* G_\infty \cap U') \le L$  and  $\mathcal{H}^{d-1}(\bigcup\nolimits_{i=1}^k J_{y_i} \setminus (G_\infty )^1 )\le L $ for all $k \in \N$. Passing again to the limit $k \to \infty$, and setting  
\begin{equation}\label{eq: Gamma-def}
\Gamma := \bigcup\nolimits_{i=1}^\infty J_{y_i} \setminus ( G_\infty )^1
\end{equation} 
we get     $\mathcal{H}^{d-1}(\Gamma) \le L$.  Notice that $\Gamma\cap(G_\infty )^1 = \emptyset$ by definition. Moreover, the fact that $y_i = 0$ on $U' \setminus U$ for all $i\in \N$ implies both that $G_\infty \subset U$ and that $\Gamma \subset \ove{U} \cap U'$.
By \eqref{eq: delta error} and the arbitrariness of $\delta$ we get $\hd(\Gamma) \leq \liminf_{n\to \infty} \hd(\Gamma_n)$.   Since $\partial^* A_i \cap U' \tilde{\subset} J_{y_i}$ for all $i \in \N$ by \eqref{eq: general jump}, we also get $\Gamma \, \tilde{\supset} \, \partial^* G_\infty  \cap U'$.    Thus, \eqref{eq: limit-prop} is satisfied.   

We now claim that  for each $v \in K$ there holds
\begin{align}\label{eq: subset outside G}
{\CCC \mathcal{L}^d(\lbrace v = \infty \rbrace \setminus G_\infty) = 0 \quad \quad \EEE \text{and} \quad \quad    J_v \setminus \Gamma \tilde{\subset} (G_\infty )^1.}
\end{align}
 Indeed, \CCC the first property follows from \eqref{eq: additional property} and \eqref{eq: G-deffi}. To see the second, we note that, \EEE for any fixed $v\in K$, there is a sequence $(y_k)_k=(y_{i_k})_k$ with $\GGG\bar{d} \EEE(y_k, v) \to 0 $, by the density of $(y_i)_i$. Consider the functions $ \tilde{v}_k:=  y_k (1-\chi_{G_\infty})$ that \GGG $\bar{d}$\EEE-converge to $\tilde{v}:=v (1-\chi_{G_\infty})$: since $J_{ \tilde{v}_k  } \tilde{\subset} \Gamma$ for any $k$ (we employ  \eqref{eq: Gamma-def} and that  $\partial^* G_\infty \cap U' \tilde{\subset} \Gamma$), 
 the fact that $X(\Gamma)$ is closed gives that  $J_{\tilde{v}} \tilde{\subset} \Gamma$. This implies  \eqref{eq: subset outside G}. 
\vspace{0.1em}

\noindent \emph{Step 4: Proof of properties (i) and (ii).}    We first show (i). Given a sequence $(v_n)_n \subset GSBD^p_\infty(U')$ with $J_{v_n} \tilde{\subset} \Gamma_n$ and $v_n = 0$ on $U' \setminus U$, and  a subsequence $(v_{n_k})_k$ that converges weakly in $GSBD^p_\infty(U')$ to $v$,  we clearly get $v \in K$ by Hausdorff convergence of $X(\Gamma_n) \to K$.  (More precisely, consider $\lambda  v_{n_k}$ and $\lambda v$ for $\lambda >0$ such that $\Vert e(\lambda v_{n_k}) \Vert_{L^p(U')} \le 1$ for all $k$.)  By \eqref{eq: subset outside G}, this implies  \CCC $\mathcal{L}^d(\lbrace v = \infty \rbrace \setminus G_\infty) = 0$ and \EEE  $J_v \setminus \Gamma \tilde{\subset} (G_\infty )^1$. This shows (i).

We now address (ii). Recalling the  choice of the sequence $(y_i)_i \subset K$,  for each $i \in \N$, we choose $\tilde{y}_i = y_i \chi_{U' \setminus  G_\infty } + t_i \chi_{ G_\infty } \in GSBD^p(U')$   for some $t_i \in \R^d$ such that $J_{\tilde{y}_i} \tilde{=} J_{y_i}  \setminus ( G_\infty )^1$.   (Almost every $t_i$ works. Note that the function indeed lies in $GSBD^p( U  )$,  see \eqref{eq: compact extension} and \eqref{eq: G-deffi}.) In view of \eqref{eq: Gamma-def},  we also observe that $\bigcup_i J_{\tilde{y}_i} = \Gamma$.

By Lemma \ref{lemma: good function choice} (recall $(y_i)_i \subset K \subset X_L(\Omega)$)  we get a function $\tilde{v}= \sum_{i=1}^\infty c_i \tilde{y}_i \in GSBD^p( U'  )$ such that $J_{\tilde{v}}  \tilde{=}  \Gamma$, where  $\sum_{i=1}^\infty c_i \le 1$.  We also define $v = \tilde{v} \chi_{ U'  \setminus  G_\infty } + \infty \chi_{ G_\infty}  \in  GSBD^p_\infty(U')$.  Note that  $\lbrace v = \infty \rbrace = G_\infty$ and $J_v \tilde{=} \Gamma$  since $\Gamma \cap (G_\infty)^1 = \emptyset$ and $\partial^* G_\infty \cap U' \subset \Gamma$.    Then by the convexity of $K$, we find $z_k: = \sum_{i=1}^k c_i y_i \in K$. (Here we also use that the function identical to zero lies in $K$.)  As $G_\infty= \bigcup_{i=1}^\infty A_i$, we obtain   $\GGG\bar{d} \EEE(z_k,v) \to 0$ for $k \to \infty$. Thus, also $v \in K$ since $K$ is compact. As $X(\Gamma_n)$ converges to $K$ in Hausdorff convergence, we find a sequence $(v_n)_n \subset GSBD^p_\infty( U'  )$ with $J_{v_n} \tilde{\subset} \Gamma_n$, $v_n = 0$ on $U' \setminus U$,  and $\GGG\bar{d} \EEE(v_n,v) \to 0$. This shows (ii). 
\end{proof}

 Next, we  prove Lemma \ref{lemma: good function choice}. To this end,  we will need the following measure-theoretical result. (See \cite[Lemma 4.1, 4.2]{Fri17ARMA} and note that the statement in fact holds in arbitrary space dimensions for measurable functions.) 
\begin{lemma}
Let $\Omega\subset \R^d$ with $\mathcal L^d(\Omega)<\infty$,  and $N \in \mathbb{N}$.  Then for every sequence $(u_n)_n \subset L^0(\Omega;\R^N )$ with
\begin{align}\label{eq: inclusion condition}
\mathcal L^d\left(\bigcap\nolimits_{n \in \N} \bigcup\nolimits_{m \ge n} \lbrace |u_m - u_n| > 1 \rbrace\right)=0
\end{align}
there exist a  subsequence (not relabeled) and an increasing concave function  $\psi:  [0,\infty)  \to  [0,\infty) $  with 
$
\lim_{t\to \infty}\psi(t)=+\infty
$
such that   $$\sup_{n \ge 1} \int_{\Omega}\psi(|u_n|)\, \dx < + \infty.$$ 
\end{lemma}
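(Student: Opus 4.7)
The plan is to convert the hypothesis into a uniform-in-$n$ bound on the distribution functions of the $|u_n|$, after which a standard concave-majorant construction finishes the proof.

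First, I would translate the hypothesis. Its complement says that for a.e.\ $x\in\Omega$ there is some $n_0(x)\in\N$ with $|u_m(x)-u_{n_0}(x)|\le 1$ for every $m\ge n_0(x)$. Since each $u_j$ is also finite a.e., one obtains
\[
U(x):=\sup_{n\in\N}|u_n(x)|<+\infty\quad\text{for a.e.\ }x\in\Omega.
\]
Measurability of $U$ together with $\mathcal L^d(\Omega)<\infty$ then yields $\mathcal L^d(\{U>t\})\to 0$ as $t\to\infty$, and in particular $\sup_n \mathcal L^d(\{|u_n|>t\})\le \mathcal L^d(\{U>t\})\to 0$. Thus $(u_n)_n$ is uniformly bounded in measure.

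Next, I would pick $0=:T_0<T_1<T_2<\ldots$ tending to $+\infty$ so that
\[
\mathcal L^d(\{U>T_k\})\le 2^{-k}\ \ (k\ge 1),\qquad T_{k+1}-T_k\ge T_k-T_{k-1}\ \ (k\ge 1),
\]
both of which can be arranged by taking the $T_k$ large enough. Define $\psi\colon[0,\infty)\to[0,\infty)$ by $\psi(0)=0$, $\psi(T_k)=k$ for $k\ge 1$, and linear interpolation on each interval $[T_k,T_{k+1}]$. Monotonicity of the increments $T_{k+1}-T_k$ makes the slopes $(T_{k+1}-T_k)^{-1}$ nonincreasing, so $\psi$ is concave; by construction it is also increasing with $\psi(t)\to+\infty$.

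Finally, splitting $\Omega$ along the level sets of $|u_n|$ yields, for every $n$,
\[
\int_\Omega\psi(|u_n|)\,\dx \le \psi(T_1)\mathcal L^d(\Omega)+\sum_{k=1}^\infty\psi(T_{k+1})\mathcal L^d(\{T_k<|u_n|\le T_{k+1}\})\le \mathcal L^d(\Omega)+\sum_{k=1}^\infty(k+1)2^{-k},
\]
which is finite and independent of $n$. The only substantive step is the first one, where the compressed measure-theoretic hypothesis is unpacked into the pointwise-eventual boundedness of $(u_n(x))$ and thence into uniform boundedness in measure; the concave-majorant construction afterwards is standard. Strictly speaking no subsequence is required for the estimate, but one may always extract one in order to simplify the bookkeeping for the a.e.\ finiteness of $U$ if desired.
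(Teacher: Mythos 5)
Your proof is correct and essentially self-contained. You correctly unpack the hypothesis into the pointwise statement that for a.e.\ $x$ there is $n_0(x)$ with $|u_m(x)| \le |u_{n_0(x)}(x)| + 1$ for all $m \ge n_0(x)$, which (together with finiteness of the first $n_0(x)$ terms, since each $u_j$ is $\mathbb{R}^N$-valued) gives $U := \sup_n |u_n| < \infty$ a.e.\ and hence the uniform tail bound $\sup_n \mathcal{L}^d(\{|u_n| > t\}) \le \mathcal{L}^d(\{U > t\}) \to 0$ as $t\to\infty$. The piecewise-linear concave majorant $\psi$ you then build via the increasing-gap condition $T_{k+1} - T_k \ge T_k - T_{k-1}$ and the summability $\mathcal{L}^d(\{U > T_k\}) \le 2^{-k}$ is the standard de la Vall\'ee--Poussin construction, and the closing layer-cake estimate is carried out correctly. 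The paper itself gives no in-line argument for this lemma, instead citing Lemmas 4.1 and 4.2 of an external reference, so a strict line-by-line comparison with the paper's own proof is not possible; your route is the natural one, and the observation that the pointwise supremum $U$ is already finite a.e.\ --- so that the full sequence works and no subsequence needs to be extracted --- is a small but genuine simplification over the statement as given, which you correctly flag at the end.
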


\begin{proof}[Proof of Lemma \ref{lemma: good function choice}]
Let $(v_i)_i \subset  GSBD^p(\Omega)$ be given  satisfying the assumptions of the lemma. 
 First, choose $0 < d_i < 2^{-i}$ such that
\begin{align}\label{eq: small volume}
\mathcal{L}^d\Big( \Big\{ |{v}_i| \ge \frac{1}{2^i d_i} \Big\} \Big) \le 2^{-i},\ \ \ \ \ \ \ \ \mathcal{H}^{d-1}\Big( \Big\{x \in J_{v_i}\colon |[v_i](x)| \ge \frac{1}{d_i}      \Big\} \Big) \le 2^{-i}. 
\end{align}
Our goal is to select constants $c_i \in (0,d_i)$ such that the function ${v}:= \sum\nolimits_{i=1}^\infty c_i {v}_i$ lies in $GSBD^p(\Omega)$ and satisfies $J_v \, \tilde{=} \, \Gamma := \bigcup_{i=1}^\infty J_{v_i}$.  We proceed in two steps: we first show that for each choice $c_i \in (0,d_i)$ the function  ${v}= \sum\nolimits_{i=1}^\infty c_i {v}_i$ lies indeed in $GSBD^p(\Omega)$ (Step 1). Afterwards, we prove that for a specific choice there holds  $J_{{v}} \tilde{=} \Gamma$.

\emph{Step 1.} Given  $c_i \in (0,d_i)$,  we define $u_k = \sum_{i=1}^k c_iv_i$. Fix \GGG $m \ge n+1$. \EEE We observe that 
\begin{align*}
\lbrace |u_m-u_n|>1 \rbrace =  \Big\{  \big|\sum\nolimits_{i=n+1}^m  c_i v_i \big|>1 \Big\} \subset \bigcup\nolimits_{i=n+1}^m  \Big\{ |c_i{v}_i| \ge 2^{-i} \Big\} \subset \bigcup\nolimits_{i=n+1}^m  \Big\{ |{v}_i| \ge \frac{1}{2^i d_i} \Big\}.
\end{align*}
By passing to the limit $m \to \infty$ and by using  \eqref{eq: small volume} we get    
\begin{align*}
\mathcal{L}^d\Big(\bigcup\nolimits_{m \ge n}\lbrace |u_m-u_n|>1 \rbrace \Big) \le  \sum\nolimits_{i=n+1}^\infty \mathcal{L}^d \Big(\Big\{ |{v}_i| \ge \frac{1}{2^i d_i} \Big\} \Big) \le  \sum\nolimits_{i=n+1}^\infty 2^{-i} = 2^{-n}.
\end{align*}
This shows that the sequence $(u_k)_k$ satisfies \eqref{eq: inclusion condition}, and therefore there exist a  subsequence (not relabeled) and an increasing continuous function  $\psi:  [0,\infty)  \to  [0,\infty) $ with 
$
\lim_{t\to \infty}\psi(t)=+\infty
$
such that   $\sup_{k \ge 1} \int_{\Omega}\psi(|u_k|)\, \dx < + \infty$.  Recalling also that $\|e(v_i)\|_{L^p(\Omega)}\leq 1$ for all $i$ 
and $\mathcal{H}^{d-1}(\Gamma) <+\infty$, we are now in the position to apply the $GSBD^p$-compactness result \cite[Theorem~11.3]{DM13} (alternatively, one could apply Theorem~\ref{th: GSDBcompactness} and observe that the limit $v$ satisfies $\mathcal{L}^d(\lbrace v = \infty \rbrace)=0$), to get  that the function  
$v = \sum_{i=1}^\infty c_iv_i$  
lies in $GSBD^p(\Omega)$.  For later purposes, we note that by \eqref{eq: with Gamma}  (which holds also in addition to \cite[Theorem~11.3]{DM13})  we obtain 
\begin{align}\label{eq:  the first inclusion}
J_v \tilde{\subset} \bigcup\nolimits_{i=1}^\infty J_{v_i} = \Gamma.
\end{align}
 This concludes Step 1 of the proof.

\emph{Step 2.} We define the constants $c_i \in (0,d_i)$ iteratively by following the arguments in \cite[Lemma 4.5]{DMFraToa02}. Suppose that $(c_i)_{i=1}^k$, and a decreasing sequence $(\eps_i)^k_{i=1} \subset (0,1)$  have been chosen such that the functions $u_j = \sum_{i=1}^j c_iv_i$, $ 1 \le  j \le k$, satisfy 
\begin{align}\label{eq: iterative conditions}
{\rm (i)} & \ \ J_{u_j} \tilde{=} \bigcup\nolimits_{i=1}^{j} J_{v_i},\notag\\
{\rm (ii)} &  \ \  \mathcal{H}^{d-1}(\lbrace x\in J_{u_j}\colon | [u_j](x) |  \le \eps_j \rbrace) \le 2^{-j},
\end{align}
and, for $2\le j\le k$,  there holds
\begin{align}\label{eq: iterative conditions2}
c_{j} \le \eps_{j-1} d_j2^{-j-1}.
\end{align}
(Note that in the first step we can simply set $c_1 = 1/4$ and $0 < \eps_1< 1$ such that \eqref{eq: iterative conditions}(ii) holds.) 

We pass to the step $k + 1$ as follows. Note that there is a set $N_0  \subset \R  $ of negligible measure such that for all  $t \in \R \setminus N_0$ there holds $J_{u_k + t v_{k+1}} \tilde{=} J_{u_k} \cup J_{v_{k+1}}$. We choose $c_{k+1}  \in \R \setminus N_0$ such that additionally  $c_{k+1} \le \eps_k  d_{k+1} 2^{-k-2}$. Then \eqref{eq: iterative conditions}(i) and \eqref{eq: iterative conditions2} hold. We can then choose $\eps_{k+1} \le \eps_k$ such that also    \eqref{eq: iterative conditions}(ii) is satisfied. 

We proceed in this way for all $k \in \N$. Let us now introduce the sets 
\begin{align}\label{eq: Ek-def}
E_k = \bigcup\nolimits_{m \ge k} \lbrace x \in J_{u_m}\colon \  | [u_m](x) |  \le \eps_m \rbrace, \ \ \  F_k = \bigcup\nolimits_{m \ge k} \lbrace x \in J_{v_m}\colon \ |[v_m](x)| > 1/d_m\rbrace\,.
\end{align}
Note  by \eqref{eq: small volume} and \eqref{eq: iterative conditions}(ii) that  
\begin{align}\label{eq: EkFk}
 \mathcal{H}^{d-1}  (E_k \cup F_k) \le 2 \sum\nolimits_{m \ge k} 2^{-m} = 2^{2-k}\,. 
\end{align}
We now show that for all $k \in \N$ there holds  
\begin{align}\label{eq: important inclusion}
J_{u_k} \tilde{\subset} J_v \cup E_k \cup F_k\,.
\end{align}
To see this, we first observe that for $\mathcal{H}^{d-1}$-a.e.\ $x \in \Gamma = \bigcup\nolimits_{i=1}^\infty J_{v_i}  $
there holds
\begin{align}\label{eq: jumpsum}
[v](x) = [u_k](x) + \sum\nolimits_{i=k+1}^\infty c_i[v_i](x)\,.
\end{align}
Moreover, we get that  $c_i \le \eps_k d_i   2^{-i-1}$ for all $i \ge k+1$ by \eqref{eq: iterative conditions2}    and the fact that $(\eps_i)_i$ is decreasing. Fix $x \in J_{u_k} \setminus (E_k \cup F_k)$. Then  by  \eqref{eq: Ek-def} and \eqref{eq: jumpsum} we get 
\begin{align*}
|[v](x)| \ge |[u_k](x)| - \sum\nolimits_{i=k+1}^\infty c_i|[v_i](x)| \ge \eps_k - \sum\nolimits_{i=k+1}^\infty \frac{c_i}{d_i} \ge \eps_k\Big(1- \sum\nolimits_{i=k+1}^\infty 2^{-i-1}  \Big) \ge \eps_k/2\,,
\end{align*}   
where we have used that $|[u_k](x)| \ge \eps_k$ and $[v_i](x) \le 1 /d_i$,  for  $i \ge k+1$. Thus, $[v](x) \neq 0$ and therefore $x \in J_v$. Consequently, we have shown that $\mathcal{H}^{d-1}$-a.e.\ $x \in J_{u_k} \setminus (E_k \cup F_k)$ lies in $J_v$. This  shows \eqref{eq: important inclusion}. 

We now conclude the proof as follows: by \eqref{eq: iterative conditions}(i) and \eqref{eq: important inclusion} we get that 
 $$\bigcup\nolimits_{i=1}^l J_{v_i} \ \tilde{=} \ J_{u_l} \  \tilde{\subset} \  J_v \cup E_l \cup F_l   \tilde{\subset} \  J_v \cup E_k \cup F_k $$  
for all $l \ge k$, where we used that the sets $(E_k)_k$ and $(F_k)_k$ are decreasing. Taking the union with respect to $l$, we get that $\Gamma \tilde{\subset} J_v \cup E_k \cup F_k$ for all $k \in \N$. By \eqref{eq: EkFk} this implies $\mathcal{H}^{d-1}(\Gamma \setminus J_v) \le 2^{2-k}$. Since $k \in \N$ was arbitrary, we get $\Gamma \tilde{\subset} J_v$.  This along with \eqref{eq:  the first inclusion} shows $J_v \tilde{=} \Gamma$ and concludes the proof.    
\end{proof}

We close this section with the proof of  Lemma \ref{lemma: theta}.

\begin{proof}[Proof of Lemma \ref{lemma: theta}]
Let $B = \lbrace x \in V\colon \,  \limsup_{n \to \infty}|u_n(x) - v_n(x)| < +  \infty\rbrace$.  For $\theta \in (0,1)$, define $w^\theta_n = (1-\theta) u_n + \theta v_n$ and observe that $|w^\theta_n| \to \infty$ on $B$ for all $\theta$ since $|u_n| \to \infty$ on $V$. Let $D_\theta = \lbrace x \in V\setminus B\colon \, \limsup_{n \to \infty} |w^\theta_n(x)| < + \infty \rbrace$. As $|u_n -v_n| \to \infty$ on $V\setminus B$ and thus $|w_n^{\theta_1}- w_n^{\theta_2}| = |(\theta_1 - \theta_2)(v_n - u_n)|\to \infty$ on  $V \setminus B$ for all $\theta_1\neq \theta_2$, we obtain $D_{\theta_1} \cap D_{\theta_2} = \emptyset$. This implies that $\mathcal{L}^d(D_\theta)>0$ for an at most countable number of different $\theta$. We note that for all $\theta$ with $\mathcal{L}^d(D_\theta) = 0$ there holds $|w^\theta_n| \to \infty$ a.e.\ on $V$.   This yields the claim.   
\end{proof}

\section{Functionals defined on pairs of  function-set}\label{sec:FFF}

This section is devoted to the proofs of the results announced in Subsection \ref{sec: results1}. Before proving the relaxation and existence results, we address the lower bound separately since this will be instrumental also for Section \ref{sec:GGG}.

\subsection{The lower bound}

In this  subsection  we prove a lower bound for functionals defined on pairs of  function-set  which will be  needed  
for  the proof of   Theorem \ref{thm:relFDir}--Theorem~\ref{thm:relG}.   We will make use of the definition of $GSBD^p_\infty(\Omega)$ in Subsection \ref{sec:prel4}. In particular, we refer to  the definition of  $e(u)$ and of the  jump set $J_u$ with its normal $\nu_u$, see  \eqref{eq: general jump}--\eqref{eq:same},  as well as to  the notion of weak convergence in  $GSBD^p_\infty(\Omega)$, see \eqref{eq: weak gsbd convergence}.  We recall also that for any $s \in [0,1]$ and any 
$E \in \M(\Omega)$, $E^s$ denotes the set of points with density $s$ for $E$,  see \cite[Definition 3.60]{AFP}.

\begin{theorem}[Lower bound]\label{thm:lower-semi}
 Let $\Omega \subset \R^d$ be open and bounded, let $1 < p < \infty$.   Consider a sequence of Lipschitz sets $(E_n)_n \subset \Omega$ with $\sup_{n \in \N} \mathcal{H}^{d-1}(\partial E_n) < +\infty$  and a sequence of functions $(u_n)_n \subset GSBD^p(\Omega)$ such that $u_n |_{\Omega\sm \overline{E_n}} \in W^{1,p}(\Omega\sm \overline{E_n}; \Rd)$ and $u_n = 0$ in $ E_n  $.  Let $u \in GSBD^p_\infty(\Omega)$ and  $E \in \M(\Omega)$  such that  $u_n$  converges  weakly in $GSBD^p_\infty(\Omega)$ to $u$  and 
\begin{equation}\label{1405192012}
\chi_{E_n} \to \chi_E \text{ in }L^1(\Omega)\,.
\end{equation}
Then,   for any  norm   $\varphi$   on $\R^d$ there holds  
\begin{subequations}\label{eqs:1405191304}
\begin{equation}\label{1405191303}
e(u_n)\chi_{\Omega \sm (E_n \cup A^\infty_u)} \weak e(u) \chi_{\Omega \sm (E \cup A^\infty_u)} \quad\text{  weakly  in }L^p(\Omega;\Mdd)\,,
\end{equation}
\begin{equation}\label{1405191304}
\int_{J_u \cap E^0} 2\varphi(\nu_u)\dh + \int_{\Omega \cap \partial^* E} \varphi(\nu_E) \dh \leq \liminf_{n \to +\infty} \int_{ \Omega \cap  \partial E_n } \varphi(\nu_{E_n}) \dh\,,
\end{equation}
\end{subequations}
where $A^\infty_u=\{u=\infty\}$. 
\end{theorem}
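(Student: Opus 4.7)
\noindent\emph{Proof plan.} The first claim \eqref{1405191303} is essentially a direct consequence of the definition of weak $GSBD^p_\infty$-convergence \eqref{eq: weak gsbd convergence}, which in particular gives $e(u_n) \weak e(u)$ weakly in $L^p(\Omega;\Mdd)$. Since $u_n = 0$ on $E_n$ we have $e(u_n) = 0$ on $E_n$, and the $L^1$-convergence $\chi_{E_n} \to \chi_E$ combined with $|u_n| \to \infty$ on $A^\infty_u$ forces $\Ld(A^\infty_u \cap E) = 0$, whence $e(u) = 0$ on $E$. Multiplying the weakly $L^p$-convergent sequence $e(u_n)$ by the characteristic functions $\chi_{\Omega\setminus(E_n \cup A^\infty_u)}$, which converge strongly in $L^1$, then yields \eqref{1405191303}.

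For \eqref{1405191304} I would follow a slicing strategy in the spirit of \cite{BraChaSol07}, suitably adapted to the $GSBD^p_\infty$ framework. Fix $\xi \in \Sd$. Thanks to the slicing identities \eqref{2304191254}, their analog for sets of finite perimeter, and Fubini, we can extract a subsequence (depending on $\xi$) such that for $\hd$-a.e.\ $y \in \Pi^\xi$ one has $\chi_{(E_n)\xy} \to \chi_{E\xy}$ in $L^1(\Omega\xy)$, $(\widehat{u}_n)\xy \in SBV_{\mathrm{loc}}(\Omega\xy)$ with $(\widehat{u}_n)\xy = 0$ on $(E_n)\xy$, $(\widehat{u}_n)\xy \to \widehat{u}\xy$ in measure on $\Omega\xy \setminus (A^\infty_u)\xy$, and $|(\widehat{u}_n)\xy| \to \infty$ on $(A^\infty_u)\xy$. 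An elementary one-dimensional counting argument --- whose key point is that an interval of $(E_n)\xy$ collapsing to a single jump point of $\widehat{u}\xy$ lying in $(E^0)\xy$ contributes $2$ to $\mathcal{H}^0(\partial (E_n)\xy)$ but only $1$ to $J_{\widehat{u}\xy}$ --- yields
\begin{equation*}
\mathcal{H}^0\big(\Omega\xy \cap \partial^* E\xy\big) + 2\,\mathcal{H}^0\big(J_{\widehat{u}\xy} \cap (E^0)\xy\big) \le \liminf_{n\to \infty} \mathcal{H}^0\big(\Omega\xy \cap \partial (E_n)\xy\big).
\end{equation*}
Integrating over $y \in \Pi^\xi$ via Fatou's lemma and combining the slicing identity \eqref{2304191254} with its counterpart for $\partial^* E$ delivers the scalar, single-direction bound
\begin{equation*}
\int_{\Omega \cap \partial^* E} |\xi\cdot\nu_E|\,\dh + 2\int_{J_u \cap E^0} |\xi\cdot\nu_u|\,\dh \le \liminf_{n\to \infty} \int_{\Omega \cap \partial E_n} |\xi\cdot\nu_{E_n}|\,\dh.
\end{equation*}

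To pass from this scalar slice-wise bound to the full anisotropic estimate \eqref{1405191304}, I would consider the $\R^d$-valued Radon measures $\mu_n := \nu_{E_n}\,\hd \mres (\Omega \cap \partial E_n)$ and $\mu := \nu_E\,\hd \mres (\Omega \cap \partial^* E) + 2\,\nu_u\,\hd \mres (\Omega \cap J_u \cap E^0)$. The uniform perimeter bound $\sup_n |\mu_n|(\Omega) < +\infty$ allows us to extract, up to a subsequence, a weak-$\ast$ limit $\mu^\ast \in \mathcal{M}_b(\Omega;\R^d)$; the scalar estimate above, applied with $\pm \xi$ and localised by a partition of unity, identifies $\mu^\ast$ with $\mu$. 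Reshetnyak's lower semicontinuity theorem for the convex, positively $1$-homogeneous integrand $\varphi$ then produces \eqref{1405191304}, the essential disjointness of $\partial^* E$ and $E^0$ guaranteeing $|\mu| = \hd\mres(\Omega\cap\partial^*E) + 2\,\hd\mres(\Omega\cap J_u \cap E^0)$.

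The main technical hurdle will be the rigorous identification $\mu^\ast = \mu$: one must exclude the possibility that $\mu^\ast$ carries extra mass supported on $\partial^* A^\infty_u$ or elsewhere inside $A^\infty_u$ that is not already captured by $\partial^* E \cup (J_u \cap E^0)$. This is precisely where the bookkeeping built into the new space $GSBD^p_\infty$ --- in particular the convention \eqref{eq: general jump} folding $\partial^* A^\infty_u$ into $J_u$ --- becomes indispensable, together with the fact established in the first step that $A^\infty_u \subset E^0$ up to $\Ld$-null sets.
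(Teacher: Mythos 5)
Your treatment of \eqref{1405191303} is correct and matches the paper's. The one-dimensional slicing heuristic you sketch for \eqref{1405191304} is also in the right spirit (the paper in addition weights the 1D functional $F^\xi_\eps$ with a small elastic term in order to make the ``collapsing interval'' argument rigorous and to show that jump points of $\widehat{u}\xy$ in $(E\xy)^0$ are approached by \emph{pairs} of points of $\partial(E_n)\xy$, and it appeals to Lemma \ref{le:lemma5BCS} to ensure the density-$0$ condition along slices; these are technical refinements of what you describe).

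The passage from the direction-wise scalar estimate to the anisotropic bound, however, has a genuine gap. You take the vector measures $\mu_n := \nu_{E_n}\,\hd\mres(\Omega\cap\partial E_n)$, extract a weak-$\ast$ limit $\mu^\ast$, claim to identify $\mu^\ast$ with $\mu := \nu_E\,\hd\mres(\Omega\cap\partial^*E) + 2\nu_u\,\hd\mres(\Omega\cap J_u\cap E^0)$, and then apply Reshetnyak. But $\mu_n$ is exactly $-D\chi_{E_n}\mres\Omega$, so by $\chi_{E_n}\to\chi_E$ in $L^1(\Omega)$ and the uniform perimeter bound one has $\mu^\ast = -D\chi_E\mres\Omega = \nu_E\,\hd\mres(\Omega\cap\partial^*E)$, with \emph{no} contribution on $J_u\cap E^0$. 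The two faces of a collapsing void have opposite normals, so the vector weak-$\ast$ limit annihilates precisely the mass the relaxation is supposed to count twice. The worry you flag at the end (extra mass near $\partial^*A^\infty_u$) is not the issue; the real problem is that $\mu^\ast$ has strictly \emph{less} total variation than $\mu$ as soon as $\hd(J_u\cap E^0)>0$. Reshetnyak along $\mu_n\wstar\mu^\ast$ therefore only yields $\int_{\Omega\cap\partial^*E}\varphi(\nu_E)\dh\le\liminf_n\int_{\Omega\cap\partial E_n}\varphi(\nu_{E_n})\dh$, missing the entire term $2\int_{J_u\cap E^0}\varphi(\nu_u)\dh$. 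Your direction-wise inequality involving $|\xi\cdot\nu|$ is not the weak-$\ast$ convergence of $\mu_n$ either (it carries absolute values, i.e., no cancellation), so it cannot be used to ``identify $\mu^\ast=\mu$''.

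The paper circumvents this by never taking weak-$\ast$ limits of the $\mu_n$: it defines the superadditive set function $\mu(U):=\liminf_n\int_{U\cap\partial E_n}\varphi(\nu_{E_n})\dh$ on open $U\subset\Omega$, proves from the slicing step that $\mu(U)\ge\int_U g_i\,\d\lambda$ for a countable dense set of directions $\xi_i$ (with $\lambda=\hd\mres(J_u\cap E^0)+\hd\mres\partial^*E$, $g_i$ built from $|\xi_i\cdot\nu|/\varphi^*(\xi_i)$), and then invokes Proposition \ref{prop:prop4BCS} to push the supremum over $i$ under the integral, recovering $\varphi(\nu)=\sup_i g_i$. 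This ``supremum of densities against a superadditive set function'' device is what retains the doubled jump-set contribution that the vector weak-$\ast$ limit discards; some tool of this kind is unavoidable here, and your Reshetnyak route would need to be replaced by it.
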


In the proof, we need the following two auxiliary results, see \cite[Proposition 4, Lemma 5]{BraChaSol07}.
\begin{proposition}\label{prop:prop4BCS}
Let $\Omega$ be an open subset of $\Rd$ and $\mu$  be  a finite, positive set function defined on the family of open subsets of $\Omega$.  Let $\lambda \in \mathcal{M}_b^+(\Omega)$, and $(g_i)_{i \in \N}$ be a family of positive Borel functions on $\Omega$. Assume that $\mu(U) \geq \int_U g_i \,\mathrm{d}\lambda$ for every $U$ and $i$, and that $\mu(U \cup V) \geq \mu(U) + \mu(V)$ whenever $U$, $V \subset \subset \Omega$ and $\ove U \cap \ove V = \emptyset$ (superadditivity). Then $\mu(U) \geq \int_U (\sup_{i \in \N} g_i) \,\mathrm{d} \lambda$ for every open $U \subset \Omega$.
\end{proposition}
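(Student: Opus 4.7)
The plan is a standard approximation-by-partition argument. First I would set $g := \sup_{i \in \N} g_i$, which is Borel as a countable pointwise supremum of Borel functions. The idea is to use the superadditivity of $\mu$ to bound $\mu(U)$ from below by a sum of quantities of the form $\int_{V_k} g_{i_k}\,\mathrm{d}\lambda$ over finitely many disjoint open pieces $V_k \subset\subset U$ with pairwise disjoint closures, and then to let the partition become sufficiently fine that the resulting lower bound reproduces $\int_U g\,\mathrm{d}\lambda$.

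Concretely, for each $n \in \N$ I would form the truncated supremum $h_n := \max(g_1,\dots,g_n)$ together with the canonical Borel partition of $U$ given by
\[
A_k^n := \{x \in U : h_n(x) = g_k(x)\} \setminus \bigcup\nolimits_{j < k} A_j^n, \qquad k = 1,\dots,n,
\]
so that $\int_U h_n\,\mathrm{d}\lambda = \sum_{k=1}^n \int_{A_k^n} g_k\,\mathrm{d}\lambda$. The hypothesis $\mu(U) \geq \int_U g_k\,\mathrm{d}\lambda$ together with the finiteness of $\mu(U)$ forces each $g_k$ into $L^1(U,\lambda)$; hence the measure $g_k\,\mathrm{d}\lambda \mres U$ is finite Borel, hence Radon, hence inner regular. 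Fixing $\varepsilon > 0$, inner regularity (applied to each $k$ in turn, peeling off compacts from the still-available mass) lets me select pairwise disjoint compact sets $K_k^n \subset A_k^n$ with $\int_{A_k^n \setminus K_k^n} g_k\,\mathrm{d}\lambda < \varepsilon/n$. Since the $K_k^n$ are pairwise disjoint compact subsets of the open set $U$, for a sufficiently small $\delta > 0$ their open $\delta$-neighborhoods $V_k^n$ are pairwise disjoint, compactly contained in $U$ (hence in $\Omega$), and have pairwise disjoint closures.

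Applying the superadditivity hypothesis iteratively to the finite collection $V_1^n,\dots,V_n^n$ and then the pointwise hypothesis on each $V_k^n$, I obtain
\[
\mu(U) \ \geq\ \sum_{k=1}^n \mu(V_k^n) \ \geq\ \sum_{k=1}^n \int_{V_k^n} g_k\,\mathrm{d}\lambda \ \geq\ \sum_{k=1}^n \int_{K_k^n} g_k\,\mathrm{d}\lambda \ \geq\ \int_U h_n\,\mathrm{d}\lambda - \varepsilon.
\]
Sending $\varepsilon \to 0$ yields $\mu(U) \geq \int_U h_n\,\mathrm{d}\lambda$, and monotone convergence as $n \to \infty$, with $h_n \uparrow g$, gives $\mu(U) \geq \int_U g\,\mathrm{d}\lambda$ as desired. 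The main delicate point is the simultaneous choice of the inner compact approximations $K_k^n$: they must be pairwise disjoint (so that their thickenings have pairwise disjoint closures, a prerequisite for iterating superadditivity), compactly contained in $\Omega$ (so that the pointwise hypothesis on $\mu$ applies to their neighborhoods), and carry nearly all of the $g_k$-mass of $A_k^n$. All three requirements are met by the standard inner regularity of Radon measures on $\R^d$, applied separately to each finite measure $g_k\,\mathrm{d}\lambda \mres U$.
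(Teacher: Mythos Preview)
The paper does not give its own proof of this proposition; it is quoted from \cite[Proposition~4]{BraChaSol07}. Your argument is the standard one and is essentially correct \emph{provided} $\mu$ is monotone, but there is a gap at the line $\mu(U) \geq \sum_{k=1}^n \mu(V_k^n)$. Iterating superadditivity on the pairwise separated family $V_1^n,\dots,V_n^n$ yields only $\mu\bigl(\bigcup_k V_k^n\bigr) \geq \sum_k \mu(V_k^n)$; passing from $\mu\bigl(\bigcup_k V_k^n\bigr)$ to $\mu(U)$ requires $\mu$ to be increasing in $U$, which the hypotheses as stated do not include.

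Without monotonicity the result is in fact false. Take $\Omega=(0,1)$, $\lambda$ Lebesgue, $g_1=\chi_{(0,1/2)}$, $g_2=\chi_{(1/2,1)}$, and set $\mu(U)=\lambda(U)$ for every proper open $U\subsetneq(0,1)$ while $\mu((0,1))=\tfrac12$. Both hypotheses hold (superadditivity is only required for $U,V\subset\subset\Omega$, and then $U\cup V$ is never all of $(0,1)$), yet $\mu((0,1))=\tfrac12<1=\int_{(0,1)}\sup_i g_i\,\mathrm{d}\lambda$. So monotonicity is an implicit assumption dropped in transcription; it holds automatically for the set function $\mu(U)=\liminf_n\int_{U\cap\partial E_n}\varphi(\nu_{E_n})\dh$ to which the proposition is actually applied in the paper. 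With that assumption added, your proof goes through verbatim.
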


\begin{lemma}\label{le:lemma5BCS}
Let $\Gamma \subset E^0$ be a $(d{-}1)$-rectifiable subset, $\xi \in \Sd$ such that $\xi$ is not orthogonal to the normal $\nu_\Gamma$ to $\Gamma$ at any point of $\Gamma$. Then, for $\hd$-a.e.\ $y\in \Pi^\xi$,  the set $E\xy$ (see \eqref{eq: vxiy2})  has density 0 in $t$ for every $t \in \Gamma\xy$.
\end{lemma}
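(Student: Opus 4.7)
The plan is to reduce the statement to a flat-slab geometry via the rectifiability of $\Gamma$ and the transversality of $\xi$, and then extract the pointwise slice-density vanishing from the $d$-dimensional density-zero assumption via a Fubini identity combined with a covering argument.

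By the $(d{-}1)$-rectifiability of $\Gamma$, I would decompose $\Gamma = N \cup \bigcup_i \Gamma_i$ with $\hd(N) = 0$ and each $\Gamma_i$ contained in a $C^1$ hypersurface whose classical normal agrees $\hd$-a.e.\ with $\nu_\Gamma$ on $\Gamma_i$. The hypothesis $\nu_\Gamma \cdot \xi \neq 0$ together with a further countable refinement lets me take each such hypersurface as a $C^1$-graph $S_i = \{y + h_i(y)\xi : y \in U_i\}$ over a bounded open set $U_i \subset \Pi^\xi$, with $\sup_{U_i}|\nabla h_i|$ bounded. Since $\pi^\xi|_{\Gamma_i}$ is biLipschitz by transversality, the area formula reduces the claim to: for each $i$ and for $\hd$-a.e.\ $y \in \pi^\xi(\Gamma_i)$, the one-dimensional density of $E\xy$ at $h_i(y)$ vanishes. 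The $\mathcal{L}^d$-preserving shear $\Psi_i:(y,t)\mapsto(y,t-h_i(y))$ straightens $S_i$ to $\{t=0\}$, preserves $\xi$-lines, and (via the Lipschitz bound on $\nabla h_i$ to compare small balls) transforms $\Gamma_i \subset E^0$ into $(y,0) \in F^0$ for every $y \in V := \pi^\xi(\Gamma_i)$, where $F := \Psi_i(E)$. The problem thus reduces to the following flat case: if $F \subset \Pi^\xi \times \R$ satisfies $(y,0)\in F^0$ for every $y \in V$, then for $\hd$-a.e.\ $y \in V$ the slice $F\xy$ has one-dimensional density $0$ at $0$.

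Setting $g_r(y) := \mathcal{L}^1(F\xy \cap (-r,r))/(2r) \in [0,1]$, Fubini applied to the slab $C_{r,\eta} := B'_r(y_0) \times (-\eta r, \eta r) \subset B_{r\sqrt{1+\eta^2}}(y_0,0)$ together with $(y_0, 0) \in F^0$ gives, for every fixed $\eta > 0$ and every $y_0 \in V$,
\begin{equation*}
\fint_{B'_r(y_0)} g_{\eta r}(y)\, \dh(y) \longrightarrow 0 \qquad (r \to 0),
\end{equation*}
equivalently $\fint_{B'_{KR}(y_0)} g_R(y)\, \dh(y) \to 0$ as $R \to 0$ for every $K > 0$, where $B'_\rho$ denotes the $(d{-}1)$-ball in $\Pi^\xi$.

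The main obstacle is to upgrade this \emph{averaged} vanishing to the \emph{pointwise} statement $\limsup_{R\to 0} g_R(y_0) = 0$ for $\hd$-a.e.\ $y_0 \in V$, since the integrand $g_R$ depends on the scale and standard Lebesgue differentiation does not apply directly. I would argue by contradiction: if $A_\delta := \{y_0 \in V : \limsup_{R\to 0} g_R(y_0) > \delta\}$ had positive $\hd$-measure for some $\delta > 0$, a dyadic pigeonhole on the defining scales $R(y)$ realizing $g_{R(y)}(y)>\delta$ would produce a subset $A_\delta^m \subset A_\delta$ of positive $\hd$-measure on which these scales lie in a common window $[2^{-m-1},2^{-m}]$. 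Selecting a Lebesgue density point $y^*$ of $A_\delta^m$ and applying a Vitali covering of $A_\delta^m$ by disjoint balls at those scales, together with the Fubini averaged estimate at scale $R \sim 2^{-m}$ on a concentric ball around $y^*$ large enough to contain the Vitali family, forces an incompatibility between the pointwise lower bounds at the Vitali centres and the averaged upper bound. The delicate point is the scale synchronization between the Vitali selection and the Fubini averaging, needed to convert pointwise information at scale-varying centres into an integral estimate at a common scale.
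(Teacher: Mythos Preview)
The paper does not give its own proof of this lemma: it is quoted verbatim from \cite[Lemma~5]{BraChaSol07} as an auxiliary tool, so there is no argument here to compare against. What follows is therefore an assessment of your proposal on its own merits.

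Your reduction is sound. Decomposing $\Gamma$ into pieces of $C^1$ graphs transverse to $\xi$, applying the volume-preserving shear $\Psi_i$, and noting that $\Psi_i$ maps $\Gamma_i\subset E^0$ biLipschitz onto $V\times\{0\}\subset F^0$ is exactly the right way to localise. The statement then reduces to the flat claim: if $(y,0)\in F^0$ for every $y\in V$, then $\limsup_{r\to 0} g_r(y)=0$ for $\hd$-a.e.\ $y\in V$.

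The gap is in the flat case, and it is precisely the ``scale synchronisation'' issue you flag but do not close. The contradiction you outline requires three ingredients to hold simultaneously at the \emph{same} scale $\rho\sim 2^{-m}$: (a) the density of $A_\delta^m$ in $B'_\rho(y^*)$ is close to $1$; (b) the Fubini average $\fint_{B'_\rho(y^*)} g_\rho$ is small; (c) the Vitali balls sit inside $B'_\rho(y^*)$. Both (a) and (b) are asymptotic statements as $\rho\to 0$, valid only below a threshold depending on $y^*$, while the dyadic pigeonhole fixes $m$ (and hence $\rho\sim 2^{-m}$) first and only then produces $A_\delta^m$ and its density point $y^*$. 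You would need $2^{-m}$ to be below both thresholds at $y^*$, but $y^*$ depends on $m$: the argument is circular. If instead you enlarge the outer ball to radius $\rho\gg 2^{-m}$ so as to capture a genuine Vitali family, the relevant Fubini quantity becomes the density of $F$ in the \emph{flat} cylinder $B'_\rho(y^*)\times(-2^{-m},2^{-m})$, for which the ball-density assumption $(y^*,0)\in F^0$ gives only the useless bound $O(\rho\,2^{m})$.

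More fundamentally, the ``incompatibility'' you seek is between a purely pointwise lower bound $g_{R(y_j)}(y_j)>\delta$ (information about a single slice, carrying no $(d{-}1)$-dimensional mass) and an $(d{-}1)$-dimensional integral upper bound. Without some mechanism converting the pointwise slice information into a set of positive $\Ld$-measure inside $F$, summing over the Vitali centres produces nothing. One way to supply this mechanism is to observe that $g_s(y)>\delta$ forces the one-dimensional maximal function $t\mapsto \sup_{0<\sigma\le\rho}\frac{\mathcal L^1(F_y\cap(t-\sigma,t+\sigma))}{2\sigma}$ to exceed $\delta/2$ on a $t$-interval of length comparable to $s$; combined with the weak-$(1,1)$ maximal inequality on each slice and an Egorov-uniformisation of the convergence $\fint_{B'_r(y)}g_r\to 0$ on $V$, this yields the missing $\Ld$-mass lower bound and closes the contradiction. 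As written, however, your covering sketch does not do this.
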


\begin{proof}[Proof of Theorem \ref{thm:lower-semi}]
Since  $u_n$  converges  weakly in $GSBD^p_\infty(\Omega)$ to $u$,  \eqref{eq: weak gsbd convergence} implies
\begin{align}\label {eq: weak gsbd convergence2}
\sup_{n \in \N}\nolimits \big( \Vert e(u_n) \Vert^p_{L^p(\Omega)} + \mathcal{H}^{d-1}(J_{u_n}) + \mathcal{H}^{d-1}(\partial E_n) \big) =:M < + \infty.
\end{align}
Consequently, Theorem~\ref{th: GSDBcompactness} and the fact that $\GGG\bar{d} \EEE(u_n,u) \to 0$, see \eqref{eq:metricd}  and   \eqref{eq: weak gsbd convergence},   imply that  $A^\infty_u=\lbrace u = \infty \rbrace = \lbrace x \in \Omega\colon \, |u_n(x)| \to \infty \rbrace$ and 
\begin{subequations}
\begin{align}
u_n &\to u \quad  \ \ \ \ \text{$\mathcal{L}^d$-a.e.\ in }\Omega \sm A^\infty_u\,,\label{1405192139}\\
e(u_n) &\weak e(u) \quad \text{ weakly  in } L^p(\Omega\sm A^\infty_u; \Mdd)\,.\label{1405192141}
\end{align}
\end{subequations}
By \eqref{1405192012}, \eqref{1405192139}, $u_n = 0$ on $E_n$, and the definition of $A^\infty_u$, we have 
\begin{align}\label{eq: EcapA}
E \cap A^\infty_u = \emptyset \ \ \ \text{ and  } \ \ \  \text{$u=0$ \ $\mathcal{L}^d$-a.e.\ in $E$}.
\end{align}
Then \eqref{1405192141} gives \eqref{1405191303}.

We now show \eqref{1405191304}  which is the core of the proof.   Let $\varphi^*$ be the dual norm of $\varphi$ and observe that (see, e.g.\ \cite[Section~4.1.2]{Bra98})
\begin{equation}\label{1904191706}
\varphi(\nu)= \max_{\xi \in \Sd} \frac{\nu \cdot \xi}{\varphi^*(\xi)} =  \max_{\xi \in \Sd} \frac{|\nu \cdot \xi|}{\varphi^*(\xi)} \,,
\end{equation}
where the second equality holds since   $\varphi(\nu) =  \varphi(-\nu)$.   

As a preparatory step, we consider a set $B \subset \Omega$ with Lipschitz boundary and a function $v$ with $v |_{\Omega\sm \ove B} \in W^{1,p}(\Omega\sm \ove B;\R^d)$  
and  $v=0$ in $B$  (observe  that  $v \in GSBD^p(\Omega)$). Recall the notation in \eqref{eq: vxiy2}--\eqref{eq: vxiy}. Let $\eps\in (0,1)$ and $U \subset \Omega$ be open. For each $\xi \in \Sd$ and $y \in \Pi^\xi$, we define 
\begin{align}\label{eq: fxsieps}
F^\xi_\eps (\widehat{v}\xy, B\xy; U\xy) =\eps \int_{U\xy \sm B\xy}  |(\widehat{v}\xy)'|^p  \,\mathrm{d}t +  \mathcal{H}^0(\partial B\xy \cap U\xy )  \frac{1}{\varphi^*(\xi)}\,. 
\end{align}
 By Fubini-Tonelli Theorem, with the slicing properties \eqref{3105171927}, \eqref{2304191254}, \eqref{2304191637},  for a.e.\ $\xi \in \mathbb{S}^{d-1}$  there holds
$$\int_{\Pi^\xi}  F_\eps^\xi(\widehat{v}\xy, B\xy; U\xy) \dh(y) = \varepsilon\int_{U \setminus B}|e(v) \xi \cdot \xi|^p \dx + \int_{U \cap \partial B} \frac{|\nu_B \cdot \xi | }{\varphi^*(\xi)}\dh \,.  $$
Since $|e(v)|\geq |e(v) \xi \cdot \xi|$, the previous estimate along with \eqref{1904191706} implies
$$\int_{\Pi^\xi}  F_\eps^\xi(\widehat{v}\xy, B\xy; U\xy) \dh(y) \le \eps \Vert e(v) \Vert_{L^p(U \setminus B)}^p + \int_{U \cap \partial B} \varphi(\nu_B) \, \d\mathcal{H}^{d-1}\,.  $$

By applying this estimate for the sequence of pairs $(u_n,E_n)$, we get by \eqref{eq: weak gsbd convergence2}  
\begin{align}\label{eq: Meps}
 \int_{\Pi^\xi} F_\eps^\xi((\widehat{u}_n)\xy, (E_n)\xy; U\xy) \dh(y) \le M\varepsilon + \int _{U \cap \partial E_n} \varphi(\nu_{E_n}) \dh \le M(\Vert \varphi \Vert_{L^\infty(\mathbb{S}^{d-1})} + \eps)
\end{align}
for all  open $U \subset \Omega$. Since $\GGG\bar{d} \EEE(u_n, u)\to 0$, we have that $\GGG\bar{d} \EEE((\widehat{u}_n)\xy, \widehat{u}\xy) =   \int_{\Omega\xy}  d_{\bar{\R}^d}((\widehat{u}_n)\xy, \widehat{u}\xy) \, \dx  \to 0$  for $\hd$-a.e.\ $y \in \Pi^\xi$ and  
$\hd$-a.e.\ 
$\xi \in \Sd$. (Notice that we have to restrict our choice to the $\xi$ satisfying $|u_n\cdot \xi| \to +\infty$ $\Ld$-a.e.\ in $A^\infty_u$, which are a set of full measure in $\Sd$, cf.\ \cite[Lemma~2.7]{CC18}.) In particular, this implies
\begin{subequations}
\begin{align}
(\widehat{u}_n)\xy \to \widehat{u}\xy \quad &\mathcal{L}^1\text{-a.e.\ in }(\Omega \sm A_u^\infty)\xy\,, \label{1505190913}\\
|(\widehat{u}_n)\xy| \to +\infty \quad &\mathcal{L}^1\text{-a.e.\ in }(A_u^\infty)\xy\,. \label{1505190915}
\end{align}
\end{subequations}
By using \eqref{eq: Meps} and Fatou's lemma we obtain 
\begin{equation}\label{1505190927}
\liminf_{n \to \infty} F_\eps^\xi((\widehat{u}_n)\xy, (E_n)\xy; U\xy) < +\infty
\end{equation}
for $\hd$-a.e.\ $y \in \Pi^\xi$ and any open $U \subset \Omega$. Then, we may find a subsequence $(u_m)_m=(u_{n_m})_m$, depending on $\varepsilon$, $\xi$, and $y$, such that
\begin{equation}\label{1505190928}
\lim_{m \to \infty} F_\eps^\xi((\widehat{u}_m)\xy, (E_m)\xy; U\xy) =\liminf_{n \to \infty} F_\eps^\xi((\widehat{u}_n)\xy, (E_n)\xy; U\xy)  
\end{equation}
 for any open $U \subset \Omega$. At this stage, up to passing to a further subsequence, we have
\begin{equation*}
\mathcal{H}^0\big(\partial (E_m)\xy \big) = N\xy \in \mathbb{N}\,,
\end{equation*}
independently of $m$,  so that the points in $\partial (E_m)\xy$ converge, as $m\to \infty$, to $M\xy \leq N\xy$ points
\begin{equation*}
t_1, \dots, t_{M\xy}\,,
\end{equation*}
which are either in $\partial E\xy$ or in a finite set $S\xy:=\{t_1, \dots, t_{M\xy}\} \setminus\partial E\xy  \subset  (E\xy)^0 \cup (E\xy)^1$,  where $(\cdot)^0$ and $(\cdot)^1$  denote the sets with one-dimensional density $0$  or $1$, respectively.  Notice that $E\xy$ is thus  the union 
of  $M\xy/2 -  \# S\xy$  intervals  (up to a finite set of points) on which there holds  $\widehat{u}\xy=0$,  see \eqref{eq: EcapA} and \eqref{1505190913}. 
In view of \eqref{eq: fxsieps} and \eqref{1505190927},  $( (\widehat{u}_m)\xy)'$ are equibounded (with respect to $m$) in $L^p_{\mathrm{loc}}(t_j, t_{j+1})$, for any interval 
\begin{equation*}
(t_j, t_{j+1}) \subset \Omega\xy \sm  (E\xy \cup S\xy)\,. 
\end{equation*} 
Then, as   in the proof of    \cite[Theorem~1.1]{CC18}, we have two alternative possibilities on $(t_j, t_{j+1})$: either $(\widehat{u}_m)\xy$ converge locally uniformly in $(t_j, t_{j+1})$ to $\widehat{u}\xy$, or $|(\widehat{u}_m)\xy|\to +\infty$ $\mathcal{L}^1$-a.e.\ in $(t_j, t_{j+1})$.
Recalling that $ J_{\widehat{u}\xy}  = \partial (A^\infty_u)\xy \cup \big(  (J_u^\xi)\xy  \sm (A^\infty_u)\xy\big)$, see \eqref{2304191254-1} and \eqref{eq: general jump},  we find 
\begin{equation}\label{1505191219}
J_{\xi,y}:=  J_{\widehat{u}\xy}  \cap (E\xy)^0 \subset  S\xy    \cap (E\xy)^0 \,. 
\end{equation}
 We notice that any point in $S\xy$ is the limit of two distinct sequences of points $(p^1_m)_m$, $(p^2_m)_m$ with $p^1_m$, $p^2_m \in \partial(E_m)\xy$. 
Thus, in view of \eqref{eq: fxsieps} and \eqref{1505190928}, for any open $U \subset \Omega$ we derive 
\begin{align}\label{eq: slicing formula}
\varepsilon\int_{U\xy \sm (E\cup A^\infty_u)\xy} \hspace{-0.2em} |(\widehat{u}\xy)'|^p \,\d t &+ \mathcal{H}^0(U\xy \cap  \partial E\xy  ) \frac{1}{\varphi^*(\xi)} + \mathcal{H}^0(U\xy \cap J_{\xi,y}) \frac{2}{\varphi^*(\xi)} \notag \\&\leq \liminf_{m\to \infty} F_\eps^\xi((\widehat{u}_m)\xy, (E_m)\xy; U\xy) =\liminf_{n \to \infty} F_\eps^\xi((\widehat{u}_n)\xy, (E_n)\xy; U\xy) \,.
\end{align} 
 We apply Lemma~\ref{le:lemma5BCS} to the rectifiable set 
$J_u \cap E^0 \cap \{ \xi \cdot \nu_u \neq 0\}$ and get that for $\hd$-a.e.\ $y\in \Pi^\xi$
\begin{equation*}
y + t \xi \in   J_u \cap E^0 \cap \{ \xi \cdot \nu_u \neq 0\} \ \ \ \Rightarrow \ \ \ t \in (E\xy)^0\,. 
\end{equation*}
This along with \eqref{1505191219}--\eqref{eq: slicing formula}, the slicing properties \eqref{3105171927}--\eqref{2304191637} (which also hold for $GSBD^p_\infty(\Omega)$ functions), and Fatou's lemma yields that for all $\xi \in \Sd  \setminus N_0$, for some $N_0$ with $\mathcal{H}^{d-1}(N_0) = 0$,  there holds
\begin{align*}
\varepsilon &\int_{U \sm (E\cup A^\infty_u)} \hspace{-0.2em} |e(u) \xi \cdot \xi|^p \dx + \int_{U \cap \partial^* E} \frac{|\nu_E \cdot \xi| }{\varphi^*(\xi )}\dh +  \int_{J_u \cap E^0\cap U}  \frac{2| \nu_u \cdot \xi |}{\varphi^*(\xi)}\dh 
\notag\\& 
 \le \int_{\Pi^\xi} \liminf_{n \to \infty}  F_\eps^\xi((\widehat{u}_n)\xy, (E_n)\xy; U\xy) \, \dh 
  \le  \liminf_{n \to \infty} \int_{\Pi^\xi}   F_\eps^\xi((\widehat{u}_n)\xy, (E_n)\xy; U\xy) \, \dh.
\end{align*}
Introducing the set function  $\mu$ defined on the open subsets of $\Omega$ by
 \begin{equation}\label{eq: mu-def}
\mu(U):=\liminf_{n \to +\infty} \int _{U\cap \partial E_n} \varphi(\nu_{E_n}) \dh\,,
\end{equation}
and letting $\eps \to 0$ we find by \eqref{eq: Meps} for all $\xi \in \Sd  \setminus N_0$ that 
\begin{align}\label{eq: lastequ.}
\int_{U \cap \partial^* E} \frac{|\nu_E \cdot \xi| }{\varphi^*(\xi  )}\dh +  \int_{J_u\cap E^0\cap U}  \frac{2| \nu_u \cdot \xi |}{\varphi^*(\xi)}\dh   \leq   \mu(U) \,.  
\end{align}
The set function $\mu $ is clearly superadditive. Let $\lambda= \hd \mres \big( J_u\cap E^0 \big)  + \hd \mres \partial^* E$ and define 
\begin{equation*}
g_i= \begin{dcases}
  \frac{2| \nu_u \cdot \xi_i |}{\varphi^*(\xi_i)}  \qquad & \text{on } J_u \cap E^0\,,\\
\frac{|\nu_E \cdot \xi_i| }{\varphi^*(\xi_i  )}\qquad & \text{on }\partial^*E\,,
\end{dcases}
\end{equation*}
where $(\xi_i)_i \subset \Sd \setminus N_0$ is a dense sequence in $\Sd$. By \eqref{eq: lastequ.} we have $\mu(U) \ge \int_U g_i \,\d \lambda$ for all $i \in \N$ and all open $U \subset \Omega$. Then,     Proposition~\ref{prop:prop4BCS} yields $\mu(\Omega) \ge \int_\Omega \sup_i g_i \,\d \lambda$. In view of \eqref{1904191706} and   \eqref{eq: mu-def}, this implies \eqref{1405191304} and concludes the proof. 
\end{proof}

\subsection{Relaxation for functionals defined on pairs of  function-set}\label{sec: sub-voids}

In this  subsection  we give the proof of  Proposition \ref{prop:relF} and  Theorem \ref{thm:relFDir}. \CCC We also provide corresponding generalizations to the space $GSBD^p_\infty$, see Proposition \ref{prop:relFinfty}  and Theorem \ref{thm:relFDirinfty}. \EEE    For the upper bound, we recall the following result proved in \cite[Proposition 9, Remark 14]{BraChaSol07}.

\begin{proposition}\label{th: Braides-upper bound}
Let $u \in L^1(\Omega;\R^d)$ and  $E \in \M(\Omega)$ such that $\mathcal{H}^{d-1}(\partial^* E)< +\infty$ and  $u\chi_{E^0} \in GSBV^p(\Omega;\R^d)$.  Then, there exists a sequence $(u_n)_n \subset W^{1,p}(\Omega;\R^d)$ and $(E_n)_n \subset \M(\Omega)$  with $E_n$ of class $C^\infty$ such that  $u_n \to u$ in $L^1(\Omega;\R^d)$, $\chi_{E_n} \to \chi_E$ in $L^1(\Omega)$,  and 
\begin{align*}
&\nabla u_n \chi_{\Omega \setminus E_n} \to  \nabla u \chi_{\Omega \setminus E} \ \ \text{ in } L^p(\Omega;\mathbb{M}^{d\times d}), \\ 
&\limsup_{n \to \infty} \int_{\partial E_n \cap \Omega   } \varphi(\nu_{E_n}) \dh \le  \int_{J_u \cap E^0} 2\varphi(\nu_u) \dh + \int_{\partial^* E\cap   \Omega  } \varphi(\nu_E) \dh\,.
\end{align*}
 Moreover, if $\mathcal{L}^d(E)>0$, one can guarantee in addition the condition
$\mathcal{L}^d(E_n) = \mathcal{L}^d(E)$  for $n \in \N$.

\end{proposition}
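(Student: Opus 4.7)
The plan is to reduce the data to a polyhedral model case via density results, then to fatten the part $\Sigma := J_u \cap E^0$ of the jump set of $u$ that lies in the exterior of $E$ into an extra piece of $E_n$ so that each such interior jump is replaced by a two-sided cut, and finally to correct the volume if needed. The new set $E_n$ will be smooth, while $u_n$ will be obtained on $\Omega\setminus E_n$ by a Whitney-type extension followed by mollification.

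First, since $u\chi_{E^0}\in GSBV^p(\Omega;\R^d)$ and $\hd(\partial^* E)<+\infty$, I would invoke a Cortesani--Toader-type density theorem together with smooth approximation of sets of finite $\varphi$-perimeter to reduce, by a diagonal argument, to the case in which $w := u\chi_{E^0}$ is $SBV^p \cap L^\infty$, $\Sigma := J_w$ is contained in a finite union of closed pieces of $C^1$ hypersurfaces, $w$ is Lipschitz on $\Omega \setminus (\Sigma \cup \overline{E})$, and $E$ is already a smooth open set. All of the required convergences ($L^1$ for $u$, $L^p$ for $\nabla u$, and anisotropic $\varphi$-perimeter for the boundaries) are stable under such diagonal extractions.

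Next, for $\eta>0$ small, I would introduce the tubular neighborhood
\[
T_\eta := \{x \in \Omega \setminus \overline{E}\colon \mathrm{dist}(x,\Sigma)<\eta\},
\]
smoothed to be $C^\infty$, and set $E_\eta := E \cup T_\eta$. Since $\Sigma$ is $C^1$-rectifiable and sits in the interior of $\Omega\setminus\overline{E}$, for $\eta$ sufficiently small the boundary $\partial T_\eta$ decomposes into two sheets approximately parallel to $\Sigma$, with outward normals $\pm\nu_u + o(1)$. Using $\varphi(-\xi)=\varphi(\xi)$ together with the area formula, one obtains $\chi_{E_\eta}\to\chi_E$ in $L^1(\Omega)$ and
\[
\limsup_{\eta\to 0}\int_{\partial E_\eta\cap\Omega}\varphi(\nu_{E_\eta})\dh \le \int_{\partial^* E\cap\Omega}\varphi(\nu_E)\dh + \int_\Sigma 2\varphi(\nu_u)\dh.
\]
On $\Omega\setminus\overline{E_\eta}$ the function $w$ is now Lipschitz (its interior jumps having been absorbed into $T_\eta$), so a Whitney-type extension followed by mollification on a scale $\varepsilon\ll\eta$ yields $u_n\in W^{1,p}(\Omega;\R^d)$ with $u_n\to u$ in $L^1$ and $\nabla u_n\chi_{\Omega\setminus E_n}\to\nabla u\chi_{\Omega\setminus E}$ in $L^p$. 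A standard diagonal argument in $\eta$ and $\varepsilon$ then extracts the desired sequence.

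Finally, when $\mathcal{L}^d(E)>0$ the volume constraint is enforced by a local correction: since $\mathcal{L}^d(E_n)\to\mathcal{L}^d(E)$, I would pick a ball $B_r(x_0)\subset E$ and either add to or remove from $E_n$ a concentric ball $B_{r_n}(x_0)$ with radius $r_n\to 0$ chosen so that $\mathcal{L}^d(E_n)=\mathcal{L}^d(E)$ exactly; its $\varphi$-perimeter is $O(r_n^{d-1})\to 0$ and $u_n$ can be redefined to $0$ there without affecting the other convergences. The main obstacle will be the sharp factor $2$ in the surface bound, which hinges on the genuinely two-sided structure of $T_\eta$, on $\varphi$ being a norm (so that both sheets contribute the same amount additively), and on the $C^1$ reduction in the first step in order to guarantee well-defined outward normals along the two faces of the tube.
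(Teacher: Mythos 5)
The paper does not prove Proposition~\ref{th: Braides-upper bound}: it cites it verbatim from \cite[Proposition~9, Remark~14]{BraChaSol07}, so there is no internal proof for your sketch to be compared against. That said, your outline --- density reduction to a piecewise $C^1$ jump set, fattening of the interior jumps $J_u\cap E^0$ into a thin two-sheeted tube that is absorbed into the void set, mollification, and a small-ball volume correction --- is essentially the strategy of the cited reference, and the place where the factor $2$ and the anisotropy $\varphi(\nu)=\varphi(-\nu)$ enter is correctly identified. Two points, however, should not be glossed over. First, the reduction step is not a bare diagonal argument: a Cortesani--Toader approximant $w_k$ of $w=u\chi_{E^0}$ will in general not vanish on $E$, and its jump set $J_{w_k}$ will not decompose cleanly into a piece on $\partial^* E$ and a piece in $E^0$; the function and the set must be regularised jointly and compatibly so that both the elastic term $\nabla u_n\chi_{\Omega\setminus E_n}$ and the anisotropic boundary term behave well under the diagonal extraction (this is exactly what Remark~14 of \cite{BraChaSol07} supplies, and would need to be reproduced). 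Second, your notation for $\Sigma$ drifts between $J_u\cap E^0$ and $J_w$; since $J_w$ also contains parts of $\partial^* E$, a tube around all of $J_w$ would na\"ively produce a spurious factor of $2$ on $\partial^* E$. In your construction this is rescued only because $T_\eta$ is taken in $\Omega\setminus\overline E$, so the $E$-side sheet of the tube is swallowed by $E_\eta=E\cup T_\eta$; that one-sidedness near $\partial E$ is what yields $\int_{\partial^*E\cap\Omega}\varphi(\nu_E)\dh$ rather than $2\int_{\partial^*E\cap\Omega}\varphi(\nu_E)\dh$, and should be stated explicitly.
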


%


\begin{proof}[Proof of Proposition \ref{prop:relF}] We first   prove the lower inequality, and then the upper inequality. The lower inequality relies on Theorem \ref{thm:lower-semi} and the upper inequality on  a density argument along with   Proposition \ref{th: Braides-upper bound}. 

\noindent \emph{The lower inequality.} Suppose  that  $u_n \to u \text{ in }L^0(\Omega;\R^d) \text{ and } \chi_{E_n} \to \chi_E \text{ in }L^1(\Omega)$.  Without restriction, we can assume that $\sup_n F(u_n,E_n)< +\infty$. In view of \eqref{eq: F functional} and $\min_{\mathbb{S}^{d-1}} \varphi>0$, this implies $\mathcal{H}^{d-1}(\partial E_n) < + \infty$. Moreover, by \eqref{eq: growth conditions} the functions $v_n:= u_n \chi_{\Omega \setminus E_n}$ lie in $GSBD^p(\Omega)$  with $J_{v_n} \subset \partial E_n \cap \Omega$   and satisfy $\sup_{n}\Vert e(v_n)\Vert_{L^p(\Omega)}<+\infty$. This along with the fact that $u_n \to u$ in measure shows that $v_n$ converges  weakly  in $GSBD^p_\infty(\Omega)$ to $u\chi_{E^0}$, see \eqref{eq: weak gsbd convergence}, where we point out that $A^\infty_u = \lbrace u = \infty \rbrace = \emptyset$. In particular,   $u \chi_{E^0} \in GSBD^p_\infty(\Omega)$  and, since  $A^\infty_u = \emptyset$, even  $u \chi_{E^0} \in GSBD^p(\Omega)$, cf.\ \eqref{eq: compact extension}. 
As also \eqref{1405192012} holds, we can apply Theorem \ref{thm:lower-semi}.   The lower inequality now follows from \eqref{eqs:1405191304} and the fact that $f$ is convex.

\emph{The upper inequality.}  We first observe the following:  given $u \in L^0(\Omega;\R^d)$ and $E \in \M(\Omega)$ with $\mathcal{H}^{d-1}(\partial^* E)<\infty$ and $u\chi_{E^0} \in GSBD^p(\Omega)$, we find an approximating sequence $(v_n)_n \subset L^1(\Omega;\R^d)$ with $v_n\chi_{E^0} \in GSBV^p(\Omega;\R^d)$ such that
\begin{align*}
{\rm (i)} & \ \ v_n \to u  \chi_{E^0}  \ \ \text{ in } L^0(\Omega;\R^d)\,,\\ 
{\rm (ii)} & \ \  e(v_n) \chi_{\Omega \setminus E} \to  e(u) \chi_{\Omega \setminus E} \ \ \text{ in } L^p(\Omega;\Mdd)\,, \\ 
{\rm (iii)} & \ \  \mathcal{H}^{d-1}\big((J_{v_n}\triangle J_u) \cap E^0\big) \to 0. 
\end{align*}
This can be seen by approximating first $u \chi_{E^0}$ by a sequence $(\wu_n)_n$ by means of Theorem \ref{thm:densityGSBD}, and by setting $v_n:= \wu_n \chi_{E^0}$ for every $n$. It is then immediate to verify that the conditions in \eqref{eqs:main'} for $(\wu_n)_n$ imply the three conditions above.

By this approximation,  \eqref{eq: growth conditions}, and a diagonal argument, it thus suffices to construct a recovery sequence for  
$u \in L^1(\Omega;\R^d)$ with $u\chi_{E^0} \in GSBV^p(\Omega;\R^d)$. To this end, we apply Proposition~\ref{th: Braides-upper bound} to obtain $(u_n,E_n)_n$ and we consider the sequence $u_n\chi_{\Omega \setminus E_n}$. We further observe that, if $\mathcal{L}^d(E)>0$, this recovery sequences $(u_n,E_n)_n$ can be constructed ensuring $\mathcal{L}^d(E_n) = \mathcal{L}^d(E)$ for  $n \in \N$.  
\end{proof}

\CCC We briefly discuss that by a small adaption we  get a relaxation result for $F$ with respect to the topology induced by $\bar{d}$ on $L^0(\Omega; \bar{\R}^d)$. We  introduce $\ove{F}_\infty\colon L^0(\Omega; \bar{\R}^d) \times  \M(\Omega) \to \R \cup \lbrace+ \infty\rbrace$ by  
\begin{equation*}
\ove{F}_\infty(u,E) = \inf\Big\{ \liminf_{n\to\infty} F(u_n,E_n)\colon  \,  \bar{d}(u_n, u) \to 0 \text{ and } \chi_{E_n} \to \chi_E \text{ in }L^1(\Omega) \Big\}\,.
\end{equation*}


\begin{proposition}[Characterization of  the lower semicontinuous envelope $\ove{F}_\infty$]\label{prop:relFinfty} 
Under the assumptions of Proposition~\ref{prop:relF}, it holds that
\begin{equation*}
\ove{F}_\infty(u,E) = \begin{dcases} \int_{\Omega \setminus E} f(e(u))\, \dx + \int_{\Omega \cap \partial^* E} & \varphi  (\nu_E)  \, \dh + \int_{J_u \cap (\Omega \setminus E)^1} 2\, \varphi(\nu_u)  \, \dh \\
&\hspace{-1em}\text{if } u=  u \,  \chi_{E^0}  \in GSBD^p_\infty(\Omega) \text{ and }\mathcal{H}^{d-1}(\partial^* E) < +\infty\,,\\
+\infty &\hspace{-1em}\text{otherwise.}
\end{dcases}
\end{equation*}
Moreover, if $\mathcal{L}^d(E)>0$, then for any $(u,E) \in L^0(\Omega;\bar{\R}^d){\times}\M(\Omega)$ there exists a recovery sequence $(u_n,E_n)_n\subset L^0(\Omega;\Rd){\times}\M(\Omega)$ such that $\mathcal{L}^d(E_n) = \mathcal{L}^d(E)$ for all $n \in \N$.
\end{proposition}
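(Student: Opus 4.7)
The plan is to reduce both inequalities to Proposition~\ref{prop:relF} by replacing $u$ with the finite-valued functions $\tilde u_t=u\chi_{\Omega\sm A^\infty_u}+t\chi_{A^\infty_u}\in GSBD^p(\Omega)$ from \eqref{eq: compact extension}, and then sending $|t|\to\infty$ through a diagonal argument. The lower inequality will follow directly from Theorem~\ref{thm:lower-semi}, since that statement is already formulated in the $GSBD^p_\infty$ framework; the upper inequality is the delicate part.

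For the lower bound, consider a sequence $(u_n,E_n)_n$ with $\bar d(u_n,u)\to 0$, $\chi_{E_n}\to\chi_E$ in $L^1(\Omega)$, and, without loss of generality, $\sup_n F(u_n,E_n)<+\infty$. Setting $v_n:=u_n$ (recall that $u_n=0$ on $E_n$), the sequence $(v_n)_n\subset GSBD^p(\Omega)$ has bounded $\|e(v_n)\|_{L^p(\Omega)}$ and bounded $\mathcal H^{d-1}(J_{v_n})\le\mathcal H^{d-1}(\partial E_n)$. Since $u_n=0$ on $E_n$ and $\chi_{E_n}\to\chi_E$ in $L^1$, the limit $u$ satisfies $u=0$ $\mathcal L^d$-a.e.\ on $E$, so $u=u\chi_{E^0}$ and $v_n$ converges weakly to $u$ in $GSBD^p_\infty(\Omega)$ in the sense of \eqref{eq: weak gsbd convergence}. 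Theorem~\ref{thm:lower-semi} together with the convexity of $f$ and $f(0)=0$ then yields the required lower bound.

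For the upper bound, fix $(u,E)$ with $u=u\chi_{E^0}\in GSBD^p_\infty(\Omega)$ and $\mathcal H^{d-1}(\partial^* E)<+\infty$. The condition $u=0$ on $E^1$ forces $\mathcal L^d(A^\infty_u\cap E)=0$, so that $\tilde u_t=\tilde u_t\chi_{E^0}$ and $e(\tilde u_t)=e(u)$ $\mathcal L^d$-a.e.\ for every $t$. By \eqref{eq: general jump} and \eqref{eq:same}, for $\mathcal L^d$-a.e.\ $t$ the jumps coincide, $J_{\tilde u_t}\,\tilde=\,J_u$, with matching normals, whence
\[
\ove F(\tilde u_t,E)=\ove F_\infty(u,E).
\]
For each $n\in\N$ we pick such a $t_n$ with $|t_n|\ge n$. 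Proposition~\ref{prop:relF} applied to $(\tilde u_{t_n},E)$ then supplies $(u_{n,k},E_{n,k})_k$ with $u_{n,k}\to\tilde u_{t_n}$ in $L^0(\Omega;\R^d)$, $\chi_{E_{n,k}}\to\chi_E$ in $L^1(\Omega)$, $F(u_{n,k},E_{n,k})\to\ove F_\infty(u,E)$, and with the additional constraint $\mathcal L^d(E_{n,k})=\mathcal L^d(E)$ when $\mathcal L^d(E)>0$. Since $\bar d(\tilde u_{t_n},u)=\int_{A^\infty_u}d_{\bar{\R}^d}(t_n,\infty)\,\dx\to 0$ as $|t_n|\to\infty$, a diagonal extraction $k=k_n$ produces the desired recovery sequence.

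The main obstacle is the identity $\ove F(\tilde u_t,E)=\ove F_\infty(u,E)$. Its verification rests on the fact that $\partial^* A^\infty_u\cap\Omega$ is already part of $J_u$ (by \eqref{eq: general jump}) and contributes with its normal $\nu_{A^\infty_u}$ to the $2\varphi$-integral appearing in $\ove F_\infty(u,E)$; on the other hand, for generic $t$ the function $\tilde u_t$ genuinely jumps on this same set with the same normal, so the two surface contributions agree. Once this identification is secured, the remainder is a standard combination of Proposition~\ref{prop:relF} with a diagonal extraction, and the volume constraint is inherited automatically from the corresponding property of Proposition~\ref{prop:relF}.
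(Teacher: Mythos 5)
Your proposal is correct and mirrors the paper's own argument: the lower bound is obtained by applying Theorem~\ref{thm:lower-semi} (using $f(0)=0$, \eqref{eq: growth conditions}, and $e(u)=0$ on $A^\infty_u$), and the upper bound is obtained by approximating $u$ with the finite-valued functions $\tilde u_{t_n}$, observing that the right-hand side of Proposition~\ref{prop:relF} for $(\tilde u_{t_n},E)$ coincides with the claimed formula for $(u,E)$, and then combining the recovery sequences of Proposition~\ref{prop:relF} with $\bar d(\tilde u_{t_n},u)\to 0$ in a diagonal extraction. The volume-constrained version is inherited exactly as you describe.
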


\begin{proof}
It is easy to check that the  lower inequality still works for  $u = u\chi_{E^0} \in GSBD^p_\infty(\Omega)$ by Theorem \ref{thm:lower-semi}, where we use \eqref{eq: growth conditions}, $f(0)=0$, and the fact that $e(u) = 0$ on $\lbrace u = \infty \rbrace$, see \eqref{eq:same}.   Moreover,  we are able to extend the upper inequality to any $u \in L^0(\Omega; \bar{\R}^d)$ such that $u=u \,\chi_{E^0} \in GSBD^p_\infty(\Omega)$. In fact, it is enough to notice that  for any $u=u \,\chi_{E^0} \in GSBD^p_\infty(\Omega)$ and any sequence $(t_n)_n\subset \Rd$ with $|t_n| \to \infty$ such that for  the functions $\tilde{u}_{t_n} \in GSBD^p(\Omega)$ defined in \eqref{eq: compact extension}  property \eqref{eq:same} holds,   we obtain  $\tilde{u}_{t_n} =\tilde{u}_{t_n} \, \chi_{E^0}$,  $\bar{d}(\tilde{u}_{t_n}, u) \to 0$ as $n \to \infty$ ,  and
\begin{equation*}
\int_{\Omega \setminus E} f(e(u))\, \dx + \int_{\Omega \cap \partial^* E}  \varphi  (\nu_E)  \, \dh + \int_{J_u \cap (\Omega \setminus E)^1} 2\, \varphi(\nu_u)  \, \dh =   \overline{F}(\tilde{u}_{t_n},E)
\end{equation*}
for all $n \in \N$, with $J_u$ defined by \eqref{eq: general jump}.  Then, the upper inequality follows from the upper inequality in Proposition \ref{prop:relF} and a diagonal argument.
\end{proof}

\EEE

As a consequence, we obtain the  following  lower semicontinuity result in $GSBD^p_\infty$.
 
\begin{corollary}[Lower semicontinuity in $GSBD^p_\infty$]\label{cor: GSDB-lsc}
Let us suppose that a sequence $(u_n)_n \subset  GSBD^p_\infty(\Omega)$ converges weakly in $GSBD^p_\infty(\Omega)$ to  $u\in GSBD^p_\infty(\Omega)$,  see \eqref{eq: weak gsbd convergence}. Then for each   norm  $\phi$ on $\Rd$ 
there holds
$$\int_{J_u} \phi(\nu_u) \, \dh \le \liminf_{n \to \infty} \int_{J_{u_n}} \phi(\nu_{u_n}) \, \dh. $$
\end{corollary}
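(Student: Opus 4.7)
\medskip
\noindent\textbf{Proof plan.} The plan is to specialize the slicing-plus-localization argument already carried out for Theorem~\ref{thm:lower-semi} to the jump part alone, dropping the bulk elastic energy and the set $E_n$ entirely. By the dual representation
$$
\varphi(\nu) = \max_{\xi \in \mathbb{S}^{d-1}} \frac{|\nu \cdot \xi|}{\varphi^*(\xi)},
$$
which here plays the role of \eqref{1904191706} with $\varphi$ replaced by $\phi$, I would introduce the set function
$$
\mu(U) := \liminf_{n \to \infty} \int_{J_{u_n} \cap U} \phi(\nu_{u_n}) \, \d\mathcal{H}^{d-1}
$$
on open $U \subset \Omega$. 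It is superadditive on disjoint opens, so combining Proposition~\ref{prop:prop4BCS} with a countable dense sequence $(\xi_i)_i \subset \mathbb{S}^{d-1}$ (chosen outside the negligible exceptional set coming from \eqref{2304191637} and from the slicing of all $u_n$ and $u$), the claim reduces to showing, for every such $\xi_i$ and every open $U \subset \Omega$,
$$
\mu(U) \;\ge\; \frac{1}{\phi^*(\xi_i)}\int_{J_u \cap U} |\nu_u \cdot \xi_i| \, \d\mathcal{H}^{d-1}.
$$

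To establish this inequality for fixed $\xi = \xi_i$, I would use the area formula \eqref{2304191254}, which remains valid in $GSBD^p_\infty(\Omega)$ via the representatives $\tilde u_t$ (see the discussion following \eqref{eq:same}), to rewrite
$$
\int_{J_{u_n} \cap U} \phi(\nu_{u_n}) \, \d\mathcal{H}^{d-1} \;\ge\; \frac{1}{\phi^*(\xi)} \int_{\Pi^\xi} \mathcal{H}^0\bigl(J_{(\widehat u_n)^\xi_y} \cap U^\xi_y\bigr) \, \d\mathcal{H}^{d-1}(y),
$$
and analogously for $u$. Fatou's lemma then reduces matters to the one-dimensional lower semicontinuity
$$
\mathcal{H}^0\bigl(J_{\widehat u^\xi_y} \cap U^\xi_y\bigr) \;\le\; \liminf_{n \to \infty} \mathcal{H}^0\bigl(J_{(\widehat u_n)^\xi_y} \cap U^\xi_y\bigr)
$$
for $\mathcal{H}^{d-1}$-a.e.\ $y \in \Pi^\xi$.

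The 1D step is where the real work lies and is the main obstacle, because the slices of $u_n$ may genuinely blow up on subsets of positive $\mathcal{L}^1$-measure of $\Omega^\xi_y$, producing jump points of $\widehat u^\xi_y$ that lie on $\partial^\ast (A^\infty_u)^\xi_y$ rather than on $J_{\widehat u^\xi_y \chi_{\Omega^\xi_y \setminus (A^\infty_u)^\xi_y}}$. However this is exactly the situation already treated inside the proof of Theorem~\ref{thm:lower-semi} (and originally in \cite{CC18}): from $\bar d(u_n,u) \to 0$ one obtains slicewise $\bar d_{\bar \R^d}$-convergence $(\widehat u_n)^\xi_y \to \widehat u^\xi_y$ for a.e.\ $y$ (cf.\ \eqref{1505190913}--\eqref{1505190915}), while \eqref{3105171927} and the hypothesis \eqref{eq: weak gsbd convergence} transfer the $L^p$-derivative and $\mathcal{H}^0$-jump bounds to the slices. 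On each subinterval between consecutive jump points the dichotomy argument in the proof of Theorem~\ref{thm:lower-semi} shows that $(\widehat u_n)^\xi_y$ either converges locally uniformly to $\widehat u^\xi_y$ or blows up to infinity almost everywhere, so every jump point of $\widehat u^\xi_y$ (ordinary jump or boundary of a blow-up interval) is the limit of at least one jump point of $(\widehat u_n)^\xi_y$. This yields the 1D inequality.

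Finally, putting everything together, Fatou's lemma gives the desired lower bound for $\mu(U)$ for each $\xi_i$, and Proposition~\ref{prop:prop4BCS} applied with $\lambda = \mathcal{H}^{d-1} \mres J_u$ and $g_i(x) = |\nu_u(x) \cdot \xi_i|/\phi^*(\xi_i)$ yields $\mu(\Omega) \ge \int_{J_u} \sup_i g_i \, \d\lambda = \int_{J_u} \phi(\nu_u) \, \d\mathcal{H}^{d-1}$, completing the proof. I do not expect to need any new ingredient beyond those used in Theorem~\ref{thm:lower-semi}; the only simplification is that the bulk term $\varepsilon \int |e(u)|^p$ entering \eqref{eq: fxsieps} can be dispensed with, so no $\varepsilon \to 0$ step is required.
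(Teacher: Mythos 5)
Your proposal is correct in spirit but takes a genuinely different route from the paper, and it is worth comparing the two. The paper's own proof of Corollary~\ref{cor: GSDB-lsc} does not re-run the slicing machinery at all: it invokes the \emph{upper} inequality of the relaxation result (Proposition~\ref{prop:relFinfty} applied to each $u_n$ with $E=\emptyset$ and $f=\varepsilon|\cdot|^p$) to replace $u_n$ by a Sobolev function $v_n$ on $\Omega\setminus E_n$ whose jump set $J_{u_n}$ has been ``thickened'' into a thin Lipschitz void $E_n$, at the cost of (approximately) $\int_{\partial E_n}\phi(\nu_{E_n})\approx 2\int_{J_{u_n}}\phi(\nu_{u_n})$. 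The lower bound of Theorem~\ref{thm:lower-semi} applied to $(v_n,E_n)$ with $E=\emptyset$ then produces $\int_{J_u}2\phi(\nu_u)$ on the left; the factor $2$ cancels and $\varepsilon\to0$ removes the bulk term. This is a clean reduction that reuses Theorem~\ref{thm:lower-semi} as a black box. Your approach instead reproves the estimate directly for $J_{u_n}$ by repeating the slicing, 1D dichotomy, and $\mbox{sup-of-measures}$ localization. That is perfectly viable and perhaps more transparent, but it is not shorter once the details are filled in, since you are essentially re-deriving a variant of Theorem~\ref{thm:lower-semi}.

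One step in your outline is glossed over and deserves a warning. You write that \eqref{3105171927} and the uniform bounds ``transfer the $L^p$-derivative and $\mathcal{H}^0$-jump bounds to the slices.'' This transfer is not pointwise in $y$: for a fixed $y\in\Pi^\xi$ the quantities $\int_{\Omega^\xi_y}|((\widehat u_n)^\xi_y)'|^p\,\mathrm dt$ and $\mathcal H^0(J_{(\widehat u_n)^\xi_y})$ need not be uniformly bounded in $n$, only their $\hd(y)$-integrals are. To invoke the dichotomy argument on each subinterval you must pass, for $\hd$-a.e.\ $y$, to a $y$-dependent subsequence along which \emph{both} quantities stay bounded (take first a subsequence achieving $\liminf_n\mathcal H^0(J_{(\widehat u_n)^\xi_y}\cap U^\xi_y)$, then refine using Fatou applied to the sliced $L^p$-energy). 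In the paper's Theorem~\ref{thm:lower-semi} this pair of bounds is achieved in one stroke precisely because $\varepsilon\int|(\widehat u\,)'|^p$ and the count term are bundled into the single functional $F^\xi_\varepsilon$ of \eqref{eq: fxsieps}, and a subsequence is chosen so that its liminf is a limit (see \eqref{1505190927}--\eqref{1505190928}). Dropping the $\varepsilon$-term therefore does not remove the need for this bookkeeping; it only changes how you extract the subsequence. With that caveat spelled out, your argument goes through.
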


\begin{proof}
 Let $\eps>0$ and $f(\zeta) =  \eps|\zeta^T + \zeta|^p  $ for $\zeta \in \mathbb{M}^{d\times d}$.  The upper inequality in  
\CCC  Proposition~\ref{prop:relFinfty} \EEE
 (for $u_n$ and $E = \emptyset$) shows that for each $u_n \in GSBD_\infty^p(\Omega)$, we can find a   Lipschitz   set $E_n$ 
 with $\mathcal{L}^{d}(E_n) \le \frac{1}{n}$ and  $v_n \in L^0(\Omega;\R^d)$ with $v_n|_{\Omega\sm \ove E_n} \in W^{1,p}( \Omega \sm \ove E_n;\R^d)$,   $v_n|_{E_n}=0$, and  $\GGG\bar{d} \EEE(v_n,u_n) \le \frac{1}{n}$ (see \eqref{eq:metricd}) such that 
\begin{align}\label{eq: imm-ref}
{  \int_{\Omega \setminus E_n} \eps|e(v_n)|^p\, \dx +  \int_{\Omega \cap\partial E_n } \phi(\nu_{E_n}) \dh \le   \int_{\Omega }  \eps|e(u_n)|^p\, \dx  +   \int_{J_{u_n}} 2\phi(\nu_{u_n}) \dh + \frac{1}{n} \,.}
 \end{align}
 Observe that   $\GGG\bar{d} \EEE(v_n,u) \to 0$ as $n\to \infty$,  and thus $v_n$ converges weakly to $u$ in $GSBD^p_\infty(\Omega)$.  By applying  Theorem~\ref{thm:lower-semi}  on $(v_n,E_n)$ and using $E = \emptyset$ we get \[
 \int_{J_u} 2\phi(\nu_u) \, \dh \le \liminf_{n \to \infty} \int_{\Omega \cap\partial E_n } \phi(\nu_{E_n})\dh\,.
 \] 
  This, along with \eqref{eq: imm-ref}, $\sup_{n\in\N} \Vert e(u_n)\Vert_{L^p(\Omega)}<+\infty$, and the arbitrariness of $\eps$ yields the result. \end{proof}

 We now address the relaxation of $F_{\rm Dir}$, see \eqref{eq: FDir functional}, i.e., a version of $F$ with boundary data. 

We   take advantage of the following  approximation result  which is obtained by following the lines of \cite[Theorem~5.5]{CC17}, where an analogous approximation is proved for Griffith  functionals  with Dirichlet boundary   conditions.  The new feature with respect to \cite[Theorem~5.5]{CC17} is that,  besides the construction of approximating   functions   with the correct boundary data, also  approximating sets  are constructed.    For  convenience of the reader, we give a sketch of the proof in Appendix~\ref{sec:App}  highlighting the adaptations needed with respect to \cite[Theorem~5.5]{CC17}.  In the following,  we denote by $\ove F'_{\mathrm{Dir}}$ the functional on the right hand side of \eqref{1807191836}.

\begin{lemma}\label{le:0410191844}
 Suppose that $\partial_D \Omega\subset \partial \Omega$ satisfies \eqref{0807170103}.  Consider $(v, H) \in L^0(\Omega;\Rd) \times \M(\Omega)$ such that  $\ove F'_{\mathrm{Dir}}(v,H)<+\infty$.  Then there exist $(v_n, H_n) \in (L^p(\Omega;\Rd) \cap SBV^p(\Omega;\Rd)) \times \M(\Omega)$ such that $J_{v_n}$ is closed in $\Omega$ and included in a finite union of closed connected pieces of $C^1$ hypersurfaces, $v_n \in W^{1,p}(\Omega\sm J_{v_n}; \Rd)$, $v_n=u_0$ in a neighborhood $V_n \subset \Omega$ of $\dod$,  $H_n$  is a set of finite perimeter, and
\begin{itemize}
\item[{\rm (i)}] $v_n \to v$  in $L^0(\Omega;\Rd)$,
\item[{\rm (ii)}]$ \chi_{H_n} \to \chi_H  $ in $L^1(\Omega)$,
\item[{\rm (iii)}] $\limsup_{n\to \infty}  \ove F'_{\mathrm{Dir}}(v_n, H_n) \leq \ove F'_{\mathrm{Dir}}(v,H)$.
\end{itemize} 
\end{lemma}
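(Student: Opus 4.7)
The plan follows the strategy of \cite[Theorem~5.5]{CC17}, with the new feature being the simultaneous approximation of the set $H$. The first step is to use the homothety $O_{\delta,x_0}$ from \eqref{0807170103} to shrink both $v$ and $H$ slightly into $\Omega$, thereby creating a buffer region near $\dod$ on which the function can be freely set equal to $u_0$. For $\delta \in (0, \ove \delta)$ I would define
\begin{equation*}
v_\delta(x) :=
\begin{cases}
v(O_{\delta,x_0}^{-1}(x)) & \text{if } x \in O_{\delta,x_0}(\Omega), \\
u_0(x) & \text{if } x \in \Omega \setminus \overline{O_{\delta,x_0}(\Omega)},
\end{cases}
\qquad H_\delta := O_{\delta,x_0}(H).
\end{equation*}
By \eqref{0807170103}, $O_{\delta,x_0}(\dod) \subset \Omega$, so the collar $V_\delta := \Omega \setminus \overline{O_{\delta,x_0}(\Omega)}$ contains an open neighborhood of $\dod$ relative to $\Omega$; consequently $v_\delta = u_0$ on $V_\delta$ and $\overline{H_\delta} \cap \dod = \emptyset$. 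A change of variables shows that $v_\delta \in GSBD^p(\Omega)$, $H_\delta$ is a set of finite perimeter, and $v_\delta \to v$ in $L^0(\Omega;\Rd)$, $\chi_{H_\delta} \to \chi_H$ in $L^1(\Omega)$ as $\delta \to 0^+$.

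With $(v_\delta, H_\delta)$ in hand, I would apply Theorem~\ref{thm:densityGSBD} to the $GSBD^p$ function $v_\delta - u_0$, which vanishes on $V_\delta$. Following the construction of \cite[Theorem~5.5]{CC17}, this yields $SBV^p \cap L^\infty$ approximations $w_{\delta,n}$ whose jump sets are closed in $\Omega$ and contained in finite unions of closed $C^1$ pieces, with the additional property that each $w_{\delta,n}$ vanishes on a slightly smaller open neighborhood of $\dod$. Setting $v_{\delta,n} := w_{\delta,n} + u_0 \in SBV^p(\Omega;\Rd) \cap L^p(\Omega;\Rd)$ then produces approximations of $v_\delta$ that agree with $u_0$ near $\dod$ and satisfy the required structural conditions on the jump set. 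Simultaneously, since $H_\delta$ is of finite perimeter and compactly separated from $\dod$, a standard mollification of $\chi_{H_\delta}$ together with the coarea formula produces smooth sets $H_{\delta,n} \subset \Omega \setminus \overline{\dod}$ with $\chi_{H_{\delta,n}} \to \chi_{H_\delta}$ in $L^1(\Omega)$ and $\mathcal{H}^{d-1}(\partial H_{\delta,n}) \to \mathcal{H}^{d-1}(\partial^* H_\delta)$ as $n \to \infty$. A diagonal argument over $(\delta, n)$ then produces the claimed sequence $(v_n, H_n)$, proving (i) and (ii).

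The main obstacle is establishing (iii), which requires a careful accounting of the new jumps of $v_\delta$ created on the interior interface $O_{\delta,x_0}(\dod) \cap \Omega$, where $v_\delta$ transitions between $v \circ O_{\delta,x_0}^{-1}$ and $u_0$. The image of $\{\mathrm{tr}(v) \neq \mathrm{tr}(u_0)\} \cap (\dod \setminus \partial^* H)$ becomes part of $J_{v_\delta}$ and contributes $\int 2\varphi(\nu_\Omega)\, \mathrm{d}\hd$ to $\ove F(v_\delta, H_\delta)$, exactly matching the trace-mismatch term in \eqref{1807191836}; the image of $\dod \cap \partial^* H$ becomes part of $\partial^* H_\delta$ and contributes $\int \varphi(\nu_H)\, \mathrm{d}\hd$, matching the void-boundary term of \eqref{1807191836}. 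Since $O_{\delta,x_0}$ is a radial contraction which preserves unit normals and $u_0 \in W^{1,p}(\R^d;\Rd)$, the continuity of the anisotropic surface energies together with a bulk change-of-variables (and the decay $\|e(u_0)\|_{L^p(\Omega \setminus O_{\delta,x_0}(\Omega))} \to 0$) yield $\limsup_{\delta \to 0}\ove F'_{\mathrm{Dir}}(v_\delta, H_\delta) \leq \ove F'_{\mathrm{Dir}}(v, H)$; combined with the energy-decreasing property of the density step from \cite[Theorem~5.5]{CC17}, this gives (iii).
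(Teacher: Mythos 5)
There is a genuine gap: your construction ignores the Neumann boundary. The definition $v_\delta := v\circ O_{\delta,x_0}^{-1}$ on $O_{\delta,x_0}(\Omega)$ and $v_\delta := u_0$ on $\Omega\setminus\overline{O_{\delta,x_0}(\Omega)}$ creates a new jump surface along the \emph{entire} contracted boundary $O_{\delta,x_0}(\partial\Omega)\cap\Omega$, not only along $O_{\delta,x_0}(\dod)$. In particular $O_{\delta,x_0}(\don)\cap\Omega$ is in general a set of positive $\hd$-measure (think of $\Omega=B_1$, $x_0=0$), and there $v_\delta$ jumps between $v\circ O_{\delta,x_0}^{-1}$ and $u_0$. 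Since no Dirichlet condition is imposed on $\don$, the trace of $v$ need not agree with $\mathrm{tr}(u_0)$ there, so this contributes a term that tends to $\int_{\don\cap\{\mathrm{tr}(v)\neq\mathrm{tr}(u_0)\}} 2\varphi(\nu_\Omega)\,\dh$ as $\delta\to 0$. This quantity is strictly positive in general and has no counterpart in $\ove F'_{\mathrm{Dir}}(v,H)$, which only penalizes boundary terms on $\dod$ (see \eqref{1807191836}); hence property (iii) fails. The same issue affects $H_\delta=O_{\delta,x_0}(H)$, since $\partial^* H\cap\don$ is mapped into $\Omega$ and adds to $\hd(\partial^* H_\delta\cap\Omega)$ without being counted in $\ove F'_{\mathrm{Dir}}(v,H)$.

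The paper's proof avoids this by \emph{first extending} $v$ and $H$ outward across the Neumann boundary $\don$ via a Nitsche-type reflection in a collection of small rectangles covering most of $\don$ (Step (i)), with the residual part of $\don$ and the newly created reflected jumps controlled to be of order $\eta$. Only after this extension is the dilation $(O_{\delta,x_0})^{-1}$ applied, so that near $\don$ one evaluates the extended function (whose additional jump is small) rather than splicing in $u_0$. Without this preliminary extension, the homothety-plus-splicing you propose necessarily introduces a full-strength jump along (most of) the image of $\don$, and the $\limsup$ inequality cannot hold. The accounting you carry out on $O_{\delta,x_0}(\dod)$ is correct, but it addresses only half of the boundary of the collar.
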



\begin{proof}[Proof of Theorem \ref{thm:relFDir}]
First, we denote  by   $\Omega'$ a bounded open set with $\Omega \subset \Omega' $ and $\Omega' \cap \dom = \dod$.  By  $F'$ and ${\ove F}'$ we denote the analogs of the functionals $F$ and $\ove F$, respectively, defined on $L^0(\Omega';\Rd) {\times} \M(\Omega')$. Given $u \in L^0(\Omega;\R^d)$, we define the extension $u' \in L^0(\Omega';\R^d)$ by setting $u' = u_0$ on $\Omega' \setminus \Omega$ for fixed boundary values $u_0 \in W^{1,p}(\R^d;\R^d)$.  Then, we observe 
\begin{equation}\label{eq: extensi}
F'(u', E) = F_{\mathrm{Dir}}(u,E) + \int_{\Omega' \sm \Omega} f(e(u_0)) \dx\,, \quad {\ove F}'(u', E) = \ove F_{\mathrm{Dir}}(u,E) + \int_{\Omega' \sm \Omega} f(e(u_0)) \dx\,.
\end{equation}
Therefore, the lower inequality follows from Proposition \ref{prop:relF} applied for $F', \ove F'$ instead of $F, \ove F$.


We now address the upper inequality. 
In view of Lemma~\ref{le:0410191844} and by a diagonal argument, it is enough to prove the result in the case where, besides the assumptions in the statement, also $u \in L^p(\Omega;\Rd) \cap SBV^p(\Omega;\Rd)$ and $u = u_0$ in a neighborhood $U \subset \Omega$ of $\partial_D \Omega$.



 By $(u_n,E_n)_n$ we denote a recovery sequence for $(u,E)$ given by Proposition~\ref{th: Braides-upper bound}. In general, the functions $(u_n)_n$  do not satisfy the boundary conditions required in \eqref{eq: FDir functional}. Let $\delta>0$ and let $V \subset \subset U$ be a  smaller neighborhood of $\partial_D \Omega$. In view of \eqref{eq: growth conditions}--\eqref{eq: F functional}, by a standard diagonal argument in the theory of $\Gamma$-convergence,  it suffices to find a sequence $(v_n)_n \subset L^1(\Omega;\R^d)$ with $v_n|_{\Omega \setminus \ove E_n} \in W^{1,p}(  \Omega   \setminus \ove E_n;\R^d)$, $v_n=0$ on $E_n$, and $v_n = u_0$ on $V\setminus \ove E_n$ such that
\begin{align}\label{eq: vnunv}
\limsup\nolimits_{n \to \infty}\Vert v_n   - u\chi_{E^0} \Vert_{L^1(\Omega)} \le \delta, \qquad   \limsup\nolimits_{n \to \infty}\Vert e(v_n) - e(u_n)\chi_{\Omega \setminus E_n}\Vert_{L^p(\Omega)}  \le \delta.
\end{align}
To this end, choose  $\psi \in C^\infty(\Omega)$ with $\psi = 1$ in $\Omega \setminus U$ and $\psi = 0$ on $V$. \CCC The sequence $(u_n)_n$ converges to $u$ only in $L^1(\Omega;\R^d)$. Therefore, we introduce truncations to obtain $L^p$-convergence:   for $M>0$, we define \EEE $u^{M}$ by $u^M_i =  (-M \vee  u_i)\wedge M $, where $u_i$ denotes the $i$-th component, $i=1,\ldots,d$. In a similar fashion, we define $u_{n}^M$.  By Proposition~\ref{th: Braides-upper bound}  we then get  $\chi_{E_n} \to \chi_E$ in $L^1(\Omega)$  and 
\begin{align*}
 u_{n}^M \to u^M \ \ \ \text{in} \ \ \ L^p(\Omega;\R^d), \ \ \ \ \ \ \ \ \ \nabla u_{n}^M \chi_{\Omega \setminus E_n} \to  \nabla u^M \chi_{E^0} \ \ \ \text{in} \ \ \ L^p(\Omega;\mathbb{M}^{d\times d})\,.
 \end{align*}   
We define $v_n:= (\psi u_n^M + (1-\psi) u_0)\chi_{\Omega \setminus E_n}$. Clearly,  $v_n = u_0$ on $V\setminus \ove E_n$. By $\nabla v_n =  \psi \nabla u_{n}^M+ (1-\psi)\nabla u_{0} + \nabla \psi \otimes (u_{n}^M-u_0)$ on $\Omega \setminus E_n$,   $u=u_0$ on $U$, and  Proposition~\ref{th: Braides-upper bound}  we find
\begin{align*}
\limsup\nolimits_{n \to \infty}\Vert v_n - u\Vert_{L^1(\Omega)}&\le\Vert  u - u^M\Vert_{L^1(\Omega)}\,, \\
\limsup\nolimits_{n \to \infty}\Vert e(v_n) - e(u_n)\chi_{\Omega \setminus E_n}\Vert_{L^p(\Omega)}&\le \Vert \nabla u -\nabla u^M\Vert_{L^p(E^0)} +  \Vert \nabla \psi \Vert_\infty\Vert  u - u^M\Vert_{L^p(\Omega)}\,.
\end{align*} 
For $M$ sufficiently large, we obtain \eqref{eq: vnunv}  since $u=u\chi_{E^0}$.  This concludes the proof. 
\end{proof}

\CCC As done for the passage from Proposition~\ref{prop:relF} to Proposition~\ref{prop:relFinfty}, we may obtain the following characterization of the lower semicontinuous envelope of $F_{\mathrm{Dir}}$ with respect to the convergence induced by $\bar{d}$ on $L^0(\Omega; \bar{\R}^d)$. 
\begin{theorem}[Characterization of  the lower semicontinuous envelope $\ove{F}_{\mathrm{Dir},\infty}$]\label{thm:relFDirinfty}
Under the assumptions of Theorem~\ref{thm:relFDir}, the lower semicontinous envelope
\begin{equation*}\label{eq: Fdir-relainfty}
\ove{F}_{\mathrm{Dir},\infty}(u,E) = \Big\{ \liminf_{n\to\infty} F_{\mathrm{Dir}}(u_n,E_n)\colon  \,  \bar{d}(u_n, u) \to 0 \text{ and } \chi_{E_n} \to \chi_E \text{ in }L^1(\Omega) \Big\}
\end{equation*}
 for $ u\in L^0(\Omega; \bar{\R}^d)$ and $E \in \M(\Omega)$  is given by
\begin{equation}\label{1807191836'}
\ove{F}_{\mathrm{Dir},\infty}(u,E) = 
\ove{F}_\infty(u,E) +  \int\limits_{\dod \cap \partial^* E}  \hspace{-0.5cm} \varphi (\nu_E)    \dh + \int\limits_{ \{ \mathrm{tr}(u)  \neq \mathrm{tr}(u_0)  \} \cap  (\dod \setminus \partial^* E) }   \hspace{-0.5cm} 2 \, \varphi(  \nu_\Omega  ) \dh\,.
\end{equation}
Moreover, if $\mathcal{L}^d(E)>0$, then for any $(u,E) \in L^0(\Omega;\bar{\R}^d){\times}\M(\Omega)$ there exists a recovery sequence $(u_n,E_n)_n\subset L^0(\Omega;\Rd){\times}\M(\Omega)$ such that $\mathcal{L}^d(E_n) = \mathcal{L}^d(E)$ for all $n \in \N$.
\end{theorem}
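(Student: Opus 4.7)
The plan is to run the two generalizations performed earlier in parallel: the boundary-term extension carried out in the proof of Theorem~\ref{thm:relFDir} (by passing to a slightly larger domain $\Omega'$) and the ``infinity value'' extension carried out in the proof of Proposition~\ref{prop:relFinfty} (by using the truncation $\tilde u_{t_n}$ with $|t_n|\to\infty$).

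For the lower inequality, I would proceed exactly as in Theorem~\ref{thm:relFDir}: fix a bounded open $\Omega'\supset \Omega$ with $\Omega'\cap\partial\Omega=\dod$, and extend any admissible $u\in L^0(\Omega;\bar{\R}^d)$ to $u'\in L^0(\Omega';\bar{\R}^d)$ by setting $u'=u_0$ on $\Omega'\setminus\Omega$. Since $u_0$ is finite, $A^\infty_{u'}=A^\infty_u\subset\Omega$, so $u'\in GSBD^p_\infty(\Omega')$ whenever $u\chi_{E^0}\in GSBD^p_\infty(\Omega)$. Writing $F'$ and $\ove F'_\infty$ for the analogs on $\Omega'$, one has $F'(u'_n,E_n)=F_{\mathrm{Dir}}(u_n,E_n)+\int_{\Omega'\setminus\Omega}f(e(u_0))\dx$, and $\bar d(u_n,u)\to 0$ on $\Omega$ gives $\bar d(u'_n,u')\to 0$ on $\Omega'$. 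Applying Proposition~\ref{prop:relFinfty} on $\Omega'$ to the sequence $(u'_n,E_n)_n$ yields the three surface contributions of $\ove F'_\infty(u',E)$: the interior ones produce $\ove F_\infty(u,E)$, the essential boundary of $E$ viewed across $\dod$ contributes the first boundary integral in \eqref{1807191836'}, and the jump of $u'$ across $\dod\setminus\partial^*E$, counted where $\mathrm{tr}(u)\neq \mathrm{tr}(u_0)$ (with the convention $\mathrm{tr}(u)=\infty$ on $\dod\cap A^\infty_u$, which automatically differs from $\mathrm{tr}(u_0)\in\R^d$), produces the second. Subtracting the bulk contribution on $\Omega'\setminus\Omega$ gives the lower bound.

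For the upper inequality, I would reduce to the finite-valued case already treated in Theorem~\ref{thm:relFDir} by the same device used in Proposition~\ref{prop:relFinfty}. Given $u\in GSBD^p_\infty(\Omega)$ with $u=u\chi_{E^0}$ and $\ove F_{\mathrm{Dir},\infty}(u,E)<+\infty$, pick $(t_n)_n\subset \Rd$ with $|t_n|\to\infty$ such that the functions $\tilde u_{t_n}:=u\chi_{\Omega\setminus A^\infty_u}+t_n\chi_{A^\infty_u}\in GSBD^p(\Omega)$ satisfy \eqref{eq:same}, so $e(\tilde u_{t_n})=e(u)$ a.e.\ and $J_{\tilde u_{t_n}}\,\tilde=\,J_u$ with the same normal. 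For $n$ large, $t_n\neq \mathrm{tr}(u_0)$ on $\dod\cap A^\infty_u$, and therefore $\{\mathrm{tr}(\tilde u_{t_n})\neq \mathrm{tr}(u_0)\}\,\tilde =\,\{\mathrm{tr}(u)\neq \mathrm{tr}(u_0)\}$ modulo $\hd$-null sets. Consequently the right-hand side of \eqref{1807191836} evaluated at $(\tilde u_{t_n},E)$ equals the right-hand side of \eqref{1807191836'} evaluated at $(u,E)$, i.e.\ $\ove F_{\mathrm{Dir}}(\tilde u_{t_n},E)=\ove F_{\mathrm{Dir},\infty}(u,E)$. Since $\bar d(\tilde u_{t_n},u)\to 0$, I would then apply Theorem~\ref{thm:relFDir} to each $\tilde u_{t_n}$ to obtain recovery pairs $(u^n_k,E^n_k)_k$ converging in $L^0(\Omega;\Rd)\times\M(\Omega)$ (hence in $\bar d\times L^1$, because finite-valued $L^0$-convergence implies $\bar d$-convergence), with the volume constraint $\mathcal L^d(E^n_k)=\mathcal L^d(E)$ preserved when $\mathcal L^d(E)>0$. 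A standard diagonal extraction produces the sought recovery sequence.

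The main obstacle is of a bookkeeping nature: one must verify carefully that the boundary term on $\dod$ in \eqref{1807191836'} is indeed produced in both directions, i.e.\ (a) in the lower bound, that the jump of the extension $u'$ along $\dod$ accounts simultaneously for the classical jump where $\mathrm{tr}(u)\neq\mathrm{tr}(u_0)$ \emph{and} for the trace mismatch introduced by $A^\infty_u$ reaching $\dod$, with the correct factor $2\varphi(\nu_\Omega)$, and (b) in the upper bound, that after replacing $u$ by $\tilde u_{t_n}$ these two contributions collapse into the single set $\{\mathrm{tr}(\tilde u_{t_n})\neq\mathrm{tr}(u_0)\}$ without generating extra surface energy. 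Both points hinge on consistently adopting the convention $\mathrm{tr}(u)=\infty$ on $\dod\cap A^\infty_u$ and on the identity $J_{\tilde u_{t_n}}\,\tilde =\,J_u$ from \eqref{eq:same}.
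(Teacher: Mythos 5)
Your proposal is correct and follows essentially the same strategy the paper signals: the paper does not write out a separate proof of Theorem~\ref{thm:relFDirinfty}, stating only that one obtains it ``as done for the passage from Proposition~\ref{prop:relF} to Proposition~\ref{prop:relFinfty},'' and your elaboration faithfully composes the domain-extension device of Theorem~\ref{thm:relFDir} with the $\tilde u_{t_n}$-approximation of Proposition~\ref{prop:relFinfty}. The only cosmetic discrepancy is the convention for $\mathrm{tr}(u)$ on $\dod\cap A^\infty_u$: the paper takes $\mathrm{tr}(u):=\mathrm{tr}(\tilde u_t)$ for any $t$ with $\hd(\{u_0=t\}\cap\dod)=0$, whereas you adopt $\mathrm{tr}(u)=\infty$ there; these agree modulo $\hd$-null sets, so the bookkeeping you perform in the last paragraph is exactly the check the paper leaves implicit.
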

 Notice that in \eqref{1807191836'} we wrote $\mathrm{tr}(u)$ also for $u\in GSBD^p_\infty(\Omega)$, with a slight abuse of notation: 
$\mathrm{tr}(u)$ should be intended as $\mathrm{tr}(\tilde{u}_t)$, cf.\ \eqref{eq: compact extension}, for any $t \in \R^d$ such that $\hd(\{ u_0 = t\} \cap \dod)=0$\EEE.

\subsection{Compactness and existence results for the relaxed functional}\label{sec: Fcomp}

We start with the following general compactness result. 
\begin{theorem}[Compactness]\label{thm:compF}
For  every  $(u_n, E_n)_n$ with $\sup_n F(u_n, E_n) < +\infty$, there exist a subsequence (not relabeled), $u \in GSBD^p_\infty(\Omega)$,  and  $E \in \M(\Omega)$ with $\hd(\partial^* E) < +\infty$ such that $u_n$  converges  weakly in $GSBD^p_\infty(\Omega)$ to $u$  and  $\chi_{E_n} \to \chi_E$ in $L^1(\Omega)$.
\end{theorem}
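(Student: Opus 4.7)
The plan is to assemble the conclusion from three standard ingredients: coercivity of $F$, $BV$ compactness for $\chi_{E_n}$, and the $GSBD^p$ compactness theorem of Chambolle--Crismale (Theorem~\ref{th: GSDBcompactness}), which already accommodates escape to infinity on a set of finite perimeter. Since the target space is $GSBD^p_\infty(\Omega)$ with weak convergence in the sense of \eqref{eq: weak gsbd convergence}, the result is essentially a repackaging of these tools.

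First, I would exploit coercivity. From $F(u_n,E_n)\le C$, definition \eqref{eq: F functional}, the growth bound \eqref{eq: growth conditions}, and the fact that $\varphi$ is a norm (so $\min_{\mathbb{S}^{d-1}}\varphi>0$), I obtain
\[
\sup_n \|e(u_n)\|_{L^p(\Omega)} + \sup_n \hd(\Omega\cap\partial E_n) < +\infty.
\]
Since $\chi_{E_n}\in BV(\Omega)$ with $|D\chi_{E_n}|(\Omega)=\hd(\Omega\cap\partial E_n)$ (together with an $\Omega$-boundary contribution uniformly bounded by Lipschitz regularity and finite $\hd(\partial\Omega)$), the $BV$ compactness theorem yields a subsequence and $E\in\M(\Omega)$ with $\chi_{E_n}\to\chi_E$ in $L^1(\Omega)$ and $\hd(\partial^* E)<+\infty$ by lower semicontinuity of the perimeter. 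Next, observe that each $u_n$ lies in $GSBD^p(\Omega)$: being in $W^{1,p}$ outside $\overline{E_n}$ and identically zero on $E_n$, its jump set satisfies $J_{u_n}\,\tilde{\subset}\, \Omega\cap\partial E_n$, so $\sup_n\hd(J_{u_n})<+\infty$.

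I would then apply Theorem~\ref{th: GSDBcompactness} to $(u_n)_n$ to extract a further subsequence for which $A:=\{x\in\Omega\colon |u_n(x)|\to\infty\}$ has finite perimeter, and $u_n\to v$ in $L^0(\Omega\setminus A;\R^d)$ for some $v\in GSBD^p(\Omega)$. Setting
\[
u := v\,\chi_{\Omega\setminus A} + \infty\,\chi_A,
\]
the finite perimeter of $A$ together with $v\in GSBD^p(\Omega)$ shows, via \eqref{eq: compact extension}, that $u\in GSBD^p_\infty(\Omega)$: indeed $\tilde u_t = v\chi_{\Omega\setminus A} + t\chi_A \in GSBD^p(\Omega)$ for every $t\in\R^d$, because $e(\tilde u_t)=e(v)\chi_{\Omega\setminus A}\in L^p$ and $J_{\tilde u_t}\,\tilde{\subset}\, J_v\cup(\partial^*A\cap\Omega)$ has finite $\hd$-measure.

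It remains to verify weak convergence in $GSBD^p_\infty(\Omega)$ as in \eqref{eq: weak gsbd convergence}. The uniform bound on $\|e(u_n)\|_{L^p(\Omega)}+\hd(J_{u_n})$ is already established, so only $\bar d(u_n,u)\to 0$ has to be checked. By the definition of $A$ one has $|u_n(x)|\to\infty$, hence $d_{\bar\R^d}(u_n(x),\infty)\to 0$, for every $x\in A$, while $u_n\to v$ in measure on $\Omega\setminus A$ yields $d_{\bar\R^d}(u_n(x),v(x))\to 0$ in measure there. Since $d_{\bar\R^d}$ is bounded on $\bar\R^d\times\bar\R^d$ and $\Omega$ has finite Lebesgue measure, dominated convergence gives $\bar d(u_n,u)\to 0$. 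No genuine obstacle is expected; the only care needed is to check that $u$ indeed satisfies the structural requirements of $GSBD^p_\infty(\Omega)$, which is precisely what Theorem~\ref{th: GSDBcompactness} guarantees via the finite perimeter of $A$.
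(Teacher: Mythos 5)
Your proposal is correct and follows essentially the same route as the paper: derive the uniform bounds $\sup_n\|e(u_n)\|_{L^p}+\hd(\Omega\cap\partial E_n)<\infty$ from coercivity, use $BV$-compactness for the sets, observe $J_{u_n}\,\tilde{\subset}\,\Omega\cap\partial E_n$, and then invoke Theorem~\ref{th: GSDBcompactness}. The paper's proof simply cites that theorem and leaves the identification of the limit as an element of $GSBD^p_\infty(\Omega)$ implicit (the construction is spelled out in the proof of Lemma~\ref{eq: corollary-comp}), whereas you carry out that step explicitly — a harmless and arguably clearer elaboration, not a different argument.
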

\begin{proof}
Let $(u_n, E_n)_n$ with $\sup_n F(u_n, E_n) < +\infty$.  As by the assumptions  on  $\varphi$ there holds  $\sup_{n\in \N} \hd(\partial E_n) < +\infty$,  a compactness result for sets of finite perimeter (see \cite[Remark 4.20]{AFP}) implies that   there exists $E \subset \Omega$ with $\mathcal{H}^{d-1}(\partial^* E)<+\infty$  such that $\chi_{E_n} \to \chi_E$ in $L^1(\Omega)$, up to a subsequence (not relabeled).

 Since the functions $u_n=u_n \chi_{\Omega \sm E_n}$ satisfy $J_{u_n} \subset \partial E_n$, we get $\sup_n \mathcal{H}^{d-1}(J_{u_n}) <+ \infty$. Moreover, by the growth assumptions on $f$ (see \eqref{eq: growth conditions})  we get that $\Vert e(u_n)\Vert_{L^p(\Omega)}$ is uniformly bounded. Thus, by Theorem \ref{th: GSDBcompactness},   $u_n$ converges (up to a subsequence) weakly in $GSBD^p_\infty(\Omega)$ to  some   $u \in GSBD^p_\infty(\Omega)$.  This concludes the proof. 
\end{proof}
\CCC We are now ready to prove Theorem~\ref{th: relF-extended}.
\begin{proof}[Proof of Theorem~\ref{th: relF-extended}.]
The existence of minimizers  for $\ove F_{\mathrm{Dir},\infty}$  follows by combining  Theorem~\ref{thm:relFDirinfty}  and Theorem~\ref{thm:compF},    by means of  \CCC general properties of relaxations, see e.g.\ \cite[Theorem~3.8]{DMLibro}.
To obtain minimizers for $\ove F_{\mathrm{Dir}}$, it is enough to observe (recall \eqref{eq: compact extension}, \eqref{eq:same})  that
\begin{equation*}
\ove F_{\mathrm{Dir},\infty}(u,E) \ge \ove F_{\mathrm{Dir}}(v_a, E)
\end{equation*}
for every $u\in GSBD^p_\infty(\Omega)$ and $v_a := u\chi_{\Omega\sm A^\infty_u} + a \chi_{A^\infty_u}$  (recall $A^\infty_u=\{u=\infty\}$), where $a\colon \Rd \to \Rd$ is an arbitrary affine function  with skew-symmetric gradient (usually called an \emph{infinitesimal rigid motion}).
Starting from a minimizer of $\ove F_{\mathrm{Dir},\infty}$, if $A^\infty_u \neq \emptyset$, we thus obtain a family of minimizers for $\ove F_{\mathrm{Dir}}$, parametrized by the infinitesimal rigid motions $a$. This concludes the proof.
\end{proof}
\EEE

\section{Functionals on domains with a subgraph constraint}\label{sec:GGG}

In this section we prove the the results announced in Subsection \ref{sec: results2}.

\subsection{Relaxation of the energy $G$}\label{subsec:RelG}
This subsection is devoted to the proof of Theorem~\ref{thm:relG}.  The lower inequality is obtained by exploiting the tool  of  $\sigma^p_{\mathrm{sym}}$-convergence introduced in Section~\ref{sec:sigmap}. The corresponding analysis will  prove to be  useful also for the compactness theorem in the next   subsection.  The proof of the upper inequality is quite delicate, and a careful procedure is needed to guarantee that the approximating   displacements are still defined on a domain which is the subgraph of a function. We only follow partially  the strategy in \cite[Proposition~4.1]{ChaSol07}, and employ  also other arguments in order to  improve the $GSBV$ proof which might fail in some pathological cases.

 Consider a Lipschitz set $\omega \subset \R^{d-1}$ which is uniformly star-shaped with respect to the origin, see \eqref{eq: star-shaped}.  We recall the notation $\Omega = \omega \times (-1, M  + 1 )$ and 
\begin{align}\label{eq: graphi}
\Omega_h = \lbrace x\in \Omega \colon \,   -1 <  x_d   <    h(x') \rbrace, \ \ \ \ \ \Omega_h^+ = \Omega_h \cap \lbrace x_d > 0 \rbrace
\end{align}
for $h\colon\omega \to  [0,M]$ measurable, where we write $x = (x',x_d)$ for $x \in \R^d$.  Moreover, we let  $\Omega^+ = \Omega \cap \lbrace x_d>0 \rbrace$.

\paragraph*{\textbf{The lower inequality.}} Consider  $(u_n, h_n)_n$ with $\sup_n G(u_n, h_n) < +\infty$. Then,  we have that $h_n \in C^1(\omega;  [0,M]  )$, $u_n|_{\Omega_{h_n}} \in W^{1,p}(\Omega_{h_n};\R^d )$, $u_n|_{\Omega\sm \Omega_{h_n}}=0$,  and   $u_n = u_0$ on $\omega\times (-1,0)$. 
Suppose that  $(u_n, h_n)_n$  converges in $L^0(\Omega;\Rd){\times}L^1(\omega)$ to $(u, h)$.
We let 
\begin{equation}\label{eq: Gamma-n}
\Gamma_n:=\partial \Omega_{h_n}  \cap \Omega = \lbrace x \in \Omega\colon \,  h_n(x') = x_d \rbrace 
\end{equation}
 be the graph of the function $h_n$.  Note that $\sup_n \mathcal{H}^{d-1}(\Gamma_n)<+\infty$.   We take $U = \omega \times (-\frac{1}{2},M) $  and $U' = \Omega = \omega \times (-1,M+1)$, and  apply Theorem~\ref{thm:compSps},  to  deduce that $(\Gamma_n)_n$ $\sps$-converges  (up to a subsequence)  to a pair $(\Gamma, G_\infty)$.   A fundamental step in the proof will be to show that
\begin{align}\label{eq: G is empty}
G_\infty = \emptyset\,.
\end{align}
We postpone the proof of this property to   Step~3  below. We first   characterize  the limiting set $\Gamma$ (Step 1) and prove the lower inequality  (Step 2), by following
   partially the  lines of \cite[Subsection~3.2]{ChaSol07}. 
In the whole proof, for simplicity we omit to write $\tilde{\subset}$ and $\tilde{=}$ to indicate that the inclusions hold up to  $\hd$-negligible sets.

\noindent \emph{Step 1:  Characterization   of the  limiting set $\Gamma$.} Let us prove that the set 
\begin{equation}\label{eq: sigma-low-def}
\Sigma:= \Gamma \cap \Omega_h^1
\end{equation}
is vertical, that is 
\begin{equation}\label{eq: Sigma}
(\Sigma+t e_d) \cap \Omega_h^1 \subset \Sigma \ \ \ \  \text{for any $t \geq 0$}\,.
\end{equation}
This follows as in \cite[Subsection~3.2]{ChaSol07}:  in fact, consider  $(v_n)_n$ and $v$ as in Definition~\ref{def:spsconv}(ii).   In particular,  $v_n = 0$ on $U' \setminus U$, $J_{v_n} {\subset} \Gamma_n$, and, in view of \eqref{eq: G is empty},   $ v$ is $\Rd$-valued with $\Gamma = J_v$.   The functions $v'_n(x):= v_n(x', x_d -t) \chi_{\Omega_{h_n}}(x)$ (with $ t >0  $, extended   by zero  in $\omega{\times}(-1, -1+t)$)  
converge to $v'(x):= v(x', x_d-t) \chi_{\Omega_h}(x)$   in measure   on $U'$.  Since $J_{v_n'} \subset \Gamma_n$,  Definition~\ref{def:spsconv}(i)  implies $J_{v'} \setminus \Gamma {\subset} (G_\infty)^1$. As $G_\infty = \emptyset$ by \eqref{eq: G is empty}, we get $J_{v'} {\subset} \Gamma$,     so that 
\[
(\Sigma + t e_d)  \cap \Omega_h^1  =  (\Gamma + t e_d)  \cap \Omega_h^1  =  (J_v + t e_d)  \cap \Omega_h^1 = J_{v'} \cap \Omega_h^1\subset \Gamma \cap \Omega_h^1=\Sigma\,,
\]
where we have used $\Gamma = J_v$. This shows \eqref{eq: Sigma}. In particular, $\nu_{\Sigma} \cdot e_d =0$ $\hd$-a.e.\ in $\Sigma$.  Next, we show that
\begin{equation}\label{2304191211}
\hd(\partial^* \Omega_h \cap \Omega) + 2 \hd(\Sigma) \leq \liminf_{n\to \infty} \int_\omega \sqrt{1 + |\nabla h_n(x')|^2} \, \d x'\,.
\end{equation}
To see  this, we again consider functions $(v_n)_n$ and  $v$ satisfying Definition~\ref{def:spsconv}(ii). In particular, we have  $J_{v_n} \subset \Gamma_n$ and  $J_v = \Gamma$. Since $\Gamma_n$ is the graph of a $C^1$ function,  we either get $v_n|_{\Omega_{h_n}} \equiv \infty$ or, by  Korn's inequality,   we have $v_n|_{\Omega_{h_n}} \in W^{1,p}(\Omega_{h_n}; \Rd)$. Since $v_n = 0$ on $U' \setminus U$, we obtain $v_n|_{\Omega_{h_n}} \in W^{1,p}(\Omega_{h_n}; \Rd)$.     We apply Theorem \ref{thm:lower-semi} for $E_n  = \Omega \setminus \Omega_{h_n}$, $E= \Omega \setminus \Omega_{h}$, and the sequence of functions $w_n := v_n\chi_{\Omega \setminus E_n} = v_n \chi_{\Omega_{h_n}}$.

 Observe that $\chi_{E_n} \to \chi_E$ in $L^1(\Omega)$. Moreover,  $w_n$  converges weakly in $GSBD^p_\infty(\Omega)$ to $w := v\chi_{\Omega_h}$ since $v_n$ converges weakly in $GSBD^p_\infty(\Omega)$ to $v$  and $\sup_n \mathcal{H}^{d-1}(\partial E_n)<+\infty$.  By \eqref{1405191304} for $\varphi \equiv 1$ on $\mathbb{S}^{d-1}$ there holds
$$
\hd(\partial^* \Omega_h \cap\Omega) + 2 \hd(J_w  \cap  \Omega_{h}^1) \leq \liminf_{n\to \infty} \mathcal{H}^{d-1}(\partial \Omega_{h_n} \cap \Omega) \,,
$$ 
where we used that $E^0 = \Omega_{h}^1$ and $\partial^* E \cap \Omega = \partial^* \Omega_h \cap \Omega$. Since $J_v = \Gamma$ and  $J_w \cap \Omega_h^1 = J_v \cap \Omega_h^1 = \Sigma$, we indeed  get \eqref{2304191211}, where for the right hand side we use that $\partial \Omega_{h_n}$ is the graph of the function $h_n\in C^1(\omega;   [0,M]  )$.   For later purposes in  Step~3,  we also note that by Corollary \ref{cor: GSDB-lsc} for $\phi(\nu)= |\xi \cdot \nu|$, with $\xi \in \mathbb{S}^{d-1}$ fixed, we get
\begin{align}\label{eq: aniso-lsc}
\int_\Gamma |\nu_\Gamma \cdot \xi|  \dh = \int_{J_v} |\nu_v \cdot \xi|  \dh \le \liminf_{n \to \infty}\int_{J_{v_n}} |\nu_{v_n} \cdot \xi| \dh \le  \liminf_{n \to \infty} \int_{\Gamma_n} |\nu_{\Gamma_n} \cdot \xi|  \dh\,. 
\end{align}
 (Strictly speaking, as $\phi$ is only a seminorm, we apply Corollary \ref{cor: GSDB-lsc} for $\phi +\eps$ for any $\eps>0$.)

\noindent \emph{Step 2: The lower inequality.} We now show the lower bound.  Recall that $(u_n, h_n)_n$ converges in $L^0(\Omega;\Rd){\times}L^1(\omega)$ to $(u, h)$ and that  $(G(u_n, h_n))_n$ is bounded. Then, \eqref{eq: growth conditions} and $\min_{\mathbb{S}^{d-1}} \varphi>0$ along with Theorem \ref{th: GSDBcompactness} and the fact that $\mathcal{L}^d(\lbrace x \in \Omega\colon |u_n(x)| \to \infty \rbrace)=0 $  imply that 
the limit $u= u\chi_{\Omega_h}$  lies in $GSBD^p(\Omega)$. 
There also holds  $u = u_0$ on $\omega\times (-1,0)$ by \eqref{eq: GSBD comp}(i) and the fact that  $u_n = u_0$ on $\omega\times (-1,0)$ for all $n \in \N$. In particular, we observe that $u_n = u_n \chi_{\Omega_{h_n}}$  converges weakly in  $GSBD^p_\infty(\Omega)$  to $u$, cf.\ \eqref{eq: weak gsbd convergence}. The fact that $h \in BV(\omega;  [0,M]  )$ follows from a standard compactness argument. This shows $\overline{G}(u,h) < + \infty$.

To obtain the lower bound for the energy,  we again apply Theorem \ref{thm:lower-semi} for $E_n  = \Omega \setminus \Omega_{h_n}$ and $E= \Omega \setminus \Omega_{h}$. Consider the sequence of functions  $v_n := \psi u_n\chi_{\Omega \setminus E_n} = \psi u_n$, where $\psi \in C^\infty(\Omega)$ with $\psi = 1$ in a neighborhood of $\Omega^+  = \Omega\cap\lbrace x_d >0\rbrace  $ and $\psi = 0$ on $\omega  \times (-1,-\frac{1}{2})$. We observe that $v_n = 0$ on $U'\setminus U  = \omega \times ((-1,-\frac{1}{2}]\cup[M,M+1)) $ and that $v_n$    converges to $ v:=  \psi u \in GSBD^p(\Omega)$  weakly in  $GSBD^p_\infty(\Omega)$. Now we apply Theorem \ref{thm:lower-semi}.     
First, notice that \eqref{1405191303},   $\psi = 1$ on $\Omega^+$,    and the fact that $A^\infty_u = \emptyset$ imply  $e(u_n)\chi_{\Omega^+_{h_n}} \weak e(u) \chi_{\Omega^+_h}$  weakly in $L^p(\Omega;\Mdd)$.    This along with the convexity of $f$  yields
\begin{equation}\label{2304191220}
\int_{\Omega_h^+} f(e(u)) \dx \leq \liminf_{n\to \infty} \int_{\Omega_{h_n}^+} f(e(u_n)) \dx\,.
\end{equation}
Moreover, applying Definition~\ref{def:spsconv}(i) on the sequence $(v_n)_n$,  which satisfies $v_n= 0$ on $U' \setminus U$ and $J_{v_n} \subset \Gamma_n$,   we observe $J_u   = J_v  \subset \Gamma$,  where we have also used \eqref{eq: G is empty}.   Recalling  the definition of $J_u' = \lbrace (x',x_d + t)\colon \, x \in J_u, \, t \ge 0 \rbrace$, see \eqref{eq: Ju'},  and using \eqref{eq: sigma-low-def}--\eqref{eq: Sigma} we find $J_u'  \cap \Omega^1_h\subset\Sigma$. Thus, by  \eqref{2304191211}, we obtain
\begin{equation}\label{2304191220-2}
\hd(\partial^* \Omega_h \cap \Omega) + 2 \hd(J_u'  \cap \Omega^1_h)  \leq \liminf_{n\to \infty} \int_\omega \sqrt{1 + |\nabla h_n(x')|^2} \, \d x'\,.
\end{equation}
Collecting \eqref{2304191220} and \eqref{2304191220-2} we conclude the lower inequality. 
To conclude the proof, it remains to confirm \eqref{eq: G is empty}.

\noindent \emph{Step  3:  Proof of $G_\infty =\emptyset$.}  Recall the definition of the graphs $\Gamma_n$ in  \eqref{eq: Gamma-n} and its $\sps$-limit $\Gamma$ on the sets  $U = \omega \times (-\frac{1}{2},M) $  and $U' = \Omega$. As before, consider $\psi \in C^\infty(\Omega)$ with $\psi = 1$ in a neighborhood of $\Omega^+$ and $\psi = 0$ on $\omega  \times (-1,-\frac{1}{2})$.     By employing (i) in Definition~\ref{def:spsconv} for the sequence  $v_n = \psi \chi_{\Omega_{h_n}} e_d$ and its limit  $v=  \psi \chi_{ \Omega_{h}  } e_d$,    we get  that $(\partial^*\Omega_h \cap \Omega) \setminus  \Gamma {\subset} (G_\infty)^1$. Since  $U' \cap \partial^* G_\infty \subset \Gamma$   by definition of $\sps$-convergence, we observe
\begin{align}\label{eq: summary5}
\Gamma \supset \big(\partial^* G_\infty\cap \Omega \big) \cup  \big( \partial^* \Omega_h  \cap \Omega  \cap  (G_\infty)^0 \big)\,. 
\end{align}
We estimate  the $\mathcal{H}^{d-1}$-measure of  the two terms on the right separately.  

\noindent \emph{The first term.}   We  define $\Psi = \partial^* G_\infty \cap \Omega$ for brevity.     Since $G_\infty$ is contained in $U = \omega \times (-\frac{1}{2},M)$ and $\Omega = \omega  \times (-1,M+1)$, we observe $\Psi = \partial^* G_\infty \cap (\omega \times \R)$.   Choose $\omega_\Psi \subset \omega$ such that $\omega_\Psi \times \lbrace 0 \rbrace $ is the orthogonal projection of $\Psi$ onto $\R^{d-1}\times \lbrace 0 \rbrace$. Note that $\Psi$ and $\omega_\Psi$ satisfy 
\begin{equation*}\label{eq: needed later3}
\mathcal{H}^0\big(  \Psi^{e_d}_y \big) \ge 2 \ \ \  \text{ for all $y \in \omega_\Psi \times \lbrace 0\rbrace$,}
\end{equation*} 
since $G_\infty$ is a set of finite perimeter. Thus
\begin{align}\label{eq: summary1}
 \int_{\omega \times \lbrace 0 \rbrace}  \mathcal{H}^0\big(   (\partial^* G_\infty\cap \Omega )^{e_d}_y \big)  \, \d \mathcal{H}^{d-1}(y)   \ge 2 \mathcal{H}^{d-1}(\omega_\Psi)\,. 
\end{align} 
\emph{The second term.} As $\partial^* \Omega_h \cap \Omega$ is the (generalized) graph of the function $h\colon \omega \to [0,M]$, we have 
\begin{align}\label{eq: summary2}
\int_{\omega \times \lbrace 0 \rbrace}  \mathcal{H}^0\Big(   \big( \partial^* \Omega_h \cap \Omega \big)^{e_d}_y \Big)  \, \d \mathcal{H}^{d-1}(y)  =  \mathcal{H}^{d-1}(\omega)\,. 
\end{align} 
In a similar fashion, letting $\Lambda_2 = (\partial^* \Omega_h \cap\Omega) \setminus (G_\infty)^0$ and  denoting by $\omega_{\Lambda_2}\subset \omega$ its orthogonal projection onto  $\R^{d-1} \times \lbrace 0 \rbrace$,  we get  
\begin{align}\label{eq: summary3}
 \int_{\omega \times \lbrace 0 \rbrace}  \mathcal{H}^0\Big(   \big((\partial^* \Omega_h \cap\Omega) \setminus (G_\infty)^0 \big)^{e_d}_y \Big)  \, \d \mathcal{H}^{d-1}(y)  =  \mathcal{H}^{d-1}(\omega_{\Lambda_2})\,. 
\end{align} 
 As $\Lambda_2$ is contained in $(G_\infty)^1 \cup \partial^* G_\infty$, we get  $\omega_{\Lambda_2} \subset \omega_\Psi$,  see Figure~1. 
\begin{figure}[h]\label{figp29copia}
\includegraphics[scale=2]{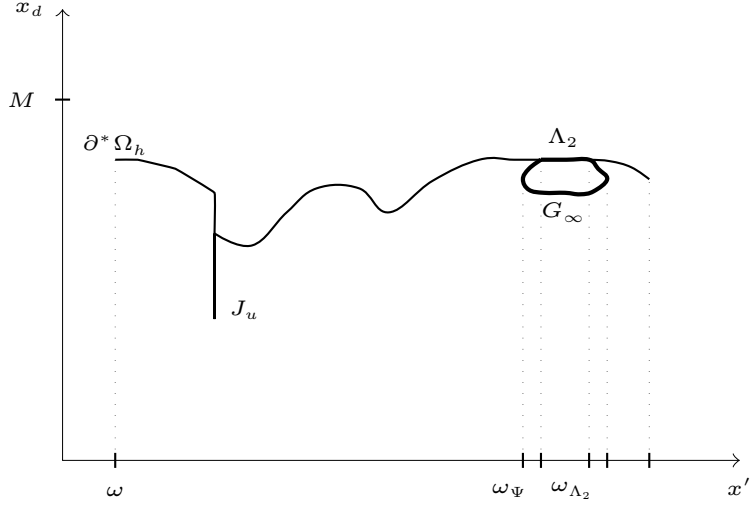}
 \caption{A picture of the situation in the argument by contradiction. We show that in fact $G_\infty=\emptyset$.} 
\end{figure} 
Therefore, by combining  \eqref{eq: summary2} and \eqref{eq: summary3} we find 
\begin{align}\label{eq: summary4}
 \int_{\omega \times \lbrace 0 \rbrace} \hspace{-0.2cm}  \mathcal{H}^0\Big(   \big(\partial^* \Omega_h  \cap \Omega   \cap (G_\infty)^0 \big)^{e_d}_y \Big)  \, \d \mathcal{H}^{d-1}(y) & = \mathcal{H}^{d-1}(\omega) -  \mathcal{H}^{d-1}(\omega_{\Lambda_2}) \ge \mathcal{H}^{d-1}(\omega) - \mathcal{H}^{d-1}(\omega_\Psi)\,.
\end{align} 
Now \eqref{eq: summary5}, \eqref{eq: summary1}, \eqref{eq: summary4}, and the fact that $\partial^*G_\infty \cap (G_\infty)^0 = \emptyset$ yield 
\begin{align}\label{eq: summary7}
\int_{\omega \times \lbrace 0 \rbrace} \mathcal{H}^{0}(\Gamma^{e_d}_y)  \, \d \mathcal{H}^{d-1}  & \ge  \int_{\omega \times \lbrace 0 \rbrace}\Big(\mathcal{H}^0\big(   (\partial^* G_\infty\cap \Omega )^{e_d}_y \big) +   \mathcal{H}^0\Big(   \big(\partial^* \Omega_h  \cap \Omega     \cap (G_\infty)^0 \big)^{e_d}_y \Big) \Big) \, \d \mathcal{H}^{d-1} \notag\\
& \ge \mathcal{H}^{d-1}(\omega) + \mathcal{H}^{d-1}(\omega_\Psi)\,.
\end{align}
Since $\Gamma_n$ are graphs of the functions $h_n \colon \omega\to [0,M]$, we get by  the area formula and \eqref{eq: aniso-lsc} that
\begin{align*}
\int_{\omega \times \lbrace 0 \rbrace} \mathcal{H}^{0}(\Gamma^{e_d}_y)  \, \d \mathcal{H}^{d-1}(y) & =  \int_\Gamma |\nu_\Gamma \cdot e_d| \d \mathcal{H}^{d-1} \le  \liminf_{n \to \infty} \int_{\Gamma_n} |\nu_{\Gamma_n} \cdot e_d|  \dh  =  \mathcal{H}^{d-1}(\omega) \,.
\end{align*}
This along with \eqref{eq: summary7} shows that $\mathcal{H}^{d-1}(\omega_\Psi) = 0$.  By recalling that $\omega_\Psi \times \lbrace 0 \rbrace$ is  the orthogonal projection of $\partial^* G_\infty \cap  (\omega \times \R)  = \Psi$ onto $\R^{d-1} \times \lbrace 0 \rbrace$, we conclude that $G_\infty = \emptyset$.  

  This  completes the proof of  the lower inequality in  Theorem~\ref{thm:relG}.  \eop

 \paragraph*{\textbf{The upper inequality.}} \label{page:upperineqbeg} To obtain the upper inequality, it clearly suffices to prove the following result.  
\begin{proposition}\label{prop: enough}
 Suppose that $f \ge 0$ is convex and   satisfies  \eqref{eq: growth conditions}. Consider $(u,h)$ with $ u = u \chi_{\Omega_h} \in~GSBD^p(\Omega)$, $u=u_0$ in $\omega{\times}(-1,0)$, and  $h \in BV(\omega;  [0,M]  )$. Then,  there exists a sequence $(u_n, h_n)_n$ with  $h_n \in C^1(\omega) \cap BV(\omega;  [0,M]  )$, $ u_n|_{\Omega_{h_n}}  \in W^{1,p}(\Omega_{h_n};  \R^d  )$,   $u_n=0$ in $\Omega \sm \Omega_{h_n}$,  and  $u_n=u_0$ in $\omega{\times}(-1,0)$ such that $u_n \to u$ in $L^0(\Omega;\R^d) $, $h_n \to h$ in $L^1(\omega)$, and 
\begin{subequations}\label{eq: sub}
\begin{align}
\limsup_{n\to \infty} \int_{\Omega_{h_n}} f(e(  u_n  ))  \, \dx  &\leq \int_{\Omega_h} f(e(u))  \, \dx  \,,\label{sub1}\\
\limsup\nolimits_{n\to \infty} \mathcal{H}^{d-1}(\partial \Omega_{h_n} \cap \Omega) & \leq \mathcal{H}^{d-1}(\partial^* \Omega_h \cap \Omega) + 2 \mathcal{H}^{d-1}(J_u' \cap \Omega_h^1)\,.\label{sub2}
\end{align}
\end{subequations}
\end{proposition}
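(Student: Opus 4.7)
The plan is to build $(u_n,h_n)$ by creating in the subgraph $\Omega_h$ thin ``corridors'' positioned above the jump set of $u$, whose width shrinks to zero, and then smoothing the resulting profile. Each corridor, when collapsed, yields a vertical slit contained in $J_u'$, producing the factor $2$ in \eqref{sub2}. A first reduction is to use Theorem~\ref{thm:densityGSBD} (applied on $\Omega_h$, preserving $u = u_0$ on $\omega\times(-1,0)$) together with a diagonal argument: it suffices to treat the case where $u \in W^{1,p}(\Omega_h\setminus K;\R^d)\cap L^\infty$ and $J_u \subset K$, with $K$ a finite union of closed, connected pieces of $C^1$ hypersurfaces. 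For the profile $h$, a standard $BV$-approximation yields $\tilde h_\eps \in C^1(\omega;[0,M])$ with $\tilde h_\eps \to h$ in $L^1(\omega)$ and $\int_\omega \sqrt{1+|\nabla \tilde h_\eps|^2}\,\d x' \to \mathcal{H}^{d-1}(\partial^*\Omega_h \cap \Omega)$.

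Next, by a slicing/rotation argument I would cover $K \cap \Omega_h^1$, up to $\mathcal{H}^{d-1}$-error smaller than $1/n$, by finitely many pairwise disjoint compact $C^1$ pieces $S_1,\dots,S_N$ whose normals $\nu_{S_j}$ satisfy $|\nu_{S_j}\cdot e_d|\le 1-\eta$ for some $\eta>0$. For each $S_j$ I define its vertical lift
$$\widetilde S_j := \{(x',x_d+t)\colon (x',x_d)\in S_j,\ 0\le t\le h(x')-x_d\} \subset J_u' \cap \Omega_h^1,$$
and its open $\delta$-thickening $C_j^\delta$ inside $\Omega_h$. The subgraph
$$\Omega_n := \Omega_{\tilde h_{\eps_n}} \setminus \bigcup_{j=1}^N C_j^{\delta_n}$$
is, for $\delta_n,\eps_n$ chosen suitably small, still the subgraph of a function $h_n^\sharp \in BV(\omega;[0,M])$, thanks to the transversality condition on $\nu_{S_j}$; mollifying $h_n^\sharp$ only in a horizontal neighborhood of $\bigcup_j \pi(\widetilde S_j)$ (where $\pi$ is the projection onto $\omega$) and exploiting \eqref{eq: star-shaped} to dilate slightly so as to stay inside $\omega$ and below $M$ yields $h_n \in C^1(\omega;[0,M])$ with $h_n \to h$ in $L^1(\omega)$.

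Setting $u_n := u\,\chi_{\Omega_{h_n}}$ (and $u_0$ on $\omega\times(-1,0)$), the corridors isolate each $S_j$ from $\Omega_{h_n}$, so $u_n \in W^{1,p}(\Omega_{h_n};\R^d)$; convergence $u_n \to u$ in $L^0(\Omega;\R^d)$ and \eqref{sub1} follow from dominated convergence together with \eqref{eq: growth conditions}, since $\Omega_{h_n}\subset \Omega_h$ and $e(u_n)=e(u)\chi_{\Omega_{h_n}}$. For \eqref{sub2}, the perimeter of $\Omega_{h_n}\cap\Omega$ splits into the ``outer'' graph part (converging to $\mathcal{H}^{d-1}(\partial^*\Omega_h\cap\Omega)$) and the two lateral sides of each corridor $C_j^{\delta_n}$, whose total measure tends to $2\sum_{j=1}^N \mathcal{H}^{d-1}(\widetilde S_j)$; by the arbitrariness of the $\mathcal{H}^{d-1}$-error in covering $K\cap \Omega_h^1$ and by $\widetilde S_j \subset J_u' \cap \Omega_h^1$, a further diagonal extraction yields the bound in \eqref{sub2}.

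The main obstacle is geometric: guaranteeing that the set $\Omega_n$ remains a genuine subgraph after the removal of the corridors and the subsequent mollification, especially in the configurations that \cite{ChaSol07} does not fully cover, namely when pieces of $J_u$ are close to horizontal, when distinct corridors nearly touch, or when $\widetilde S_j$ reaches $\partial \omega \times [0,M]$. The transversality condition $|\nu_{S_j}\cdot e_d|\le 1 - \eta$, the disjointness obtained in the covering step, and the dilation permitted by \eqref{eq: star-shaped} are precisely what allow these pathologies to be controlled. Everything else is essentially a careful quantitative tracking of surface area along the corridor-thickening and mollification parameters $\delta_n \ll \eps_n \to 0$.
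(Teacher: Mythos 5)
The corridor idea is in the right spirit, but your first reduction step is fatally flawed, and this undermines everything that follows. You want to apply Theorem~\ref{thm:densityGSBD} to $u\chi_{\Omega_h}$ (or $u|_{\Omega_h}$) and then argue by diagonalization, which requires $\limsup_n \overline{G}(\tilde u_n, h) \le \overline{G}(u,h)$ for the density approximations $\tilde u_n$. But the density theorem only yields $\hd(J_{\tilde u_n}\triangle J_u)\to 0$; it does \emph{not} preserve the orientation of the jump set. For finite energy $\overline{G}(u,h)$, the interior jump $J_u\cap\Omega_h^1$ must be $\hd$-a.e.\ \emph{vertical} (i.e.\ $\nu_u\cdot e_d=0$), since otherwise its vertical cone $J_u'\cap\Omega_h^1$ would have positive $\Ld$-measure and hence infinite $\hd$-measure. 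The approximating jump sets $J_{\tilde u_n}$ will generically contain tilted pieces inside $\Omega_h^1$, so $J_{\tilde u_n}'\cap\Omega_h^1$ has infinite $\hd$-measure and $\overline{G}(\tilde u_n, h)=+\infty$. The diagonal argument therefore cannot start.

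The same confusion resurfaces in the covering step: the condition $|\nu_{S_j}\cdot e_d|\le 1-\eta$ keeps the $S_j$ bounded away from \emph{horizontal}, which is the opposite of what you need. Any $S_j$ with $\nu_{S_j}\cdot e_d\neq 0$ has a vertical lift $\widetilde S_j$ of positive $\Ld$-measure, so its thickening $C_j^\delta$ is not a thin corridor and $\hd\bigl(\partial C_j^\delta\cap\Omega_h\bigr)$ cannot be controlled by $2\hd(S_j)$. What you would actually need is $|\nu_{S_j}\cdot e_d|=0$, i.e.\ exact verticality, which is precisely the structural information you destroyed in the reduction.

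The paper's proof avoids both pitfalls by never running a global density result on $u$. After approximating $\partial^*\Omega_h\cup\Sigma$ \emph{from below} by a smooth graph $g$ (Lemma~\ref{lemma: graph approx}), it partitions $\Omega_g$ into dyadic cubes and classifies nodes as good or bad via \eqref{0306191212-1}--\eqref{0306191212-2}; the bad set $V_k$ is defined so as to be vertical inside $(\Omega_g)^k$ (Lemma~\ref{lemma: bad/good}(i)), which makes $(\Omega_g)^k\setminus V_k$ again the subgraph of a function up to lifting and dilation (Lemma~\ref{lemma: new-lemma}). A Sobolev approximation is then obtained only on the good cubes via the local rough estimate of Lemma~\ref{lemma: vito approx}, which smooths $u$ where its jump is already $\hd$-small without ever replacing $J_u$ by a rotated set. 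The whole point of the good/bad node machinery is exactly to make your corridor idea rigorous: the bad cubes play the role of your corridors, and verticality of $V_k$ is a theorem, not an assumption. If you want to pursue your strategy, you would need to (a) cover $\Sigma$ (not $J_u$) by nearly vertical pieces, and (b) produce the local Sobolev approximation of $u$ away from $\Sigma$ without a global density theorem — at which point you essentially re-derive the paper's construction.
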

In particular, it is not restrictive to assume that $f \ge 0$. In fact, otherwise we consider $\tilde{f} := f + c_2 \ge 0$  changing the value of the elastic energy  by the term $c_2\mathcal{L}^d(\Omega_h)$ which is continuous with respect to $L^1(\omega)$ convergence for $h$.  Moreover, the integrals $\Omega_{h_n}$ and $\Omega_h$ can be replaced by  $\Omega_{h_n}^+$ and $\Omega_h^+$, respectively, since all functions coincide with $u_0$ on $\omega \times (-1,0)$.

  \begin{remark}\label{rem:1805192117}
   The proof of the proposition will show that we can construct the sequence  $(u_n)_n$ also in such a way that  $u_n \in L^\infty(\Omega;\R^d)$ holds for all $n \in \N$. This, however, comes at the expense of the fact that the boundary data is only satisfied approximately, i.e.,  $u_n|_{\omega \times (-1,0)} \to u_0|_{\omega \times (-1,0)}$ in $W^{1,p}(\omega \times (-1,0);\R^d)$.  This slightly different version will be instrumental  in Subsection~\ref{sec:phasefield}.
\end{remark}

 As a preparation, we first state some auxiliary results. We recall two lemmas from \cite{ChaSol07}. The first is stated in \cite[Lemma~4.3]{ChaSol07}.
\begin{lemma}\label{le:4.3ChaSol}
Let $h \in BV(\omega; [0, +\infty))$, with $\partial^* \Omega_h$ essentially closed, i.e., $\hd(\ove{\partial^* \Omega_h} \sm \partial^* \Omega_h)=0$. Then, for any $\varepsilon>0$, there exists $g \in C^\infty(\omega; [0, +\infty))$ such that $g \leq h$ a.e.\ in $\omega$, $\|g-h\|_{L^1(\omega)} \leq \varepsilon$, and
\begin{equation*}
\bigg| \int_\omega \sqrt{1+|\nabla g|^2}\, \d  x'  - \hd(\partial^* \Omega_h \cap \Omega)   \bigg| \leq \varepsilon\,.
\end{equation*}
\end{lemma}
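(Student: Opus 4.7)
My strategy is a two-stage regularization that preserves the subgraph constraint: first replace $h$ by a horizontal infimum, then mollify at the same scale. For $\delta > 0$, set
\[
h_\delta(x') := \inf\bigl\{ h(y') : y' \in \omega,\ |y' - x'| \le \delta \bigr\},
\]
which is lower semicontinuous, nonnegative, and satisfies $h_\delta \le h$ pointwise. Let $\rho_\delta$ be a standard mollifier with $\mathrm{supp}\,\rho_\delta \subset B_\delta$, and put $g_\delta := h_\delta \ast \rho_\delta \in C^\infty(\omega; [0,\infty))$. For $y' \in B_\delta(x')$ one has $x' \in B_\delta(y')$, hence $h_\delta(y') = \inf_{B_\delta(y')} h \le h(x')$; integrating against $\rho_\delta$ gives the pointwise constraint $g_\delta \le h$ a.e.

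The $L^1$ convergence $g_\delta \to h$ follows from $h_\delta \to h$ in $L^1(\omega)$ (using that $h$ is approximately continuous at $\mathcal{L}^{d-1}$-a.e.\ point and belongs to $L^\infty$) combined with the standard $L^1$-continuity of mollification. For the area, lower semicontinuity along $L^1$-convergence immediately gives $\hd(\partial^* \Omega_h \cap \Omega) \le \liminf_\delta \int_\omega \sqrt{1+|\nabla g_\delta|^2}\,\d x'$. The reverse inequality is obtained in two substeps: observe that $\Omega_{h_\delta}$ is exactly the horizontal erosion of $\Omega_h$ by the disc $B_\delta^{d-1}\times\{0\}$, show that $\hd(\partial^*\Omega_{h_\delta} \cap \Omega) \to \hd(\partial^*\Omega_h \cap \Omega)$, and then use the classical fact that mollifying a $BV$ function at scale $\delta$ produces graph areas converging to the perimeter of its subgraph. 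A standard diagonal extraction in $\delta$ then produces $g$ for the given $\varepsilon$.

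The main obstacle is the first substep, namely controlling the perimeter of the horizontal erosion. For a generic set of finite perimeter, erosion can change the perimeter by a definite amount (e.g.\ by severing a thin neck), so some regularity of $\partial^*\Omega_h$ is needed. The hypothesis that $\partial^*\Omega_h$ be essentially closed is precisely what is required: it ensures, via the Minkowski content characterization of $(d{-}1)$-rectifiable essentially closed sets, that the portion of $\partial^*\Omega_h$ lost under horizontal $\delta$-erosion has $\hd$-measure $o(1)$ as $\delta\to 0$, and that no extraneous boundary is created in the limit.
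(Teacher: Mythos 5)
The paper gives no proof of this lemma---it is quoted verbatim from \cite[Lemma~4.3]{ChaSol07}---so there is no in-paper argument to compare against. Your two-stage scheme (horizontal erosion followed by mollification at the same scale) is a reasonable strategy for producing a smooth approximant from below, and the routine parts are fine: the inequality $g_\delta \le h$ a.e.\ follows as you indicate once one is careful to use essential rather than pointwise infimum (since $h$ is only defined a.e.), and $\|h_\delta - h\|_{L^1(\omega)} \to 0$ comes from monotone convergence, $h_\delta \uparrow h^- = h$ a.e., with $h^-$ the approximate lower envelope; the appeal to $h \in L^\infty$ is unnecessary and, here, also unavailable, since $h$ is only assumed $[0,\infty)$-valued in this lemma.

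The genuine gap is the step you yourself flag as the crux, namely that $\hd(\partial^*\Omega_{h_\delta}\cap\Omega) \to \hd(\partial^*\Omega_h\cap\Omega)$. This is the entire content of the lemma, and the proposal gives no proof of it beyond a gesture at Minkowski content. Minkowski content of the essentially closed set $\overline{\partial^*\Omega_h}$ controls the $\mathcal{L}^d$-volume of its tubular $\delta$-neighborhood; it does not control the $(d{-}1)$-measure of the reduced boundary of the eroded set $\Omega_{h_\delta}$, which is the quantity you need. Lower semicontinuity of perimeter along $L^1$-convergence already gives the easy half, $\liminf_\delta \hd(\partial^*\Omega_{h_\delta}\cap\Omega) \ge \hd(\partial^*\Omega_h\cap\Omega)$, so the content is the reverse bound $\limsup_\delta \hd(\partial^*\Omega_{h_\delta}\cap\Omega) \le \hd(\partial^*\Omega_h\cap\Omega)$. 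You would need, for instance, a proof that horizontal erosion of subgraphs is perimeter-nonincreasing---plausible but not automatic, since Minkowski erosion of general finite-perimeter sets need not decrease perimeter---together with an account of precisely where the essential-closedness hypothesis enters; as written the argument would go through identically without that hypothesis, a sign that the actual mechanism has not been identified. A lesser but real issue: the classical fact that $\int_\omega\sqrt{1+|\nabla(f\ast\rho_\eps)|^2}\,\d x' \to \hd(\partial^*\Omega_f\cap\Omega)$ holds for $\eps\to 0$ at \emph{fixed} $f\in BV(\omega)$; using $\eps=\delta$ while $f=h_\delta$ changes simultaneously requires a uniformity statement or an honest two-parameter diagonal extraction, which you mention but do not carry out.
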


\begin{lemma}\label{lemma: graph approx}
Let $h \in BV(\omega; [0,M])$ and let $\Sigma \subset \R^{d}$ with $\mathcal{H}^{d-1}(\Sigma) < +\infty$ be vertical in the sense that $x=(x',x_d) \in \Sigma$ implies $(x',x_d + t) \in \Sigma$  as long as $(x',x_d + t) \in \Omega_h^1$.  Then, for each $\eps >0$ there exists $g \in C^\infty(\omega; [0,M])$   such that
\begin{subequations}
\begin{align}
\|g-h\|_{L^1(\omega)} &\leq \varepsilon\,,\label{2004192228}\\
\hd\big((\partial^* \Omega_h \cup \Sigma) \cap \Omega_{g}\big) & \leq \varepsilon\,,\label{2004192229}\\
\Big| \int_\omega \sqrt{1+|\nabla {g}|^2} \, \d  x'  - \big( \hd(\partial^*\Omega_h \cap \Omega) + 2 \hd(\Sigma)  \big)  \Big|& \leq \varepsilon\,.\label{2004192231}
\end{align}
\end{subequations}

\end{lemma}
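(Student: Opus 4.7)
The plan is to construct $g$ in two stages: first deal with the $\hd(\partial^*\Omega_h\cap\Omega)$ term via Lemma~\ref{le:4.3ChaSol}, and then dip the resulting smooth graph below $\Sigma$ inside a thin horizontal tube around the ``shadow'' $V:=\pi(\Sigma)$ (where $\pi(x',x_d):=x'$), so as to pay an extra area of $\approx 2\hd(\Sigma)$ while leaving the rest essentially unchanged. After a standard reduction to the case when $\partial^*\Omega_h$ is essentially closed, I apply Lemma~\ref{le:4.3ChaSol} with parameter $\eps/3$ to produce $g_0\in C^\infty(\omega;[0,M])$ with $g_0\leq h$, $\|g_0-h\|_{L^1(\omega)}\leq \eps/3$, and $\big|\int_\omega\sqrt{1+|\nabla g_0|^2}\,\d x'-\hd(\partial^*\Omega_h\cap\Omega)\big|\leq \eps/3$; in particular $\hd(\partial^*\Omega_h\cap\Omega_{g_0})=0$ because $g_0\leq h$.

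The vertical condition forces $\nu_\Sigma\cdot e_d=0$ at $\hd$-a.e.\ point of $\Sigma$, hence for each $y\in V$ the stalk $\Sigma\cap\pi^{-1}(y)$ is, up to $\hd$-null sets, the interval $\{y\}\times[s(y),h(y))$ with $s(y):=\inf\{t:(y,t)\in\Sigma\}$; Fubini/coarea applied to $\pi|_\Sigma$ then gives $\hd(\Sigma)=\int_V(h(y)-s(y))\,\d\mathcal{H}^{d-2}(y)$. By $(d{-}1)$-rectifiability of $\Sigma$ (which, in the intended application to the upper bound for $\ove G$, holds since $\Sigma$ comes from $J_u$ and its vertical liftings), I may first replace $\Sigma$ by a finite union of compact pieces of $C^1$ vertical hypersurfaces at the cost of an $\hd$-error at most $\eps/6$, making the tubular construction below local and explicit.

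Fix $\delta>0$ (to be chosen small) and set $T_\delta:=\{x'\in\omega:\mathrm{dist}(x',V)<\delta\}$, so $\mathcal{L}^{d-1}(T_\delta)=O(\delta)$. I then replace $g_0$ inside $T_\delta$ by a smoothly mollified \emph{slot profile}: on each fibre normal to $V$, $g$ descends from $g_0(x')$ to slightly below $s(y)$ and returns, all within width $\delta$. Because $\mathcal{L}^{d-1}(T_\delta)=O(\delta)$, one has $\|g-g_0\|_{L^1(\omega)}=O(\delta)$, which yields \eqref{2004192228} for $\delta$ small. Inside $T_\delta$ the slope satisfies $|\nabla g|\sim (h(y)-s(y))/\delta$, so $\sqrt{1+|\nabla g|^2}\sim|\nabla g|$, and Fubini on the normal slices gives $\int_{T_\delta}\sqrt{1+|\nabla g|^2}\,\d x'=2\int_V(h(y)-s(y))\,\d\mathcal{H}^{d-2}(y)+o(1)=2\hd(\Sigma)+o(1)$, with the factor $2$ coming from descent plus ascent; combining with $\int_{\omega\sm T_\delta}\sqrt{1+|\nabla g_0|^2}\,\d x'\to \hd(\partial^*\Omega_h\cap\Omega)$ produces \eqref{2004192231}. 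Since $g(x')<s(y)$ on the central axis of each tube, the set $(\partial^*\Omega_h\cup\Sigma)\cap\Omega_g$ is $\hd$-concentrated in a strip of width $o(\delta)$ near $\partial T_\delta$, giving \eqref{2004192229}.

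The main obstacle is the slot construction itself: one must design a $C^\infty$ profile globally over the geometrically intricate set $V$, pieced together from local tubular charts on each $C^1$ component of the reduced vertical surface, and check that whenever two pieces of $\Sigma$ project onto nearby $(d{-}2)$-sets the corresponding slots do not overlap and cause double-counting. Additional care is required near $\{s(y)\approx 0\}$ so that the slot does not dip below $\{x_d=0\}$; this is handled either by slightly raising $g_0$ there, or by cutting off the portion of $\Sigma$ near its base at an $\hd$-cost $\leq\eps/6$. The argument in spirit parallels the $GSBV$ construction of \cite[Proposition~4.1]{ChaSol07}, with the verticality of $\Sigma$ being essential precisely for the Fubini computation that produces the coefficient $2$.
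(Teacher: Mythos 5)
The paper proves this lemma purely by reference to the first step of \cite[Proposition~4.1]{ChaSol07} (equations~(12)--(13) there), adding only the remark that the bound $M$ is preserved; no independent argument is given. Your proposal reconstructs that construction --- smooth approximation from below via Lemma~\ref{le:4.3ChaSol}, then carving a thin slot along the horizontal projection $V$ of $\Sigma$ so that Fubini over vertical fibres produces the coefficient $2$ --- so in outline you follow exactly the route the paper delegates.

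Two things stop this from being a proof. First, the Fubini identity $\hd(\Sigma)=\int_V(h(y)-s(y))\,\d\mathcal{H}^{d-2}(y)$ and the reduction to finitely many compact $C^1$ vertical sheets both require $\Sigma$ to be $(d{-}1)$-rectifiable, which is not among the lemma's hypotheses; for $d\geq 3$ one can take $\Sigma=K\times[0,1]\subset\R^3$ with $K\subset\R^2$ purely $1$-unrectifiable and $0<\mathcal{H}^1(K)<\infty$ to get a vertical set of finite $\hd$-measure for which the coarea formula you invoke is unavailable. You flag this and restrict to ``the intended application'', which means you prove the lemma only under an unstated extra hypothesis. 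Second, and more importantly, the heart of the matter --- constructing a single global $C^\infty$ function $g$ that dips below $s(y)$ over every fibre of $V$ while staying in $[0,M]$, while avoiding $\partial^*\Omega_h\cup\Sigma$ up to a small $\hd$-error, and while preventing neighbouring slots from overlapping and spoiling the two-sided estimate in \eqref{2004192231} --- is precisely the step the paper hands off to \cite{ChaSol07}. You name it ``the main obstacle'' and correctly describe what the slot profile must accomplish, but you do not actually build it or carry out the gluing across components of $V$, so the proposal stops where the real work begins.
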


\begin{proof}
We refer to the first step in the proof of \cite[Proposition~4.1]{ChaSol07}, in particular \cite[Equation (12)-(13)]{ChaSol07}. We point out that  the case of possibly unbounded graphs has been treated  there,  i.e., $h \in BV(\omega; [0,+\infty))$. The proof shows that the upper bound on $h$ is preserved and we indeed obtain $g \in C^\infty(\omega; [0,M])$ if $h \in BV(\omega; [0,M])$.
\end{proof}

\CCC Note that Lemma \ref{lemma: graph approx} states that $\partial^* \Omega_h \cup \Sigma$ can be approximated from below by a smooth graph $g$. However, this only holds \emph{up to a small portion}, see \eqref{2004192229}. Therefore, two additional approximation techniques are needed, one for graphs and one for $GSBD$ functions.     \EEE  To this end, we introduce some notation  which will also  be  needed  for the proof of Proposition \ref{prop: enough}. Let $k \in \N$,  $k>1$.  For any $z\in (2 \km) \mathbb{Z}^d$, consider  the hypercubes 
\begin{align}\label{eq: cube-notation}
q_z^k:=z+(-\km,\km)^d\,,  \qquad Q_z^k:=z+(-5\km,5\km)^d\,.
\end{align}
Given an open set $U \subset \R^d$, we also define the union of cubes well contained in $U$ by
\begin{align}\label{eq: well contained}
 (U)^k  := {\rm int}\Big( \bigcup\nolimits_{z\colon \, Q^k_z \subset U} \overline{q^k_z}\Big)\,. 
\end{align}
 (Here, ${\rm int}(\cdot)$ denotes the interior.  This definition   is unrelated  to the notation $E^s$ for the set of points with density $s \in [0,1]$.)  
 
 \CCC   We now address the two approximation results. We start by an approximation of graphs from which  a union of hypercubes has been removed. Recall $\Omega^+ = \Omega \cap \lbrace x_d>0 \rbrace$.


\begin{lemma}\label{lemma: new-lemma}
Let $g \in C^\infty(\omega;[0,M])$ and let $V_k \subset \Omega^+$ be a union of cubes $Q_z^k$, $z \in \mathcal{Z} \subset (2k^{-1}) \Z^d$, intersected with $(\Omega_g)^k$.  Suppose that  $V_k$ is vertical in the sense that $(x',x_d) \in V_k$ implies $(x',x_d + t) \in V_k$  for $t \ge 0$  as long as $(x',x_d + t) \in (\Omega_g)^k$.  Then, for $k \in \N$ sufficiently large,   we find a function $h_k\in C^\infty(\omega;[0,M])$ such that 
\begin{subequations} \label{eq: volume-a-b}
\begin{equation}\label{eq: volume2a}
\mathcal{L}^d( \Omega_g \, \triangle  \, \Omega_{{h}_k}   )  \le  \mathcal{L}^d(\Omega_g \cap V_k) + C_{g,\omega}k^{-1}\,,
\end{equation}
\begin{equation}\label{eq: volume2b}
 \hd(\partial \Omega_{{h}_k} \cap \Omega)  \le \hd(\partial \Omega_g \cap \Omega)  + \hd\big(\partial V_k \cap (\Omega_{g})^k\big)+ C_{g,\omega} k^{-1}\,,
\end{equation}
\end{subequations}
where $C_{g,\omega}>0$ depends on $d$, $g$, and $\omega$, but is independent of  $k$. Moreover, there are  constants $\tau_g,\tau_*>0$ only depending on $d$, $g$, and $\omega$ such that   
\begin{align}\label{eq: inclusion}
x =(x',x_d) \in \Omega_{h_k} \  \ \ \Rightarrow \ \ \   \big(  (1-\tau_*/k)\,  x', (1-\tau_*/k) \, x_d - 6\tau_g/k         \big)   \in  (\Omega_g)^k  \setminus V_k\,.
\end{align}

\end{lemma}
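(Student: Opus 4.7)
The plan is to construct $h_k$ in three stages: first set up a lower-semicontinuous ``target profile'' $\hat h$ that exactly realises the carving of $V_k$ out of $\Omega_g$, then introduce a horizontal contraction and a downward translation to build a margin, and finally mollify at a fine scale to upgrade to $C^\infty$. Concretely, for $x'\in\omega$ I would define
\begin{equation*}
\hat h(x'):= g(x') \wedge \inf\{t \in [0,M] : (x',t) \in V_k\},
\end{equation*}
with $\inf\emptyset=+\infty$. The verticality hypothesis on $V_k$ guarantees that $(x',s)\in\Omega_g\setminus V_k$ whenever $0<s<\hat h(x')$, so $\Omega_{\hat h}\subset\Omega_g\setminus V_k$. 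Since $V_k$ is a union of cubes $Q_z^k$ of side $10/k$ intersected with the cube-union $(\Omega_g)^k$, the function $\hat h$ coincides with $g$ outside the projection $\pi(V_k)$, and inside $\pi(V_k)$ it is essentially piecewise constant on a grid of step $\sim k^{-1}$. In particular, the boundary $\partial^*\Omega_{\hat h}\cap\Omega$ splits into a piece of the graph of $g$ (contributing at most $\mathcal{H}^{d-1}(\partial\Omega_g\cap\Omega)$) plus axis-aligned ``floors'' and ``walls'' all contained in $\partial V_k\cap(\Omega_g)^k$.

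Next, I would pick constants $\tau_*,\tau_g>0$ depending only on $d$, $g$, $\omega$, and define, for $x'\in(1-\tau_*/k)\omega$,
\begin{equation*}
\hat h^-(x'):=\hat h\bigl((1-\tau_*/k)^{-1}x'\bigr)-\delta_k,
\qquad \delta_k:=C_1/k,
\end{equation*}
then mollify with a standard mollifier $\rho_{\varepsilon_k}$ at scale $\varepsilon_k\ll k^{-1}$ (e.g.\ $\varepsilon_k=k^{-2}$), and finally undo the rescaling to set
\begin{equation*}
h_k(x'):=\min\!\Bigl\{M,\ (1-\tau_*/k)^{-1}\bigl((\rho_{\varepsilon_k}*\hat h^-)((1-\tau_*/k)x')+6\tau_g/k\bigr)\Bigr\}.
\end{equation*}
The uniform star-shapedness of $\omega$, i.e.\ \eqref{eq: star-shaped}, ensures that $(1-\tau_*/k)\omega\subset\omega$, so the rescaling is admissible; a smooth cut-off near $\partial\omega$ and the choice $\tau_g$ large relative to the Lipschitz constant of $g$ absorbs the behaviour close to $\partial\omega$. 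By construction $h_k\in C^\infty(\omega;[0,M])$.

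The three conclusions of the lemma are then verified as follows. For the volume bound \eqref{eq: volume2a}, the downward shift, the horizontal rescaling, and the mollification each modify the subgraph by $O(k^{-1})$ in Lebesgue measure (the shift contributes $\delta_k\,\mathcal{H}^{d-1}(\omega)=O(k^{-1})$; the rescaling changes volumes by a factor $1+O(k^{-1})$; mollification affects only a strip of width $\varepsilon_k=o(k^{-1})$), while the ``core'' difference $\Omega_g\setminus\Omega_{\hat h}\subset V_k\cap\Omega_g$ accounts for $\mathcal{L}^d(\Omega_g\cap V_k)$ up to an $O(k^{-1})$ gap between $\Omega_g$ and $(\Omega_g)^k$. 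For the inclusion \eqref{eq: inclusion}, if $(x',x_d)\in\Omega_{h_k}$ then the definition of $h_k$ yields
\begin{equation*}
(1-\tau_*/k)x_d-6\tau_g/k < (\rho_{\varepsilon_k}*\hat h^-)((1-\tau_*/k)x')\le \hat h((1-\tau_*/k)x'+O(\varepsilon_k))-\delta_k,
\end{equation*}
so the margin $\delta_k-O(\varepsilon_k)>0$ forces the transformed point to sit strictly below $\hat h$ at the (slightly shifted) base point, hence in $\Omega_g\setminus V_k$; with $\tau_*$ large enough, the contraction additionally guarantees that the point lies inside $(\Omega_g)^k$.

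The main obstacle is the perimeter estimate \eqref{eq: volume2b}, because $\hat h$ is not smooth and one must show that the mollification inflates the graph area by at most $O(k^{-1})$. The key observation is that all jumps of $\hat h$ occur along $(d{-}2)$-dimensional axis-aligned edges of total $\mathcal{H}^{d-2}$-measure $\lesssim k\cdot\mathcal{H}^{d-1}(\partial V_k\cap(\Omega_g)^k)$, and that for an axis-aligned step of height $\Delta$ along an edge of length $L$ the mollified graph area is $\Delta\cdot L+O(\varepsilon_k L)$, which is precisely the sum of the horizontal ``floor'' and the vertical ``wall'' contributions plus a smoothing error. Summing over all edges and using $\varepsilon_k=k^{-2}$, the total error is $O(\varepsilon_k/k^{-1})=O(k^{-1})$, which is absorbed into $C_{g,\omega}/k$. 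The portion of the graph where $\hat h=g$ contributes $\mathcal{H}^{d-1}(\partial\Omega_g\cap\Omega)+O(k^{-1})$, from the Lipschitz dependence of the area functional on $g\in C^\infty$ under the small perturbation of rescaling, translation, and mollification. Combining these two estimates delivers \eqref{eq: volume2b}.
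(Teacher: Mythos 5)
Your overall plan — carve $V_k$ out of $\Omega_g$, contract horizontally, build a vertical margin, then smooth — is in the same spirit as the paper's proof, but there is a genuine gap in your verification of \eqref{eq: inclusion}, and it comes precisely from replacing the paper's key smoothing tool by direct mollification.

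The carved profile $\hat h$ is \emph{not} continuous: across the projection of a lateral face of $V_k$ it can drop by an amount of order one (e.g.\ from $g(x')$ to nearly $0$). Your chain of inequalities gives, for $(x',x_d)\in\Omega_{h_k}$ and $s:=(1-\tau_*/k)x_d-6\tau_g/k$,
\begin{equation*}
s \;<\; \sup_{|z'-x'|\lesssim\varepsilon_k}\hat h(z') - \delta_k\,,
\end{equation*}
so there is \emph{some} $z'$ within distance $\varepsilon_k$ of $x'$ with $(z',s)\in\Omega_{\hat h}\subset\Omega_g\setminus V_k$. But the point you must control is $\big((1-\tau_*/k)x',s\big)$, whose base lies at distance roughly $\tau_*|x'|/k$ from $x'$. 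Since $\tau_*\ge 6\tau_g+6\ge 12$, this is several cube widths $2/k$, and $\varepsilon_k=k^{-2}$ is far smaller; so $z'$ and $(1-\tau_*/k)x'$ generically land in different cube columns of $V_k$. There $\hat h$ can jump by $O(1)$, and a downward margin $\delta_k=C_1/k$ cannot absorb that. Consequently the claim that the margin ``forces the transformed point to sit strictly below $\hat h$ at the (slightly shifted) base point'' does not produce the needed conclusion: you control $\hat h$ at a point $\varepsilon_k$-close to $x'$, not at the target base $(1-\tau_*/k)x'$. The key property that is missing from mollification is approximation \emph{from below}: $\rho_{\varepsilon_k}\ast\hat h^-$ is not $\le \hat h^-$, and near a jump it overshoots by about half the jump height.

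The paper's proof sidesteps this in two ways. First, it lifts $(\Omega_g)^k\setminus V_k$ upward by $6\tau_g/k$ and defines the carved profile by the supremum $g'_k(x')=\sup\{x_d<g(x'): (x',x_d-6\tau_g/k)\in(\Omega_g)^k\setminus V_k\}$; this makes the inclusion \eqref{eq: inclusion} true essentially by definition, using only the vertical monotonicity of $(\Omega_g)^k\setminus V_k$. Second, after the radial dilation, it does not mollify but invokes Lemma~\ref{le:4.3ChaSol} (from Chambolle--Solci) to obtain a \emph{smooth approximation from below}, $h_k\le g_k''$, together with area control. If you wish to rescue a mollification-based argument you would have to (i) mollify before rescaling, so that the averaging happens around the target base point rather than around $x'$, (ii) take a margin $\delta_k$ large compared with the cube width $2/k$ (not just $\varepsilon_k$), or (iii) replace mollification by an operation that stays below $\hat h$, such as an inf-convolution. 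As written, the argument does not establish \eqref{eq: inclusion}. The sketch of the perimeter bound \eqref{eq: volume2b} is plausible but also implicitly re-derives the content of Lemma~\ref{le:4.3ChaSol}, which the paper simply cites.
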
 
 
We point out that \eqref{eq: inclusion} means that $h_k$ lies below the boundary of  $(\Omega_g)^k  \setminus V_k$, up to a slight translation and dilation. We suggest to omit the proof of the lemma on first reading.

 \begin{proof}
 The proof relies on slight lifting and dilation of the set  $(\Omega_{g})^k\sm V_k $ along with an application of Lemma \ref{le:4.3ChaSol}.  Recall definition \eqref{eq: well contained}, and define $\omega_k \subset \omega \subset \R^{d-1}$ such that $(\omega \times \R)^k = \omega_k \times \R$. Since $\omega$ is  uniformly star-shaped  with respect to the origin,  see \eqref{eq: star-shaped},  there exists a universal constant $\tau_\omega>0$ such that 
\begin{align}\label{eq: stretching}
 \omega_k \supset (1-\tau k^{-1}) \, \omega \ \ \  \text{ for $\tau \ge \tau_\omega$}\,.
\end{align} 
Define $\tau_g := 1+ \sqrt{d} \max_{\omega} |\nabla {g}|$. For $k$ sufficiently large, it is elementary to check that
\begin{equation}\label{2104191048}
\Omega_{g} \cap (\omega_k \times (0,\infty)) \subset \big( (\Omega_{g})^k + 6 \,\tau_g k^{-1} e_d\big)\,.
\end{equation}
We now  ``lift''  the set $(\Omega_g)^k \setminus V_k$ upwards: define the function 
\begin{equation}\label{eq: htilde-def}
g'_k(x'):= \sup\big\{ x_d < g(x') \colon (x', x_d-6\,\tau_g/k) \in  (\Omega_{g})^k\sm V_k \big\}  \qquad\text{for }x' \in \omega_k\,.
\end{equation}
We observe that $g_k' \in BV(\omega_k;[0,M])$. Define $(\Omega)^k$ as in \eqref{eq: well contained} and, similar to \eqref{eq: graphi}, we let $\Omega_{g_k'} = \lbrace x\in \omega_k \times (-1, M+1)\colon \,  -1 <   x_d   <    g_k'(x') \rbrace$. Since $V_k$ is vertical, we note  that $\partial \Omega_{g_k'}\cap (\Omega)^k$ is made of two parts: one part is contained in the smooth graph of $g$ and the rest in the boundary of $V_k + 6\tau_g  k^{-1}  e_d$.  In particular, by  \eqref{2104191048} we get  
\begin{equation*}
\partial \Omega_{g'_k} \cap (\Omega)^k \subset (\partial \Omega_{g}\cap\Omega) \cup \big(\partial \Omega_{g'_k}  \cap (\Omega)^k\cap \Omega_{g} \big) \subset (\partial \Omega_{g}\cap\Omega) \cup \big(( \partial V_k  \cap (\Omega_{g})^k) + 6\,\tau_g k^{-1}\, \,e_d  \big)\,.
\end{equation*}
Then, we deduce
\begin{equation}\label{2104192337}
 \hd(\partial \Omega_{g'_k} \cap (\Omega)^k  ) \le \hd(\partial \Omega_g \cap \Omega)  + \hd\big(\partial V_k \cap (\Omega_{g})^k\big)\,.
\end{equation}
Since by \eqref{2104191048} and \eqref{eq: htilde-def} there holds $(\Omega_g \setminus \Omega_{g_k'})\cap (\Omega)^k \subset V_k  + 6\,\tau_g k^{-1} e_d$,   the fact that  $V_k$ is vertical  implies
\begin{align}\label{eq: htilde}
\mathcal{L}^d\big((\Omega_g \triangle \Omega_{g_k'})\cap (\Omega)^k\big) &  \le \mathcal{L}^d\big(\Omega_g \cap (V_k  + 6\,\tau_g k^{-1} e_d)\big)  \le \mathcal{L}^d(\Omega_g \cap V_k)\,.
\end{align}
As $g_k'$ is only defined on $\omega_k$, we further need a dilation: letting $\tau_* := \tau_\omega \vee  (6\tau_g +  6)   $ and recalling \eqref{eq: stretching} we define $g_k'' \in BV(\omega;[0,M])$ by 
\begin{align}\label{eq: g''}
g_k''(x') = g_k'(  (1-\tau_* k^{-1})  \,   x') \ \ \text{ for } \ \ x' \in \omega\,.
\end{align}
 (The particular choice of $\tau_*$  will become clear  in the proof of  \eqref{eq: inclusion} below.)   By  \eqref{eq: htilde} we get
\begin{subequations} \label{eq: volume3}
\begin{equation}
\mathcal{L}^d( \Omega_g \, \triangle\,   \Omega_{g_k''}   )  \le  \mathcal{L}^d(\Omega_g \cap V_k)  + C_{g,\omega}k^{-1}\,,
\end{equation}
\begin{equation}
\big| \hd(\partial \Omega_{{g}''_k} \cap\Omega) -  \hd(\partial \Omega_{g'_k} \cap  (\Omega)^k   )  \big| \leq C_{g,\omega} k^{-1}\,,
\end{equation}
\end{subequations} 
where the constant $C_{g,\omega}$ depends only on $d$,  $g$, and $\omega$.   We  also notice that  $\mathcal{H}^{d-1} \big( \overline{\partial^* \Omega_{g_k''}} \setminus \partial^* \Omega_{g_k''} \big) = 0$. Then by Lemma \ref{le:4.3ChaSol} applied for $\eps = 1/k$ we find a function $h_k\in C^\infty(\omega;[0,M])$ with $h_k \le g''_k$ on $\omega$ such that 
\begin{align}\label{eq: htilde2}
\|g_k''-h_k\|_{L^1(\omega)} &\leq k^{-1}\,, \ \ \  \ \ \ \ \ \ \ \big|  \mathcal{H}^{d-1}(\partial {\Omega_{h_k}} \cap \Omega    )       - \mathcal{H}^{d-1}(\partial^* {\Omega_{g_k''}} \cap \Omega    )       \big| \le k^{-1} \,.
\end{align}
By passing to a larger constant $C_{\omega,g}$ and by using  \eqref{2104192337},  \eqref{eq: volume3}, and \eqref{eq: htilde2},    we get \eqref{eq: volume-a-b}. We finally show \eqref{eq: inclusion}.  
In view of the definitions of $g_k'$ and $g_k''$ in \eqref{eq: htilde-def} and \eqref{eq: g''}, respectively, and the fact that  $h_k \le g_k''$, we get
$$ x =(x',x_d) \in \Omega_{h_k} \  \ \ \Rightarrow \ \ \   \big(  (1-\tau_*/k)\,  x',  \, x_d - 6\tau_g/k         \big)   \in  (\Omega_g)^k  \setminus V_k\,. $$ 
   Recall $\tau_* = \tau_\omega \vee  (6\tau_g +  6)  $ and observe that $-(1-\tau_*/k)  - 6\tau_g/k \ge - 1 + 6  /k$.  Also note that $(\Omega_g)^k \supset (\Omega)^k \cap (\omega \times (-1+6/k,0))$, cf.\ \eqref{eq: well contained}.  This along with the verticality of $V_k\subset \Omega^+$  shows    
$$
x =(x',x_d) \in \Omega_{h_k} \  \ \ \Rightarrow \ \ \   \big(  (1-\tau_*/k)\,  x', (1-\tau_*/k) \, x_d - 6\tau_g/k         \big)   \in  (\Omega_g)^k  \setminus V_k\,.
$$
This concludes the proof. 
\end{proof}

 \EEE

\CCC Next, we  present an approximation technique for $GSBD$ functions \EEE based on  \cite{CC17}.  In the following, $\psi\colon  [0,\infty)  \to  [0,\infty) $ denotes the function $\psi(t) = t \wedge 1$. 
\begin{lemma}\label{lemma: vito approx}
 Let $U \subset \R^d$  be  open, bounded, $p >1$, and $k \in \N$,   $\theta\in (0,1)$ with $k^{-1}$, $\theta$  small enough.  
   Let $\mathcal{F} \subset GSBD^p(U)$ be such that $\psi(|v|) + |e(v)|^p$ is equiintegrable for $v \in \mathcal{F}$. 
  Suppose that for $v \in \mathcal{F}$  there exists a 
set of finite perimeter
 $V \subset U$ such that 
 for each $q^k_z$, $z \in (2k^{-1})\Z^d$, intersecting $(U)^k \setminus V$, there  holds that 
\begin{equation}\label{eq: cond1}
\hd\big(Q^k_z \cap  J_v\big)  \leq \theta k^{1-d}\,.
\end{equation}
Then there exists a  function $w_k \in W^{1,\infty}( (U)^k \setminus V; \R^d)$  such that  
%
\begin{subequations}\label{rough-dens}
\begin{equation}\label{rough-dens-1}
\int_{(U)^k \setminus V} \psi(|w_k-v| )\dx \leq R_k\,,
\end{equation}
\begin{equation}\label{rough-dens-2}
\int_{(U)^k \setminus V} |e(w_k)|^p \dx \leq \int_U  |e( v)|^p \dx + R_k\,.
\end{equation}
\end{subequations}   
 where $(R_k)_k$ is a sequence independent of $v \in \mathcal{F}$ with $R_k \to 0$ as $k \to \infty$. 
\end{lemma}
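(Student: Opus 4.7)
The plan is to adapt the local approximation technique developed by Chambolle and Crismale in the proof of Theorem~\ref{thm:densityGSBD} (cf.\ \cite{CC17}) to the present quantitative setting, making crucial use of the smallness assumption~\eqref{eq: cond1}. The idea is to build local Sobolev approximations on each small cube $q^k_z$ intersecting $(U)^k\setminus V$, and then glue them via a smooth partition of unity.

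First I would, for each $z \in (2k^{-1})\Z^d$ with $q^k_z \cap ((U)^k \setminus V) \neq \emptyset$, consider the enlarged cube $Q^k_z$. Applying Fubini in each of the $d$ coordinate directions together with the slicing properties of $GSBD^p$ functions (see \eqref{3105171927}--\eqref{2304191637}), the hypothesis~\eqref{eq: cond1} produces a positive measure family of parallel $(d-1)$-slices on which the one-dimensional sections of $v$ have jump-set of $\mathcal{H}^0$-measure controlled by $C\theta/k$. Following the construction in \cite{CC17}, this allows one to produce a function $w^z \in W^{1,\infty}(q^k_z;\R^d)$ approximating $v$ outside a small exceptional set $E^z \subset q^k_z$ with $\Ld(E^z) \leq C\theta k^{-d}$, and satisfying $\|e(w^z)\|_{L^p(q^k_z)}^p \leq \|e(v)\|_{L^p(Q^k_z)}^p + o(1)$ as $k \to \infty$.

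Next I would take a smooth partition of unity $(\varphi_z)_z$ subordinate to the covering $(q^k_z)_z$, with $|\nabla \varphi_z| \leq C k$, and define
\[
w_k := \sum_{z} \varphi_z\, w^z \qquad \text{on } (U)^k \setminus V.
\]
The $\psi$-estimate~\eqref{rough-dens-1} then follows from the $L^1$-closeness of $w^z$ to $v$ on $q^k_z \setminus E^z$, the boundedness of $\psi$, and the fact that the total measure of the exceptional sets $\bigcup_z E^z$ is bounded by $C\theta |U|$. The uniformity of the remainder $R_k$ in $v \in \mathcal{F}$ is obtained from the equiintegrability hypothesis on $\psi(|v|) + |e(v)|^p$, which controls in particular the contribution from cubes $q^k_z$ touching the boundary of $(U)^k$.

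The step I expect to be the main obstacle is the symmetric-gradient estimate~\eqref{rough-dens-2} with \emph{additive} (rather than multiplicative) error: this requires the gluing to be essentially isometric on $e(\cdot)$. Since $\sum_z \varphi_z \equiv 1$, for any reference index $z_0$ one has $e(w_k) = \sum_z \varphi_z\, e(w^z) + \sum_z \nabla \varphi_z \odot (w^z - w^{z_0})$ on the overlap regions. To prevent the factor $|\nabla \varphi_z| \sim k$ from producing a blow-up, I would replace $w^{z_0}$ on each overlap by a suitable infinitesimal rigid motion $a^{z,z_0}$ (whose contribution to $e(\cdot)$ vanishes) and estimate $\|w^z - a^{z,z_0}\|_{L^p(q^k_z \cap q^k_{z_0})}$ via a Korn--Poincar\'e-type inequality on cubes with small jump set, in the spirit of \cite{ChaConIur17, CC17}. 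The smallness of $\theta$ then compensates for the factor $k$ coming from $|\nabla \varphi_z|$ and yields the desired additive control, with a remainder $R_k$ that is uniform over $\mathcal{F}$ thanks to the equiintegrability of $|e(v)|^p$.
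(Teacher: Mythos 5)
Your high-level plan -- local Lipschitz approximations on cubes with small jump via Korn--Poincar\'e and a gluing, crucially using rigid motions to control the $e(\cdot)$-error -- is in the right spirit and shares ingredients with the paper's proof, which also applies the Korn--Poincar\'e inequality of Proposition~\ref{prop:3CCF16} cube by cube. But the structure of the two constructions differs: the paper does not glue local solutions with a partition of unity. Instead it builds a single function $\widetilde{w}_k$ equal to $v$ outside the union $\omega^k$ of the small exceptional sets and equal to the affine functions $a_z$ on them, and then defines $w_k = \widetilde{w}_k * \varphi_k$ by mollification at scale $k^{-1}$; the $e(\cdot)$-estimate comes from the mollified Korn--Poincar\'e bound \eqref{prop3iiCCF16} rather than from controlling the cut-off gradient terms $\nabla\varphi_z \odot(w^z - a^{z,z_0})$.

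There is, however, a genuine gap in how you propose to reach $R_k \to 0$. Your mechanism is ``the smallness of $\theta$ compensates the factor $k$ from $|\nabla\varphi_z|$.'' But $\theta$ is a \emph{fixed} small parameter, not a quantity tending to zero with $k$. The hypothesis \eqref{eq: cond1} only gives $\hd(Q^k_z\cap J_v)\le \theta k^{1-d}$, so any Korn--Poincar\'e--type estimate of $\|w^z - a^{z,z_0}\|_{L^p(q^k_z\cap q^k_{z_0})}$ carries a factor of order $k^{-1}\|e(v)\|_{L^p(Q^k_z)}$; after multiplying by $|\nabla\varphi_z|\sim k$ and summing, you get a term of size $C(\theta)\int_U|e(v)|^p\,\dx$ with a constant $C(\theta)$ that is small but \emph{does not decay in $k$}. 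That gives a bounded error, not a vanishing one, so the conclusion $R_k\to 0$ is not reached. The same objection applies to the contribution of the exceptional sets $E^z$ in \eqref{rough-dens-1}: $\mathcal L^d\bigl(\bigcup_z E^z\bigr) \le C\theta\,\mathcal L^d(U)$ is small but not infinitesimal in $k$.

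What is missing is the second, $k$-dependent threshold used in the paper (and in \cite[Theorem~3.1]{CC17}): one splits the nodes into $G_1^k=\{z:\hd(J_v\cap Q^k_z)\le k^{1/2-d}\}$ and its complement $G^k_2$. On $G_1^k$ the mollified Korn--Poincar\'e estimate \eqref{prop3iiCCF16} yields a factor $\bigl(\hd(J_v\cap Q^k_z)\,k^{d-1}\bigr)^q\le k^{-q/2}\to 0$. On $G^k_2$, one uses that $\sup_n\hd(J_v)$ is finite to show the total volume of the enlarged $G^k_2$-cubes tends to zero, and then the equiintegrability of $|e(v)|^p$ uniformly in $v\in\mathcal F$ kills that contribution. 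Without this two-level decomposition, neither your $e(\cdot)$-estimate nor your $\psi$-estimate actually produces a sequence $R_k\to 0$.
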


%

The lemma is essentially a consequence of the rough estimate proved in \cite[Theorem 3.1]{CC17}. 
For the convenience of the reader, we include a short proof 
in   Appendix~\ref{sec:App}.

\CCC After having collected auxiliary lemmas, we now give a short outline of the proof. \EEE Recall that $\Omega = \omega \times (-1,M+1)$ for given $M>0$.  Consider a pair $(u,h)$ as in Proposition~\ref{prop: enough}. We work with $u \chi_{\Omega_h} \in  GSBD^p(\Omega)$ in the following, without specifying each time that $u=0$ in the complement of $\Omega_h$.  Recall $J_u'$ defined in \eqref{eq: Ju'}, and, as before, set $\Sigma:= J_u' \cap \Omega_h^1$.  This implies  
$
J_u \subset (\partial^* \Omega_h  \cap \Omega) \cup \Sigma$. Since $\Sigma$ is vertical, we can   approximate $(\partial^*\Omega_h \cap \Omega ) \cup \Sigma$ by  the graph  of a smooth function $g\in C^\infty(\omega;[0,M])$ in the sense of Lemma \ref{lemma: graph approx}.

Our goal is to construct a regular approximation of $u$ in  (most of)  $\Omega_{g}$ by means of Lemma~\ref{lemma: vito approx}. The main step is to identify suitable exceptional sets $(V_k)_k$ such that for the cubes outside of $(V_k)_k$ we can verify  \eqref{eq: cond1}.  In this context, we emphasize that it is crucial that each $V_k$  is vertical \CCC since this allows us to apply Lemma \ref{lemma: new-lemma} and to approximate the boundary of $(\Omega_g)^k \setminus V_k$ from below by a smooth graph. \EEE    Before we start with the actual proof of  Proposition~\ref{prop: enough}, \CCC we address the construction of $(V_k)_k$. To this end, \EEE we introduce the notion of  good and bad nodes, and collect some important properties.

Define the set of nodes
\begin{align}\label{eq: nodes}
\mathcal{N}_k := \lbrace z \in (2 \km) \mathbb{Z}^d\colon \,  \overline{q^k_z}  \subset \Omega_g \rbrace\,.
\end{align}
Let us introduce the families of \emph{good nodes}  and \emph{bad nodes}  at level $k$. 
Let $\rho_1,\rho_2>0$ to be specified below.  By $\mathcal{G}_k$ we denote the set of good nodes $z \in \mathcal{N}_k$,  namely those  satisfying  
\begin{equation}\label{0306191212-1}
\hd\big(\overline{q^k_z} \cap (\partial^* \Omega_h \cup  \Sigma) \big)   \leq \rho_1 k^{1-d} 
\end{equation}
or having the property that there exists a set of finite perimeter  $F^k_z \subset q^k_z$, 
such that 
\begin{equation}\label{0306191212-2}
q^k_z \cap \Omega^0_h  \subset (F^k_z)^1, \ \ \ \ \ \  \mathcal{H}^{d-1}\big(\partial^* F^k_z  \big) \le  \rho_2 k^{1-d}, \ \  \ \ \  \hd\big(\overline{q^k_z} \cap   \Sigma  \cap  (F^k_z)^0  \big)    \leq \rho_2 k^{1-d}\,.      
\end{equation}
We define the set of bad nodes by $\mathcal{B}_k = \mathcal{N}_k \setminus \mathcal{G}_k$.  Moreover, let
\begin{equation}\label{1906191702}
 {\mathcal{G}}_k^*  :=\{ z \in   \mathcal{G}_k \colon \eqref{0306191212-1} \text{ does not hold}  \}\,.
\end{equation}  
For an illustration of the cubes in $\mathcal{G}_k$  we refer to Figure~2.  

\begin{figure}[h]\label{figp32}
\hspace{-3em}
\begin{minipage}[c]{0.5\linewidth}
\hspace{8em}
\includegraphics[scale=2]{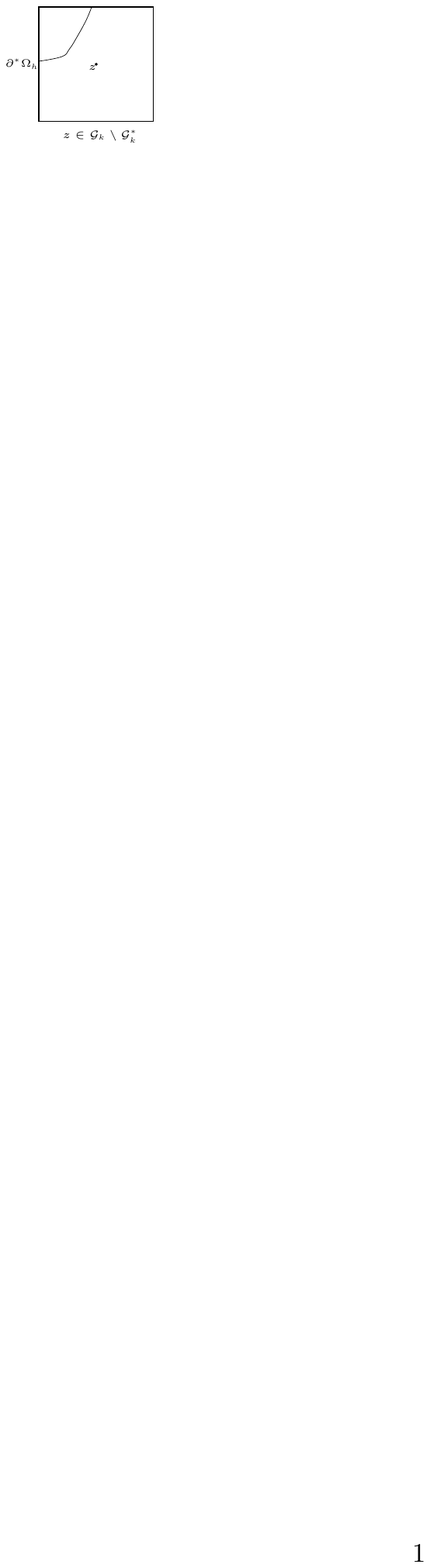}
\end{minipage}
\hfill
\begin{minipage}[c]{0.5\linewidth}
\includegraphics[scale=2]{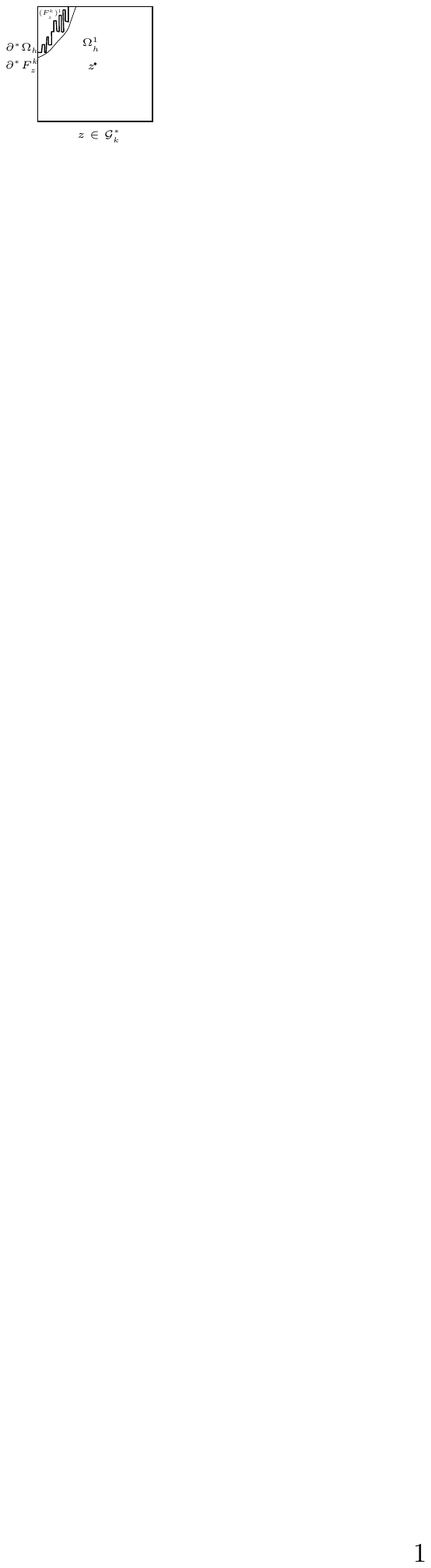}
\end{minipage}
\caption{A simplified representation of nodes in $\mathcal{G}_k$, for $d=2$ and with $\Sigma=\emptyset$.  The set  ${\mathcal{G}}_k \setminus  {\mathcal{G}}_k^*$ corresponds to the cubes containing only a small portion of $\partial^* \Omega_h \cup  \Sigma$, see first picture. For the cubes ${\mathcal{G}}_k^*$, the portion of  $\partial^* \Omega_h$  is contained in a set $F^k_z$ with small boundary, see second picture. Intuitively, this along with the fact that \eqref{0306191212-1} does not hold means that   $\partial^* \Omega_h$    is highly oscillatory in such cubes.} 
\end{figure} 
We partition the set of good nodes $\mathcal{G}_k$ into 
\begin{equation}\label{0406191145}
\mathcal{G}_k^1 = \big\{ z \in  \mathcal{G}_k\colon\, \Ld(\qz \cap \oho) \le \Ld(\qz \cap \ohi) \big\},  \ \ \ \ \ \ \ \mathcal{G}_k^2 =\mathcal{G}_k \setminus \mathcal{G}_k^1.
\end{equation}
We introduce the terminology ``$q^k_{z'}$ is above $q^k_z$'' meaning that $q^k_{z'}$ and $q^k_z$ have the same vertical projection onto $\R^{d-1} {\times} \{0\}$ and $z'_d > z_d$.

 We remark that bad nodes have been defined differently in \cite{ChaSol07}, namely as the cubes having an edge which intersects $\partial^*\Omega_h \cap \Sigma$. This definition is considerably easier than our definition. It may, however, fail in some pathological  situations  since, in this case, the union of cubes with bad nodes as centers does not necessarily form a ``vertical set''.

\begin{lemma}[Properties of good and bad nodes]\label{lemma: bad/good}
Given $\Omega_h$ and $\Sigma$, define $\Omega_g$ as in Lemma \ref{lemma: graph approx} for $\eps>0$ sufficiently small.  
We can choose  $0<\rho_1<\rho_2$  satisfying  $\rho_1,\rho_2 \le \frac{1}{2}5^{-d}  \theta$  such that the following properties hold for the good and bad nodes defined in \eqref{eq: nodes}--\eqref{0406191145}:
\begin{align*}
{\rm (i)} & \ \  \text{ if  $q^k_{z'}$ is above $q^k_z$ and $ z  \in  \mathcal{B}_k \cup \mathcal{G}^2_k$, then  $ z'  \in \mathcal{B}_k \cup \mathcal{G}^2_k$.} \\
{\rm (ii)} & \ \ \text{ if $z,z' \in \mathcal{G}_k$ with $\hd(\partial \qz \cap \partial q^k_{z'})>0$, then $z,z' \in \mathcal{G}^1_k$ or $z,z' \in \mathcal{G}^2_k$.}\\
{\rm (iii)} & \ \ \# \mathcal{B}_k  +  \# \mathcal{G}_k^* \le  2\rho_1^{-1}  k^{d-1} \varepsilon.\\
{\rm (iv)} &   \ \ \sum\nolimits_{z \in \mathcal{G}_k^2}\mathcal{L}^d(\Omega_h \cap q_z^k) \le   \varepsilon.  
\end{align*}

\end{lemma}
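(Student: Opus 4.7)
The plan is to establish the four properties by exploiting three central ingredients: the monotonicity of $z_d \mapsto \mathcal{L}^d(q^k_z \cap \Omega_h^0)$ at fixed horizontal projection (immediate since $\Omega_h$ is a subgraph), the relative isoperimetric inequality in cubes, and the smallness estimate $\hd((\partial^*\Omega_h\cup\Sigma)\cap\Omega_g)\le\varepsilon$ from \eqref{2004192229}. The parameters $0<\rho_1<\rho_2$ will be tuned small enough to absorb all dimensional constants arising from the relative isoperimetric inequality.

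I would start with (iii), the most direct. Both $\mathcal{B}_k$ and $\mathcal{G}_k^*$ consist of nodes $z$ for which \eqref{0306191212-1} fails, i.e.\ $\hd(\overline{q^k_z}\cap(\partial^*\Omega_h\cup\Sigma))>\rho_1 k^{1-d}$. Since $\overline{q^k_z}\subset\Omega_g$ for $z\in\mathcal{N}_k$ and every $\hd$-a.e.\ point lies in at most two of the closed cubes, summing gives $(\#\mathcal{B}_k+\#\mathcal{G}_k^*)\rho_1 k^{1-d}\le 2\hd((\partial^*\Omega_h\cup\Sigma)\cap\Omega_g)\le 2\varepsilon$, proving (iii).

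Property (i) for $z\in\mathcal{G}_k^2$ is immediate from monotonicity, since $\mathcal{L}^d(q^k_{z'}\cap\Omega_h^0)\ge\mathcal{L}^d(q^k_z\cap\Omega_h^0)>|q^k_{z'}|/2$ forces $z'\notin\mathcal{G}_k^1$. The delicate case is $z\in\mathcal{B}_k$, treated by contrapositive: assuming $z'\in\mathcal{G}_k^1$, the density condition for $z\in\mathcal{G}_k^1$ follows again from monotonicity, and one needs to produce a goodness witness for $z$ from the one for $z'$. When $z'$ satisfies \eqref{0306191212-2} through some $F^k_{z'}$, the translate $F^k_{z'}-2k^{-1}e_d\subset q^k_z$ is the natural candidate $F^k_z$: its perimeter equals that of $F^k_{z'}$, the containment $\Omega_h^0\cap q^k_z\subset F^k_{z'}-2k^{-1}e_d$ follows from the vertical monotonicity of $\Omega_h^0$ (if $h(x')<x_d$ then $h(x')<x_d+2k^{-1}$) combined with the analogous containment for $z'$, and the $\Sigma$-condition is transferred using the verticality of $\Sigma$. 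When $z'$ satisfies \eqref{0306191212-1}, monotonicity in the column shows that $h>z_d+k^{-1}$ on all but a small portion of $q^{d-1}_z$, so that both $\Omega_h^0\cap q^k_z$ and the bulk of $\Sigma\cap\overline{q^k_z}$ are confined to a small exceptional set, which can be enclosed in a set with perimeter $\le\rho_2 k^{1-d}$ precisely because $\rho_2$ dominates $\rho_1$ times the relevant dimensional constants.

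Property (ii) for vertically adjacent $z,z'\in\mathcal{G}_k$ is a direct consequence of (i) and its symmetric contrapositive. For horizontally adjacent cubes with $z\in\mathcal{G}_k^1$ and $z'\in\mathcal{G}_k^2$, the density imbalance combined with the relative isoperimetric inequality applied to $\Omega_h$ in the box $q^k_z\cup q^k_{z'}$ yields $\hd(\partial^*\Omega_h\cap(q^k_z\cup q^k_{z'}))\ge c\,k^{1-d}$ for a dimensional $c>0$, contradicting \eqref{0306191212-1} in at least one of the two cubes once $\rho_1<c/2$; the \eqref{0306191212-2} alternative is excluded by combining this lower bound with the constraint $\Omega_h^0\cap q^k_z\subset(F^k_z)^1$ and the relative isoperimetric inequality inside the single cube. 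Finally, for (iv), the relative isoperimetric inequality in $q^k_z$ together with $\mathcal{L}^d(\Omega_h\cap q^k_z)\le|q^k_z|/2$ for $z\in\mathcal{G}_k^2$ yields $\mathcal{L}^d(\Omega_h\cap q^k_z)\le C k^{-1}\hd(\overline{q^k_z}\cap\partial^*\Omega_h)$ via the splitting $|A|=|A|^{1/d}|A|^{(d-1)/d}$; summing over $z\in\mathcal{G}_k^2\setminus\mathcal{G}_k^*$ using disjointness of the open cubes and \eqref{2004192229} gives a contribution $\le C k^{-1}\varepsilon$, while $z\in\mathcal{G}_k^2\cap\mathcal{G}_k^*$ is controlled by the trivial bound $|q^k_z|$ together with the cardinality estimate from (iii), contributing $O(k^{-1}\varepsilon)$. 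For $k$ sufficiently large the total is $\le\varepsilon$. The main technical obstacle lies in the bad case of (i), where the construction of $F^k_z$ must correctly interface the verticality of $\Sigma$ with the two alternative characterizations of goodness for $z'$, and in the horizontal subcase of (ii), where the precise interplay between $\rho_1$ and $\rho_2$ is essential.
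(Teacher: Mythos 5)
Your treatment of (iii) is identical to the paper's, and your version of (iv) (using the splitting $|A|=|A|^{1/d}|A|^{(d-1)/d}$ and the cardinality bound from (iii) to absorb the $\mathcal{G}_k^*$-cubes) is a valid minor variant of the paper's argument, which instead establishes $\mathcal{G}_k^*\subset\mathcal{G}_k^1$ at the outset and uses superadditivity of $t\mapsto t^{d/(d-1)}$. However, your proofs of (i) and (ii) contain genuine gaps.

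For (i), in the subcase where $z'$ satisfies \eqref{0306191212-1}, you assert that $\Omega_h^0\cap q^k_z$ "can be enclosed in a set with perimeter $\le\rho_2 k^{1-d}$ precisely because $\rho_2$ dominates $\rho_1$ times dimensional constants," but this is precisely the nontrivial step and it does \emph{not} follow from the relative isoperimetric inequality alone. The relative isoperimetric inequality controls $\mathcal{L}^d(\Omega_h^0\cap q^k_z)$ in terms of $\mathcal{H}^{d-1}(\partial^*\Omega_h\cap q^k_z)$, but $\partial^*(\Omega_h^0\cap q^k_z)$ also contains the portion $\partial q^k_z\cap\Omega_h^0$ on the cube faces, and small volume does not force this face contribution to be small (a thin slab near a face has tiny volume but face-intersection of order $k^{1-d}$). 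One needs the subgraph structure of $\Omega_h$: the paper bounds $\mathcal{H}^{d-1}(\partial q^k_z\cap\Omega_h^0)$ by projecting vertically, applying the area formula to $\partial^*\Omega_h$, and separately controlling the columns where the whole cube lies in $\Omega_h^0$ by the isoperimetric volume bound. This is the content of the paper's preparation step producing $H^k_z$, and it cannot be replaced by a purely parameter-tuning argument. For (ii), your claim that the vertically adjacent case "is a direct consequence of (i) and its symmetric contrapositive" is incorrect: the "symmetric" version of (i) (going downward) is \emph{false}, because below the graph $\Omega_h^1$ dominates. Concretely, (i) excludes $z\in\mathcal{G}_k^2$ (lower) with $z'\in\mathcal{G}_k^1$ (upper), but it says nothing about $z\in\mathcal{G}_k^1$ (lower) with $z'\in\mathcal{G}_k^2$ (upper), which is exactly the configuration the paper rules out by constructing $F:=H^k_z\cup(\Omega_h^0\cap q^k_{z'})$ in the double cube $q^k_*$ and applying the relative isoperimetric inequality there. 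This double-cube argument handles all adjacency directions uniformly and cannot be bypassed for the vertical case.
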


We suggest to omit the proof of the lemma on first reading and to proceed directly with the proof  of  Proposition \ref{prop: enough}.  

\begin{proof}
By $c_\pi \ge 1$ we denote the maximum of the constants appearing in the isoperimetric inequality and the relative isoperimetric inequality on a cube in dimension $d$. We will show the statement for $\eps$ and $0 < \rho_2 < 1$ sufficiently small satisfying $\rho_2 \le \frac{1}{2} 5^{-d} \theta$, and for $\rho_1 = ((3d+1)c_\pi)^{-1} \rho_2$.

\emph{Preparations.} First, we observe that for $\rho_2$ sufficiently small we have that  $\mathcal{G}_k^* \subset \mathcal{G}_k^1$.  Indeed,  since for $z \in \mathcal{G}_k^*$ property \eqref{0306191212-2} holds,  the isoperimetric inequality   implies
\begin{align}\label{eq: G1G2}
\Ld(\qz \cap \oho) \le \Ld(  F^k_z)  \le c_\pi \big( \mathcal{H}^{d-1}(\partial^*  F^k_z  ) \big)^{d/(d{-}1)} \le c_\pi \rho_2^{d/(d{-}1)} k^{-d}.
\end{align}
Then, for $\rho_2$ sufficiently small we get $\Ld(\qz \cap \oho)<\frac{1}{2} \Ld(\qz)$, and thus $z \in \mathcal{G}_k^1$, see \eqref{0406191145}. 

As a further preparation, we show that for each $z \in \mathcal{G}_k^1$ there exists a set of finite perimeter $H^k_z$ with   $\Omega_h^0 \cap q^k_z \subset H^k_z \subset q^k_z$ such that
\begin{align}\label{eq: setH}
\mathcal{L}^d(     H^k_z      )  \le c_\pi \rho_2^{d/(d{-}1)} k^{-d}, \ \ \  \mathcal{H}^{d-1}( \partial^* H^k_z) \le \rho_2 k^{1-d}, \ \ \ \   \hd\big(\overline{q^k_z} \cap   \Sigma  \cap  (H^k_z)^0  \big)    \leq \rho_2 k^{1-d}\,.   
\end{align}
Indeed, if  \eqref{0306191212-2} holds, this follows directly from \eqref{0306191212-2} and \eqref{eq: G1G2} for $H^k_z := F^k_z$.

Now suppose that $z \in \mathcal{G}_k^1$ satisfies \eqref{0306191212-1}. In this case, we define $H^k_z := \Omega_h^0 \cap q_z^k$. To control the volume, we use the relative isoperimetric inequality on $q^k_z$ to find by \eqref{0306191212-1}
\begin{align}\label{eq: vol part}
\mathcal{L}^d(     H^k_z      )  =  \mathcal{L}^d( \Omega^0_h \cap q^k_z    ) \wedge   \mathcal{L}^d( \Omega^1_h \cap q^k_z    ) \le c_\pi \big(\mathcal{H}^{d-1}(\partial^* \Omega_h \cap q^k_z)\big)^{d/(d{-}1)} \le c_\pi \rho_1^{d/(d{-}1)} k^{-d}\,,
\end{align}
i.e., the first part of \eqref{eq: setH} holds since  $\rho_1 = ((3d+1)c_\pi)^{-1} \rho_2$.  To obtain the second estimate in \eqref{eq: setH}, the essential step is to control $\mathcal{H}^{d-1}( \partial q^k_z \cap \Omega_h^0)$. For simplicity, we only estimate $\mathcal{H}^{d-1}( \partial_d q^k_z \cap \Omega_h^0)$ where $ \partial_d q^k_z$ denotes the two faces of  $\partial q^k_z$ whose normal vector is  parallel to  $e_d$. The other faces can be treated in a similar fashion. Write $z=(z',z_d)$ and define $\omega_z = z' + (-k^{-1},k^{-1})^{d-1}$.  By $\omega_* \subset \omega_z$ we denote the largest measurable set such that the cylindrical set $(\omega_* \times \R) \cap q^k_z$ is contained in $\Omega^0_h$. Then by the area formula (cf.\ e.g.\ \cite[(12.4) in Section~12]{Sim84}) and by recalling notation \eqref{eq: vxiy2}  we get
\begin{align}\label{eq: area-est}
\mathcal{H}^{d-1}( \partial_d q^k_z \cap \Omega_h^0) &\le 2 \mathcal{H}^{d-1}(\omega_*) + 2  \int_{ (\omega_z \setminus \omega_*) \times \lbrace 0 \rbrace} \mathcal{H}^0\big(  (\partial^* \Omega_h)^{e_d}_y \big) \, \d \mathcal{H}^{d-1}(y)\notag \\
& \le 2 \mathcal{H}^{d-1}(\omega_*)  + 2\int_{\partial^* \Omega_h \cap q^k_z} |\nu_{\Omega_h} \cdot e_d| \dh \notag \\
& \le 2 \mathcal{H}^{d-1}(\omega_*)  +2\mathcal{H}^{d-1}(\partial^* \Omega_h \cap q^k_z)\,.
\end{align}
As $(\omega_* \times \R) \cap q^k_z \subset \Omega^0_h \cap q^k_z$ and $\mathcal{L}^d(\Omega_h^0 \cap q^k_z)  \le c_\pi \rho_1^{d/(d{-}1)} k^{-d}$  by \eqref{eq: vol part}, we deduce $2k^{-1}\mathcal{H}^{d-1}(\omega_*) \le  c_\pi \rho_1^{d/(d{-}1)} k^{-d}$. This along with \eqref{0306191212-1} and \eqref{eq: area-est} yields
$$\mathcal{H}^{d-1}( \partial_d q^k_z \cap \Omega_h^0) \le c_\pi \rho_1^{d/(d{-}1)} k^{1-d} + 2\rho_1 k^{1-d} \le 3c_\pi\rho_1 k^{1-d}\,.$$
By repeating this argument for the other faces and by recalling $\mathcal{H}^{d-1}(\partial^* \Omega_h \cap q^k_z) \le \rho_1 k^{1-d}$, we conclude that $H^k_z = \Omega_h^0 \cap q_z^k$ satisfies
$$\mathcal{H}^{d-1}(\partial^* H^k_z) = \mathcal{H}^{d-1}(\partial^* \Omega_h \cap q^k_z) + \mathcal{H}^{d-1}(\partial q^k_z \cap \Omega_h^0) \le    \rho_1 k^{1-d} +  d   \cdot 3c_\pi\rho_1 k^{1-d}\le\rho_2k^{1-d}\,, $$
where the last step follows from  $\rho_1 = ((3d+1)c_\pi)^{-1} \rho_2$. This concludes the  the second part of  \eqref{eq: setH}. The third part follows from \eqref{0306191212-1} and $\rho_1 \le\rho_2$.   We are now in a position to prove the statement.

\emph{Proof of (i).} We need to show that for $z' \in \mathcal{G}_k^1$ there holds $z \in \mathcal{G}_k^1$ for   all  $z \in \mathcal{N}_k$ such that  $q^k_{z'}$ is above $q^k_{z}$. Fix such cubes $q_z^k$ and $q^k_{z'}$. 


Consider the set  $H^k_{z'}$  with   $\Omega_h^0 \cap q^k_{z'} \subset H^k_{z'} \subset q^k_{z'}$ introduced in \eqref{eq: setH}, and define  $F^k_{z}  := H^k_{z'} - z' + z$.   Since $\Omega_h$ is a generalized graph, we get $(F^k_z)^1 \supset \Omega_h^0 \cap q^k_z$.
 Moreover, since $\Sigma  = J_u' \cap \Omega_h^1  $ is vertical in $\Omega_h$,  see \eqref{eq: Ju'},  and $(H^k_{z'})^0 \subset \Omega_h^1 \cap q^k_{z'}$,  we have 
\begin{equation*}
\Sigma \cap (F^k_z)^0 = \Sigma \cap \Omega_h^1 \cap (F^k_z)^0 \subset (\Sigma \cap \Omega_h^1 \cap (H^k_{z'})^0) + z-z'= (\Sigma \cap (H^k_{z'})^0)+z-z'\,.
\end{equation*} 
 By \eqref{eq: setH} we thus get     $\hd( \ove{q^k_z} \cap \Sigma \cap (F^k_z)^0) \leq \hd( \ove{q^k_{z'}} \cap \Sigma \cap (H^k_{z'})^0)\leq  \rho_2  k^{1-d}$.
Then the third property in \eqref{0306191212-2} is satisfied for $z$. 
 Again  by \eqref{eq: setH} we note that  also the first two properties of  \eqref{0306191212-2} hold, and thus $z \in \mathcal{G}_k$. Using  once more  that $\Omega_h$ is a generalized graph, we get $\mathcal{L}^d(\Omega_h\cap q_z^k) \ge \mathcal{L}^d(\Omega_h\cap q_{z'}^k)$. Then $z' \in \mathcal{G}_k^1$  implies $z \in \mathcal{G}_k^1$, see \eqref{0406191145}.  This shows (i).

\emph{Proof of (ii).} Suppose by contradiction that there exist  $z\in \mathcal{G}_k^1$ and $z' \in \mathcal{G}_k^2$ satisfying     $\hd(\partial \qz \cap \partial q^k_{z'})>0$. Define the set $F := H^k_z \cup (\Omega^0_h \cap q^k_{z'})$ with $H^k_z$ from \eqref{eq: setH}, and observe that $F$ is contained in the cuboid $q^k_* = {\rm int}(\overline{q^k_z} \cup \overline{q^k_{z'}})$. Since $H^k_z \supset \Omega_h^0 \cap q^k_z$, we find
$$\mathcal{H}^{d-1}(q^k_* \cap \partial^*F) \le \mathcal{H}^{d-1}(\partial^* H^k_z) + \mathcal{H}^{d-1}(\partial^* \Omega_h \cap q^{k}_{z'}) \,.$$
As $\mathcal{G}^2_k \cap \mathcal{G}_k^* = \emptyset$, cf.\ \eqref{eq: G1G2},  for $z' \in \mathcal{G}^2_k$ estimate \eqref{0306191212-1} holds true. This along with  \eqref{eq: setH} yields
$$\mathcal{H}^{d-1}(q^k_* \cap \partial^*F)    \le  \rho_2 k^{1-d}  +  \rho_1 k^{1-d} \le 2\rho_2 k^{1-d}    \,.$$
 Then, the relative isoperimetric inequality on $q^k_*$ yields
\begin{align}\label{eq: contradiction}   
\Ld(q^k_* \cap F) \wedge \Ld(q^k_* \setminus F) \le C_* \big(\mathcal{H}^{d-1}(q^k_* \cap \partial^*F)\big)^{d/(d{-}1)} \le C_* (2\rho_2)^{d/(d{-}1)} k^{-d} 
\end{align}
for some universal $C_*>0$. On the other hand,  there holds $\Ld(q^k_* \cap F) \ge  \mathcal{L}^d(\Omega^0_h \cap q^k_{z'}) \ge \frac{1}{2} (2k^{-1})^d $ and $\Ld(q^k_* \setminus F)\ge \mathcal{L}^d(q^k_z \setminus H^k_z ) \ge (2k^{-1})^d - c_\pi \rho_2^{d/(d{-}1)} k^{-d}$ by \eqref{eq: setH}. However, for $\rho_2$ sufficiently small, this contradicts \eqref{eq: contradiction}. This concludes the proof of (ii).

\emph{Proof of (iii).}  Note that $\mathcal{H}^{d-1}$-a.e.\ point in $\R^d$ is contained in at most two different closed cubes $\overline{q^k_z}$, $\overline{q^k_{z'}}$. Therefore, since the cubes with centers in  $\mathcal{G}_k^*$ and    $\mathcal{B}_k$  do not satisfy \eqref{0306191212-1}, we get 
\begin{align*}
  \# \mathcal{B}_k  + \# \mathcal{G}_k^*   \leq  \rho_1^{-1} k^{d-1}  \hspace{-0.2cm}  \sum_{z \in \mathcal{B}_k \cup \mathcal{G}_k^*} \hd\big(\overline{q^k_z} \cap (\partial^* \Omega_h \cup  \Sigma) \big)  \le   2 \rho_1^{-1} k^{d-1}   \hd\big((\partial^* \Omega_h \cup \Sigma) \cap  {\Omega_g}    \big)\,,
\end{align*}
 where the last step follows from \eqref{eq: nodes}.  This along with \eqref{2004192229}   shows  (iii).

\emph{Proof of (iv).} Recall that each $z \in \mathcal{G}^2_k$ satisfies \eqref{0306191212-1}, cf.\  \eqref{1906191702} and before  \eqref{eq: G1G2}. The relative isoperimetric inequality,   \eqref{eq: nodes},   and \eqref{0406191145} yield
\begin{align*}
\sum\nolimits_{z \in \mathcal{G}_k^2}\mathcal{L}^d(\Omega_h \cap q_z^k) & = \sum\nolimits_{z \in \mathcal{G}_k^2}\mathcal{L}^d(\Omega^0_h \cap q_z^k) \wedge \mathcal{L}^d(\Omega^1_h \cap q_z^k) \le  c_\pi \sum\nolimits_{z \in \mathcal{G}_k^2}  \big(\mathcal{H}^{d-1}(\partial^* \Omega_h \cap q^k_z)\big)^{\frac{d}{d-1}}  \\
&\le    c_\pi  \Big(\sum\nolimits_{z \in \mathcal{G}_k^2}  \mathcal{H}^{d-1}(\partial^* \Omega_h \cap q^k_z)\Big)^{d/(d{-}1)} \le c_\pi \big( \mathcal{H}^{d-1}(\partial^*\Omega_h \cap \Omega_g) \big)^{d/(d{-}1)}\,.
\end{align*}
By \eqref{2004192229} we conclude for $\eps$ small enough that $\sum\nolimits_{z \in \mathcal{G}_k^2}\mathcal{L}^d(\Omega_h \cap q_z^k) \le c_\pi \eps^{d/(d{-}1)} \le\eps$.
 \end{proof}

\begin{proof}[Proof of  Proposition \ref{prop: enough}]
 Consider a pair $(u,h)$ and set $\Sigma := J_u' \cap \Omega_h^1$  with $J_u'$ as in \eqref{eq: Ju'}.  Given $\eps>0$, we  approximate $(\partial^*\Omega_h \cap \Omega ) \cup \Sigma$ by  the graph  of a smooth function $g\in C^\infty(\omega;[0,M])$ in the sense of Lemma \ref{lemma: graph approx}. Define the good and bad nodes as in  \eqref{eq: nodes}--\eqref{0406191145} for $0<\rho_1,\rho_2 \le \frac{1}{2}5^{-d}  \theta$ such that the properties in Lemma \ref {lemma: bad/good} hold. We will first define approximating regular graphs (Step 1) and  regular functions (Step 2) for fixed $\eps >0$. Finally, we let $\eps \to 0$ and obtain the result by a diagonal argument (Step 3). In the whole proof, $C>0$ will denote a constant depending only on $d$, $p$,  $\rho_1$,  and $\rho_2$.  

\noindent\emph{Step 1: Definition of regular graphs.} Recall \eqref{eq: well contained}. For each $k \in \N$, we define the set
\begin{equation}\label{0406191233}
V_k:= \bigcup\nolimits_{z \in \mathcal{G}_k^2 \cup \mathcal{B}_k} Q^k_z \cap  (\Omega_g)^k\,. 
\end{equation}
We observe that
\begin{equation}\label{0506192329}
\partial V_k \cap (\Omega_{g})^k \subset  \bigcup\nolimits_{z \in \mathcal{B}_k} \partial Q_z^k\,.
\end{equation}
In fact, consider  $z \in \mathcal{B}_k \cup \mathcal{G}_k^2$ such that $Q_z^k \cap V_k \neq \emptyset$ and one face of  $\partial Q^k_z$ intersects $\partial V_k \cap (\Omega_{g})^k$. In view of \eqref{0406191233}, there  exists an adjacent cube $q^k_{z'}$ satisfying  $\hd(\partial \qz \cap \partial q^k_{z'})>0$ and ${z'}  \in \mathcal{G}_k^1$ since otherwise $\partial Q^k_z \cap \partial V_k \cap (\Omega_{g})^k =\emptyset$. As  ${z'}  \in \mathcal{G}_k^1$, Lemma \ref{lemma: bad/good}(ii) implies $z  \notin \mathcal{G}_k^2$ and therefore $z \in \mathcal{B}_k$.  This shows  \eqref{0506192329}. A similar argument
yields 
\begin{equation}\label{0506192329-2}
V_k= \Big(\bigcup\nolimits_{z \in  \mathcal{B}_k} Q^k_z  \cup \bigcup\nolimits_{z \in \mathcal{G}_k^2}  q^k_z \Big) \cap (\Omega_g)^k
\end{equation}
up to a negligible set.  Indeed,  since  $V_k$ is a union of cubes of sidelength $2k^{-1}$ centered in nodes in $\mathcal{N}_k$,  it suffices to prove that  for a fixed 
$z \in \mathcal{N}_k \cap V_k$
 there holds   (a)  $z \in \mathcal{G}_k^2$ or that  (b)   there exists $z' \in \mathcal{B}_k$ such that $z \in Q_{z'}^k$.
 Arguing by contradiction, if $z \in \mathcal{N}_k \cap V_k$ and  neither (a) nor (b) hold,  we deduce that $z \in \mathcal{G}_k^1$ and $Q_z^k \cap \mathcal{B}_k=\emptyset$.  Then all  $z' \in \mathcal{N}_k\cap Q_z^k$ lie in $\mathcal{G}_k$. More precisely, by $z \in \mathcal{G}_k^1$ and Lemma~\ref{lemma: bad/good}(ii) we get that all  $z' \in \mathcal{N}_k\cap Q_z^k$ lie in $\mathcal{G}_k^1$.    Then $Q_z^k \cap (\mathcal{G}_k^2 \cup \mathcal{B}_k)=\emptyset$, so  that $q_z^k \cap V_k = \emptyset$ by \eqref{0406191233}.    This contradicts   $z \in  V_k$.

%

Let us now estimate the surface and volume of $V_k$. By \eqref{0506192329} and Lemma \ref{lemma: bad/good}(iii) we get 
\begin{equation}\label{0506192340}
\hd(\partial V_k \cap (\Omega_{g})^k) \leq  \sum\nolimits_{z \in \mathcal{B}_k} \mathcal{H}^{d-1}(\partial Q_z^k) \le C k^{1-d} \# \mathcal{B}_k \le   C \varepsilon\,,
\end{equation}
where $C$ depends on $\rho_1$. In a similar fashion, by  \eqref{0506192329-2} and  Lemma \ref{lemma: bad/good}(iii),(iv) we obtain
\begin{align}\label{eq: V volume}
\mathcal{L}^d(V_k\cap \Omega_h) \le Ck^{-d} \, \# \mathcal{B}_k  +  \sum\nolimits_{z \in \mathcal{G}_k^2} \mathcal{L}^d(q^k_z\cap \Omega_h) \le Ck^{-1}\eps + \eps \le C\eps\,.   
\end{align}
 Note that $V_k$ is vertical in the sense that $(x',x_d) \in V_k$ implies $(x',x_d + t) \in V_k$  for $t \ge 0$  as long as $(x',x_d + t) \in (\Omega_g)^k$.     This follows from Lemma \ref{lemma: bad/good}(i) and \eqref{0406191233}.  

\CCC 
We apply Lemma \ref{lemma: new-lemma} for $g$ and $V_k$  to find functions $h_k\in C^\infty(\omega;[0,M])$ satisfying \eqref{eq: volume-a-b} and \eqref{eq: inclusion}. Therefore, by 
  \eqref{2004192228}, \eqref{eq: volume-a-b}, and \eqref{eq: V volume}      we get
\begin{align}\label{eq: volume2a-new}
\mathcal{L}^d( \Omega_h \, \triangle  \, \Omega_{{h}_k}   ) & \le  \mathcal{L}^d( \Omega_g \, \triangle  \, \Omega_{{h}_k}   ) + \mathcal{L}^d(\Omega_g \triangle \Omega_h)  \le   \mathcal{L}^d(\Omega_g \cap V_k) + C_{g,\omega}k^{-1} + \mathcal{L}^d(\Omega_g \triangle \Omega_h) \notag \\ & \le  \mathcal{L}^d(\Omega_h \cap V_k) + C_{g,\omega}k^{-1} + 2\mathcal{L}^d(\Omega_g \triangle \Omega_h)  \le  C\eps + C_{g,\omega}k^{-1}\,.
\end{align}
Moreover, by  \eqref{2004192231}, \eqref{eq: volume-a-b}, and    \eqref{0506192340} we obtain
\begin{align}\label{eq: volume2b-new}
 \hd(\partial \Omega_{{h}_k} \cap \Omega)  & \le  \hd(\partial \Omega_g \cap \Omega)  + \hd\big(\partial V_k \cap (\Omega_{g})^k\big)+ C_{g,\omega} k^{-1} \notag \\ & \le \hd(\partial^* \Omega_h \cap \Omega) + 2 \, \hd(\Sigma)  + C\varepsilon + C_{g,\omega} k^{-1}\,.
\end{align}
\EEE
\noindent\emph{Step 2: Definition of regular functions.}  
Recall \eqref{0306191212-2}--\eqref{1906191702},  and observe that Lemma~\ref{lemma: bad/good}(iii)  implies
\begin{equation*}
\Ld(F^k)\leq  \sum\nolimits_{z \in {\mathcal{G}}_k^*} \mathcal{L}^d(q^k_z) \le  Ck^{-d} \# {\mathcal{G}}_k^* \le  C\varepsilon\,k^{-1}\,, \quad \text{ where  } \  F^k:= \bigcup\nolimits_{z \in {\mathcal{G}}_k^*} (F^k_z)^1\,.
\end{equation*} 
We define the  functions $v_k \in GSBD^p(\Omega)$  by   
\begin{equation}\label{2006191034}
 v_k:=  u (1-\chi_{F^k }) \, \chi_{\Omega_g}\,. 
\end{equation}
Since $u=0$ in $\Omega\sm \Omega_h$ and  $v_k=0$  in $\Omega\sm \Omega_g$, we get by \eqref{2004192228} and   \eqref{2006191034} 
\begin{equation}\label{2006190839}
 \limsup_{k\to \infty}  \Ld(\{ v_k \neq u\}) \leq \limsup_{k\to \infty} \Ld\big(F^k \cup (\Omega_h \sm \Omega_g)  \big)  \leq C\varepsilon\,.
\end{equation}
 We also obtain  
\begin{equation}\label{eq: smalljump}
\hd(Q^k_z \cap  J_{v_k} ) \leq \theta k^{1-d}
\end{equation}
for each $q^k_z$ intersecting $(\Omega_g)^k \sm V_k$.   To see this, note that the definitions of $\mathcal{N}_k$ in \eqref{eq: nodes} and of $V_k$ in \eqref{0406191233} imply that for each $q^k_z$ with $q^k_z\cap( (\Omega_g)^k \setminus V_k) \neq \emptyset$, each $z'\in \mathcal{N}_k$  with $q^k_{z'} \cap Q^k_z \neq \emptyset$  satisfies $z' \in \mathcal{G}_k$.  In view of  $\rho_1 <  \rho_2  \leq \frac{1}{2}5^{-d} \theta$ (see Lemma ~\ref{lemma: bad/good}), the property then  follows from  \eqref{0306191212-1}, \eqref{0306191212-2}, 
$ J_u  \cap \Omega_g \subset \partial^* \Omega_h \cup \Sigma$,  and  the fact that $Q^k_z$ consists of  $5^d$ different cubes $q^k_{z'}$. 

Notice that  $|v_k|\le |u|$ and $|e(v_k)| \le |e(u)|$ pointwise a.e., i.e., the functions $ \psi(|v_k|)  + |e(v_k)|^p$ are equiintegrable,   where $\psi(t) = t \wedge 1$.  In view of \eqref{eq: smalljump}, we can   apply Lemma \ref{lemma: vito approx} on $U = \Omega_g$   for the function $v_k \in GSBD^p(\Omega_g)$ and the sets $V_k$,    to get functions $w_k \in W^{1,\infty}( (\Omega_g)^k \setminus V_k;  \R^d)$ such that \eqref{rough-dens-1} and \eqref{rough-dens-2}  hold for a sequence $R_k \to 0$.

%
%
We now define the function  $\hat{w}_k\colon \Omega\to \R^d$  by   
 \begin{equation*}
\hat{w}_k(x):= \begin{dcases}
w_k\big((1-\tau_*/k) \, x', (1-\tau_*/k) \, x_d - 6\,\tau_g/k\big) \qquad&\text{if } -1 < x_d < {h}_k(x')\,,\\
0 &\text{otherwise.}
\end{dcases}
\end{equation*}
\CCC Note that, in view of \eqref{eq: inclusion}, \EEE the mapping is well defined and  satisfies  $ \hat{w}_k  |_{\Omega_{h_k}} \in W^{1,\infty}(\Omega_{h_k};\R^d)$. By   \eqref{2004192228} 
\eqref{rough-dens-1},  \eqref{eq: volume2a-new},   
\eqref{2006190839},  and $\psi \le 1$ 
we get
\begin{align}\label{eq: smalldiff}
\limsup_{k\to \infty}\Vert \psi(|\hat{w}_k - u|) \Vert_{L^1(\Omega)}  \le \limsup_{k\to \infty}\big(\Vert \psi(|\hat{w}_k - v_k|)\Vert_{L^1(\Omega)}  +   \Ld(\{ v_k \neq u\})\big)  \le C\eps\,. 
\end{align} 
 In a similar fashion, by  employing  \eqref{rough-dens-2}  in place of \eqref{rough-dens-1}   and by the fact that $\Vert  e(\hat{w}_k)\Vert_{L^p(\Omega)} \le (1+C_Mk^{-1}) \Vert e(w_k)\Vert_{L^p((\Omega_g)^k \setminus V_k)}$ for some $C_M$ depending on $M$ and $\tau_*$, we obtain   
\begin{align}\label{eq: elastic energy estimate}
\limsup_{k\to \infty} \int_{\Omega} |e(\hat{w}_k)|^p \dx & \le \limsup_{k\to \infty} \int_{(\Omega_g)^k \setminus V_k} |e(w_k)|^p \dx      \notag \\
& \le     \limsup_{k\to \infty}  \int_{\Omega_g} |e(v_k)|^p \dx  \le \int_{\Omega_h}  |e(u)|^p \dx  \,,
\end{align} 
 where  the  last step follows from \eqref{2006191034}.

\noindent\emph{Step 3: Conclusion.} Performing the construction above for $\eps = 1/n$, $n \in \N$, and choosing for each $n \in \N$ an index $k=k(n)\in \N$ sufficiently large, we obtain a sequence $( \hat{w}_{n},  h_{n})$ such that by  \eqref{eq: volume2a-new}    and  \eqref{eq: smalldiff}  we get  
\begin{equation}\label{2709192115}
\hat{w}_n \to u=u\chi_{\Omega_h}\text{ in }L^0(\Omega;\R^d) \quad \text{ and }\quad h_n \to h\text{ in }L^1(\omega)\,.
\end{equation} 
By \eqref{eq: volume2b-new}  and the definition $\Sigma=J_u' \cap \Omega_h^1$  we obtain \eqref{sub2}. By $GSBD^p$ compactness (see Theorem \ref{th: GSDBcompactness}) applied on $\hat{w}_n = \hat{w}_n\chi_{\Omega_{h_n}} \in  GSBD^p(\Omega)$ along with $\hat{w}_n \to u$ in $L^0(\Omega;\R^d)$  we get 
$$   \int_{\Omega_h} |e(u)|^p \dx \le \liminf_{n \to \infty} \int_{\Omega_{h_n}} |e(\hat{w}_n)|^p \dx.$$ 
 This along with \eqref{eq: elastic energy estimate} and the strict convexity of the norm $\Vert \cdot \Vert_{L^p(\Omega)}$  gives  
\begin{equation}\label{2006191122}
e(\hat{w}_n) \to e(u) \quad \text{in }L^p(\Omega; \Mdd)\,.
\end{equation}
 In view of \eqref{eq: growth conditions}, this shows the statement apart from the fact that the configurations $\hat{w}_n$ do possibly not satisfy the boundary data. (I.e., we have now proved the version described in Remark \ref{rem:1805192117} since $\hat{w}_n \in L^\infty(\Omega;\R^d)$.) It remains to adjust the boundary values. 

To this end,  choose a continuous extension operator from $W^{1,p}(\omega \times (-1,0);\R^d)$ to $W^{1,p}(\Omega;\R^d)$ and denote by $(w_n)_n$ the extensions of  $(\hat{w}_n-  u_0)|_{\omega \times (-1,0)}$  to $\Omega$. Clearly, $w_n \to 0$ strongly in $W^{1,p}(\Omega;\R^d)$ since $ (\hat{w}_n-  u_0)|_{\omega \times (-1,0)}   \to 0$  in $W^{1,p}(\omega \times (-1,0);\R^d)$. We now define the sequence $(u_n)_n$ by $u_n:= (\hat{w}_n - w_n)\chi_{\Omega_{h_n}}$.  By \eqref{2709192115} we immediately deduce $u_n \to u$ in $L^0(\Omega;\R^d)$.  Moreover,  $ u_n|_{\Omega_{h_n}}  \in W^{1,p}(\Omega_{h_n}; \Rd)$, $u_n=0$ in $\Omega \sm \Omega_{h_n}$,  $u_n = u_0$ a.e.\ in $\omega \times (-1,0)$ and 
\eqref{2006191122} 
still holds with $u_n$ in place of  $\hat{w}_n$.  Due to \eqref{eq: growth conditions},   this shows \eqref{sub1} and concludes the proof. 
\end{proof}

\begin{remark}[Volume constraint]\label{rem: volume constraint}
Given a volume constraint $\mathcal{L}^d(\Omega_h^+) = m$ with $0 <m < M\mathcal{H}^{d-1}(\omega) $, one can construct  the sequence $(u_n,h_n)$ in Proposition \ref{prop: enough}  such that also $h_n$ satisfies the volume constraint, cf.\ \cite[Remark~4.2]{ChaSol07}.   Indeed, if $\Vert h\Vert_{\infty}<M$,  we consider  $h^*_n(x') = r_n^{-1} h_n(x')$  and $u^*_n(x',x_d) = u_n(x', r_n  x_d)$, where $r_n := m^{-1}\int_\omega h_n \,\dx$. Then $\int_\omega h_n^* \,\dx =m$. Note that we can assume $\Vert h_n \Vert_\infty \le \Vert h \Vert_\infty$ (apply Proposition~\ref{prop: enough} with $\Vert h\Vert_{\infty}$ in place of $M$). Since $r_n \to 1$, we then find $h_n\colon \omega \to [0,M]$ for $n$ sufficiently large,  and \eqref{eq: sub} still holds.

If $\Vert h\Vert_{L^\infty(\omega)}=M$ instead, we need to perform a preliminary approximation: given $\delta>0$, define $\hat{h}^{\delta,M} = h \wedge (M- \delta)$ and   $h_\delta(x') = r_\delta^{-1} \hat{h}^{\delta,M}(x')$,  where $r_\delta = m^{-1} \int_\omega\hat{h}^{\delta,M} \, \dx$. Since $\Omega_h$  is  a subgraph and $m < M\mathcal{H}^{d-1}(\omega)$,  it is easy to check that $r_\delta > (M-\delta)/M$ and therefore $\Vert h_\delta\Vert_\infty < M$. Moreover, by construction we have $\int_\omega h_\delta \, \dx = m$. We  define $u_\delta(x',x_d) = u(x', r_\delta x_d) \chi_{\Omega_{h_\delta}}$. We now apply the above approximation on fixed $(u_\delta, h_\delta)$, then consider a sequence $\delta \to 0$, and use a diagonal argument.
\end{remark}  \label{page:upperineqend}


\begin{remark}[Surface tension]
We remark that, similar to \cite{BonCha02, ChaSol07, FonFusLeoMor07}, we could also derive a relaxation result for more general models where the surface tension  $\sigma_S$ for the substrate can be different from the the surface tension $\sigma_C$ of the crystal. This corresponds to surface energies of the form 
$$\sigma_S \,\hd(\{h=0\}) +  \sigma_C \hd\big(\partial\Omega_h \cap (\omega{\times}(0,+\infty))\big)\,.$$ 
In the relaxed  setting, the  surface energy is then given by 
\begin{equation*}
(\sigma_S \wedge \sigma_C) \,\hd(\{h=0\}) + \sigma_C \Big(\hd\big(\partial^*\Omega_h \cap (\omega{\times}(0,+\infty))\big) + 2\,\hd(J_u' \cap \Omega_h^1) \Big)\,.
\end{equation*}
We do not prove this fact here for simplicity, but refer to \cite[Subsection~2.4, Remark~4.4]{ChaSol07} for details how the proof needs to be adapted to deal with such a situation.
\end{remark}

\subsection{Compactness and existence of minimizers} \label{subsec:compactness}
In this short subsection we give the proof of the compactness result stated in Theorem~\ref{thm:compG}. As discussed in Subsection \ref{sec: results2}, this immediately implies the existence of minimizers for problem \eqref{eq: minimization problem2}.

\begin{proof}[Proof of Theorem~\ref{thm:compG}]
 Consider  $(u_n, h_n)_n$ with $\sup_n G(u_n, h_n) < +\infty$. First, by \eqref{eq: Gfunctional} and a standard compactness argument we find $h \in BV(\omega;  [0,M]  )$  such that $h_n \to h$ in $L^1(\omega)$, up to a subsequence (not relabeled). Moreover, by \eqref{eq: growth conditions}, \eqref{eq: Gfunctional}, and the fact that $J_{u_n} \subset \partial \Omega_{h_n}\cap \Omega$ we can apply  Theorem \ref{th: GSDBcompactness} to obtain some $u \in GSBD^p_\infty(\Omega)$ such that  $u_n \to u$ weakly in $GSBD^p_\infty$. We also observe that $u =u\chi_{\Omega_h}$ and $u = u_0$ on $\omega\times (-1,0)$ by \eqref{eq: GSBD comp}(i), $u_n =u_n\chi_{\Omega_{h_n}}$, and   $u_n = u_0$ on $\omega\times (-1,0)$ for all $n \in \N$. It remains to show that $u \in GSBD^p(\Omega)$, i.e., $ \lbrace u = \infty \rbrace = \emptyset$.
 
 To this end, we take $U = \omega \times (-\frac{1}{2},M) $  and $U' = \Omega = \omega \times (-1,M+1)$, and  apply Theorem~\ref{thm:compSps} on the sequence $\Gamma_n = \partial \Omega_{h_n} \cap \Omega$  to  find that $\Gamma_n$ $\sps$-converges  (up to a subsequence) to a pair $(\Gamma, G_\infty)$. Consider  $v_n = \psi u_n$, where $\psi \in C^\infty(\Omega)$ with $\psi = 1$ in a neighborhood of $ \omega \times (0,M+1) $ and $\psi = 0$ on $\omega  \times (-1,-\frac{1}{2})$. Clearly, $v_n$ converges weakly in $GSBD^p_\infty(\Omega)$ to $v:=\psi u$. As $J_{v_n} \subset \Gamma_n$ and $v_n = 0$ on $U' \setminus U$ for all $n \in \N$, \CCC we also obtain  $\lbrace v = \infty \rbrace \subset G_\infty$ (up to a $\Ld$-negligible set), see Definition~\ref{def:spsconv}(i). \EEE As by definition of $v$ we have $\lbrace u = \infty \rbrace  = \lbrace v = \infty \rbrace$, we deduce $\lbrace u = \infty \rbrace \subset G_\infty$. It now suffices to recall $G_\infty =  \emptyset$, see \eqref{eq: G is empty}, to conclude $\lbrace u = \infty \rbrace = \emptyset$. 
 \end{proof}

%
%
%

\subsection{Phase field approximation of $\ove G$}\label{sec:phasefield}

This final subsection  is  devoted to  the phase-field approximation of the functional  $\ove{G}$. Recall the  functionals  introduced in  \eqref{eq: phase-approx}.

\begin{proof}[Proof of Theorem~\ref{thm:phasefieldG}]  
 Fix a decreasing sequence $(\eps_n)_n$ of positive numbers converging to zero.  We first prove the liminf and then the  limsup inequality.

\noindent \emph{Proof of (i).} Let $(u_n, v_n)_n$ with  $\sup_n G_{\varepsilon_n}(u_n, v_n)< +\infty$.  Then, $v_n$ is nonincreasing in $x_d$,  and therefore 
\begin{equation*}
\widetilde{v}_n  (x)  :=0 \vee( v_n(x)-\delta_n x_d) \wedge 1 \quad  \text{ for $x \in \Omega  = \omega \times (-1,M+1)$}  
\end{equation*}
is strictly decreasing  on $\lbrace 0 < \widetilde{v}_n < 1\rbrace$,   where $(\delta_n)_n$ is a decreasing sequence of positive  numbers converging to zero. For a suitable choice of $(\delta_n)_n$, depending on $(\eps_n)_n$ and $W$, we obtain   $\| v_n - \widetilde{v}_n\|_{L^1(\Omega)} \to 0$  and 
\begin{equation}\label{1805192049} 
G_{\varepsilon_n}(u_n, v_n)= G_{\varepsilon_n}(u_n, \widetilde{v}_n) + O(1/n)\,.
\end{equation}     
By using  the implicit function theorem and  the coarea formula for $\widetilde{v}_n$, we can see, exactly as in  the proof of   \cite[Theorem~5.1]{ChaSol07}, that for a.e.\ $s \in (0,1)$  and $n \in \N$  the superlevel  set  $\{\widetilde{v}_n > s\}$ is the subgraph of a function $h^s_n \in H^1(\omega; [0,M])$. (Every $h^s_n$ takes values in $[0,M]$ since $\widetilde{v}_n=0$ in $\omega{\times} (M, M+1)$.) By the coarea formula for $\widetilde{v}_n$,  $\partial^*  \lbrace \widetilde{v}_n > s \rbrace\cap \Omega =\partial^* \Omega_{h_n^s} \cap \Omega$, and   Young's inequality we obtain
\begin{align*}
\int_0^1 \sqrt{2W(s)}\,  \mathcal{H}^{d-1}(\partial^* \Omega_{h_n^s} \cap \Omega) \,  \d s & \leq \int_\Omega \sqrt{2W(\widetilde{v}_n)} \,|\nabla \widetilde{v}_n|  \dx  \le  \int_\Omega \Big( \frac{\varepsilon_n}{2} |\nabla \widetilde{v}_n|^2 + \frac{1}{\varepsilon_n} W(\widetilde{v}_n)  \Big) \dx\,.
\end{align*}
Then, by Fatou's lemma we get 
\begin{align}\label{eq: coarea, fatou}
\int_0^1 \sqrt{2W(s)}\Big( \liminf_{n\to \infty} \int_\omega \sqrt{1+|\nabla h^s_n(x')|^2} \d x'\Big) \d s  \le \liminf_{n\to \infty} \int_\Omega \Big( \frac{\varepsilon_n}{2} |\nabla \widetilde{v}_n|^2 + \frac{1}{\varepsilon_n} W(\widetilde{v}_n)  \Big) \dx  < + \infty 
\end{align}
and  thus  $\liminf_{n\to \infty} \int_\omega \sqrt{1+|\nabla h^s_n(x')|^2} \,\d x'$ is finite for a.e.\ $s\in (0,1)$. 
By a diagonal argument, we can find a subsequence (still denoted by $(\varepsilon_n)_n$) and $(s_k)_k \subset (0,1)$ with $\lim_{k\to \infty} s_k =0$ such that for every $k \in \N$ there holds
\begin{equation}\label{eq: liminfequalinf}
\lim_{n\to \infty} \int_\omega \sqrt{1+|\nabla h^{s_k}_n(x')|^2} \,  \d x'=\liminf_{n\to \infty} \int_\omega \sqrt{1+|\nabla h^{s_k}_n(x')|^2} \,  \d x'  < + \infty \,.
\end{equation}
 Up  to a  further  (not relabeled) subsequence, we may     thus  assume  that $h^{s_k}_n$ converges in $L^1(\omega)$ to some function $h^{s_k}$ for every $k$.  Since $ \sup_n  G_{\varepsilon_n}(u_n, \widetilde{v}_n) < +\infty$ and thus $W(\widetilde{v}_n) \to 0$ a.e.\ in $\Omega$, we obtain $\widetilde{v}_n \to 0$ for a.e.\ $x$ with $x_d > h^{s_k}(x')$ and $\widetilde{v}_n \to 1$ for a.e.\ $x$ with $x_d < h^{s_k}(x')$.  (Recall $W(t) = 0 \Leftrightarrow t \in \lbrace 0,1\rbrace$.)  This shows that the functions $h^{s_k}$ are independent of $k$, and will be denoted simply by $h\in BV(\omega; [0,M])$.

Let us  denote by $u_n^k \in  GSBD^p(\Omega)$ the function given by 
\begin{align}\label{1805192011}
u^k_n(x) = \begin{cases} u_n(x) & \text{if } x_d < h^{s_k}_n(x')\,,\\ 0 & \text{else}\,. \end{cases}
\end{align}
Then $(u_n^k)_n$ satisfies the hypothesis of Theorem~\ref{th: GSDBcompactness} for every $k \in \N$. Indeed, $J_{u^k_n} \subset \partial^* \Omega_{h^{s_k}_n}$  and $\mathcal{H}^{d-1}(\partial^* \Omega_{h^{s_k}_n})$ is uniformly bounded in $n$ by \eqref{eq: liminfequalinf}. Moreover,   $(e(u^k_n))_n$ is uniformly bounded in $L^p(\Omega; \Mdd)$   by \eqref{eq: growth conditions} and the fact that   
\[
G_{\varepsilon_n}(u_n, \widetilde{v}_n) \geq (\eta_{\varepsilon_n} + s_k^2) \int _{\Omega}   f(e(u^k_n))  \dx\,.
\]
Therefore, Theorem~\ref{th: GSDBcompactness}  implies  that, up to a subsequence, $u^k_n$ converges weakly in $GSBD^p_\infty(\Omega)$ to a function $u^k$. Furthermore, we infer, arguing exactly as  in the proof of Theorem~\ref{thm:compG} above,  that  actually  $u^k \in GSBD^p(\Omega)$, i.e., the exceptional set   $\lbrace u^k  = \infty \rbrace$   is empty.  By \eqref{eq: GSBD comp}(i) this yields $u^k_n \to u^k$ in  $L^0(\Omega;\Rd)$. By a diagonal argument we get (up to a further subsequence)  that   $u_n^k \to u^k$ pointwise a.e.\ as $n \to \infty$ for all $k\in\N$. 

 Recalling now the definition of $u^k_n$  in  \eqref{1805192011} and the fact that $\lim_{n\to  \infty} \Vert h_n^{s_k} - h\Vert_{L^1(\omega)} = 0$ for all $k \in \N$, we deduce that the functions $u^k$ are independent of $k$.  This function will simply be denoted by $u \in GSBD^p(\Omega)$ in the following. Note that $u = u \chi_{\Omega_h}$  and that $u = u_0$ on $\omega \times (-1,0)$ since  $u_n = u_0$ on $\omega \times (-1,0)$ for all $n \in \N$.

 For the proof of \eqref{1805192018}, we can now follow exactly the lines of the lower bound in \cite[Theorem~5.1]{ChaSol07}. We sketch the main arguments for convenience of the reader. We first observe that 
$$\int_\Omega \widetilde{v}_n \, f(e(u_n)) \,\dx = \int_\Omega \Big(2\int_0^{\widetilde{v}_n(x)}s\,\d s \Big) \, f(e(u_n)(x)) \,\dx \ge \int_0^1 2s \Big(\int_{\lbrace\widetilde{v}_n >s \rbrace} f(e(u_n)) \, \d x\Big)\, \d s\,. $$
This along with  \eqref{eq: coarea, fatou} and  Fatou's lemma yields 
\begin{equation}\label{eq: last equation}
\int_0^1 \liminf_{n\to \infty}  \Big( 2s \int_{\{ \widetilde{v}_n > s \} } f(e(u_n)) \,\dx + c_W \sqrt{2 W(s)} \int _\omega \sqrt{1+|\nabla h^s_n|^2}\, \d x' \Big)\, \d s\leq \liminf_{n \to \infty} G_{\varepsilon_n}(u_n, \widetilde{v}_n) \,.
\end{equation}
Thus, the  integrand  
\[
I^s_n:= 2s \int_{\{ \widetilde{v}_n > s \} } f(e(u_n)) \dx + c_W \sqrt{2 W(s)} \int _\omega \sqrt{1+|\nabla h^s_n|^2} \,  \d x' 
\]
is finite for a.e.\ $s \in (0,1)$.  We then take $s$ such that $h_n^s \in H^1(\omega)$ for all $n$,  and consider a subsequence $(n_m)_m$  such that  $\lim_{m\to \infty} I_{n_m}^s = \liminf_{n\to \infty} I_n^s$. Exactly as in \eqref{1805192011}, we let $u^s_{n_m}$ be the function given by $u_{n_m}$ if $x_d< h^{s}_{n_m}(x')$ and by zero otherwise.  Repeating the compactness argument below  \eqref{1805192011},  we get $u^s_{n_m} \to u$ a.e.\ in $\Omega$  and  $h^s_{n_m} \to h$ in $L^1(\omega)$ as $m \to \infty$.  We observe that this can be done for a.e.\ $s \in (0,1)$, for a subsequence depending on $s$.

By   $\int_{\{ \widetilde{v}_{n_m} > s \} } f(e(u_{n_m})) \dx = \int_{\Omega} f(e(u^s_{n_m})) \dx $ and   the (lower inequality in the) relaxation result Theorem~\ref{thm:relG}  (up to different constants in front of the elastic energy and surface energy)  we obtain
\begin{equation*}
2s \int _{ \Omega^+_h  } f(e(u)) \dx + c_W \sqrt{2W(s)}\,\big( \hd(\partial^*\Omega_h  \cap \Omega  ) + 2 \, \hd(J_u' \cap \Omega_h^1) \big) \leq \lim_{n_m\to \infty} I_{n_m}^s  = \liminf_{n \to \infty} I_n^s
\end{equation*}
 for a.e.\ $s\in(0,1)$.  
 We obtain \eqref{1805192018} by integrating the above inequality  and by using  \eqref{1805192049} and \eqref{eq: last equation}. Indeed,   the integral on the left-hand side gives exactly $\ove G(u,h)$  as  $c_W=(\int_0^1 \sqrt{2 W(s)} \,\d s)^{-1}$. 

\noindent \emph{Proof of (ii).}  Let $(u, h)$ with   $\ove G(u,h) < + \infty$.  By the construction in the upper inequality for Theorem~\ref{thm:relG},  see Proposition~\ref{prop: enough} and Remark~\ref{rem:1805192117},  
we find $h_n \in C^1(\omega; [0,M]  )$ with $h_n \to h$ in $L^1(\omega)$ and $u_n \in L^\infty(\Omega;\R^d)$ with $u_n|_{\Omega_{h_n}} \in W^{1,p}(\Omega_{h_n}; \Rd)$   and $u_n \to u$ a.e.\ in $\Omega$ such that
\begin{equation}\label{1805192140}
\ove G(u,h)= \lim_{n\to \infty}   H(u_n, h_n)  \quad \text{for } H(u_n, h_n):= \int_{\Omega^+_{h_n}} f(e(u_n)) \dx + \hd(\partial \Omega_{h_n} \cap \Omega)  
\end{equation}
as well as
\begin{equation}\label{1805192141}
 (u_n -  u_0)|_{\omega \times (-1,0)} \to 0 \quad\text{in }W^{1,p}(\omega{\times}(-1,0); \Rd)\,.
\end{equation}
 For each $(u_n,h_n)$, we can use the construction in \cite{ChaSol07} to find sequences $ (u_n^k)_k  \subset W^{1,p}(\Omega;\R^d)$ and $(v_n^k)_k \subset H^1(\Omega;[0,1])$ with $u_n^k = u_n$ on $\omega \times (-1,0)$,  $u_n^k \to u_n$ in $L^1(\Omega;\Rd)$, and  $v^k_n \to \chi_{\Omega_{h_n}}$ in $L^1(\Omega)$  such that  (cf.\ \eqref{1805192140}) 
\begin{equation}\label{1805192154}
\limsup_{k \to \infty} \int_\Omega \bigg( \big( (v_n^k)^2+\eta_{\varepsilon_k}\big) f(e(u_n^k)) + c_W\Big(\frac{W( v_n^k)}{\varepsilon_k} + \frac{\varepsilon_k}{2} |\nabla  v_n^k|^2 \Big) \bigg)\dx  \leq  H(u_n,h_n)\,. 
\end{equation}
In particular, we refer to  \cite[Equation (28)]{ChaSol07} and mention that the functions  $(v_n^k)_k$ can be constructed such that $v_n^k =1$ on $\omega \times (-1,0)$ and $v_n^k = 0$ in $\omega \times (M,M+1)$. We also point out that for this construction the assumption  $\eta_\varepsilon \varepsilon^{1-p} \to 0$ as $\varepsilon \to 0$ is needed.

By \eqref{1805192140}, \eqref{1805192154}, and  a standard diagonal extraction argument we find sequences $(\hat{u}^k)_k \subset (u_n^k)_{n,k}$ and $(v^k)_k \subset (v^k_n)_{n,k}$ such that $\hat{u}^k \to u$ a.e.\ in $\Omega$, $v^k \to \chi_{\Omega_h}$ in $L^1(\Omega)$, and   
\begin{equation}\label{1805192154.2}
\limsup_{k \to \infty} \int_\Omega \bigg( \big( ({v}^k)^2+\eta_{\varepsilon_k}\big) f(e(\hat{u}^k)) + c_W\Big(\frac{W( v^k)}{\varepsilon_k} + \frac{\varepsilon_k}{2} |\nabla  v^k|^2 \Big) \bigg)\dx  \leq  \ove G(u,h)\,. 
\end{equation}
By using \eqref{1805192141} and the fact that $u^k_n = u_n$ for all $k,n \in \N$, we can modify $(\hat{u}^k)_k$ as described at the end of the proof of Proposition \ref{prop: enough}  (see below \eqref{2006191122}): we find a sequence $(u^k)_k$ which satisfies $u^k = u_0$ on $\omega \times (-1,0)$, converges to $u$ a.e.\ in $\Omega$, and \eqref{1805192154.2} still holds, i.e., $\limsup_{k \to \infty}G_{\eps_k}(u^k, v^k) \le \ove G(u,h)$.  This concludes the proof. 
\end{proof}

\begin{appendices}

\section{Auxiliary results}\label{sec:App}
In this appendix, we prove two technical  approximation  results employed in Sections~\ref{sec:FFF} and \ref{sec:GGG},  based  on tools from \cite{CC17}.
 \begin{proof}[Proof of Lemma~\ref{le:0410191844}]
Let $(v, H)$ be given as in the statement of the lemma.
  Clearly, it suffices to prove the following statement: for every   $\eta>0$, there exists $( {v}^\eta, H^\eta) \in L^p(\Omega;\R^d){\times}\M(\Omega)$ with the  regularity and the  properties  required in the   statement of the lemma   (in particular, $ {v}^\eta = u_0$ in a neighborhood $V^\eta \subset \Omega$ of $\partial_D \Omega$), such that, for a universal constant $C$, one has $\GGG\bar{d}\EEE(  v^\eta, v  )\le C\eta$ (cf.\ \eqref{eq:metricd} for \GGG $\bar{d}$\EEE), $\Ld(H\triangle H^\eta)\le C\eta$, and
\[
\ove F'_{\mathrm{Dir}}( {v}^\eta, H^\eta) \le  \ove F'_{\mathrm{Dir}}(v,H) + C\eta\,.
\]
 We start by recalling the main steps of the construction in \cite[Theorem~5.5]{CC17}  and we  refer to \cite{CC17} for  details (see also \cite[Section~4, first part]{CC19b}).   Based on this, we then explain how to construct $( {v}^\eta, H^\eta)$ simultaneously, highlighting particularly the steps needed for constructing  $H^\eta$.

 Let $\varepsilon>0$, to be chosen small with respect to $\eta$. By using     the assumptions on $\partial\Omega$ 
 given  before \eqref{0807170103}, a preliminary step is to find  cubes $(Q_j)_{j=1}^{ J  }$ with pairwise disjoint closures and hypersurfaces $(\Gamma_j)_{j=1}^J$ with the following properties: each $Q_j$ is centered at $x_j \in \partial_N \Omega$ with sidelength $\varrho_j$, $ {\rm dist}  (Q_j, \dod)> d_\varepsilon  >0  $ with $\lim_{\varepsilon \to 0} d_\varepsilon =0$,  and 
\begin{align}\label{eq: guarantee2}
\hd(\don \sm \widehat{Q}) + \Ld(\widehat{Q}) \le \varepsilon,\qquad\text{for }\widehat{Q}:= \bigcup\nolimits_{j=1}^J \ove Q_j\,.
\end{align}  
Moreover, each $\Gamma_j$ is a $C^1$-hypersurface with $x_j \in \Gamma_j \subset \ove Q_j$, 
\begin{equation*}\label{2212181446}
\begin{split}
 \hd\big((\don\triangle \Gamma_j)\,\cap \,  \ove{Q_j}  \big) \le \varepsilon (2\varrho_j)^{d-1}\le \, \frac{\varepsilon}{1-\varepsilon}  \hd(\don\cap \ove{Q_j})\,,
 \end{split}
 \end{equation*}
and  $\Gamma_j$  is  a $C^1$-graph with respect to $\nu_{\dom}(x_j)$ with Lipschitz constant less than $\varepsilon/2$. (We can say that $\don \cap Q_j$  is ``almost'' the intersection of $Q_j$ with the hyperplane passing  through  $x_j$ with normal $\nu_{\dom}(x_j)$.)
We can also guarantee that 
\begin{align}\label{eq: guarantee}
\hd\big((\partial^* H \cup J_u) \cap \Omega \cap \widehat{Q}\big) \le  \varepsilon, \ \ \ \ \ \ \ \ \hd\big((\partial^* H \cup J_u) \cap \partial  Q_j  \big)= 0 
\end{align}
for all   $j=1,\ldots,J$.  
 To each $Q_j$, we associate    the following rectangles:
\begin{equation*}
R_{j}:=\Big\{x_{j}+\sum\nolimits_{i=1}^{d-1} y_i\, b_{j,i}+y_d\, \nu_{j} \colon y_i\in (-\varrho_{j},\varrho_{j}),\, y_d \in (-3\varepsilon \varrho_{j}-t, -\varepsilon \varrho_{j}) \Big\}\,,
\end{equation*}
$$
R'_{j}:=\Big\{x_{j}+\sum\nolimits_{i=1}^{ d-1  } y_i\, b_{j,i}+y_d\, \nu_{j} \colon y_i\in (-\varrho_{j},\varrho_{j}),\, y_d \in (-\varepsilon \varrho_{j}, \varepsilon \varrho_{j}+t) \Big\}\,,
$$
and $\widehat{R}_{j}:=R_{j} \cup R'_{j}$,  where  $\nu_{j}=-\nu_{\dom}(x_{j})$  denotes the generalized outer normal,  
$(b_{j,i})_{i=1}^{d-1}$  is  an orthonormal basis of $(\nu_{j})^\perp$,  and  $t>0$ is small with respect to $\eta$.     We remark that $\Gamma_j \subset R'_j$ and that
$R_j$ is a small strip  adjacent to $R_j'$, which is  included in  $\Omega \cap Q_j$.  (We use here the notation $_j$ in place of $_{h,N}$ adopted in \cite[Theorem~5.5]{CC17}.)

After this preliminary part,  the approximating function  $u^\eta$ was constructed in \cite[Theorem~5.5]{CC17} starting from a given function $u$ through the following three  steps:
\begin{itemize}
\item[(i)] definition of an extension $\widetilde{u} \in GSBD^p(\Omega + B_t(0))$ which is obtained by a reflection argument \emph{à la} Nitsche \cite{Nie81} inside $\widehat{R}_j$,  equal  to $u$ in $\Omega\sm \bigcup_j \widehat{R}_j$, and  equal  to $u_0$ elsewhere.  This can be done  such that, for $t$ and  $\varepsilon$ small,  there holds  (see below \cite[(5.13)]{CC17}) 
\begin{equation}\label{eq: SSS2}
\int\limits_{(\Omega+B_t(0)) \sm \Omega} \hspace{-2em} |e(u_0)|^p \dx + \int\limits_{\widehat{R}}  |e(\widetilde{u})|^p  \dx +    \int\limits_{{R}}  |e(u)|^p \dx    + \hd\big(J_{\widetilde{u}} \cap \widehat{R}\big)   \le \eta\,,
\end{equation}
where  $R:= \bigcup_{j=1}^J {R}_j $ and  $\widehat{R}:=  \bigcup_{j=1}^J  \widehat{R}_j \cap (\Omega+B_t(0))$.   
\item[(ii)]  application of  Theorem~\ref{thm:densityGSBD}  on the function  $\widetilde{u}^\delta:= \widetilde{u}\circ (O_{\delta,x_0})^{-1} + u_0 - u_0 \circ  (O_{\delta,x_0})^{-1}$  (for some $\delta$ sufficiently small)  to get approximating functions $\widetilde{u}^\delta_n$  with the required regularity  which are equal to $u_0 \ast  \psi_n  $ in a neighborhood of $\dod$ in $\Omega$, where   $\psi_n$ is a suitable mollifier. Here, assumption \eqref{0807170103} is crucial.  
\item[(iii)]  correcting the boundary values by defining  $u^\eta$ as $ u^\eta := \widetilde{u}^\delta_n + u_0 - u_0 \ast  \psi_n  $, for $\delta$  and  $1/n$ small enough.
\end{itemize}


 After having recalled the main steps of the construction in \cite[Theorem~5.5]{CC17}, let  us now construct $ {v}^\eta$ and $H^\eta$ at the same time, following the lines of the steps (i), (ii),  and  (iii) above. The main  novelty  is the analog of step (i) for the approximating sets, while the  approximating  functions  are constructed in a very similar way. For this reason, we   do  not recall more details 
from \cite[Theorem~5.5]{CC17}.

\emph{Step (i).} Step (i) for $ {v}^\eta$ is the same done before for  $u^\eta$,  starting from $v$ in place of $u$.  Hereby, we   get a function $\widetilde{v}  \in GSBD^p(\Omega + B_t(0)) $. 

 For the construction of 
$H^\eta$, we introduce a set $\widetilde{H} \subset \Omega+B_t(0)$ as follows:
in $R'_j$, we define a set $H'_j$ by a simple reflection of the set $H \cap R_j$ with respect to the common hyperface between  $R_j$ and $R'_j$. 
Then, we let  
$
\widetilde{H}:= H \cup \bigcup_{j=1}^J   (H'_j \cap (\Omega+B_t(0)))$.  Since  $H$ has finite perimeter,  also $\widetilde{H}$ has  finite perimeter.  By \eqref{eq: guarantee} we get $\hd(\partial^*\widetilde{H} \cap  \widehat{R}  )\le  \eta/3 $ for $\varepsilon$  small,  where as before  $\widehat{R}:=  \bigcup_{j=1}^J  \widehat{R}_j \cap (\Omega+B_t(0))$.   
We choose $\delta$, $\eps$, and $t$ so small that
\begin{align}\label{eq: deltassmall}
\mathcal{H}^{d-1}  \Big(O_{\delta,x_0}\Big( \bigcup\nolimits_{j=1}^J \partial R_j' \setminus \partial R_j \Big) \cap \Omega    \Big) \le \frac{\eta}{ 3 }\,. 
\end{align}
  We let  $ {H}^\eta  :=O_{\delta,x_0}(\widetilde{H})$. Then, we get    $\Ld({H}^\eta\triangle H)\le \eta$ for $\varepsilon$, $t$,  and  $\delta$  small enough.    By \eqref{eq: guarantee2}, \eqref{eq: deltassmall}, and $\hd(\partial^*\widetilde{H} \cap  \widehat{R}  )\le  \eta/3  $  we  also  have (again take suitable $\varepsilon$, $\delta$)
\begin{align}\label{eq: SSS}
\int_{\partial^*{H}^\eta} \varphi(\nu_{{H}^\eta}) \dh \le \int_{  \partial^*  H \cap (\Omega\cup \dod)} \varphi(\nu_H) \dh + \eta\,.
\end{align}
Moreover,  in view of \eqref{0807170103}  and $ {\rm dist}  (Q_j, \dod)> d_\varepsilon  >0  $ for all $j$,   ${H}^\eta$ does not intersect a suitable neighborhood of $\dod$.  Define  $\widetilde{v}^\delta:= \widetilde{v}\circ (O_{\delta,x_0})^{-1} + u_0 - u_0 \circ  (O_{\delta,x_0})^{-1}$   and observe that the function $\widetilde{v}^\delta\chi_{({H}^\eta)^0}$ coincides with $u_0$ in a suitable neighborhood of $\dod$.  By \eqref{eq: SSS}, by the properties recalled for $\widetilde{u}$,  see \eqref{eq: SSS2},  and the fact that $v = v \chi_{H^0}$,   it is elementary to check that 
\begin{align}\label{eq: LLLL}
\ove F'_{\mathrm{Dir}}(\widetilde{v}^\delta \chi_{({H}^\eta)^0}, {H}^\eta)  \le  \ove F'_{\mathrm{Dir}}(v\chi_{H^0}, H) + C \eta =  \ove F'_{\mathrm{Dir}}(v, H) + C \eta\,.
\end{align}
Notice that here it is important to take the same $\delta$ both for $\widetilde{v}^\delta$ and ${H}^\eta$, that is to  ``dilate''   the function and the set at the same time. 

\emph{Step~2.} We apply Theorem~\ref{thm:densityGSBD} to $\widetilde{v}^\delta \chi_{({H}^\eta)^0}$, to get approximating functions $\widetilde{v}^\delta_n$   with the required regularity. For $n$ sufficiently large, we obtain $\GGG\bar{d} \EEE(\widetilde{v}^\delta_n \chi_{({H}^\eta)^0}, \widetilde{v}^\delta \chi_{({H}^\eta)^0})\le \eta$ and    
\[
|\ove F'_{\mathrm{Dir}}(\widetilde{v}^\delta_n \chi_{({H}^\eta)^0}, {H}^\eta) - \ove F'_{\mathrm{Dir}}(\widetilde{v}^\delta \chi_{({H}^\eta)^0}, {H}^\eta)| \le \eta\,.
\]

\emph{Step 3.}  Similar to item  (ii)   above, we obtain  $\widetilde{v}^\delta_n= u_0 \ast  \psi_n  $  in a neighborhood of $\dod$.  Therefore, it is enough  
 to define $ {v}^\eta$ as $  {v}^\eta := \widetilde{v}^\delta_n + u_0 - u_0 \ast  \psi_n  $.  Then  by \eqref{eq: LLLL} and  Step~2 we obtain  $\GGG\bar{d} \EEE(  v^\eta, v  )\le C\eta$ and $\ove F'_{\mathrm{Dir}}( {v}^\eta, H^\eta) \le  \ove F'_{\mathrm{Dir}}(v,H) + C\eta$ for $n$ sufficiently large. 
\end{proof}

 We now proceed with the proof of Lemma~\ref{lemma: vito approx} which  relies strongly  on \cite[Theorem~3.1]{CC17}.  Another main ingredient is the following Korn-Poincar\'e inequality in $GSBD^p$, see \cite[Proposition~3]{CCF16}. 

\begin{proposition}\label{prop:3CCF16}
Let $Q =(-r,r)^d$, $Q'=(-r/2, r/2)^d$, $u\in GSBD^p(Q)$, $p\in [1,\infty)$. Then there exist a Borel set $\omega\subset Q'$ and an affine function $a\colon \Rd\to\Rd$ with $e(a)=0$ such that $\Ld(\omega)\leq cr \hd(J_u)$ and
\begin{equation}\label{prop3iCCF16}
\int_{Q'\setminus \omega}(|u-a|^{p}) ^{1^*} \dx\leq cr^{(p-1)1^*}\Bigg(\int_Q|e(u)|^p\dx\Bigg)^{1^*}\,.
\end{equation}
If additionally $p>1$, then there  exists  $q>0$ (depending on $p$ and $d$) such that, for a given mollifier $\varphi_r\in C_c^{\infty}(B_{r/4})\,, \varphi_r(x)=r^{-d}\varphi_1(x/r)$, the function $ w=u \chi_{Q'\setminus \omega}+a\chi_\omega$ obeys
\begin{equation}\label{prop3iiCCF16}
\int_{Q''}|e( w  \ast \varphi_r)-e(u)\ast \varphi_r|^p\dx\leq c\left(\frac{\hd(J_u)}{ r^{d-1}  }\right)^q \int_Q|e(u)|^p\dx\,,
\end{equation}
where $Q''=(-r/4,r/4)^d$.
The constant in \eqref{prop3iCCF16} depends only on $p$ and $d$, the one in \eqref{prop3iiCCF16} also on $\varphi_1$.  
\end{proposition}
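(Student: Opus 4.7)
The plan is to follow the slicing-and-reassembly strategy of Chambolle-Conti-Focardi that underlies Proposition~\ref{prop:3CCF16}. The backbone is the slicing characterization of $GSBD^p$: for each $\xi\in\Sd$ and $\hd$-a.e.\ $y\in\Pi^\xi$, the one-dimensional trace $\widehat u_y^\xi$ lies in $SBV^p_{\mathrm{loc}}(Q_y^\xi)$ with derivative $e(u)(y+t\xi)\xi\cdot\xi$ and jump set $J_{\widehat u_y^\xi}$, cf.\ \eqref{2304191254}. A one-dimensional Poincaré-type inequality for $SBV^p$ functions with small jump set then yields, on each slice, an affine approximation $t\mapsto \alpha_y^\xi+\beta_y^\xi t$ of $\widehat u_y^\xi$ outside an exceptional subset of $\mathcal{L}^1$-measure $\leq c\,r\,\mathcal{H}^0(J_{\widehat u_y^\xi})$, with $L^{1^*}$-error on the complement controlled by $r^{p-1}\int_{Q_y^\xi}|(\widehat u_y^\xi)'|^p\,\mathrm{d}t$.

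The delicate step is to pass from these direction-by-direction affine approximations to a \emph{single} infinitesimal rigid motion $a\colon\Rd\to\Rd$ with $e(a)=0$. Applied along the coordinate directions $e_1,\ldots,e_d$, the slice-wise data give $d$ scalar fields of slopes $\beta_y^{e_i}$ and constants $\alpha_y^{e_i}$; the constraint $e(a)=0$ forces $\nabla a$ to be skew, so the diagonal slopes $(\nabla a)_{ii}$ vanish and only the constants $\alpha_y^{e_i}$ need to be matched along a well-chosen hyperplane transverse to each $e_i$. The off-diagonal skew entries are then fixed by slicing in the mixed directions $(e_i\pm e_j)/\sqrt 2$, invoking the 1D step a second time. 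Defining $\omega$ as the union over $i=1,\ldots,d$ of the slicewise bad sets reassembled back into $Q'$, Fubini gives $\mathcal{L}^d(\omega)\leq cr\,\hd(J_u)$, while a coordinate-wise Sobolev-Gagliardo-Nirenberg interpolation upgrades the $d$ one-dimensional $L^{1^*}$-estimates into the claimed $d$-dimensional bound \eqref{prop3iCCF16}.

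For the mollification inequality \eqref{prop3iiCCF16} (case $p>1$), observe that $w-u=(a-u)\chi_\omega$, so that $e(w\ast\varphi_r)-e(u)\ast\varphi_r$ is expressible as a convolution of $(a-u)\chi_\omega$ against the symmetric part of $\nabla\varphi_r$, supported within distance $r/4$ of $\omega$ and localised in $Q''$ by the support of $\varphi_r$. Hölder's inequality with $\|\nabla\varphi_r\|_{L^{p'}}\lesssim r^{-1-d/p'}$ reduces matters to a bound on $\|(a-u)\chi_\omega\|_{L^p}$. Interpolating the $L^{1^*}$-control supplied by \eqref{prop3iCCF16} (extended to $\omega$ via the same 1D slice estimates applied with slightly enlarged exceptional sets) against the size $\mathcal{L}^d(\omega)\lesssim r\,\hd(J_u)$ by Hölder extracts a strictly positive power of $\hd(J_u)/r^{d-1}$, giving \eqref{prop3iiCCF16} with $q=q(p,d)>0$.

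The main obstacle is the selection step in the second paragraph: different slicing directions produce affine approximations that must be reconciled into one $a$ with $e(a)=0$, which requires a careful choice of hyperplanes along which the 1D Poincaré constant does not degenerate and along which the resulting skew-symmetric $\nabla a$ is simultaneously consistent with all slice data. A secondary technical difficulty is the exponent bookkeeping in the mollification step, where the strictly positive power $q$ only emerges after a delicate Hölder balance between the $r^{-1-d/p'}$ factor from $\nabla\varphi_r$ and the $|\omega|^{1/p-1/1^*}$ factor from the smallness of $\omega$.
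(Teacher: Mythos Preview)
The paper does not prove this proposition: it is merely quoted from \cite[Proposition~3]{CCF16} (Chambolle--Conti--Francfort, not Chambolle--Conti--Focardi) and used as a black box in the proof of Lemma~\ref{lemma: vito approx}. There is therefore no ``paper's own proof'' to compare your proposal against.

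That said, your sketch is broadly in the spirit of the argument in \cite{CCF16}, but it remains a sketch rather than a proof. You correctly identify the main obstacle---reconciling the direction-by-direction slice approximations into a single infinitesimal rigid motion $a$---and then essentially leave it unresolved (``requires a careful choice of hyperplanes\ldots''). This is precisely the nontrivial content of the result; the actual proof in \cite{CCF16} handles it through an iterative construction over a finite family of directions with explicit control on how the slice constants and slopes interact, not simply by ``slicing in the mixed directions $(e_i\pm e_j)/\sqrt2$'' and hoping for consistency. Similarly, for \eqref{prop3iiCCF16} your interpolation heuristic (balancing $\|\nabla\varphi_r\|_{L^{p'}}$ against $|\omega|^{1/p-1/1^*}$) is the right intuition, but the claim that $\|(a-u)\chi_\omega\|_{L^p}$ can be controlled via ``\eqref{prop3iCCF16} extended to $\omega$'' is circular: \eqref{prop3iCCF16} gives no information on $\omega$ by construction. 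The genuine argument works instead with the structure of $\omega$ as a union of one-dimensional bad sets and exploits directly the slice-level $L^p$ bounds there.
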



\begin{proof}[Proof of Lemma~\ref{lemma: vito approx}]
We recall the definition of the hypercubes
\begin{equation*}
\begin{split}
q_z^k:=z+(-\km,\km)^d\,,\qquad\tq_z^k:= z+(-2\km,2\km)^d\,, \qquad Q_z^k:=z+(-5\km,5\km)^d\,,
\end{split}
\end{equation*}
 where in addition  to the  notation in \eqref{eq: cube-notation},  we have also defined  the hypercubes $\tqz$.   In contrast to \cite[Theorem~3.1]{CC17}, the cubes $\Qz$ have  sidelength $10k^{-1}$ instead of $8k^{-1}$. This, however, does not affect the estimates. 
 We point out that at some points in \cite[Theorem~3.1]{CC17} cubes of the form $z+(-8\km,8\km)^d$  are used. By a slight alternation of the argument, however, it suffices to take cubes $Q^k_z$.  In particular it is enough to show the inequality \cite[(3.19)]{CC17}  for  a cube $Q_j$ (of sidelength $10k^{-1}$) in place of $\widetilde{Q}_j$ (of sidelength $16k^{-1}$), which may be done by employing rigidity properties of affine functions. 
  Let us fix a smooth radial function $\varphi$ with compact support on the unit ball $B_1(0)\subset \Rd$, and  define  $\varphi_k(x):=k^d\varphi(kx)$.  We choose $\theta< (16c)^{-1}$,  where $c$ is  the constant in Proposition~\ref{prop:3CCF16} (cf.\  also  \cite[Lemma~2.12]{CC17}).     Recall \eqref{eq: well contained} and set 
\begin{equation*}
\mathcal{N}'_k:=\{ z \in  (2k^{-1})  \Z^d \colon \qz \cap (U)^k \sm V \neq \emptyset  \}\,.
\end{equation*}
We apply Proposition~\ref{prop:3CCF16}   for $r = 4k^{-1}$,  for any $z \in \mathcal{N}'_k$  by
taking $v$ as the reference function and $z+(-4k^{-1}, 4k^{-1})^d$ as $Q$ therein. (In the following, we may then use the bigger cube $\Qz$ in the estimates from above.) Then, there exist $\omega_z \subset \tqz$ and $a_z\colon \Rd \to \Rd$ affine with $e(a_z)=0$ such that by   \eqref{eq: cond1},   \eqref{prop3iCCF16}, and H\"older's inequality  there holds 
\begin{subequations}
\begin{equation}\label{1005171230}
\Ld(\omega_z)\leq  4  c k^{-1} \hd(J_{v} \cap \Qz) \leq  4  c \theta k^{-d} \,,
\end{equation}
\begin{equation}\label{prop3iCCF16applicata}
\|v-a_z\|_{L^{p}(\tqz\setminus \omega_z)} \leq  4  ck^{-1} \|e(v)\|_{L^p(\Qz)}\,.
\end{equation}
Moreover,  by \eqref{eq: cond1} and \eqref{prop3iiCCF16}  there holds
\begin{equation*}
\begin{split}
\int_{\qz}|e(\hat{v}_z\ast \varphi_k)-e(v)\ast \varphi_k|^p\dx  \leq c\left(\hd(J_v \cap \Qz)\,k^{d-1}\right)^q \int_{\Qz}|e(v)|^p\dx \leq c \theta^q \int_{\Qz}|e(v)|^p\dx
\end{split}
\end{equation*}
for $\hat{v}_z:= v\chi_{\tqz\setminus \omega_z}+a_z \chi_{\omega_z}$ and a suitable $q>0$ depending on $p$ and  $d$.  
\end{subequations}
Let us set
\begin{equation*}
\omega^k:= \bigcup\nolimits_{ z \in \mathcal{N}_k'  } \, \omega_{z}\,.
\end{equation*}
We order (arbitrarily) the nodes $z \in \mathcal{N}_k'$,  and denote the set by $(z_j)_{j\in J}$.  We define 
\begin{equation}\label{eq:defappr1}
\widetilde{w}_k:=
\begin{cases}
 v \quad &\text{in }\big(\bigcup_{z \in \mathcal{N}'_k} \Qz \big) \setminus \omega^k\,,\\
 a_{z_j}\quad &\text{in }\omega_{z_j}\setminus \bigcup_{i<j}\omega_{z_i}\,,
\end{cases}
\end{equation}
and
\begin{equation}\label{eq:defapprox}
 w_k:=  \widetilde{w}_k \ast \varphi_k \quad \text{in }(U)^k \sm V\,.
\end{equation}
We have that $w_k$ is smooth since $(U)^k \sm V + \mathrm{supp} \,\varphi_k \subset \bigcup_{z \in \mathcal{N}'_k} \tqz  \subset U $  (recall  \eqref{eq: well contained})  and $  v|_{\tqz \sm \omega^k}  \in L^p(\tqz \sm \omega^k;  \R^d  )$ for any $z \in \mathcal{N}'_k$,
by \eqref{prop3iCCF16applicata}.  

 We define the  sets $G^k_1:=\{ z \in \mathcal{N}'_k \colon \hd(J_v \cap \Qz)\leq k^{1/2 - d}\}$ and  $G^k_2:= \mathcal{N}'_k \sm G^k_2$.  By  $\widetilde{G}^k_1$ and  $\widetilde{G}^k_2$, respectively, we denote their  ``neighbors'',  see \cite[(3.11)]{CC17} for the exact definition.  We let 
\begin{equation*}
 \widetilde{\Omega}^k_{g,2}:=   \bigcup\nolimits_{z \in \widetilde{G}^k_2} \,  \Qz\,.
\end{equation*}
 There holds  (cf.\ \cite[(3.8), (3.9), (3.12)]{CC17})
\begin{equation}\label{eq: small vol--}
\lim_{k\to \infty} \big( \Ld(\omega^k) +  \Ld(\widetilde{\Omega}^k_{g,2})\big) = 0\,.
\end{equation}
At this point, we  notice that the set  $E_k$   in \cite[(3.8)]{CC17} reduces to $\omega^k$ since in our situation all nodes are ``good'' (see \eqref{eq: cond1} and \cite[(3.2)]{CC17}) and therefore $\widetilde{\Omega}^k_b$ therein is empty. 

The proof of (3.1a), (3.1d), (3.1b) in \cite[Theorem~3.1]{CC17} may be followed exactly, with the modifications described just above and the suitable slight change of notation.  More precisely,  by \cite[equation below (3.22)]{CC17} we obtain
 \begin{equation}\label{2006191938}
 \|w_k-v\|_{L^p( ((U)^k \sm V) \sm \omega^k)} \leq Ck^{-1} \|e(v)\|_{L^p(U)}\,,
 \end{equation}
  for a  constant $C>0$ depending only on $d$ and $p$, 
 and \cite[equation before (3.26)]{CC17} gives
\begin{equation}\label{2006191940}
\int_{\omega^k} \psi(|w_k-v|) \dx \leq C \Big( \int_{\omega^k \cup \widetilde{\Omega}^k_{g,2}} \big(1+\psi(|v|)\big) \dx + k^{-1/2} \int_U \big( 1+\psi(|v|)\big) \dx  +  k^{-p}\int_U  |e(v)|^p    \,\dx \Big)\,,
\end{equation}
 where $\psi(t) = t \wedge 1$.   Combining \eqref{2006191938}-\eqref{2006191940}, using \eqref{eq: small vol--}, and recalling  that  $\psi$ is sublinear, we obtain \eqref{rough-dens-1}. Note that the sequence $R_k \to 0$ can be chosen independently of $v \in \mathcal{F}$ since $\psi(|v|) + |e(v)|^p$ is equiintegrable for $v \in \mathcal{F}$.

Moreover, recalling \eqref{eq:defappr1}-\eqref{eq:defapprox}, we sum \cite[(3.34)]{CC17} for $z=z_j \in \widetilde{G}^k_2$ and \cite[(3.35)]{CC17} for $z=z_j \in \widetilde{G}^k_1$ to obtain 
$$\int_{(U)^k \setminus V} |e(w_k)|^p \, \dx \le \int_U |e(v)|^p \, \dx + Ck^{-q'/2} \int_U |e(v)|^p \, \dx + C\int_{\widetilde{\Omega}^k_{g,2}} |e(v)|^p \, \dx $$
for some $q' >0$. This along with \eqref{eq: small vol--} and the equiintegrability of $|e(v)|^p$ shows   \eqref{rough-dens-2}.  
\end{proof}

%

\end{appendices}

\section*{Acknowledgements} 
VC is supported by the Marie Sk\l odowska-Curie Standard European Fellowship No.\ 793018.
MF acknowledges support by the DFG project FR 4083/1-1 and by the Deutsche Forschungsgemeinschaft (DFG, German Research Foundation) under Germany's Excellence Strategy EXC 2044 -390685587, Mathematics M\"unster: Dynamics--Geometry--Structure.

\medskip
\noindent {\bf Conflict of interest and ethical statement.} The authors declare that they have no conflict of interest and guarantee the compliance with the Ethics Guidelines of the journal.
\bigskip


\begin{thebibliography}{10}

\bibitem{Amb90GSBV}
{\sc L.~Ambrosio}, {\em Existence theory for a new class of variational
  problems}, Arch. Rational Mech. Anal., 111 (1990), pp.~291--322.

\bibitem{AmbCosDM97}
{\sc L.~Ambrosio, A.~Coscia, and G.~Dal~Maso}, {\em Fine properties of
  functions with bounded deformation}, Arch. Rational Mech. Anal., 139 (1997),
  pp.~201--238.

\bibitem{AFP}
{\sc L.~Ambrosio, N.~Fusco, and D.~Pallara}, {\em Functions of bounded
  variation and free discontinuity problems}, Oxford Mathematical Monographs,
  The Clarendon Press, Oxford University Press, New York, 2000.

\bibitem{AsaTil72}
{\sc R.~J. Asaro and W.~A. Tiller}, {\em Interface morphology development
  during stress corrosion cracking: Part {I}: Via surface diffusion}, Metall.
  Trans., 3 (1972), pp.~1789--1796.

\bibitem{BelGolZwi15}
{\sc P.~Bella, M.~Goldman, and B.~Zwicknagl}, {\em Study of island formation in
  epitaxially strained films on unbounded domains}, Arch. Ration. Mech. Anal.,
  218 (2015), pp.~163--217.

\bibitem{BelCosDM98}
{\sc G.~Bellettini, A.~Coscia, and G.~Dal~Maso}, {\em Compactness and lower
  semicontinuity properties in {${\rm SBD}(\Omega)$}}, Math. Z., 228 (1998),
  pp.~337--351.

\bibitem{Bon12}
{\sc M.~Bonacini}, {\em Epitaxially strained elastic films: the case of
  anisotropic surface energies}, ESAIM Control Optim. Calc. Var., 19 (2013),
  pp.~167--189.

\bibitem{Bon13}
\leavevmode\vrule height 2pt depth -1.6pt width 23pt, {\em Stability of
  equilibrium configurations for elastic films in two and three dimensions},
  Adv. Calc. Var., 8 (2015), pp.~117--153.

\bibitem{BonCha02}
{\sc E.~Bonnetier and A.~Chambolle}, {\em Computing the equilibrium
  configuration of epitaxially strained crystalline films}, SIAM J. Appl.
  Math., 62 (2002), pp.~1093--1121.

\bibitem{Bra98}
{\sc A.~Braides}, {\em Approximation of free-discontinuity problems}, vol.~1694
  of Lecture Notes in Mathematics, Springer-Verlag, Berlin, 1998.

\bibitem{BraChaSol07}
{\sc A.~Braides, A.~Chambolle, and M.~Solci}, {\em A relaxation result for
  energies defined on pairs set-function and applications}, ESAIM Control
  Optim. Calc. Var., 13 (2007), pp.~717--734.
  
  \bibitem{CagColDePMag17}
{\sc F.~Cagnetti, M.~Colombo, G.~De Philippis, and F.~Maggi}, {\em Essential connectedness and the rigidity problem for  {G}aussian symmetrization}, J. Eur. Math. Soc. (JEMS), 19 (2017), pp.~395--439.
  
\bibitem{CagSca11}
{\sc F.~Cagnetti and L.~Scardia}, {\em An extension theorem in {SBV} and an application to the homogenization of the {M}umford-{S}hah functional in perforated domains}, J. Math. Pures Appl. (9), 95 (2011), pp.~349--381.    

\bibitem{capriani}
{\sc G.~M. Capriani, V.~Julin, and G.~Pisante}, {\em A quantitative second
  order minimality criterion for cavities in elastic bodies}, SIAM J. Math.
  Anal., 45 (2013), pp.~1952--1991.

\bibitem{CCF16}
{\sc A.~Chambolle, S.~Conti, and G.~A. Francfort}, {\em Korn-{P}oincar\'e
  inequalities for functions with a small jump set}, Indiana Univ. Math. J., 65
  (2016), pp.~1373--1399.

\bibitem{ChaConIur17}
{\sc A.~Chambolle, S.~Conti, and F.~Iurlano}, {\em Approximation of functions
  with small jump sets and existence of strong minimizers of {G}riffith’s
  energy}.
\newblock To appear in \textit{J.~Math.~Pures Appl.}, DOI
  https://doi.org/10.1016/j.matpur.2019.02.001.

\bibitem{CC17}
{\sc A.~Chambolle and V.~Crismale}, {\em A {D}ensity {R}esult in {$\rm GSBD^p$}
  with {A}pplications to the {A}pproximation of {B}rittle {F}racture
  {E}nergies}, Arch. Ration. Mech. Anal., 232 (2019), pp.~1329--1378.

\bibitem{CC19b}
\leavevmode\vrule height 2pt depth -1.6pt width 23pt, {\em Phase-field
  approximation of a class of cohesive fracture energies with an activation
  threshold}, Preprint arXiv:1812.05301 (2018).

\bibitem{CC18}
\leavevmode\vrule height 2pt depth -1.6pt width 23pt, {\em Compactness and
  lower semicontinuity in {$GSBD$}}, To appear on J. Eur. Math. Soc. (JEMS),
  Preprint arXiv:1802.03302 (2018).

\bibitem{ChaSol07}
{\sc A.~Chambolle and M.~Solci}, {\em Interaction of a bulk and a surface
  energy with a geometrical constraint}, SIAM J. Math. Anal., 39 (2007),
  pp.~77--102.

\bibitem{ConFocIur15}
{\sc S.~Conti, M.~Focardi, and F.~Iurlano}, {\em Which special functions of
  bounded deformation have bounded variation?}
\newblock Proc. Roy. Soc. Edinburgh Sect. A, (2016).

\bibitem{CFI16ARMA}
\leavevmode\vrule height 2pt depth -1.6pt width 23pt, {\em Integral
  representation for functionals defined on {$SBD^p$} in dimension two}, Arch.
  Ration. Mech. Anal., 223 (2017), pp.~1337--1374.

\bibitem{CFI17DCL}
\leavevmode\vrule height 2pt depth -1.6pt width 23pt, {\em Existence of strong
  minimizers for the {Griffith} static fracture model in dimension two}, 2018,
  In press on Ann. Inst. H. Poincaré Anal. Non Linéaire, DOI
  \url{https://doi.org/10.1016/j.anihpc.2018.06.003}.

\bibitem{CFI17Density}
\leavevmode\vrule height 2pt depth -1.6pt width 23pt, {\em Approximation of
  fracture energies with $p$-growth via piecewise affine finite elements},
  2018, In press on ESAIM Control Optim. Calc. Var., DOI
  \url{https://doi.org/10.1051/cocv/2018021}.

\bibitem{Cri19}
{\sc V.~Crismale}, {\em On the approximation of {$SBD$} functions and some
  applications}, Preprint arXiv:1806.03076 (2018).

\bibitem{DMLibro}
{\sc G.~Dal~Maso}, {\em An introduction to {$\Gamma$}-convergence}, vol.~8 of
  Progress in Nonlinear Differential Equations and their Applications,
  Birkh\"auser Boston, Inc., Boston, MA, 1993.

\bibitem{DM13}
\leavevmode\vrule height 2pt depth -1.6pt width 23pt, {\em Generalised
  functions of bounded deformation}, J. Eur. Math. Soc. (JEMS), 15 (2013),
  pp.~1943--1997.

\bibitem{DMFraToa02}
{\sc G.~Dal~Maso, G.~A. Francfort, and R.~Toader}, {\em Quasistatic crack
  growth in nonlinear elasticity}, Arch. Ration. Mech. Anal., 176 (2005),
  pp.~165--225.

\bibitem{DavPio18}
{\sc E.~Davoli and P.~Piovano}, {\em Derivation of a heteroepitaxial thin-film
  model}, Preprint arXiv:1809.07128 (2018).

\bibitem{DavPio17}
\leavevmode\vrule height 2pt depth -1.6pt width 23pt, {\em Analytical
  validation of the young-dupré law for epitaxially-strained thin films}, To
  appear on Math. Models Methods Appl. Sci., Preprint CVGMT 3544 (2017).

\bibitem{DeGioAmb88GBV}
{\sc E.~De~Giorgi and L.~Ambrosio}, {\em New functionals in the calculus of
  variations}, Atti Accad. Naz. Lincei Rend. Cl. Sci. Fis. Mat. Natur. (8), 82
  (1988), pp.~199--210 (1989).

\bibitem{FocGel07}
{\sc M.~Focardi and M.S.~Gelli}, {\em Asymptotic analysis of {M}umford-{S}hah type energies in periodically perforated domains}, Interfaces Free Bound., 9
  (2007), pp.~107--132.

\bibitem{FocGel08}
{\sc M.~Focardi and M.S.~Gelli}, {\em Relaxation of free-discontinuity energies with obstacles}, ESAIM Control Optim. Calc. Var., 14
  (2008), pp.~879--896.

\bibitem{FonFusLeoMil11}
{\sc I.~Fonseca, N.~Fusco, G.~Leoni, and V.~Millot}, {\em Material voids in
  elastic solids with anisotropic surface energies}, J. Math. Pures Appl. (9),
  96 (2011), pp.~591--639.

\bibitem{FonFusLeoMor07}
{\sc I.~Fonseca, N.~Fusco, G.~Leoni, and M.~Morini}, {\em Equilibrium
  configurations of epitaxially strained crystalline films: existence and
  regularity results}, Arch. Ration. Mech. Anal., 186 (2007), pp.~477--537.

\bibitem{FonFusLeoMor15}
{\sc I.~Fonseca, N.~Fusco, G.~Leoni, and M.~Morini}, {\em Motion of
  three-dimensional elastic films by anisotropic surface diffusion with
  curvature regularization}, Anal. PDE, 8 (2015), pp.~373--423.

\bibitem{FonPraZwi14}
{\sc I.~Fonseca, A.~Pratelli, and B.~Zwicknagl}, {\em Shapes of epitaxially
  grown quantum dots}, Arch. Ration. Mech. Anal., 214 (2014), pp.~359--401.

\bibitem{Fri17ARMA}
{\sc M.~Friedrich}, {\em A derivation of linearized {G}riffith energies from
  nonlinear models}, Arch. Ration. Mech. Anal., 225 (2017), pp.~425--467.

\bibitem{Fri17M3AS}
\leavevmode\vrule height 2pt depth -1.6pt width 23pt, {\em A {K}orn-type
  inequality in {SBD} for functions with small jump sets}, Math. Models Methods
  Appl. Sci., 27 (2017), pp.~2461--2484.

\bibitem{FriPWKorn}
\leavevmode\vrule height 2pt depth -1.6pt width 23pt, {\em A {P}iecewise {K}orn
  {I}nequality in {SBD} and {A}pplications to {E}mbedding and {D}ensity
  {R}esults}, SIAM J. Math. Anal., 50 (2018), pp.~3842--3918.

\bibitem{Fri19}
\leavevmode\vrule height 2pt depth -1.6pt width 23pt, {\em A compactness result
  in {$GSBV^p$} and applications to {$\Gamma$}-convergence for free
  discontinuity problems}, Calc. Var. Partial Differential Equations, 58
  (2019), pp.~Art. 86, 31.

\bibitem{FriSol16}
{\sc M.~Friedrich and F.~Solombrino}, {\em Quasistatic crack growth in
  2d-linearized elasticity}, Ann. Inst. H. Poincaré Anal. Non Linéaire, 35
  (2018), pp.~27--64.

\bibitem{FusJulMor18}
{\sc N.~Fusco, V.~Julin, and M.~Morini}, {\em The surface diffusion flow with
  elasticity in the plane}, Comm. Math. Phys., 362 (2018), pp.~571--607.

\bibitem{FusJulMor19}
\leavevmode\vrule height 2pt depth -1.6pt width 23pt, {\em The surface
  diffusion flow with elasticity in three dimensions}, Preprint CVGMT 4082
  (2018).

\bibitem{FusMor12}
{\sc N.~Fusco and M.~Morini}, {\em Equilibrium configurations of epitaxially
  strained elastic films: second order minimality conditions and qualitative
  properties of solutions}, Arch. Ration. Mech. Anal., 203 (2012),
  pp.~247--327.

\bibitem{GaoNix99}
{\sc H.~Gao and W.~Nix}, {\em Surface roughening of heteroepitaxial thin
  films}, Ann. Rev. Mater. Sci., 29 (1999), pp.~173--209.

\bibitem{GiaPon06}
{\sc A.~Giacomini and M.~Ponsiglione}, {\em A {$\Gamma$}-convergence approach
  to stability of unilateral minimality properties in fracture mechanics and
  applications}, Arch. Ration. Mech. Anal., 180 (2006), pp.~399--447.

\bibitem{GolZwi14}
{\sc M.~Goldman and B.~Zwicknagl}, {\em Scaling law and reduced models for
  epitaxially strained crystalline films}, SIAM J. Math. Anal., 46 (2014),
  pp.~1--24.

\bibitem{Grin86}
{\sc M.~A. Grinfeld}, {\em Instability of the separation boundary between a
  non-hydrostatically stressed elastic body and a melt}, Soviet Physics
  Doklady, 31 (1986), pp.~831--834.

\bibitem{Grin93}
\leavevmode\vrule height 2pt depth -1.6pt width 23pt, {\em The stress driven
  instability in elastic crystals: mathematical models and physical
  manifestations}, J. Nonlinear Sci., 3 (1993), pp.~35--83.

\bibitem{Iur14}
{\sc F.~Iurlano}, {\em A density result for {GSBD} and its application to the
  approximation of brittle fracture energies}, Calc. Var. Partial Differential
  Equations, 51 (2014), pp.~315--342.

\bibitem{KhoPio19}
{\sc S.~Kholmatov and P.~Piovano}, {\em A unified model for stress-driven
  rearrangement instabilities}.
\newblock Preprint CVGMT 4228, 2019.

\bibitem{KrePio19}
{\sc L.~Kreutz and P.~Piovano}, {\em Microscopic validation of a variational
  model of epitaxially strained crystalline films}, Preprint CVGMT 4197 (2019).

\bibitem{Nie81}
{\sc J.~A. Nitsche}, {\em On {K}orn's second inequality}, RAIRO Anal. Num\'er.,
  15 (1981), pp.~237--248.

\bibitem{Pio14}
{\sc P.~Piovano}, {\em Evolution of elastic thin films with curvature
  regularization via minimizing movements}, Calc. Var. Partial Differential
  Equations, 49 (2014), pp.~337--367.

\bibitem{SieMikVoo04}
{\sc M.~Siegel, M.~J. Miksis, and P.~W. Voorhees}, {\em Evolution of material
  voids for highly anisotropic surface energy}, J. Mech. Phys. Solids, 52
  (2004), pp.~1319--1353.

\bibitem{Sim84}
{\sc L.~Simon}, {\em Lectures on geometric measure theory}, vol.~3 of
  Proceedings of the Centre for Mathematical Analysis, Australian National
  University, Australian National University, Centre for Mathematical Analysis,
  Canberra, 1983.

\bibitem{Spe99}
{\sc B.~J. Spencer}, {\em Asymptotic derivation of the glued-wetting-layer
  model and contact-angle condition for Stranski-Krastanow islands}, Physical
  Review B, 59 (1999), pp.~2011--2017.

\bibitem{Tem}
{\sc R.~Temam}, {\em Mathematical problems in plasticity}, Gauthier-Villars,
  Paris, 1985. Translation of Problèmes mathématiques en plasticité.
  Gauthier-Villars, Paris, 1983.

\end{thebibliography}

\end{document}